
\documentclass{ims9x6}

\makeindex
\usepackage[font=footnotesize,labelfont=bf]{caption}
\usepackage{float}
\usepackage{subfloat}
\usepackage[skip=1pt]{subcaption}
\usepackage{cases}
\newtheorem{assum}{Assumption}
\usepackage{empheq}
\usepackage[most]{tcolorbox}
\usepackage{cite}

\allowdisplaybreaks

\newtcbox{\mymath}[1][]{%
    nobeforeafter, math upper, tcbox raise base,
    enhanced, colframe=blue!15!black, boxrule=.0pt, left=1mm, right=1mm, #1}

\newtcolorbox{empheqboxed}{colback=gray!35, 
 colframe=white,
 width=\textwidth,
 sharpish corners,
 top=-2mm, 
 bottom=0pt
}
    

\newcommand{\coloredeq}[2]{\begin{empheq}[box={\mymath[colback=gray!13, sharp corners]}]{align}\label{#1}#2\end{empheq}}

\usepackage{mdframed}
\usepackage{lipsum}

\newmdtheoremenv{theo-frmd}{Theorem}

\newcommand{\nn}{\nonumber}
\newcommand{\R}{{\mathbb R}}

\renewcommand{\tilde}{\widetilde}
\renewcommand{\hat}{\widehat}
\renewcommand{\bar}{\overline}

\newcommand{\CR}{\mathcal{C}([0,T];\mathbb{R}^d)}

\newcommand{\EE}{\mathbb{E}}
\newcommand{\TE}{\mathcal{F}}
\newcommand{\RR}{\mathbb{R}}
\newcommand{\NN}{\mathbb{N}}
\newcommand{\PP}{\mathbb{P}}

\newcommand{\OV}{\overline{V}}
\newcommand{\OX}{\overline{X}}
\newcommand{\OY}{\overline{Y}}
\newcommand{\CH}{{\mathcal{H}}}

\newcommand{\la}{\langle}
\newcommand{\ra}{\rangle}

\newcommand{\norm}[1]{\left\lVert#1 \, \right\rVert}

\begin{document}

\chapter{MEAN-FIELD PARTICLE SWARM OPTIMIZATION}

\markboth{S.~Grassi, H.~Huang, L.~Pareschi and J.~Qiu}{Mean-field particle swarm optimization}

\author{Sara Grassi}

\address{Department of Mathematical, Physical and Computer Sciences, \\
University of Parma,\\
Parco Area delle Scienze 7/A, 43124 Parma, Italy\\
sara.grassi{@}unipr.it }

\author{Hui Huang} 

\address{Department of Mathematics and Statistics, \\
University of Calgary, \\
2500 University Dr NW, Calgary, AB T2N 1N4, Canada\\
hui.huang1{@}ucalgary.ca }

\author{Lorenzo Pareschi}

\address{Department of Mathematics and Computer Science, \\
University of Ferrara,\\
 Via Machiavelli 30, 44121 Ferrara, Italy \\
lorenzo.pareschi{@}unife.it }

\author{Jinniao Qiu} 

\address{Department of Mathematics and Statistics, \\
University of Calgary, \\
2500 University Dr NW, Calgary, AB T2N 1N4, Canada\\
jinniao.qiu{@}ucalgary.ca}

\begin{abstract}
In this work we survey some recent results on the global minimization of a non-convex and possibly non-smooth high dimensional objective function by means of particle based gradient-free methods. Such problems arise in many situations of contemporary interest in machine learning and signal processing.
After a brief overview of metaheuristic methods based on particle swarm optimization (PSO), we introduce a continuous formulation via second-order systems of stochastic differential equations that generalize PSO methods and provide the basis for their theoretical analysis. Subsequently, we will show how through the use of mean-field techniques it is possible to derive in the limit of large particles number the corresponding mean-field PSO description based on Vlasov-Fokker-Planck type equations. Finally, in the zero inertia limit, we will analyze the corresponding macroscopic hydrodynamic equations, showing that they generalize the recently introduced consensus-based optimization (CBO) methods by including memory effects. Rigorous results concerning the mean-field limit, the zero-inertia limit, and the convergence of the mean-field PSO method towards the global minimum are provided along with a suite of numerical examples. 
\end{abstract}

\vspace*{12pt}

\chaptercontents  

\section{Introduction}

The Particle Swarm Optimization (PSO) algorithm was introduced by James Kennedy, a social psychologist, and Russel Eberhart, an electrical engineer, in the mid-1990s \cite{kennedy1995particle,kennedy1997particle}. Since its introduction, due to its simplicity and versatility, the PSO method has gained a great deal of attention from the scientific community, resulting in a huge number of variants of the original algorithm \cite{kennedy2010particle,poli2007particle,shieber98,hassan04,wang2017particle}. The origin of the method can actually be traced back to an earlier time, since the basic principle of optimization by interacting agents is inspired by previous attempts to reproduce the observed behaviors of animals in their natural habitat, such as flocks of birds or schools of fish \cite{Aoki,Okubo,Vicseck,Giardina2008collective,boids1987,CFT}. These roots in the natural processes of collective animal behavior lead to the PSO algorithm's classification as belonging to Swarm Intelligence (SI), where the notion of swarm intelligence refers to the property of a system in which the coordinated behaviors of agents interacting locally with their environment cause coherent global functional patterns (e.g., self-organization, emergent behavior) to emerge \cite{Sumpter2010collective,Dorigo,cucker2007,motsch2014,CS}. 

Currently, similar to other gradient-free approaches \cite{Back:1997:HEC:548530,Blum:2003:MCO:937503.937505,Gendreau:2010:HM:1941310,8843624,larson_menickelly_wild_2019,neumaier_2004,Audet2017}, PSO is considered an efficient metaheuristic method for solving complex optimization problems and is available in several programming language libraries. Gradient-based optimizers are effective at finding local minima for high-dimensional, nonlinearly constrained convex problems; however, most gradient-based optimizers have problems dealing with noisy, discontinuous functions, and are not designed to handle discrete and mixed discrete-continuous variables. Unlike gradient-based methods in a convex search space, metaheuristic methods are not necessarily guaranteed to find true global optimal solutions, but they are capable of finding many good solutions that are sometimes sufficient in practical applications. 
 Some of the most popular stochastic metaheuristic methods include Simulated Annealing (SA) \cite{holley1988simulated,kirkpatrick1983optimization,Aarts:1989:SAB:61990}, Ant Colony Optimization (ACO) \cite{dorigo1996ant,dorigo2005ant}, Genetic Algorithms (GA) \cite{Holland:1992:ANA:531075,Goldberg1989genetic} and Differential Evolution (DE) \cite{Fogel:2006:ECT:1202305, Storn1997differential}. See also \cite{8843624} for a recent survey on other natured inspired metaheuristics. It should also be mentioned that a large number of newer  metaheuristic methods have begun to attract criticism in the research community for hiding their lack of novelty behind elaborate constructions unsupported by any theoretical analysis \cite{Sorensen}. 

In spite of its apparent simplicity, PSO poses formidable challenges for those interested in understanding swarm intelligence through theoretical analysis. To date a fully complete mathematical theory for particle swarm optimization is still lacking (see for example \cite{bruned2019weak,Schmitt2015convergence,XU2018709,Bellomo17,poli2009mean,zhang2015comprehensive} and the references therein). The algorithm explores the search space in an intelligent way thanks to a population of particles interacting with each other and updated at each step their position and velocity. Thus, from the theoretical point of view, one can take advantage of the fact that PSO is inspired by classical second order Newtonian dynamics of particle systems. This allows approaches derived from statistical mechanics and mean-field theory to be adapted to the study of the system properties in the limit of a large number of particles \cite{Jab2,Snitzman,Golse,Jab,CC1,CC4,huangmf,BCC}. 
 
Analogies with mean-field dynamics in consensus formation have recently inspired Consensus-based Optimization (CBO) methods, a novel class of particle based methods for global optimization (see \cite{pinnau2017consensus,carrillo2018analytical,carrillo2019consensus,TW,Totzeck2018ANC,ha2020convergence,ha2021convergence,chen2020consensusbased,fornasier2021consensusbased} and the recent survey \cite{totzeck2021trends}). Global optimization methods with similar features, but based on Kuramoto-Vicseck  dynamics constrained to hypersurfaces \cite{fhps20-1,fhps20-2,fornasier2021anisotropic} or on binary Boltzmann dynamics \cite{benfenati2021binary}, have been introduced and studied recently. These methods are inherently simpler than PSO methods since they were inspired by first order consensus-like dynamics typical of social interactions such as opinion formations and wealth exchanges \cite{partos13,NPT}.
In contrast to classic metaheuristic methods typically formulated through a discrete sequence of operations and for which it is quite difficult to provide rigorous convergence to global minimizers, CBO-like methods, thanks to their formulation through stochastic differential equations (SDE) permit to exploit mean-field techniques to prove global convergence for a large class of optimization problems \cite{carrillo2018analytical,carrillo2019consensus,fhps20-2,fornasier2021anisotropic}. On the other hand, CBO methods seem to be powerful and robust enough to tackle many interesting high dimensional non-convex optimization problems of interest in machine learning and sampling \cite{carrillo2019consensus,fhps20-2,chen2020consensusbased,carrillo2021consensus,benfenati2021binary,fornasier2021anisotropic,grassi2021consensus}.

In this work we review some recent results on the mean-field modeling of particle swarm optimization with the goal of providing a robust mathematical theory for PSO methods and their convergence to the global minimum, based on a continuous description of their dynamics \cite{Grassi2021PSO,huang2021mean,huang2021mean1,cipriani2021zero,HJK,grassi2021consensus}. 
A major difficulty in the mathematical description of PSO methods, and other metaheuristic algorithms, is the presence of memory mechanisms that make their interpretation in terms of differential equations particularly challenging. To this end, 
the discrete PSO method is generalized via a system of second-order SDEs in which an additional state variable takes into account the memory of the individual particle. We refer to \cite{TW} for alternative approaches to memory mechanisms in CBO system. 

Adopting the same regularization process for the global best as in CBO methods \cite{pinnau2017consensus,carrillo2018analytical}, it is then possible to pass to the mean-field limit and derive the corresponding Vlasov-Fokker-Planck equation that characterizes the behavior of the system in the limit of a large number of particles \cite{Grassi2021PSO,huang2021mean1}. The new mathematical formalism based on mean-field equations permits to study the behavior of the Vlasov-Fokker-Planck PSO model in the limit of zero inertia (see \cite{AS,DLP,sun1,sun2,choi2020quantified,carrillo2021large} for related results in other contexts). In particular, we prove that in this limit the PSO dynamics is described by simplified macroscopic models that correspond to a generalization of CBO models including memory effects and local best \cite{Grassi2021PSO,cipriani2021zero}. The convergence of the mean-field PSO model to the global minimum is then discussed and shown rigorously in absence of memory effects \cite{HJK}.  
 
Several numerical examples are reported to validate of the mean-field process and the small inertia limit, and to illustrate the role of the various parameters involved in solving high dimensional global optimization problems for various prototype test functions.
Other than the basic algorithmic aspects of implementing these generalized PSO methods, we do not discuss the practical algorithmic improvements that can be adopted to increase the success rate, like for example the use of random batch methods \cite{AlPa,carrillo2019consensus,JLJ}, particle reduction techniques \cite{fhps20-2,fornasier2021anisotropic} and parameters adaptivity \cite{poli2007particle,wang2017particle}. We refer to \cite{grassi2021consensus} for further details on these implementation aspects.

The rest of the survey is organized as follows. In Section 2 we introduce the PSO algorithms and derive the corresponding representations as SDEs using a time continuous approximation of the memory process. Next, in Section 3, thanks to a regularization of the global best and the local best we discuss the large particle limit and derive the respective Vlasov-Fokker-Planck equations describing the mean-field dynamic. A rigorous proof of the mean-field limit is also given. 
Section 4 is then dedicated to the zero-inertia limit for the mean-field system that allows to recover a CBO model with local best as the corresponding macroscopic limit. This is shown rigorously at the end of the Section. A general convergence result to the global minimum is illustrated in Section 5 in absence of memory effects.  
Several numerical examples, validating the mean-field approximation, the small inertia limit and testing the performances of the minimizers against some prototype functions in high dimension are then given in Section 6. Some concluding remarks and open research directions are reported at the end of the manuscript. 

\begin{figure}[t]
\begin{center}
\includegraphics[scale=0.35]{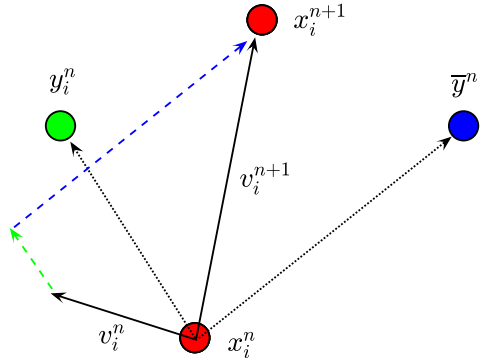}
\end{center}
\caption{Particle dynamics in the standard PSO model \eqref{eq:pso}. Green and blue dashed arrows denote the influence of the local best and global best, respectively.}
\label{Fg:pso}
\end{figure}

\section{Second order stochastic models for particle swarm optimization}
In the sequel we consider the following optimization problem
\begin{equation}\label{typrob}
x^\ast \in \argmin\limits_{x\in \RR^d}\TE(x)\,,
\end{equation}
where $\TE(x):\mathbb R^{d} \to \mathbb R$ is a given  high dimensional objective function, which we wish to minimize. In machine learning the objective function allows the algorithm designer to encode the appropriate and expected behavior for the machine learning model, such as fitting well to the training data versus some loss function. Modern applications frequently require learning algorithms to operate in extremely high dimensional spaces \cite{bishop06,vapnik91}. In other applications, the natural objective of the learning task is a possibly non-smooth and non-convex function \cite{nonconvex}. Common examples include training deep neural networks and tensor decomposition problems.
In contrast to gradient based optimizers and other metaheuristic solvers, PSO solve the minimization problem \eqref{typrob} by starting from a population of candidate solutions, represented by particles, and moving these particles in the search space according to simple mathematical relationships on particle position and speed.  The movement of each particle is influenced by its best known local position, but it is also driven to the best collective position of the swarm in the search space, which is updated when the particles find better positions (see Figure \ref{Fg:pso}). 

\subsection{The standard PSO algorithm}

The method is based on introducing $N$ particles with position $x_i\in {\RR}^d$ and speed $v_i\in {\RR}^d$, $i=1,\ldots,N$. In the \emph{standard PSO algorithm} the particle positions and velocities, starting with an initial $x_i^0$ and $v_i^0$ assigned, are updated according to the following rule \cite{kennedy1995particle}
\coloredeq{eq:pso}{
\begin{split}
x^{n+1}_i &= x_i^n + v_i^{n+1},\\
v^{n+1}_i &= v_i^n + c_1 R_1^n \left(\p_i^n-x_i^n\right)+c_2 R_2^n \left(\g^n-x_i^n\right),
\end{split}}
where the values $c_1, c_2 \in {\RR}$ are the {\em acceleration coefficients}, $\p_i^n$ is the {\em local best} position found by the $i$ particle up to that iteration, and $\g^n$ is the {\em global best} position found among all the particles up to that iteration. The terms $R_1^n$ and $R_2^n$ denote two $d$-dimensional diagonal matrices with random numbers uniformly distributed in $[0,1]$ on their diagonals. These numbers are generated at each iteration and for each particle.
Typically, the values of $x_i$ and $v_i$ are restricted within a specific search domain $X=[X_{min},X_{max}]^d$ and velocity range $V=[-V_{max},V_{max}]^d$. Different boundary conditions are usually applied in the search space $X$.

The local best $\p_i^n$ and global best $\g^n$ are defined by the following relationships
\begin{eqnarray}
\nonumber
\p_i^0&=&x_i^0,\\ 
\nonumber
\p_i^{n+1} &=& \left\{
\begin{array}{lcl}
\p_i^n  & \hbox{if}   & \TE(x_i^{n+1}) \geq \TE(y_i^n),  \\ 
x_i^{n+1}  & \hbox{if}   & \TE(x_i^{n+1}) < \TE(y_i^n),  
\end{array}
\right.\\ 
\label{eq:psoEvolution}
\\
\nonumber
\g^0&=&\hbox{argmin}\{\TE(x_1^0),\TE(x_2^0),\ldots,\TE(x_N^0)\},\\ 
\nonumber
\g^{n+1} &=& \hbox{argmin}\{\TE(y_1^{n+1}),\TE(y_2^{n+1}),\ldots,\TE(y_N^{n+1})\}. 
\end{eqnarray}
Another way to represent the local best, which will be useful in the sequel, is the following \cite{hassan04}
\be
\p_i^{n+1} = \p_i^n + \frac12\left(x_i^{n+1}-\p_i^n\right)S(x_i^{n+1},\p_i^n),
\label{eq:hass}
\ee
where
\be
S(x,y)=\left(1+\sign\left(\TE(y)-\TE(x)\right)\right).
\ee

\subsection{The stochastic differential PSO system}
In order to derive a time continuous version of the PSO algorithm \eqref{eq:pso}, we rewrite it in the form
\begin{equation}
\begin{split}
x^{n+1}_i &= x_i^n + {v_i^{n+1}},\\
v^{n+1}_i &= v_i^n + \frac{c_1}{2}\left(\p_i^n-x_i^n\right)+\frac{c_2}{2}\left(\g^n-x_i^n\right) \\
&\quad +\frac{c_1}{2}\widetilde R_1\left(\p_i^n-x_i^n\right)+\frac{c_2}{2}\widetilde R_2\left(\g^n-x_i^n\right),
\end{split}
\label{eq:pso2}
\end{equation} 
where $\widetilde R_k=(2R_k-1)$, $k=1,2$. We can interpret \eqref{eq:pso2} as a semi-implicit time discretization method for SDEs with time stepping $\Delta t=1$ where the implicit Euler scheme has been used for the first equation and the Euler-Maruyama method is used for the second one. Note that, the particular distribution of the random noise will not change the corresponding stochastic differential system provided the noise has the same mean value and variance. In the case of the PSO model \eqref{eq:pso2}, since the random terms are uniformly distributed in $[-1,1]$, the mean value is $0$ and the corresponding variance is $1/3$. 

We can then write the time continuous formulation as a {second order system of SDEs} in It\^o form defining the \emph{stochastic differential PSO system}
\coloredeq{eq:psoc}{
\begin{split}
dX^i_t &= V^i_t dt,\\ 
dV^i_t &= \lambda_1\left(\P_t^i-X^i_t\right)dt+\lambda_2\left(\G_t-X^i_t\right)dt \\
 &\quad +\sigma_1 D(\P_t^i-X^i_t)dB^{1,i}_t+\sigma_2 D(\G_t-X^i_t)dB^{2,i}_t,
\end{split} 
}
with 
\begin{equation}
\lambda_k=\frac{c_k}{2},\quad \sigma_k = \frac{c_k}{2\sqrt{3}},\quad k=1,2 
\label{eq:param}
\end{equation}
the \emph{drift and diffusion coefficients} and
\begin{equation}
D(X_t)=\diag\left\{(X_t)_1,(X_t)_2,\dots,(X_t)_d\right\},
\end{equation}
a $d$-dimensional diagonal matrix. \\ In \eqref{eq:psoc} the vectors $B^k_t=\left((B^k_t)_1,(B_t^k)_2,\dots,(B_t^k)_d\right)^T$, $k=1,2$ denote $d$ independent 1-dimensional \emph{Brownian motions} and depend on the $i$-th particle. 
One critical aspect is the definition of the best positions $\P_t^i$ and $\G_t$ which in the PSO method make use of the past history of the particles. Thanks to \eqref{eq:hass}, for a positive constant $\nu$, we can approximate $y_i^{n+1}$ with the following \emph{differential system for the local best}
\coloredeq{eq:lbest}{
d\P_t^i = \nu \left(X^i_t-\P^i_t\right)S(X^i_t,\P^i_t)dt,
}
with $Y^i_0=X^i_0$
and consequently define
\begin{equation}
\G_t = \hbox{argmin}\left\{\TE(\P^1_t),\TE(\P^2_t),\ldots,\TE(\P^N_t)\right\}.
\end{equation}
Note that, equation \eqref{eq:lbest} does not describe 
the evolution of the local best, but rather a time continuous approximation of its evolution. 

\begin{figure}[t]
\includegraphics[scale=0.3]{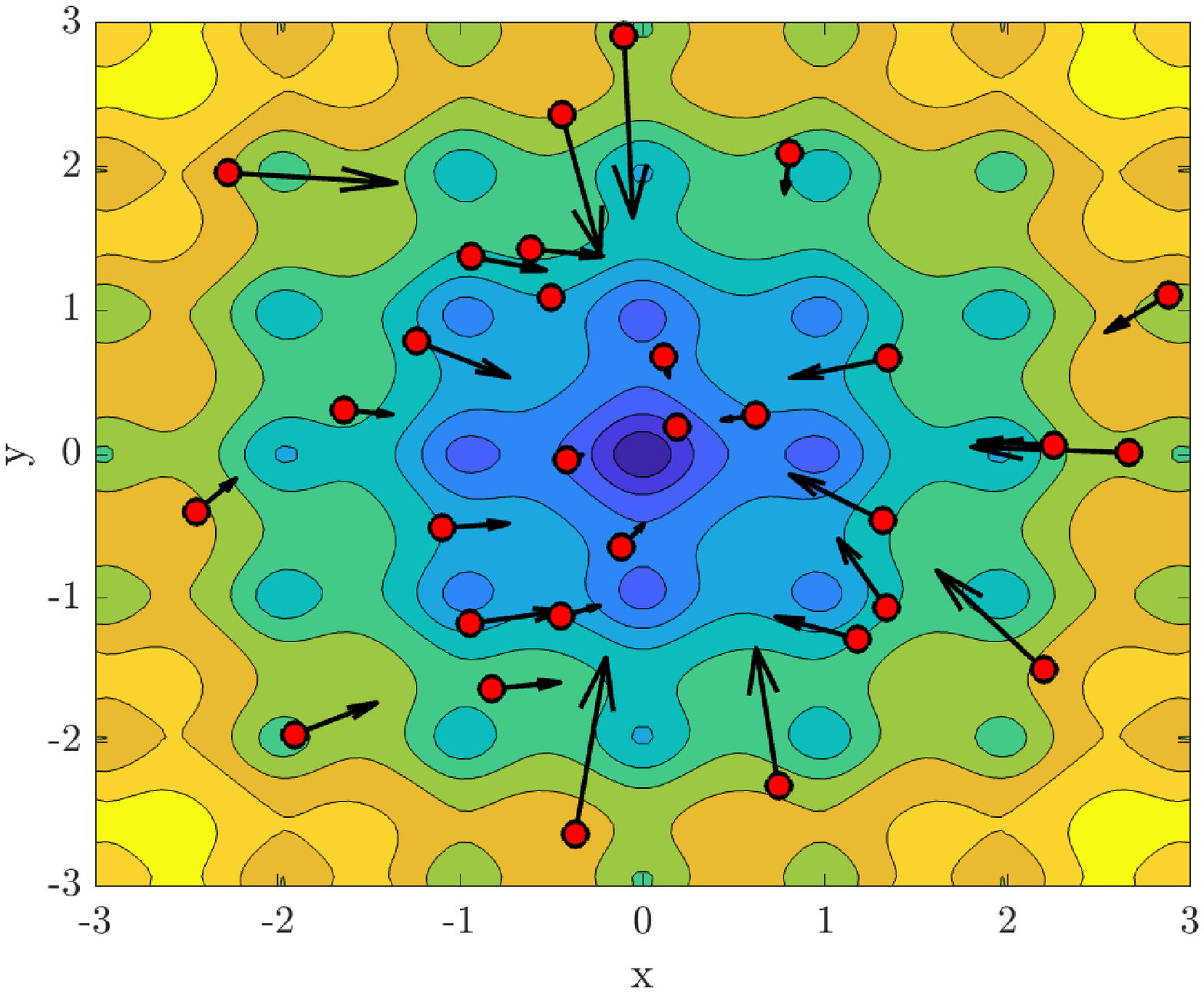}\hskip -.45cm
\includegraphics[scale=0.3]{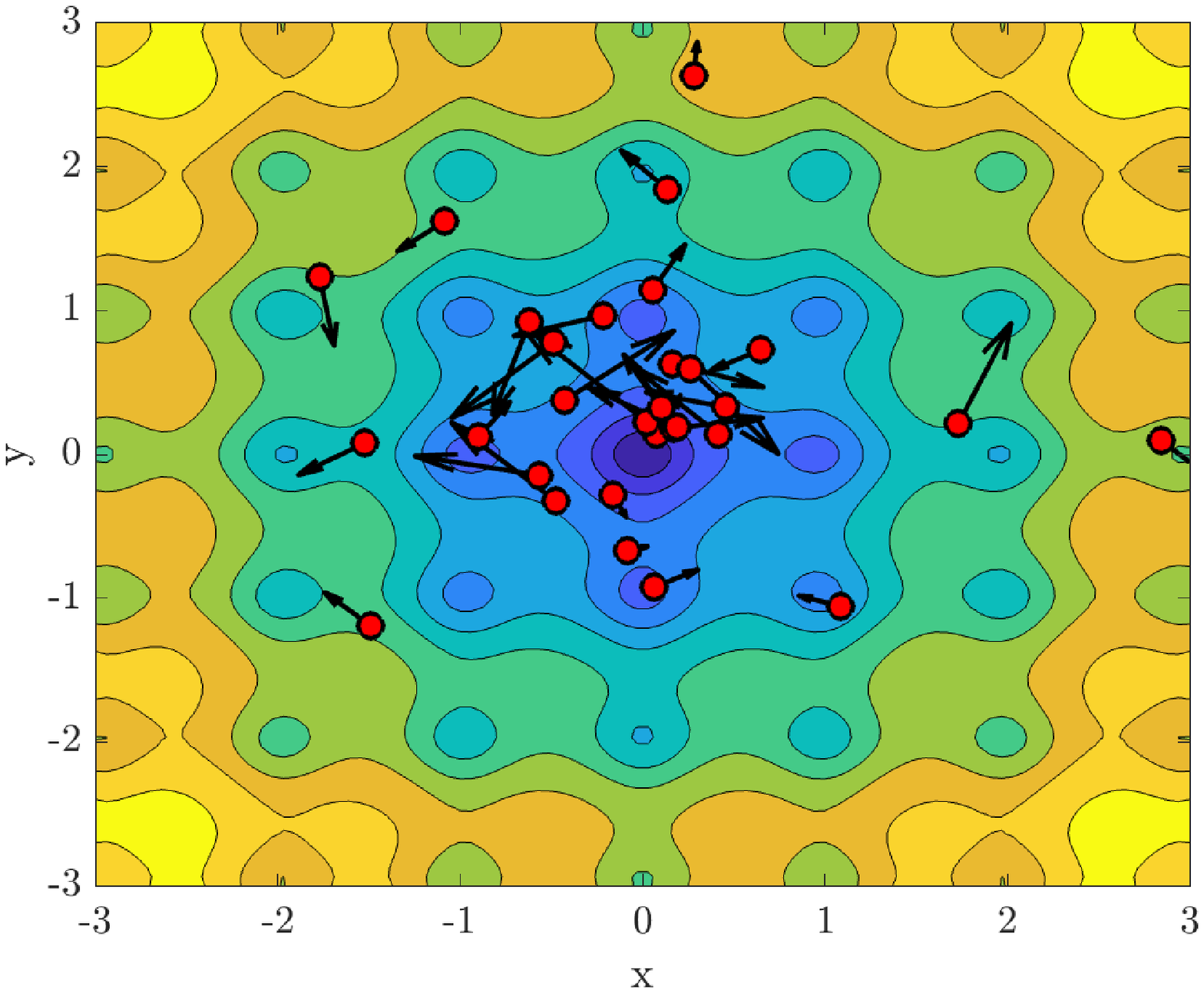}\hskip -.45cm
\caption{Snapshots of the PSO minimization process \eqref{eq:psoi} for the two-dimensional Ackley function (see Table \ref{Tabfun}) using $N=30$ particles, with $m=0$, $c_1=0.25$ and $c_2=2$.}
\end{figure}

\subsection{Stochastic differential PSO model with inertia}
To optimize the search algorithm, the value $c_k= 2$, $k=1,2$ was adopted in early PSO research. This value, which corresponds to $\lambda_k=1$ and $\sigma_k=1/\sqrt{3}$, $k=1,2$ in the SDEs form, however, may lead to unstable dynamics with particle speed increase without control. The use of hard bounds on velocity in $[-V_{\max},V_{\max}]^d$ is one way to control the velocities. However, the value
of $V_{\max}$ is problem-specific and difficult to determine. For this reason, the \emph{PSO algorithm with inertia} has been  considered \cite{shieber98}
\coloredeq{eq:psoi}{
\begin{split}
x^{n+1}_i &= x_i^n + v_i^{n+1},\\
v^{n+1}_i &= \iw  v_i^n + c_1 R_1^n \left(\p_i^n-x_i^n\right)+c_2 R_2^n \left(\g^n-x_i^n\right),
\end{split}
}
where $\iw \in (0,1]$ is the \emph{inertia weight}. The above system can be rewritten as
\begin{equation}
\begin{split}
x^{n+1}_i &= x_i^n + v_i^{n+1},\\
\iw  v^{n+1}_i &= \iw  v_i^n - (1-\iw ) v_i^{n+1} + c_1 R_1^n \left(\p_i^n-x_i^n\right)+c_2 R_2^n \left(\g^n-x_i^n\right).
\end{split}
\label{eq:psoi2}
\end{equation}
In this case, we can interpret the second equation as a semi-implicit Euler-Maruyama method, that is implicit in $v_i$ and explicit in $x_i$, hence the corresponding \emph{stochastic differential PSO system with inertia} reads
\coloredeq{eq:psoci}{
\begin{split}
dX^i_t &= V^i_t dt,\\
\iw  dV^i_t &= -\gamma V^i_t dt +\lambda_1\left(\P_t^i-X^i_t\right)dt +\lambda_2\left(\G_t-X^i_t\right) dt \\
&\quad +\sigma_1 D(\P_t^i-X^i_t)dB^{1,i}_t+\sigma_2 D(\G_t-X^i_t)dB^{2,i}_t,
\end{split}} 
where $\gamma=(1-\iw ) \geq 0$ is the \emph{friction coefficent}. Thus, the constant $\gamma$ acts effectively as a friction coefficient, and can be related to the fluidity of the medium in which particles move. System \eqref{eq:psoci} is reminescent of other second order stochastic particle system with inertia \cite{AS,DLP}. However, note that here, the inertia weight $\iw $ and the friction coefficient $\gamma$ are not independent.

In practice, in the PSO method \eqref{eq:psoi} the parameter $\gamma$ is often initially set to some low value, which corresponds to a system where particles move in a low viscosity medium and perform extensive exploration, and gradually increased to a higher value closer to one, where the system is more dissipative and would more easily concentrate into local minima.
Most PSO approaches, nowadays, are based on \eqref{eq:psoi} (or some variant) which is usually referred to as canonical PSO method to distinguish it from the original PSO method \eqref{eq:pso} (see \cite{poli2007particle}). Similarly we will refer to \eqref{eq:psoc}-\eqref{eq:lbest} as the original stochastic differential PSO (SD-PSO) system and to \eqref{eq:psoci}-\eqref{eq:lbest} as the canonical SD-PSO system.  
\begin{remark}
We underline that the PSO stochastic systems \eqref{eq:psoci}-\eqref{eq:lbest} if discretized properly yields the  PSO algorithm with inertia \eqref{eq:psoi}. This is achieved discretizing \eqref{eq:psoci} implicitly in $V^i_t$ and explicitly in $X^i_t$, and \eqref{eq:lbest} implicitly in $X^i_t$ and explicitly in $Y^i_t$. Taking $\Delta t=1$, $\nu=1/2$, the drift and diffusion terms satisfying \eqref{eq:param}, and a uniform noise permits to recover exactly \eqref{eq:psoi}. We refer to the last part of the manuscript containing the numerical examples for further details.
\end{remark}


\section{Mean-field particle swarm optimization}
In this section we introduce a modified version of the canonical stochastic differential PSO system for which we can formally compute its mean-field limit. We first consider the case in absence of memory effects and then we extend the results to the general case.
Throughout this note, our theoretical analysis assumes the cost function $\TE$ satisfies the following
\begin{assum}\label{asum}
	For the given cost function $\TE:\RR^d\rightarrow \RR$, it holds that:
	\begin{itemize}
		\item[(1)]  There exists some constant $L>0$ such $|\TE(x)-\TE(y)|\leq L(|x|+|y|)|x-y|$ for all $x,y\in\RR^d$;
		\item[(2)]	$\TE$ is  bounded from below with $-\infty<\underline{\TE}:=\inf \TE$ and there exists some constant $C_u>0$ such that
		\begin{equation*}
		\TE(x)-\underline{\TE}\leq C_u(1+|x|^2)\mbox{ for all }x\in\RR^d\,;
		\end{equation*}
		\item[(3)]  $\TE$ has quadratic growth at infinity. Namely, there  exist constants $C_l,\, M>0$ such that
		\begin{equation*}
		\TE(x)-\underline{\TE}\geq C_l|x|^2\mbox{ for all }|x|\geq M\,.
		\end{equation*}
	\end{itemize}
\end{assum}

\subsection{Regularized PSO dynamics without memory effects}
To simplify the mathematical description, let us consider a PSO approach where the dynamic is instantaneous without memory of the local best positions and the global best has been regularized as in \cite{pinnau2017consensus}. The corresponding second order system of SDEs describing the \emph{regularized SD-PSO dynamics} takes the form\footnote{The superscript $N$ is used to emphasize the dependence on the number of particles in the system.} 
\coloredeq{PSO}{
	\begin{split}
		dX_t^{i,N} & = V_t^{i,N}d t, \\
		mdV_t^{i,N} & =-\gamma V_t^{i,N}dt+\lambda (X^\alpha(\rho_t^{N})-X_t^{i,N})dt\\
		&\quad +\sigma D(X^\alpha(\rho_t^{N})-X_t^{i,N})dB_t^i\,,
	\end{split}}
where the $\RR^d$-valued functions $X_t^{i,N} $ and $V_t^{i,N}$ denote the position and velocity of the $i$-th particle at time $t$,  and $\{(B_t^i)_{t\geq0}\}_{i=1}^N$ are $N$ independent $d$-dimensional Brownian motions. 
Here the weighted average \emph{regularization of the global best} is given by
\coloredeq{ValphaE}{
	{X}^{\alpha}(\rho_t^{N}):=\frac{\int_{\RR^d}x\omega_{\alpha}^{\TE}(x)\rho_t^{N}(dx)}{\int_{\RR^d}\omega_{\alpha}^{\TE}(x)\rho_t^{N}(dx)},
}
with  the empirical measure $\rho^N:=\frac{1}{N}\sum_{i=1}^{N}\delta_{X^{i,N}}$, which is the spacial marginal of
$f^N:=\frac{1}{N}\sum_{i=1}^{N}\delta_{(X^{i,N},V^{i,N})}$.
The choice of the weight function $\omega_\alpha^\TE(x):=e^{-\alpha\TE(x)}$ in \eqref{ValphaE} comes from  the  well-known \emph{Laplace principle}, a classical result in large deviation theory, which states that for any probability measure $\rho\in\mc{P}(\RR^d)$ compactly supported, it holds
\vspace{5pt}
\begin{equation}\label{Laplace}
\lim\limits_{\alpha\to\infty}\left(-\frac{1}{\alpha}\log\left(\int_{\RR^d}e^{-\alpha\TE(x)}\rho(dx)\right)\right)=\inf\limits_{x\, \in\, \rm{supp }(\rho)} \TE(x)\,.
\end{equation}
\vspace{7pt}
Therefore, for large values of $\alpha \gg 1$ the regularized global best $X^\alpha(\rho_t^{N}) \approx X_t^*$, where
\[
X_t^* = \argmin\left\{\TE(X_t^{1,N}),\TE(X_t^{2,N}),\ldots,\TE(X_t^{N,N})\right\}.
\] 
We emphasize that the stochastic particle system \eqref{PSO} has locally Lipschitz coefficients, thus it admits strong solutions and pathwise uniqueness holds up to any finite time $T>0$, see \cite{CS,Dur}. The above system of SDEs in the sequel is considered in a general setting, without necessarily satisfying the PSO constraint \eqref{eq:param}.

As the particle number $N\to \infty$, one expects to derive the  \emph{mean-filed PSO description without local best} characterized by the following nonlinear \emph{Vlasov-Fokker-Planck equation}
\coloredeq{PDEi}{
\begin{split}
& \partial_t f + \  v \cdot \nabla_x f = \\ 
&\quad \nabla_v\cdot\left(\frac{\gamma}{\iw } v f + \frac{\lambda}{\iw } (x - X^{\alpha}(\rho) )f+\frac{\sigma^2}{2\iw ^2}D(x - X^{\alpha}(\rho))^2\nabla_v f\right)
\end{split}} 
where we have used the identity
\vspace{2pt}
\begin{equation}
\sum_{j=1}^d \frac{\partial^2}{\partial v^2_j}\left((x-X^{\alpha}(\rho))^2_j f\right) = \nabla_{v}\cdot \left(D(x - X^{\alpha}(\rho))^2\nabla_{v} f\right)
\label{eq:newid}
\end{equation}
\\
with $D(x - X^{\alpha}(\rho) )^2$ the diagonal matrix given by the square of $D(x - X^{\alpha}(\rho) )$. 
Equation \eqref{PDEi} represents the mean-field PSO (MF-PSO) model without local best and 
should be accompanied by initial (and boundary)
data, and normalization
\[
\iint_{\RR^{2d}} f(t,dx,dv) = 1.
\]
We refer to \cite{CFT,BCC,Jab,Golse,Snitzman} and the references therein, for more details and rigorous results about mean-field models of Vlasov-Fokker-Planck type. Note, however, that the presence of $X^{\alpha}(\rho)$ makes the Vlasov-Fokker-Planck equation nonlinear and nonlocal. This is nonstandard in the literature and raises several analytical and numerical questions (see \cite{carrillo2018analytical,fhps20-2}).

\subsubsection{Mean-field limit}\label{meanPSO1}
In this section, following \cite{huang2021mean1} we provide a rigorous justification of the mean-field limit of  PSO model \eqref{PSO} towards its mean-field PDE \eqref{PDEi} through a compactness argument. More precisely, we first prove  that the sequence of empirical measures $\{f^N\}_{N\geq 2}$ ($f^N=\frac{1}{N}\sum_{i=1}^{N}\delta_{(X^{i,N},V^{i,N})}$ are $\mc{P}(\CR\times\CR)$-valued random variables) is tight.  Prokhorov's theorem indicates that there exists a subsequence of $\{f^N\}_{N\geq 2}$ converging in law to a random measure $f$. Then, to identify the limit, we verify that the limit measure $f$ is a weak solution to the mean-field PSO equation \eqref{PDEi}  almost surely, while the uniqueness of the weak solution to PDE \eqref{PDEi} yields that $f$ is actually deterministic.  Our main result can be described in the following way:
\begin{theo-frmd}\label{thmmean}
	Let $\TE$ satisfy Assumption \ref{asum} and $f_0\in \mc{P}_4(\RR^{2d})$. For any $N\geq 2$, we assume that $\{(X_t^{i,N},V_t^{i,N})_{t\in[0,T]}\}_{i=1}^N$ is the unique solution to the SD-PSO system \eqref{PSO} with $f_0^{\otimes N}$-distributed initial data $\{X_0^{i,N},V_0^{i,N}\}_{i=1}^N$.  Then the limit (denoted by $f$) of the sequence of the empirical measure $f^N=\frac{1}{N}\sum_{i=1}^N\delta_{(X^{i,N},V^{i,N})}$ exists. Moreover, $f$ is the unique weak solution to the MF-PSO equation \eqref{PDEi}.
\end{theo-frmd}
To obtain the above theorem,  let us first prove the following lemma on a uniform moment estimate for the particle system \eqref{PSO}.
\begin{lemma}\label{lem2mon}
	Let $\TE$ satisfy Assumption \ref{asum} and $f_0\in \mc{P}_4(\RR^{2d})$. For any $N\geq 2$, assume that $\{(X_t^{i,N},V_t^{i,N})_{t\in[0,T]}\}_{i=1}^N$ is the unique solution to the SD-PSO system \eqref{PSO} with $f_0^{\otimes N}$-distributed initial data $\{(X_0^{i,N},V_0^{i,N})\}_{i=1}^N$. Then there exists a constant $K>0$ independent of $N$ such that
	\begin{equation}\label{est-lem2mon}
	\begin{split}
	&\sup\limits_{i=1,\cdots,N} \left\{
	\sup\limits_{t\in[0,T]}
	\EE\left[|X_t^{i,N}|^2+|X_t^{i,N}|^4+|V_t^{i,N}|^2+|V_t^{i,N}|^4\right]	\right\}\\ 
	&\quad +\sup\limits_{t\in[0,T]}\EE\left[|X^\alpha(\rho^N_t)|^2+|X^\alpha(\rho^N_t)|^4\right]
	\leq K\,. 
	\end{split}
	\end{equation}
\end{lemma}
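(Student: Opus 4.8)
The plan is to establish the moment bounds by a Grönwall-type argument applied to an appropriately chosen energy functional, using the crucial observation that the regularized global best $X^\alpha(\rho_t^N)$ is controlled by the second moments of the positions. First I would record the key a priori bound on the consensus point: from the definition \eqref{ValphaE} and the Laplace weight $\omega_\alpha^\TE(x)=e^{-\alpha\TE(x)}$, together with Assumption \ref{asum}(2)–(3), one obtains a pointwise estimate of the form
\begin{equation*}
|X^\alpha(\rho_t^N)|^2 \leq C\left(1 + \frac{1}{N}\sum_{j=1}^N |X_t^{j,N}|^2\right),
\end{equation*}
and similarly $|X^\alpha(\rho_t^N)|^4 \leq C(1 + \frac1N\sum_j |X_t^{j,N}|^4)$; this is the standard argument (cf. \cite{carrillo2018analytical,fhps20-2}) using that $\omega_\alpha^\TE$ is bounded above by $e^{-\alpha\underline{\TE}}$ and bounded below on any ball, so the weighted average cannot escape too far from the bulk of the empirical measure. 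Once this is in hand, the second term on the left of \eqref{est-lem2mon} is controlled by the first, and it suffices to bound the particle moments themselves.

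Next I would apply Itô's formula to the process $|X_t^{i,N}|^2 + |V_t^{i,N}|^2$ (and separately to the fourth-power analogue $|X_t^{i,N}|^4 + |V_t^{i,N}|^4$, or to $(|X_t^{i,N}|^2+|V_t^{i,N}|^2)^2$). The friction term $-\gamma V_t^{i,N}$ contributes a nonpositive term $-\frac{2\gamma}{m}|V_t^{i,N}|^2$; the drift $\lambda(X^\alpha(\rho_t^N) - X_t^{i,N})$ and the mixed term from $dX$ are estimated by Young's inequality, producing terms bounded by $|X_t^{i,N}|^2$, $|V_t^{i,N}|^2$ and $|X^\alpha(\rho_t^N)|^2$; the diffusion term $\sigma D(X^\alpha(\rho_t^N)-X_t^{i,N})dB_t^i$ has zero expectation after taking $\EE$, while its quadratic variation contributes $\frac{\sigma^2}{m^2}|X^\alpha(\rho_t^N)-X_t^{i,N}|^2$, again controlled by $|X_t^{i,N}|^2 + |X^\alpha(\rho_t^N)|^2$. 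Taking expectations, summing over $i$, dividing by $N$, and inserting the consensus-point estimate from the first step, one arrives at a closed differential inequality for
\begin{equation*}
m_t := \frac1N\sum_{i=1}^N \EE\left[|X_t^{i,N}|^2 + |X_t^{i,N}|^4 + |V_t^{i,N}|^2 + |V_t^{i,N}|^4\right]
\end{equation*}
of the form $\frac{d}{dt} m_t \leq C(1 + m_t)$, with $C$ independent of $N$; Grönwall's lemma then gives $\sup_{t\in[0,T]} m_t \leq K$, and since the initial data are i.i.d.\ with law $f_0\in\mc{P}_4(\RR^{2d})$ the starting value $m_0$ is finite and $N$-independent.

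The remaining point is to upgrade the bound on the \emph{averaged} moment $m_t$ to the \emph{uniform-in-$i$} supremum appearing in \eqref{est-lem2mon}. Here I would exploit the exchangeability of the system: since the initial data are i.i.d.\ and the coupling enters only through the symmetric empirical measure $\rho_t^N$, the laws of $(X_t^{i,N},V_t^{i,N})$ coincide for all $i$, so $\EE[|X_t^{i,N}|^2+\cdots] = m_t$ for every $i$, and the supremum over $i$ is immediate. The main obstacle I anticipate is making the consensus-point estimate fully rigorous with the correct constants — in particular controlling the denominator $\int \omega_\alpha^\TE\,d\rho_t^N$ from below uniformly in $N$ and in $t$, which requires the quadratic-growth lower bound in Assumption \ref{asum}(3) to ensure the mass of $\rho_t^N$ does not all escape to where $\omega_\alpha^\TE$ is exponentially small; one handles this by splitting the integral over $\{|x|\le R\}$ and its complement and using that the tail mass is itself bounded by $m_t/R^2$. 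Everything else is a routine Itô-calculus and Grönwall computation.
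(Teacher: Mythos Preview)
Your proposal is correct and follows essentially the same approach the paper has in mind: the paper itself does not give a detailed proof but refers to \cite[Lemma 3.4]{carrillo2018analytical}, whose argument is precisely the It\^o--Gr\"onwall computation you describe, with the consensus-point bound (stated later in the paper as Lemma~\ref{lemXa}) playing the role you assign it and exchangeability yielding the uniform-in-$i$ statement. Your anticipated ``obstacle'' about the denominator is exactly what Lemma~\ref{lemXa} handles, so there is no gap.
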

The proof follows similar arguments as in \cite[Lemma 3.4]{carrillo2018analytical}.

We treat $(X^{i,N},V^{i,N}): \Omega\mapsto \CR\times \CR$. Then
$f^N=\sum_{i=1}^{N}\delta_{(X^{i,N},V^{i,N})}: \Omega\mapsto \mc{P}(\CR\times\CR)$ is a random measure. Let us denote $\mc{L}(f^N):=\mbox{Law}(f^N)\in \mc{P}(\mc{P}(\CR\times\CR))$.  We can prove that $\{\mc{L}(f^N)\}_{N\geq 2}$ is tight, or  we say
$\{f^N\}_{N\geq2}$ is tight, which can be done by verifying the Aldous criteria \cite{billingsley2013convergence} as presented below:
	\begin{lemma}\label{lemAldous}
	Let $\{X^n\}_{n\in \NN}$ be a sequence of random variables defined on a probability space $(\Omega,\mc{F},\PP)$ and valued in $\mc{C}([0,T];\RR^d)$. The sequence of probability distributions $\{\mu_{X^n}\}_{n\in \NN}$  of $\{X^n\}_{n\in \NN}$ is tight on $\mc{C}([0,T];\RR^d)$ if the following two conditions hold.
	
	$(Con 1)$ For all $t\in [0,T]$, the set of distributions of $X_t^n$, denoted by $\{\mu_{X_t^n}\}_{n\in\NN}$, is tight as a sequence of probability measures on $\RR^d$.
	
	$(Con 2)$ For all $\varepsilon>0$, $\eta>0$, there exists $\delta_0>0$ and $n_0\in\NN$ such that for all $n\geq n_0$ and for all discrete-valued $\sigma(X^n_s;s\in[0,T])$-stopping times $\beta$ with $0\leq \beta+\delta_0\leq T$, it holds that
	\begin{equation}
	\sup_{\delta\in[0,\delta_0]}\PP\left(|X^n_{\beta+\delta}-X^n_{\beta}|\geq \eta\right)\leq \varepsilon\,.
	\end{equation}
\end{lemma}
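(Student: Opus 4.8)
This is the classical Aldous tightness criterion (see \cite{billingsley2013convergence}); I indicate the argument. The plan is to establish relative compactness of $\{\mu_{X^n}\}_{n\in\NN}$ on $\CR$ via Prokhorov's theorem together with the Arzel\`a--Ascoli description of compact subsets of $\CR$: a set $\mathcal{A}\subset\CR$ is relatively compact there precisely when $\sup_{X\in\mathcal{A}}|X_0|<\infty$ and $\sup_{X\in\mathcal{A}}w_X(\delta)\to0$ as $\delta\downarrow0$, where $w_X(\delta):=\sup\{|X_t-X_s|:s,t\in[0,T],\,|t-s|\leq\delta\}$ is the modulus of continuity. Hence it suffices to produce, for each $\varepsilon>0$, a radius $R>0$ and, for every $k\in\NN$, a mesh $\delta_k>0$ such that
\begin{equation*}
\sup_{n\in\NN}\PP\big(|X^n_0|>R\big)\leq\frac{\varepsilon}{2}
\qquad\text{and}\qquad
\sup_{n\in\NN}\PP\Big(w_{X^n}(\delta_k)\geq\frac{1}{k}\Big)\leq\frac{\varepsilon}{2^{k+1}}\,;
\end{equation*}
for then the closed set $\mathcal{K}_\varepsilon:=\{X\in\CR:\,|X_0|\leq R,\ w_X(\delta_k)\leq 1/k\ \text{for all }k\}$ is compact by Arzel\`a--Ascoli and $\PP(X^n\in\mathcal{K}_\varepsilon)\geq 1-\varepsilon$ for every $n$ by a union bound. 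The first estimate is immediate from $(Con 1)$ at $t=0$: since $\{\mu_{X^n_0}\}_n$ is tight on $\RR^d$, a suitable $R$ exists.

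The heart of the proof is to extract the second estimate from $(Con 2)$. Fix $k$, set $\eta:=1/k$, and apply $(Con 2)$ with the parameters $\eta/2$ and an auxiliary level $\varepsilon'>0$ (to be fixed later) to obtain a mesh $\delta_0>0$ and an index $n_0$. I would then introduce the forward-looking stopping time
\begin{equation*}
\tau:=\inf\Big\{t\in[0,T]:\ \sup_{t\leq s\leq(t+\delta_0)\wedge T}\big|X^n_s-X^n_t\big|\geq\frac{\eta}{2}\Big\},
\end{equation*}
which is a stopping time because $X^n$ has continuous paths, together with its companion $\tau':=\inf\{s\geq\tau:\,|X^n_s-X^n_\tau|\geq\eta/2\}\wedge(\tau+\delta_0)\wedge T$. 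On the event $\{w_{X^n}(\delta_0)\geq\eta\}$ there exist $s<t\leq T$ with $t-s\leq\delta_0$ and $|X^n_t-X^n_s|\geq\eta$, which forces $\tau\leq s<T$; and, outside a boundary layer $\{\tau>T-\delta_0\}$ --- which is handled in the same fashion using the deterministic stopping time $T-\delta_0$ on the interval $[T-\delta_0,T]$ --- continuity of $X^n$ yields $\tau\leq\tau'\leq\tau+\delta_0\leq T$ and $|X^n_{\tau'}-X^n_\tau|\geq\eta/2$. Thus, up to this boundary term, $\{w_{X^n}(\delta_0)\geq\eta\}$ is contained in an increment event controlled by $(Con 2)$ --- after passing to its equivalent two-stopping-time formulation, since $\tau$ and $\tau'$ are separated by the \emph{random} lag $\tau'-\tau\in[0,\delta_0]$ --- whence $\PP(w_{X^n}(\delta_0)\geq\eta)\leq\varepsilon'+(\text{boundary term})$ for all $n\geq n_0$; choosing $\varepsilon'$ small enough, and treating the boundary term (which is of the same nature) with equal care, gives $\sup_{n\geq n_0}\PP(w_{X^n}(\delta_0)\geq 1/k)\leq\varepsilon/2^{k+2}$.

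Finally, the finitely many remaining indices $n<n_0$ are absorbed as follows: each $\mu_{X^n}$ is a Borel probability measure on the Polish space $\CR$, hence tight, and a finite family of tight measures is tight; after possibly shrinking $\delta_0$ and enlarging $R$, the two displayed estimates hold uniformly over all $n\in\NN$, and setting $\delta_k$ equal to this common value concludes the verification and, via Prokhorov's theorem, the proof. The step I expect to be the main obstacle is the passage described in the previous paragraph: $(Con 2)$ literally controls only $\PP(|X^n_{\beta+\delta}-X^n_\beta|\geq\eta)$ for a single discrete-valued stopping time $\beta$ with $\beta+\delta_0\leq T$ and a \emph{deterministic} shift $\delta\in[0,\delta_0]$, so turning it into a bound on the genuine oscillation of $X^n$ over a window requires the Aldous-type equivalence with the two-stopping-time condition (in which the lag $\tau'-\tau$ is random) together with the careful treatment of the boundary layer near $t=T$ forced by the constraint $\beta+\delta_0\leq T$.
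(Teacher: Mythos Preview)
The paper does not actually prove this lemma: it is stated as the classical Aldous tightness criterion, attributed to \cite{billingsley2013convergence}, and then \emph{applied} (in Theorems~\ref{thmtight} and~\ref{thmtight1}) rather than proved. So there is no in-paper proof to compare against; your sketch follows the standard route via Prokhorov's theorem and Arzel\`a--Ascoli, and the reduction to a uniform bound on $\PP(w_{X^n}(\delta)\geq\eta)$, together with the disposal of the finitely many indices $n<n_0$ by inner regularity on the Polish space $\CR$, is correct.

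There is, however, one genuine slip in your stopping-time construction. Your $\tau$ is defined as the first time $t$ at which the \emph{forward} oscillation over $[t,(t+\delta_0)\wedge T]$ exceeds $\eta/2$. Deciding whether $\tau\leq t$ requires knowledge of $X^n$ on $(t,t+\delta_0]$, so $\tau$ is \emph{not} a stopping time with respect to the natural filtration $\sigma(X^n_s:s\leq t)$; continuity of paths does not rescue this. Since $(Con\,2)$ is formulated for (discrete-valued) stopping times of that filtration, your $\tau$ cannot be fed into it directly. The standard remedy is to use genuinely adapted successive exit times, e.g.\ $\tau_0=0$, $\tau_{k+1}=\inf\{t>\tau_k:\,|X^n_t-X^n_{\tau_k}|\geq\eta/4\}\wedge T$, and to argue that on $\{w_{X^n}(\delta)\geq\eta\}$ several consecutive $\tau_k$'s must fall within $\delta$ of one another; a union bound over $k$ (with the number of relevant $k$ controlled separately) then reduces to events of the form $\{\tau_{k+1}-\tau_k\leq\delta\}$ which---after a discrete approximation of $\tau_k$---are exactly what $(Con\,2)$ handles. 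You correctly flag the deterministic-versus-random lag as a difficulty, but the forward-looking nature of your $\tau$ is the more immediate gap.
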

We can then prove:
\begin{theorem}\label{thmtight}
	Let $\TE$ satisfy Assumption \ref{asum} and $f_0\in \mc{P}_4(\RR^{2d})$. For any $N\geq 2$, we assume that $\{(X_t^{i,N},V_t^{i,N})_{t\in[0,T]}\}_{i=1}^N$ is the unique solution to the SD-PSO system \eqref{PSO} with $f_0^{\otimes N}$-distributed initial data $\{X_0^{i,N},V_0^{i,N}\}_{i=1}^N$. Then the sequence $\{\mc{L}(f^N)\}_{N\geq 2}$ is tight in $\mc{P}(\mc{P}(\CR\times\CR))$.
\end{theorem}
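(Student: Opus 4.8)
The plan is to deduce the tightness of $\{\mc{L}(f^N)\}_{N\ge2}$ on $\mc{P}(\mc{P}(\CR\times\CR))$ from the tightness of the law of a single particle, and to obtain the latter by verifying the Aldous criterion of Lemma~\ref{lemAldous}.

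\textbf{Step 1 (reduction to one particle).} Since the initial data $\{(X_0^{i,N},V_0^{i,N})\}_{i=1}^N$ are i.i.d.\ and the coefficients of \eqref{PSO} are invariant under relabelling of the particles, the processes $(X^{i,N},V^{i,N})$, $i=1,\dots,N$, are exchangeable for each fixed $N$. By the standard equivalence for exchangeable systems (see e.g.\ \cite{Snitzman}), $\{\mc{L}(f^N)\}_{N\ge2}$ is tight if and only if the family of one-particle laws $\{\mathrm{Law}(X^{1,N},V^{1,N})\}_{N\ge2}$ is tight on $\mc{P}(\CR\times\CR)$. As tightness on a product of Polish spaces follows from tightness of the two marginal families, it then suffices to show that $\{\mathrm{Law}(X^{1,N})\}_N$ and $\{\mathrm{Law}(V^{1,N})\}_N$ are tight on $\CR$, which I would do by checking conditions $(\mathrm{Con}\,1)$ and $(\mathrm{Con}\,2)$ of Lemma~\ref{lemAldous} for each.

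\textbf{Step 2 (the two conditions).} Condition $(\mathrm{Con}\,1)$ is immediate from Lemma~\ref{lem2mon}: the uniform-in-$N$ bound on $\sup_{t\in[0,T]}\EE[|X_t^{1,N}|^2+|V_t^{1,N}|^2]$ combined with Markov's inequality controls $\PP(|X_t^{1,N}|>R)$ and $\PP(|V_t^{1,N}|>R)$ uniformly in $N$, for every fixed $t$. For $(\mathrm{Con}\,2)$, fix a discrete stopping time $\beta$ with $0\le\beta+\delta\le T$ and $\delta\le\delta_0\le1$. For the position, $X_{\beta+\delta}^{1,N}-X_\beta^{1,N}=\int_\beta^{\beta+\delta}V_s^{1,N}\,ds$, so Cauchy--Schwarz and Lemma~\ref{lem2mon} give $\EE|X_{\beta+\delta}^{1,N}-X_\beta^{1,N}|^2\le K\delta^2$. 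For the velocity, integrate the second equation of \eqref{PSO} over $[\beta,\beta+\delta]$ and write the martingale part as $\int_0^T\mathbf 1_{[\beta,\beta+\delta]}(s)\,\sigma D(X^\alpha(\rho_s^N)-X_s^{1,N})\,dB_s^1$, so that It\^o's isometry applies on the deterministic interval $[0,T]$ with an adapted integrand ($\beta$ being a stopping time); together with Cauchy--Schwarz on the drift term and Lemma~\ref{lem2mon}, applied to $|X_s^{1,N}|^2$, $|V_s^{1,N}|^2$ and $|X^\alpha(\rho_s^N)|^2$, this yields $\EE|V_{\beta+\delta}^{1,N}-V_\beta^{1,N}|^2\le C\,(\delta+\delta^2)$ with $C$ depending only on $m,\gamma,\lambda,\sigma$ and $K$. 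A further application of Markov's inequality then makes $\PP(|X_{\beta+\delta}^{1,N}-X_\beta^{1,N}|\ge\eta)$ and $\PP(|V_{\beta+\delta}^{1,N}-V_\beta^{1,N}|\ge\eta)$ smaller than any prescribed $\varepsilon$ once $\delta_0$ is chosen small, uniformly in $N$ and $\beta$; this is $(\mathrm{Con}\,2)$.

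\textbf{Conclusion and main obstacle.} Lemma~\ref{lemAldous} then delivers tightness of $\{\mathrm{Law}(X^{1,N})\}_N$ and $\{\mathrm{Law}(V^{1,N})\}_N$ on $\CR$, hence of the joint law on $\CR\times\CR$, and Step~1 upgrades this to the asserted tightness of $\{\mc{L}(f^N)\}_{N\ge2}$. The point demanding the most care is $(\mathrm{Con}\,2)$ for the velocity: one must control the stochastic integral over the random interval $[\beta,\beta+\delta]$ --- handled by the indicator rewriting, which keeps the integrand adapted so that It\^o's isometry is legitimate --- and, above all, ensure that the constants produced by the drift $\lambda(X^\alpha(\rho_s^N)-X_s^{1,N})$ and the diffusion $\sigma D(X^\alpha(\rho_s^N)-X_s^{1,N})$ stay bounded as $N\to\infty$. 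This uniformity is exactly what Lemma~\ref{lem2mon} supplies, most notably through its bound on the regularized global best $X^\alpha(\rho_s^N)$, the only genuinely nonlocal object in the system.
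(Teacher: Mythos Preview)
Your approach is essentially the paper's: reduce to tightness of the one-particle law via exchangeability and \cite[Proposition 2.2(ii)]{Snitzman}, then verify the two Aldous conditions using Lemma~\ref{lem2mon} for $(\mathrm{Con}\,1)$ and direct increment estimates for $(\mathrm{Con}\,2)$. The split into $X$- and $V$-marginals (rather than the joint process) is a harmless cosmetic variation.

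One quantitative point deserves correction. Your claimed rates are too optimistic. For the position increment, Cauchy--Schwarz over the random interval $[\beta,\beta+\delta]$ gives $\EE|X_{\beta+\delta}^{1,N}-X_\beta^{1,N}|^2\le \delta\int_0^T\EE|V_s^{1,N}|^2\,ds\le C\delta$, not $K\delta^2$; the latter would require a bound on $\EE\sup_t|V_t|^2$, which Lemma~\ref{lem2mon} does not provide. For the velocity, after It\^o's isometry you face $\EE\int_\beta^{\beta+\delta}|X^\alpha(\rho_s^N)-X_s^{1,N}|^2\,ds$, and with only $\sup_t\EE[\,\cdot\,]$ bounds on a \emph{random} interval you cannot extract a full factor of $\delta$; the paper uses the \emph{fourth} moments from Lemma~\ref{lem2mon} together with Cauchy--Schwarz in time to obtain $C\delta^{1/2}$, so that $\EE|V_{\beta+\delta}^{1,N}-V_\beta^{1,N}|^2\le C(\delta^{1/2}+\delta)$. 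Since Aldous only requires the bound to vanish as $\delta\to0$, your conclusion stands, but the argument as written (invoking only second moments for the stochastic integral) does not actually deliver the stated $C(\delta+\delta^2)$; you should either invoke the fourth-moment part of Lemma~\ref{lem2mon} or adjust the exponent.
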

\begin{proof}
	According to Proposition 2.2 $(ii)$ in \cite[Proposition 2.2 (ii)]{Snitzman}, we only need to prove that $\{\mc{L}((X^{1,N},V^{1,N}))\}_{N\geq 2}$ is tight in $\mc{P}(\CR\times\CR)$ because of the exchangeability of the particle system. It is sufficient to justify conditions $(Con 1)$ and $(Con 2)$ in Lemma \ref{lemAldous}.
	
	$\bullet$ \textit{Step 1: Checking $(Con 1)$. }  
	For any $\varepsilon>0$, there exists a compact subset $U_\varepsilon:=\{(x,v):~|x|^2+|v|^2\leq \frac{K}{\varepsilon}\}$ such that by Markov's inequality
	\begin{align*}
	&\mc{L}((X_t^{1,N},V_t^{1,N}))~\big((U_\varepsilon)^c\big)=\PP\left(|X_t^{1,N}|^2+|V_t^{1,N}|^2> \frac{K}{\varepsilon}\right)\\
	&\qquad \leq \frac{\varepsilon\EE[|X_t^{1,N}|^2+|V_t^{1,N}|^2]}{K}\leq\varepsilon,\quad \forall ~N\geq 2\,,
	\end{align*}
	where we have used Lemma \ref{lem2mon} in the last inequality.
	This means that for each $t\in[0,T]$, the sequence $\{\mc{L}((X_t^{1,N},V_t^{1,N}))\}_{N\geq 2}$ is tight, which verifies  condition $(Con 1)$ in Lemma \ref{lemAldous}.
	
	$\bullet$ \textit{Step 2: Checking $(Con 2)$. }   Let $\beta$ be a $\sigma((X_s^{1,N},V_s^{1,N});s\in[0,T])$-stopping time with discrete values such that $\beta+\delta_0\leq T$.  It is easy to see that
	\begin{equation}
		\EE [|X_{\beta+\delta}^{1,N}-X_{\beta}^{1,N}|^2]\leq \delta\int_0^{T} \EE[|V_s^{1,N}|^2]ds\leq C\delta\,,
	\end{equation}
	where $C>0$ is independent of $N$ by \eqref{est-lem2mon}. Furthermore, 
	\begin{align*}
		V_{\beta+\delta}^{1,N}-V_{\beta}^{1,N}&=-\frac{\gamma}{m}\int_\beta^{\beta+\delta}V_{s}^{1,N}ds+\frac{\lambda}{m}\int_{\beta}^{\beta+\delta }(X^\alpha(\rho_s^N)-X_s^{1,N})ds\nn\\
		&\quad+\frac{\sigma}{m}\int_{\beta}^{\beta+\delta}D(X^\alpha(\rho_s^N)-X_s^{1,N})dB_s^1\,.
	\end{align*}
	Notice that
	\begin{equation}
	\label{es1}
	\begin{split}
		&\EE\left[\left|\int_{\beta}^{\beta+\delta }(X^\alpha(\rho_s^N)-X_s^{1,N})ds\right|^2\right]\leq \delta \int_0^T\EE\left[|X^\alpha(\rho_s^N)-X_s^{1,N}|^2\right]ds\\
		&\quad \leq  2 \delta T \left(\sup\limits_{t\in[0,T]}\EE\left[|X_t^{1,N}|^2\right] +\sup\limits_{t\in[0,T]}\EE\left[|X^\alpha(\rho^N_t)|^2\right]\right)\leq 2TK\delta\,,
	\end{split}
	\end{equation}
	where we have used Lemma \ref{lem2mon} in the last inequality.  Similarly we have
	\begin{align}\label{es1'}
	\EE\left[\left|\int_{\beta}^{\beta+\delta } V_s^{1,N}ds\right|^2\right] \leq TK\delta\,.
	\end{align}
	Further we apply It\^{o}'s isometry 
	\begin{align}\label{es2}
		&\EE\left[\left|\int_{\beta}^{\beta+\delta}D (X^\alpha(\rho_s^N)-X_s^{1,N})dB_s^1\right|^2\right]
		= \EE\left[\int_{\beta}^{\beta+\delta}|X^\alpha(\rho_s^N)-X_s^{1,N}|^2ds\right]
		\notag\\
		&\quad\leq
		 \delta^{\frac{1}{2}}  \EE \left[\left(\int_{0}^{T}|X^\alpha(\rho_s^N)-X_s^{1,N}|^4ds\right)^{\frac{1}{2}}\right]\nn\\
		&\quad \leq   \delta^{\frac{1}{2}}
		\left(\int_0^T\EE[|X^\alpha(\rho_s^N)-X_s^{1,N}|^4] ds\right)^\frac{1}{2}
	\leq
		 \delta^{\frac{1}{2}} T^{\frac{1}{2}}(8K)^{\frac{1}{2}}\,.
	\end{align} 
	Combining estimates \eqref{es1}--\eqref{es2} one has
	\begin{equation}
	\EE[|V_{\beta+\delta}^{1,N}-V_{\beta}^{1,N}|^2]\leq C(\gamma.\lambda,m,\sigma,T,K)\left( \delta^{\frac{1}{2}} + \delta\right)  \,.
	\end{equation}
	Hence, for any $\varepsilon>0$, $\eta>0$, there exists some $\delta_0>0$ such that for all $N\geq 2$ it holds that
	\begin{align}
	&\sup_{\delta\in[0,\delta_0]}\PP\left(|X_{\beta+\delta}^{1,N}-X_{\beta}^{1,N}|^2+|V_{\beta+\delta}^{1,N}-V_{\beta}^{1,N}|^2\geq \eta\right)\nn\\
	&\qquad\leq \sup_{\delta\in[0,\delta_0]}\frac{\EE\left[|X_{\beta+\delta}^{1,N}-X_{\beta}^{1,N}|^2+|V_{\beta+\delta}^{1,N}-V_{\beta}^{1,N}|^2\right]}{\eta}\leq \varepsilon\,.
	\end{align}
	 Hence $(Con 2)$ is verified.
\end{proof}
For any $\varphi\in \mc{C}_c^2(\R^d\times \R^d)$,   define a functional on $\mc{P}(\CR\times \CR)$ as follows
{\small \begin{align*}
	&F_{\varphi}(f)\nn
	:=\la \varphi(\textbf{x}_t,\textbf{v}_t),f(d\textbf{x},d\textbf{v})\ra-\la \varphi(\textbf{x}_0,\textbf{v}_0),f(d\textbf{x},d\textbf{v}) \ra 
	+\!\!\int_0^t\la \textbf{v}_s\cdot\nabla_x\varphi,f(d\textbf{x},d\textbf{v})\ra ds
	\nn\\
	&\qquad -\frac{\gamma}{m}\int_0^t\la \textbf{v}_s\cdot\nabla_v\varphi,f(d\textbf{x},d\textbf{v})\ra ds+\frac{\lambda}{m}\int_0^t\la (\textbf{x}_s-X^\alpha(\rho_s))\cdot \nabla_v\varphi, f(d\textbf{x},d\textbf{v})\ra ds\notag\\
	&\qquad -\frac{\sigma^2}{2m^2}\int_0^t\sum_{k=1}^{d}\la (\textbf{x}_s-X^\alpha(\rho_s))_k^2\frac{\partial^2\varphi}{\partial v_k^2},f (d\textbf{x},d\textbf{v})\ra ds
	\notag\\
	&\quad =\la \varphi(x,v) ,f_t (dx,dv)\ra-\la \varphi(x,v),f_0(dx,dv) \ra +\int_0^t\la v\cdot\nabla_x\varphi,f_s(dx,dv)\ra ds
	\nn\\
	&\qquad -\frac{\gamma}{m}\int_0^t\la v\cdot\nabla_v\varphi,f_s(dx,dv)\ra ds+\frac{\lambda}{m}\int_0^t\la (x-X^\alpha(\rho_s))\cdot \nabla_v\varphi, f_s (dx,dv)\ra ds\notag\\
	&\qquad -\frac{\sigma^2}{2m^2}\int_0^t\sum_{k=1}^{d}\la (x-X^\alpha(\rho_s))_k^2\frac{\partial^2\varphi}{\partial v_k^2},f_s (dx,dv)\ra ds \,,
\end{align*}}
for all $f\in \mc{P}(\CR\times \CR)$ and $\textbf{x},\textbf{v}\in \CR$, where $\rho_t(x)=\int_{ \RR^d }f_t(x,dv)$. 

Then we have the following estimate by the reasoning in  \cite[Proposition 3.2]{huang2021mean1}.
\begin{lemma}\label{prop}
	Let $\TE$ satisfy Assumption \ref{asum} and $f_0\in \mc{P}_4(\RR^{2d})$. For any $N\geq 2$, assume that $\{(X_t^{i,N},V_t^{i,N})_{t\in[0,T]}\}_{i=1}^N$ is the unique solution to the SD-PSO system \eqref{PSO} with $f_0^{\otimes N}$-distributed initial data $\{(X_0^{i,N},V_0^{i,N})\}_{i=1}^N$. There exists a constant $C>0$ depending only on $\sigma,\gamma,\lambda,m,K,T$, and $\norm{\nabla\varphi}_\infty$ such that
	\begin{equation}
	\EE[|F_{\varphi}(f^N)|^2]\leq \frac{C}{N}\,,
	\end{equation}
	where $f^N=\frac{1}{N}\sum_{i=1}^N\delta_{(X^{i,N},V^{i,N})}$ is the empirical measure.
\end{lemma}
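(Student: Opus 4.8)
The plan is to identify $F_\varphi(f^N)$ with a normalized sum of $N$ mutually orthogonal martingales and then bound its second moment by It\^o's isometry together with the uniform moment estimate of Lemma~\ref{lem2mon}, as in \cite[Proposition 3.2]{huang2021mean1}.

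First I would apply It\^o's formula to $t\mapsto\varphi(X_t^{i,N},V_t^{i,N})$ for each $i=1,\dots,N$, using the dynamics \eqref{PSO}. Since $dX_t^{i,N}=V_t^{i,N}\,dt$ carries no quadratic variation, the only It\^o correction comes from the velocity equation, whose diffusion matrix is $\frac{\sigma}{m}D(X^\alpha(\rho_s^N)-X_s^{i,N})$ and therefore contributes $\frac{\sigma^2}{2m^2}\sum_{k=1}^d\partial_{v_k}^2\varphi\,(X^\alpha(\rho_s^N)-X_s^{i,N})_k^2$. Summing over $i$, dividing by $N$, and recalling that $\rho_s^N=\frac1N\sum_i\delta_{X_s^{i,N}}$, every deterministic term appearing in the definition of $F_\varphi$ is reproduced, up to sign, by the drift and It\^o-correction terms, and hence cancels; what remains is only the stochastic-integral part,
\begin{equation*}
F_\varphi(f^N)=\frac1N\sum_{i=1}^N M_t^i,\qquad M_t^i:=\frac{\sigma}{m}\int_0^t D\big(X^\alpha(\rho_s^N)-X_s^{i,N}\big)\nabla_v\varphi(X_s^{i,N},V_s^{i,N})\cdot dB_s^i .
\end{equation*}
Each $M^i$ is a square-integrable martingale on $[0,T]$: its integrand is adapted and, by $\varphi\in\mc{C}_c^2$ and Lemma~\ref{lem2mon}, square-integrable in $(s,\omega)$.

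The crucial structural point is that for $i\ne j$ the Brownian motions $B^i$ and $B^j$ are independent, so $M^i$ and $M^j$ are orthogonal martingales and $\EE[M_t^iM_t^j]=0$. Therefore the double sum collapses,
\begin{equation*}
\EE\big[|F_\varphi(f^N)|^2\big]=\frac1{N^2}\sum_{i,j=1}^N\EE[M_t^iM_t^j]=\frac1{N^2}\sum_{i=1}^N\EE\big[|M_t^i|^2\big],
\end{equation*}
and it remains to control each term. By It\^o's isometry,
\begin{equation*}
\EE\big[|M_t^i|^2\big]=\frac{\sigma^2}{m^2}\sum_{k=1}^d\EE\int_0^t(\partial_{v_k}\varphi)^2\big(X^\alpha(\rho_s^N)-X_s^{i,N}\big)_k^2\,ds\le\frac{\sigma^2}{m^2}\|\nabla\varphi\|_\infty^2\int_0^T\EE\big[|X^\alpha(\rho_s^N)-X_s^{i,N}|^2\big]\,ds,
\end{equation*}
and the last integral is bounded by $2T\big(\sup_{s\le T}\EE|X_s^{i,N}|^2+\sup_{s\le T}\EE|X^\alpha(\rho_s^N)|^2\big)\le 4TK$ by \eqref{est-lem2mon}, uniformly in $i$ and $N$. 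Combining the two displays gives $\EE[|F_\varphi(f^N)|^2]\le\frac{C}{N}$ with $C=\frac{4TK\sigma^2}{m^2}\|\nabla\varphi\|_\infty^2$, depending only on $\sigma,\gamma,\lambda,m,K,T$ and $\|\nabla\varphi\|_\infty$ (the coefficients $\gamma,\lambda$ entering only through the drift terms that were cancelled, in the general non-PSO-constrained setting).

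I expect the one genuinely delicate step to be the bookkeeping in the cancellation: one must verify that the $\gamma$-, $\lambda$- and $\sigma^2$-contributions of It\^o's formula match the corresponding terms in the definition of $F_\varphi$ with the correct signs, so that nothing beyond the martingale part survives. Once that is in place, the estimate is a routine consequence of the orthogonality of independent stochastic integrals — which is precisely what turns an a priori $O(1)$ double sum into a single sum of $N$ uniformly bounded terms, and thus yields the $N^{-1}$ decay — together with the moment bounds of Lemma~\ref{lem2mon}.
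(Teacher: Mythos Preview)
Your proposal is correct and follows exactly the argument the paper invokes by citing \cite[Proposition 3.2]{huang2021mean1}: apply It\^o's formula to $\varphi(X_t^{i,N},V_t^{i,N})$, average over $i$ so that the drift and It\^o-correction terms reproduce (and cancel) the deterministic pieces of $F_\varphi(f^N)$, leaving only the average of $N$ stochastic integrals driven by independent Brownian motions; then use orthogonality plus It\^o's isometry and Lemma~\ref{lem2mon} to get the $N^{-1}$ bound. Your caution about the sign bookkeeping is well placed (indeed the displayed definition of $F_\varphi$ in the paper has a couple of sign slips relative to the weak form of \eqref{PDEi}), but once the signs are aligned with It\^o's formula the cancellation is exact and your estimate goes through as written.
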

By Skorokhod's lemma (see \cite[Theorem 6.7 on page
70]{billingsley2013convergence}),   using Theorem \ref{thmtight} we may find a common probability space $(\Omega,\mc{F},\PP)$ on which the processes $\{f^N\}_{N\in\mathbb N}$ converge to some process $f$ as a random variable valued in $\mc{P}(\CR\times\CR)$ almost surely. In particular, we have that for all $t\in [0,T]$ and $\phi\in C_b(\RR^d\times \RR^d)$,
\begin{equation}\label{310}
\lim_{N\rightarrow \infty} |\la \phi,f_t^N-f_t\ra| + \left|X^\alpha(\rho^N_t)-X^\alpha(\rho_t)\right|= 0,\quad \text{a.s.}
\end{equation}
Indeed, according to Assumption \ref{asum}, one has  $xe^{-\alpha \TE(x)}, e^{-\alpha \TE(x)} \in \mc{C}_b(
\R^d)$, which gives
{\small \begin{align*}
\lim_{N\rightarrow \infty} X^\alpha(\rho_t^N) =	\lim_{N\rightarrow \infty} \frac{\la xe^{-\alpha\TE(x)},\rho_t^N(dx)\ra}{\la e^{-\alpha\TE(x)}, \rho_t^N(dx)\ra}=\frac{\la xe^{-\alpha\TE(x)},\rho_t(dx)\ra}{\la e^{-\alpha\TE(x)}, \rho_t(dx)\ra}=X^\alpha(\rho_t)\quad \text{a.s.} 
\end{align*}}
\begin{lemma}{\cite[Lemma 3.3]{carrillo2018analytical}}\label{lemXa}
	Let $\TE$ satisfy Assumption \ref{asum}  and $\mu\in \mc{P}_2(\R^d)$. Then  it holds that
	\begin{equation}
	|X^\alpha(\mu)|^2 \leq b_1+b_2\int_{\RR^d}|x|^2\mu(dx)\,,
	\end{equation}
	where $b_1$ and $b_2$ depends only on $M$, $C_u$, and $C_l$.
\end{lemma}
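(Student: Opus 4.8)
The plan is to bound $|X^\alpha(\mu)|^2$ by the second moment of the tilted probability measure whose barycenter is exactly $X^\alpha(\mu)$, and then to estimate that second moment using the quadratic growth conditions in Assumption~\ref{asum}. Write $\hat\mu_\alpha(dx):=\omega_\alpha^{\TE}(x)\,\mu(dx)\big/\int_{\RR^d}\omega_\alpha^{\TE}\,d\mu$, so that $X^\alpha(\mu)=\int_{\RR^d}x\,\hat\mu_\alpha(dx)$; since $|\cdot|^2$ is convex, Jensen's inequality gives $|X^\alpha(\mu)|^2\le\int_{\RR^d}|x|^2\hat\mu_\alpha(dx)=\big(\int_{\RR^d}|x|^2 e^{-\alpha\TE(x)}\mu(dx)\big)\big/\big(\int_{\RR^d}e^{-\alpha\TE(x)}\mu(dx)\big)$. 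Replacing $\TE$ by $\TE-\underline{\TE}$ leaves $X^\alpha(\mu)$ unchanged (the factor $e^{\alpha\underline{\TE}}$ cancels between numerator and denominator), so I may assume $\underline{\TE}=0$; then $\TE\ge0$, Assumption~\ref{asum}(2) reads $\TE(x)\le C_u(1+|x|^2)$ for all $x$, and Assumption~\ref{asum}(3) reads $\TE(x)\ge C_l|x|^2$ for $|x|\ge M$.

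Next I would use the elementary pointwise bound $|x|^2\le M^2+\TE(x)/C_l$, valid for every $x\in\RR^d$: on $\{|x|\le M\}$ the first term already dominates, on $\{|x|>M\}$ the second does, and both terms are nonnegative. Integrating this against the measure $e^{-\alpha\TE(x)}\mu(dx)$ and dividing by $\int_{\RR^d}e^{-\alpha\TE}\,d\mu>0$ yields $|X^\alpha(\mu)|^2\le M^2+\frac{1}{C_l}\,\big(\int_{\RR^d}\TE(x)e^{-\alpha\TE(x)}\mu(dx)\big)\big/\big(\int_{\RR^d}e^{-\alpha\TE(x)}\mu(dx)\big)$, so everything reduces to controlling the $\TE$-mean under the tilted measure $\hat\mu_\alpha$.

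This last estimate is the only real obstacle: the naive bound $\TE e^{-\alpha\TE}\le\TE$ only gives $\int\TE e^{-\alpha\TE}\,d\mu\le\int\TE\,d\mu$, which is useless after dividing, since the partition function $\int e^{-\alpha\TE}\,d\mu$ may be arbitrarily small (e.g.\ for $\mu$ concentrated far from the minimizers of $\TE$). The resolution is that $e^{-\alpha\TE}$ is a decreasing function of $\TE$, so tilting by it cannot raise the $\TE$-average: by Chebyshev's correlation inequality for the nondecreasing map $t\mapsto t$ and the nonincreasing map $t\mapsto e^{-\alpha t}$ against the law of $\TE$ under $\mu$ — equivalently, because $\alpha\mapsto\log\int e^{-\alpha\TE}\,d\mu$ is convex, so its derivative $-\int\TE e^{-\alpha\TE}\,d\mu\big/\int e^{-\alpha\TE}\,d\mu$ is nondecreasing in $\alpha$ and hence no smaller than its value $-\int\TE\,d\mu$ at $\alpha=0$ — one obtains $\int_{\RR^d}\TE e^{-\alpha\TE}\,d\mu\le\big(\int_{\RR^d}\TE\,d\mu\big)\big(\int_{\RR^d}e^{-\alpha\TE}\,d\mu\big)$. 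Therefore the ratio in the previous display is at most $\int_{\RR^d}\TE(x)\mu(dx)$, which by Assumption~\ref{asum}(2) is at most $C_u\big(1+\int_{\RR^d}|x|^2\mu(dx)\big)$; all integrals here are finite because $\mu\in\mathcal{P}_2(\RR^d)$. Combining the three displays gives $|X^\alpha(\mu)|^2\le\big(M^2+\frac{C_u}{C_l}\big)+\frac{C_u}{C_l}\int_{\RR^d}|x|^2\mu(dx)$, i.e.\ the claim with $b_1=M^2+C_u/C_l$ and $b_2=C_u/C_l$, which indeed depend only on $M$, $C_u$ and $C_l$.
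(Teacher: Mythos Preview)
Your proof is correct. The paper does not give its own proof of this lemma but simply cites \cite[Lemma~3.3]{carrillo2018analytical}, so there is nothing to compare against in the present text; the argument in the original reference follows essentially the same skeleton (Jensen to pass to the tilted second moment, then the pointwise bound $|x|^2\le M^2+\TE(x)/C_l$), and your use of Chebyshev's correlation inequality --- equivalently, monotonicity of $\alpha\mapsto -\partial_\alpha\log\int e^{-\alpha\TE}\,d\mu$ --- is a clean way to control the tilted $\TE$-mean and obtain $\alpha$-independent constants $b_1=M^2+C_u/C_l$, $b_2=C_u/C_l$.
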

For each $A>0$, it follows from \eqref{310} that
\begin{align*}
&\EE\left[\iint_{\R^{2d}}((|x|^4+|v|^4)\wedge A)f_t(dx,dv)\right]\\
&\quad=\EE\left[\lim_{N\rightarrow \infty}\iint_{\R^{2d}}((|x|^4+|v|^4)\wedge A)f_t^N(dx,dv)\right]\\
&\quad\leq \lim_{N\rightarrow \infty}\frac{\sum_{i=1}^{N}\EE[|X_t^{i,N}|^4+|V_t^{i,N}|^4]}{N}\leq K\,,
\end{align*}
where we have used Lemma \ref{lem2mon}. Letting $A\rightarrow \infty$, we have 
\begin{align}
\sup_{t\in[0,T]}\EE\left[\iint_{\R^{2d}}(|x|^4+|v|^4)f_t(dx,dv)\right]\leq K. \label{bd-mu}
\end{align}
Then Lemma \ref{lemXa} implies that
\begin{equation}\label{Xa1}
\EE[|X^\alpha(\rho_t)|^4]<\infty\,,
\end{equation}
for all $t\in[0,T]$.
Furthermore, it holds that 
\begin{equation}\label{convergence}
\lim_{N\rightarrow \infty} 
\EE\left[
\left|\la \phi,f_t^N-f_t\ra \right|^2 + |X^\alpha(\rho^N_t)-X^\alpha(\rho_t)|^2
\right]=0,
\end{equation}
which follows directly from the pointwise convergences of $\la \phi,f_t^N-f_t\ra$ and $X^\alpha(\rho^N_t)-X^\alpha(\rho_t)$, and the uniform estimate \eqref{est-lem2mon} in Lemma \ref{lem2mon} and \eqref{Xa1}. 

We can now prove the main result in Theorem \ref{thmmean}:
\begin{proof}{\textbf{(Theorem \ref{thmmean})}}
	Suppose the $\mc{P}(\CR\times\CR)$-valued random variable $f$ is the limit of a subsequence of the empirical measure $f^N=\frac{1}{N}\sum_{i=1}^N\delta_{(X^{i,N},V^{i,N})}$. W.l.o.g., Denote the subsequence by itself. We may continue to work on the above common probability space $(\Omega,\mc{F},\PP)$ by Skorokhod's lemma where the convergence is holding almost surely (see \eqref{310} for instance). We may first check that  $f_t$ is a.s. continuous in time. Indeed for any $\phi\in\mc{C}_b(\R^{2d})$ and $t_n\to t$ we may apply dominated convergence theorem
	\begin{align*}
	&\iint_{\CR\times\CR}\phi(\textbf{x}_{t_n},\textbf{v}_{t_n})f(d\textbf{x},d\textbf{v})\nn\\
	&\quad\to \iint_{\CR\times \CR}\phi(\textbf{x}_{t},\textbf{v}_{t}) f(d\textbf{x},d\textbf{v})\quad \text{a.s.,}
	\end{align*}
	which gives
	\begin{equation*}
	\iint_{\R^{2d} }\phi(x,v)f_{t_n}(d{x},dv)\to \iint_{\R^{2d}}\phi(x,v)f_t(d{x},dv)\quad\text{a.s.}
	\end{equation*}
	For $\varphi\in \mc{C}_c^2(\RR^{2d})$, using the convergence result in \eqref{convergence} one has
	\begin{equation}\label{est1}
	\lim_{N\rightarrow \infty} \EE\left[|(\la \varphi ,f_t^N\ra-\la \varphi,f_0^N \ra) -(\la \varphi ,f_t \ra-\la \varphi,f_0\ra)|\right]=0\,.
	\end{equation}
	Further we notice that
      \begin{align*}
		&\left|\int_0^t\la (x-X^\alpha(\rho_s^N))\cdot \nabla_v\varphi, f_s^N \ra ds -\int_0^t\la (x-X^\alpha(\rho_s))\cdot \nabla_v\varphi, f_s \ra ds\right|\notag\\
		&\quad \leq \int_0^t\left|\la (x-X^\alpha(\rho_s^N))\cdot \nabla_v\varphi, f_s^N -f_s \ra \right| ds \nn\\
		&\qquad +\int_0^t\left|\la (X^\alpha(\rho_s)-X^\alpha(\rho_s^N))\cdot \nabla_v\varphi , f_s \ra\right| ds \notag\\
		&\quad =:\int_0^t|I_1^N(s)|ds+\int_0^t|I_2^N(s)|ds\,.
		\end{align*}
	One computes
	\begin{align*}
	&\EE[|I_1^N(s)|]\nn\\
	&\quad \leq \EE[|\la x\cdot \nabla_v \varphi,f_s^N -f_s \ra|]+\EE[|X^\alpha(\rho_s^N) \cdot \la\nabla_v \varphi,f_s^N-f_s \ra|]\notag\\
	&\quad \leq \EE[|\la x\cdot \nabla_v \varphi,f_s^N -f_s \ra|]+K^{\frac{1}{2}}(\EE[| \la\nabla_v \varphi,f_s^N -f_s \ra|^2])^\frac{1}{2}\,,
	\end{align*}
	where we have used Lemma \ref{lem2mon} in the second inequality.
	Since $\varphi$ has a compact support, applying \eqref{convergence} leads to 
	\begin{equation}
	\lim\limits_{N\to \infty }\EE\left[|I_1^N(s)|\right]=0\,.
	\end{equation}
	Moreover, the uniform boundedness of $\EE\left[|I_1^N(s)|\right]$ follows directly from \eqref{bd-mu}, \eqref{Xa1}, and the estimates in Lemma \ref{lem2mon}, which by the dominated convergence theorem implies
	\begin{equation}\label{I1}
	\lim\limits_{N\to \infty }\int _0^t\EE[|I_1^N(s)|]ds=0\,.
	\end{equation}
	As for  $I_2^N$, we know that
	\begin{equation}
	\left|\la (X^\alpha(\rho_s)-X^\alpha(\rho_s^N))\cdot \nabla_v\varphi, f_s \ra\right|\leq \norm{\nabla_v \varphi}_\infty  |X^\alpha(\rho_s)-X^\alpha(\rho_s^N)|\,.
	\end{equation}
	Hence by \eqref{convergence} it yields that
	\begin{equation}
	\lim\limits_{N\to \infty }\EE[|I_2^N(s)|]=0\,.
	\end{equation}
	Again by the dominated convergence theorem, we have
	\begin{equation}
	\lim\limits_{N\to \infty }\int _0^t\EE[|I_2^N(s)|]ds=0\,.
	\end{equation}
	This combined with \eqref{I1} leads to
{\small 	\begin{equation}
\begin{split}\label{est2}
	&\lim_{N\rightarrow \infty}	\EE\left[\left|\int_0^t\la (x-X^\alpha(\rho_s^N))\cdot \nabla_v\varphi, f_s^N \ra ds\right.\right.\\
	&\qquad\qquad\left.\left. -\int_0^t\la (x-X^\alpha(\rho_s))\cdot \nabla_v\varphi, f_s \ra ds\right|\right]=0\,.
	\end{split}\end{equation}}
	Similarly we split the error
	{\small \begin{align*}
		&\left|\int_0^t\la (x-X^\alpha(\rho_s^N))_k^2\frac{\partial^2}{\partial{v_k}^2}\varphi,f_s^N \ra ds -\int_0^t\la (x-X^\alpha(\rho_s))_k^2\frac{\partial^2}{\partial{v_k}^2}\varphi,f_s \ra ds \right|\notag\\
		&\quad \leq \left|\int_0^t\la (x-X^\alpha(\rho_s^N))_k^2\frac{\partial^2}{\partial{v_k}^2}\varphi,f_s^N-f_s\ra ds\right|\nn\\
		&\qquad +\left|\int_0^t\la ((x-X^\alpha(\rho_s^N))_k^2-(x-X^\alpha(\rho_s))_k^2)\frac{\partial^2}{\partial{v_k}^2}\varphi,f_s \ra ds \right| \notag\\
		&\quad =: \int_0^t |I_3^N(s)|ds+\int_0^t |I_4^N(s)|ds\,.
		\end{align*}}
	Following the same argument as for $I_1^N$ and $I_2^N$, one has
	\begin{equation}
	\lim\limits_{N\to \infty }\int_0^t \EE[|I_3^N(s)|]ds=0 \mbox{ and } \lim\limits_{N\to \infty }\int_0^t \EE[|I_4^N(s)|]ds=0\,.
	\end{equation}
	This implies that
	{\small 	\begin{equation}
	\label{est3}
	\begin{split}
		\lim_{N\rightarrow \infty}\EE\bigg[\bigg|&\int_0^t \sum_{k=1}^d\la (x-X^\alpha(\rho_s^N))_k^2\frac{\partial^2}{\partial{v_k}^2}\varphi(x),f_s^N \ra ds\\
		&-\int_0^t\sum_{k=1}^d\la (x-X^\alpha(\rho_s))_k^2\frac{\partial^2}{\partial{v_k}^2}\varphi(x),f_s \ra ds\bigg|\bigg]=0\,.
		\end{split}\end{equation}}
	Moreover it is easy to get
	\begin{align}\label{est4}
	\lim_{N\rightarrow \infty}\EE\left[\left|\int_0^t \la v \cdot \nabla_x\varphi, f_s^N\ra ds -\int_0^t \la v\cdot \nabla_x\varphi, f_s\ra ds\right|\right]=0
	\end{align}
	and
		\begin{align}\label{est5}
	\lim_{N\rightarrow \infty}\EE\left[\left|\int_0^t \la v \cdot \nabla_v\varphi, f_s^N\ra ds -\int_0^t \la v\cdot \nabla_v\varphi, f_s\ra ds\right|\right]=0\,.
	\end{align}	
	Collecting estimates \eqref{est1}, \eqref{est2}, \eqref{est3}, \eqref{est4} and \eqref{est5} we have
	\begin{equation}
	\lim_{N\rightarrow \infty}\EE[|F_{\varphi}(f^N) -F_{\varphi}(f)|]=0 \,.
	\end{equation}
	Then we have
	\begin{align*}
	\EE[|F_{\varphi}(f)|]&\leq \EE[|F_{\varphi}(f^N) -F_{\varphi}(f)|]+\EE[|F_{\varphi}(f^N)|]\nn\\
	&\leq \EE[|F_{\varphi}(f^N) -F_{\varphi}(f)|]+\frac{C}{\sqrt{N}}\to 0\quad \mbox{as }N\to\infty\,,
	\end{align*}
	where we have used Lemma \ref{prop} in the last inequality.
	This implies that
	\begin{equation}
	F_{\varphi}(f)=0\quad \text{a.s.}
	\end{equation}
	In other words, it holds that
{\small 	\begin{align*}
	&\la \varphi(x,v) ,f_t (dx,dv)\ra-\la \varphi(x,v),f_0(dx,dv) \ra +\int_0^t\la v\cdot\nabla_x\varphi,f_s(dx,dv)\ra ds
	\nn\\
	&\quad-\frac{\gamma}{m}\int_0^t\la v\cdot\nabla_v\varphi,f_s(dx,dv)\ra ds+\frac{\lambda}{m}\int_0^t\la (x-X^\alpha(\rho_s))\cdot \nabla_v\varphi, f_s (dx,dv)\ra ds\notag\\
	&\quad-\frac{\sigma^2}{2m^2}\int_0^t\sum_{k=1}^{d}\la (x-X^\alpha(\rho_s))_k^2\frac{\partial^2\varphi}{\partial v_k^2},f_s (dx,dv)\ra ds=0\,,
	\end{align*}}
	for any $\varphi\in \mc{C}_c^2(\RR^{2d})$. 
	
	Until now we have proved that $f$ a.s. is a weak solution to PDE \eqref{PDEi}. Finally combining the uniqueness of weak solution to \eqref{PDEi}  (see for example in \cite{BCC}) and the arbitrariness of the subsequence of $\{f^N\}_{N\geq 2}$,  the (deterministic) weak solution $f$ to PDE \eqref{PDEi} must be the limit of the whole sequence $\{f^N\}_{N\geq 2}$. We completed the proof.
	\end{proof}

\subsection{Regularized PSO dynamic with memory and local best}
Next, we consider the second order system of SDEs corresponding to the \emph{regularized SD-PSO method with local best} 
\coloredeq{eq:psocir}{
\begin{split}
dX^{i,N}_t &= V^{i,N}_t dt,\\
d\P_t^{i,N} &= \nu \left(X^{i,N}_t-\P^{i,N}_t\right)S^\beta(X^{i,N}_t,\P^{i,N}_t)dt,\\
\iw dV^{i,N}_t &= -\gamma V^{i,N}_tdt +\lambda_1\left(\P_t^{i,N}-X^{i,N}_t\right)dt \\
&\quad +\lambda_2\left(Y^\alpha(\bar \rho_t^N)-X^{i,N}_t\right)dt\\
&\quad +\sigma_1 D(\P_t^{i,N}-X^{i,N}_t)dB^{1,i}_t\\
&\quad +\sigma_2 D(Y^\alpha(\bar \rho_t^N)-X^{i,N}_t)dB^{2,i}_t,
\end{split}} 
where, similarly to the previous case, we  introduced the following \emph{regularized global best}  
\coloredeq{ValphaE2}{
{Y}^{\alpha}(\bar \rho_t^{N}):=\frac{\int_{\RR^d}y\omega_{\alpha}^{\mc{E}}(y)\bar\rho_t^{N}(dy)}{\int_{\RR^d}\omega_{\alpha}^{\mc{E}}(y)\rho_t^{N}(dy)},
}
with  the empirical measure $\bar \rho^N:=\frac{1}{N}\sum_{i=1}^{N}\delta_{Y^{i,N}}$, which is the $Y$-marginal of
$f^N=\frac{1}{N}\sum_{i=1}^{N}\delta_{(X^{i,N},Y^{i,N},V^{i,N})}$.

Furthermore, in the right hand side of \eqref{eq:psocir} we have replaced the $\sign(x)$ function with a \emph{sigmoid}, for example the hyperbolic tangent $\tanh(\beta x)$ for $\beta\gg 1$, and consider 
\be
S^\beta(x,y)=1+\tanh\left(\beta(\TE(y)-\TE(x))\right).
\ee 
Thanks to these regularizations, also the stochastic particle system \eqref{eq:psocir} has locally Lipschitz coefficients and therefore it admits strong solutions and pathwise uniqueness holds for any finite time $T>0$. Even in this case, the system of SDEs \eqref{eq:psocir} is generalized without restricting the search parameters to the PSO constraint \eqref{eq:param}.  


In order to derive a mean-field description of system \eqref{eq:psocir}, we can follow the same arguments as in Section \ref{meanPSO1}. The only difference is that we have an additional variable $Y$, which can be treated easily because of the regularity of the function $S^\beta$.
 Namely we can prove the tightness of the empirical measures $\{f^N\}_{N\geq2}$ by verifying the Aldous criteria ( Lemma \ref{lemAldous}). Then there exists a subsequence of $\{f^N\}_{N\geq 2}$ converging in law to a deterministic measure $f\in\mc{P}(\CR\times\CR\times\CR)$, which is the unique weak solution to the following \emph{mean-field PSO system with local best} characterized by the nonlinear Vlasov-Fokker-Planck equation
 \coloredeq{PDEii}{
\begin{split}
&\partial_t f + v \cdot \nabla_x f + \nabla_y \cdot \left(\nu(x-y)S^\beta(x,y)f\right)= 
\\
&\qquad \nabla_v\cdot\left(\frac{\gamma}{\iw} v f + \frac{\lambda_1}{\iw} (x - y)f
+ \frac{\lambda_2}{\iw} (x - Y^\alpha(\bar\rho) )f\right.\\
&\qquad \left.+\left(\frac{\sigma_2^2}{2{\iw^2}}D(x - Y^\alpha(\bar\rho))^2+\frac{\sigma_1^2}{2{\iw}^2}D(x - y)^2\right)\nabla_v f\right)\,,
\end{split}}
where $\bar\rho(t,y)=\int_{ \RR^{2d}}f(t,dx,y,dv)$.

This can be summarized in the following theorem
\begin{theo-frmd}
	Let $\TE$ satisfy Assumption \ref{asum} and $f_0\in \mc{P}_4(\RR^{3d})$. For any $N\geq 2$, we assume that $\{(X_t^{i,N},Y_t^{i,N},V_t^{i,N})_{t\in[0,T]}\}_{i=1}^N$ is the unique solution to the SD-PSO system \eqref{eq:psocir} with $f_0^{\otimes N}$-distributed initial data $\{X_0^{i,N},Y_0^{i,N},V_0^{i,N}\}_{i=1}^N$.  Then the limit (denoted by $f$) of the sequence of the empirical measure $f^N=\frac{1}{N}\sum_{i=1}^N\delta_{(X^{i,N},Y^{i,N},V^{i,N})}$ exists. Moreover, $f$ is the unique weak solution to MF-PSO equation \eqref{PDEii}.
\end{theo-frmd}

\section{Zero-inertia limit and consensus-based optimization}
In this section we consider the asymptotic behavior of the previous Vlasov-Fokker-Planck equations modelling the PSO dynamic in the small inertia limit, i.e. $m\to 0$. We will derive the corresponding macroscopic equations which permit to recover and generalize the recently introduced consensus-based optimization (CBO) methods \cite{carrillo2019consensus}. We refer to \cite{DLP,choi2020quantified} for a theoretical background concerning the related problem of the overdamped limit of nonlinear Vlasov-Fokker-Planck systems.
 

\subsection{The case without memory effects}
Let us first consider the simplified setting in absence of local best. 
Now we write down the  so called McKean-Vlasov process \cite{mckean1966class} underlying PSO equation \eqref{PDEi}, which is of the form\footnote{We used the superscript $m$ to emphasize its dependence on the inertia coefficient $m$.}
{ \begin{subequations}\label{MVeq}
	\begin{eqnarray}
	d\OX_t^m&=& \OV_t^mdt\,, \label{eqX}
	\\
	\nn
	d\OV_t^m&=&-\frac{\gamma}{m}\OV_t^mdt+\frac{\lambda}{m}(X^{\alpha}(\rho_t^m)-\OX_t^m)dt\\[-.2cm]
	\label{eqV} 
	\\[-.2cm]\nn
	&& +\frac{\sigma}{m}D(X^{\alpha}(\rho_t^m)-\OX_t^m)dB_t\,, 
	\end{eqnarray}
\end{subequations}}
where 
\begin{equation}\label{14}
X^{\alpha}(\rho_t^m)=\frac{\int_{\RR^d}x\omega_{\alpha}^{\mc{E}}(x)\rho^m(t,dx)}{\int_{\RR^d}\omega_{\alpha}^{\mc{E}}(x)\rho^m(t,dx)}, \quad \rho^m(t,x)=\int_{\RR^d}f^m(t,x,dv)\,,
\end{equation}
and the initial data $(\OX_0,\OV_0)$ is the same as in \eqref{PSO}. Here  $f^m(t,x,v)$ is  the distribution of $(\OX_t^m,\OV_t^m)$ at time $t$ , which makes the set of equations \eqref{MVeq} nonlinear.  A direct application of the It\^{o}-Doeblin formula yields that the law $f_t^m:=f^m(t,\cdot,\cdot)$ at time $t$ is a weak solution to \eqref{PDEi}.

To illustrate the limiting procedure, let us observe that for $m \to 0^+$ from the equation \eqref{eqV} we formally have 
\[
\OV^0_tdt = \lambda\left(X^{\alpha}(\rho_t^0)-\OX^0_t\right)dt+\sigma D(X^{\alpha}(\rho_t^0)-\OX^0_t)dB_t,
\] 
where we used the fact that $\gamma = 1 - m \to 1$. Substituting the above identity into the  equation \eqref{eqX} and omitting the superscripts gives the first order CBO system \cite{carrillo2019consensus}
\coloredeq{MVCBO}{
d\OX_t=\lambda(X^{\alpha}(\rho_t)-\OX_t)dt +\sigma D(X^{\alpha}(\rho_t)-\OX_t)dB_t\,.
}
Therefore, the CBO models based on a multiplicative noise can be understood as reduced order approximations of SD-PSO dynamics. 

\subsubsection{Formal derivation in the mean-field case}
In the sequel we will develop these arguments in the case of the nonlinear Vlasov-Fokker-Planck equation \eqref{PDEi} describing the evolution of the distribution of \eqref{MVeq}. 
We re-write the scaled Vlasov-Fokker-Planck system in the form
\begin{equation}\label{PDEis}
\begin{split}
\partial_t f + v \cdot \nabla_x f + 
\frac1{m}\nabla_v\cdot\left(m  v f + \lambda (X^{\alpha}(\rho)-x)f\right)= L_{m}(f)
\end{split}
\end{equation}
where we used the fact that $\gamma=1-m$ and define
\[
\begin{split}
L_{m}(f)&=\frac1{m}\nabla_v\cdot\left(v f + \frac{\sigma^2}{2m} D(x - X^{\alpha}(\rho) )^2\nabla_v f\right)\\
&= \frac1{m} \sum_{j=1}^d \frac{\sigma^2}{2} (x_j - X^{\alpha}_j(\rho))^2 \frac{\partial}{\partial v_j}\left(\frac{2 f v_j}{\sigma^2(x_j - X^{\alpha}_j(\rho))^2} + \frac1{m}\frac{\partial f}{\partial v_j}\right).
\end{split}
\]
{Note that the last equality is a direct consequence of identity \eqref{eq:newid}}.
Let us now introduce the local Maxwellian with unitary mass and zero momentum
\[
\begin{split}
{\mathcal M}_m(x,v,t)&= \prod_{j=1}^d M_{m}(x_j,v_j,t), \\
M_{m}(x_j,v_j,t) &= \frac{m^{1/2}}{\pi^{1/2}\sigma |x_j-X^\alpha_j(\rho)|} 
\exp\left\{-\frac{m v_j^2}{\sigma^2(x_j-X^\alpha_j(\rho))^2}\right\},
\end{split}
\]
then we have
\[
L_{m}(f) = \frac1{m^2 }\sum_{j=1}^d \frac{\sigma^2}{2} (x_j - X^{\alpha}_j(\rho))^2 \frac{\partial}{\partial v_j}\left(f\frac{\partial}{\partial v_j}\log\left(\frac{f}{M_{m}(x_j,v_j,t)}\right)\right).
\]
Therefore $L_{m}(f)$ is of order $1/m^2 $ and we can write for small values of $m \ll 1$
\begin{equation}
f(x,v,t)=\rho(x,t){\mathcal M}_m(x,v,t).
\label{eq:maxw1}
\end{equation}
{Let us now integrate equation \eqref{PDEis} with respect to $v$, and multiply the same equation by $v$ and ingrate again with respect to $v$,
we get}
\[
\begin{split}
\frac{\partial \rho}{\partial t} &+ \nabla_x \cdot (\rho u) = 0\\
\\[-.5cm] \nn
\frac{\partial \rho u}{\partial t} &+ \int_{\RR^d} v\left(v\cdot \nabla_x f\right)\,dv = -\frac{\gamma}{m} \rho u + \frac1{m} \lambda (X^{\alpha}(\rho)-x) \rho
\end{split}
\]
where
\[
\rho u = \int_{\RR^d} f(x,v,t) v\,dv.
\]
{Now assuming \eqref{eq:maxw1} we can compute for
$m \ll 1$ the $j$-th component of the second term in the right hand side of
last equation as}
\[{\small
\begin{split}
\int_{\RR^d} v_j\left(v\cdot \nabla_x \left(\rho(x,t){\mathcal M}_m(x,v,t)\right) \right)\,dv &=  \sum_{j=1}^d \frac{\partial}{\partial x_j} \left(\rho(x,t) \int_{\RR^d} v_j (v_j {\mathcal M}_m(x,v,t))\,dv\right)\\
&=  \frac{\partial}{\partial x_j} \left(\rho(x,t) \int_{\RR} v^2_j {M}_m(x_j,v_j,t)\,dv_j\right)\\
&= \frac{\sigma^2}{2m}\frac{\partial}{\partial x_j} \left(\rho(x,t) (x_j-X^\alpha_j(\rho))^2\right)
\end{split}}
\]
which provides the \emph{macroscopic PSO system without local best}
\coloredeq{eq:macro}{
\begin{split}
\frac{\partial \rho}{\partial t} &+ \nabla_x \cdot (\rho u) = 0,\\
\\[-.5cm]
\frac{\partial (\rho u)_j}{\partial t} &+ \frac{\sigma^2}{2m} \frac{\partial}{\partial x_j} \left(\rho(x,t) (x_j-X^\alpha_j(\rho))^2\right) =\\
&= \displaystyle -\frac{1-m}{m} (\rho u)_j + \frac1{m} \lambda (X_j^{\alpha}(\rho)-x_j) \rho.
\end{split}}
\\
Formally, as $m\to 0^+$, from the second equation in \eqref{eq:macro} we get 
\[
(\rho u)_j = \lambda (X_j^{\alpha}(\rho)-x_j) \rho -\frac{\sigma^2}{2} \frac{\partial}{\partial x_j} \left(\rho(x,t) (x_j-X^\alpha_j(\rho))^2\right),
\]
which substituted in the first equation yields the \emph{mean-field CBO system} \cite{carrillo2019consensus}
\coloredeq{eq:CBOp}{
\frac{\partial \rho}{\partial t} + \nabla_x \cdot \lambda (X^{\alpha}(\rho)-x) \rho = \frac{\sigma^2}{2}\sum_{j=1}^d \frac{\partial^2}{\partial x^2_j} \left(\rho(x,t) (x_j-X^\alpha_j(\rho))^2\right).
}
Therefore, in the small inertia limit we expect the macroscopic density in the PSO system \eqref{PDEi} to be well approximated by the solution of the CBO equation \eqref{eq:CBOp}. We emphasize that system \eqref{eq:macro} represents a novel mean-field optimization model with an intermediate level of description between the mean-field PSO system \eqref{PDEis} and the mean-field CBO system \eqref{eq:CBOp}.

\subsubsection{Rigorous derivation}
\label{seczero}
In this section, we present a rigorous derivation of the zero-inertia limit \cite{cipriani2021zero}. More precisely we prove that as $m\to 0^+$, the processes $\{\OX^m\}$ satisfying the SDEs \eqref{MVeq}  converge weakly to the solution  $\OX$ to the SDE \eqref{MVCBO} in the continuous path space $\mc{C}([0,T];\RR^d)$, and a convergence rate is obtained.  The main theorem can be stated as below:
\begin{theo-frmd}\label{thmzero-limit} 
	Let Assumption \ref{asum} hold and $(X_t^m,V_t^m)_{t\in[0,T]}$ satisfy the system \eqref{MVeq}. Then as $m\rightarrow 0^+$, the sequence of  stochastic processes $\{\OX^m\}_{0< m\leq \frac{1}{2}}$  converge weakly to $\overline X$, which is the unique solution to the following SDE:
	\begin{equation}\label{MVCBO1}
	\begin{split}
	\OX_t= &\ \OX_0
	+\lambda\int_0^t(X^{\alpha}(\rho_s)-\OX_s)ds \\
	&+\sigma\int_0^t D(X^{\alpha}(\rho_s)-\OX_s)dB_s\,.
	\end{split}
	\end{equation}
	Moreover it holds that
	\begin{equation}\label{est-convegence-m}
	\sup_{t\in[0,T]}\EE[|\OX^m_t-\OX_t|^2]\leq C \, m\,,
	\end{equation}
	where the constant $C$ depends only on $\EE[|\OX_0|^4]$, $\EE[|\OV_0|^4]$, $M$, $C_u$, $C_l$, $\lambda$, $\sigma$, $d$, and $T$.
\end{theo-frmd}
\begin{remark}
	It follows from the definition of Wasserstein distance that
	\begin{equation}
	\sup_{t\in[0,T]}W_2^2(\rho^m_t,\rho_t)\leq  \sup_{t\in[0,T]}\EE[|\OX^m_t-\OX_t|^2]\leq C\, m \,,
	\end{equation}
	which in a way is consistent with the result obtained in \cite[Theorem 1.3]{choi2020quantified}, where the authors obtained a quantified overdamped limit  (with the same rate $m$) of the singular Vlasov-Poisson-Fokker-Planck system to the aggregation-diffusion equation. 
\end{remark}
The following theorem gives the well-posedness of the mean-field PSO dynamic  \eqref{MVeq} whose proof is  analogous to \cite[Theorem 2.3]{huang2021mean} or \cite[Theorem 3.1]{carrillo2018analytical}, and thus omitted.
\begin{theorem}\label{thm-huang2021mean}
	Let Assumption \ref{asum} hold.  If $(\OX^m_0,\OV^m_0)=(\OX_0,\OV_0)$ is distributed according to $f_0$ with $f_0\in\mc{P}_4(\RR^{2d})$, then for each $T>0$and $m\in(0,1]$, the nonlinear SDE \eqref{MVeq} admits a unique strong solution up to time $T$ with the initial data $(\OX^m_0,\OV^m_0)$ and it holds further that
	\begin{equation}\label{secmen}
	\sup\limits_{t\in[0,T]}\EE\left[|\OX_t^m|^4+|\OV_t^m|^4\right]\leq e^{CT} \cdot \EE\left[ |\OX_0|^4+|\OV_0|^4\right]\,,
	\end{equation}
	where $C$ depends only on $\lambda,m,\sigma$, $M$, $C_u$, and $C_l$.
\end{theorem}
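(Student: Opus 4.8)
The approach is the standard one for McKean--Vlasov equations with nonlocal nonlinearity (as in \cite[Theorem 2.3]{huang2021mean} and \cite[Theorem 3.1]{carrillo2018analytical}), exploiting that the nonlinearity in \eqref{MVeq} enters only through the $\RR^d$-valued curve $t\mapsto X^{\alpha}(\rho_t^m)$. The plan is: (i) derive the \emph{a priori} fourth-moment bound \eqref{secmen}; (ii) construct a strong solution on a short interval by a fixed-point/Picard iteration; (iii) extend it to $[0,T]$ using (i). For (i), assuming a solution exists on $[0,T]$, I would apply It\^o's formula (with the usual localization) to $|\OX_t^m|^4+|\OV_t^m|^4$, handling the $\OX$-part via $d\OX_t^m=\OV_t^m\,dt$ and the $\OV$-part via its explicit drift and diffusion; the only term carrying the law, $X^{\alpha}(\rho_t^m)$, is a deterministic function of $t$ whose size is controlled by a moment of $\OX_t^m$ through Lemma \ref{lemXa}, e.g.\ $|X^{\alpha}(\rho_t^m)|^4\le 2b_1^2+2b_2^2\,\EE[|\OX_t^m|^4]$. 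After Young's inequality this yields $\tfrac{d}{dt}g(t)\le C(1+g(t))$ with $g(t):=\EE[|\OX_t^m|^4+|\OV_t^m|^4]$ and $C=C(\lambda,\sigma,m,M,C_u,C_l)$, and Gr\"onwall's lemma gives \eqref{secmen}.

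For (ii), given a continuous $\RR^d$-valued curve $u=(u_t)_{t\in[0,T]}$, replacing $X^{\alpha}(\rho_t^m)$ by $u_t$ in \eqref{MVeq} turns it into a classical \emph{linear} SDE in $(\OX,\OV)$ with affine, globally Lipschitz coefficients, hence with a unique strong solution $(\OX^u,\OV^u)$; I then set $\mathcal T(u)_t:=X^{\alpha}\bigl(\mathrm{Law}(\OX_t^u)\bigr)$, so that a fixed point of $\mathcal T$ is exactly a solution of \eqref{MVeq}. Iterating $\mathcal T$ from the constant curve $u^0\equiv X^{\alpha}(\rho_0)$, the a priori computation of (i) --- which needs only the \emph{growth} bound of Lemma \ref{lemXa}, not the stability one --- keeps all iterates within a fixed set of laws with uniformly bounded fourth moment. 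For two inputs $u,\tilde u$, stability of linear SDEs with respect to their coefficients gives $\sup_{t\le T_0}\EE[|\OX_t^u-\OX_t^{\tilde u}|^2]\le C\,T_0\sup_{t\le T_0}|u_t-\tilde u_t|^2$, and combined with the Lipschitz dependence of $X^{\alpha}$ on its argument in $2$-Wasserstein distance (uniform on that set of laws) this produces $\|\mathcal T(u)-\mathcal T(\tilde u)\|_{C([0,T_0])}\le C\,T_0^{1/2}\|u-\tilde u\|_{C([0,T_0])}$, a contraction for $T_0$ small depending only on $\lambda,\sigma,m$ and the moment bounds; since $T_0$ does not shrink along the iteration, one concatenates to reach $[0,T]$, \eqref{secmen} ruling out blow-up at the junctions.

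I expect the main obstacle to be the quantitative stability of the weighted mean $\mu\mapsto X^{\alpha}(\mu)$ with respect to $W_2$ --- the genuinely nonlocal, nonlinear ingredient on which the contraction in (ii) rests. One writes $X^{\alpha}(\mu)-X^{\alpha}(\nu)$ as a combination of a numerator difference $\int x\,\omega_{\alpha}^{\TE}\,d(\mu-\nu)$ and a denominator difference $\int\omega_{\alpha}^{\TE}\,d(\mu-\nu)$; the numerators are handled with the local-Lipschitz bound on $\TE$ in Assumption \ref{asum}(1) together with the moment control, while the crucial point is a \emph{uniform lower bound} $\int_{\RR^d}e^{-\alpha\TE}\,d\mu\ge c>0$ along the iteration, which follows from the quadratic lower growth of $\TE$ in Assumption \ref{asum}(3) and the uniform fourth-moment estimate (a measure with bounded fourth moment cannot place too much mass where $\TE$ is large). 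Once this estimate is available --- it is the stability lemma for $X^{\alpha}$ from \cite{carrillo2018analytical} --- the remaining steps, namely the local contraction, the gluing, and the propagation of the fourth moment, are routine and parallel the references cited in the statement.
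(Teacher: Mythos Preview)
Your proposal is correct and is precisely the argument the paper has in mind: the paper does not actually prove this theorem but simply states that the proof ``is analogous to \cite[Theorem 2.3]{huang2021mean} or \cite[Theorem 3.1]{carrillo2018analytical}, and thus omitted,'' and your outline is a faithful expansion of those references, using the same two auxiliary results the paper records as Lemma~\ref{lemXa} (growth of $X^\alpha$) and Lemma~\ref{lemsta} (stability of $X^\alpha$ in $W_2$). The only cosmetic difference is that the cited works (and the paper, when it later invokes the same argument for the limiting CBO equation) phrase step (ii) via the Leray--Schauder fixed point theorem on the map $u\mapsto X^\alpha(\mathrm{Law}(\OX^u))$ rather than a Picard contraction; your contraction version is an equally valid (and arguably cleaner) route once the uniform fourth-moment bound and the $W_2$-Lipschitz estimate for $X^\alpha$ are in hand.
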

Solving \eqref{eqV} for $\OV_t^m$ gives  
\begin{align*}
\OV_t^m&=e^{-\frac{\gamma}{m}t}\OV_0+\frac{\lambda}{m}\int_0^te^{-\frac{\gamma}{m}(t-s)}(X^{\alpha}(\rho_s^m)-\OX_s^m)ds\\
&\quad +\frac{\sigma}{m}\int_0^te^{-\frac{\gamma}{m}(t-s)}D(X^{\alpha}(\rho_s^m)-\OX_s^m)dB_s\,,
\end{align*}
which implies that
\begin{equation}
\label{onlyX}
\begin{split}
\OX_t^m&=\OX_0+\int_0^t\OV_\tau d\tau=\OX_0+\int_0^t  e^{-\frac{\gamma}{m}\tau}\OV_0d\tau\\
&\quad +\frac{\lambda}{m}\int_0^t\int_0^\tau e^{-\frac{\gamma}{m}(\tau-s)}(X^{\alpha}(\rho_s^m)-\OX_s^m)dsd\tau\\
& \quad +\frac{\sigma}{m}\int_0^t\int_0^\tau e^{-\frac{\gamma}{m}(\tau-s)}D(X^{\alpha}(\rho_s^m)-\OX_s^m)dB_s d\tau\,.
\end{split}
\end{equation}
Then $\OX_t^m$ has the law $\rho^m_t$ for each $t\geq 0$. 

Each continuous stochastic process $\OX^m$ may be seen as a $\mc{C}([0,T];\RR^d)$-valued random function and it induces a probability measure (or law, denoted by $\rho^m$) on $\mc{C}([0,T];\RR^d)$. We shall use the weak convergence in the space of probability measures on $\mc{C}([0,T];\RR^d)$. In what follows,  we write $\OX^m \rightharpoonup \OX$ or $\rho^m \rightharpoonup \rho$ with $\rho$ being the law of $\OX$,  if $\left\{\rho^m\right\}_{m>0}$, as a sequence of probability measures,  converges weakly to $ \rho$, i.e.,
for each bounded continuous functional $\Phi$ on $\mc{C}([0,T];\RR^d)$ , there holds $\lim_{m\rightarrow 0^+}\EE\left[\Phi(\OX^m)\right]= \EE\left[\Phi(\OX)\right]$. The weak convergence $\OX^m \rightharpoonup \OX$ is stronger than and actually implies the convergence of $\{\rho^m_t \}_{m>0}$ to $ \rho_t$ with $\rho_t$ being the law of $\OX_t$ for each $t\geq 0$, while the converse need not hold. Moreover, due to the separability and completeness of the space $\mc{C}([0,T];\RR^d)$,  Prohorov's theorem implies that the relative compactness is equivalent to the tightness; see \cite{billingsley2013convergence} for more details.


\begin{theorem}\label{thmtight1}
	Let Assumption \ref{asum} hold and $(X_t^m,V_t^m)_{t\in[0,T]}$ satisfy the system \eqref{MVeq}. For each countable subsequence $\{m_k\}_{k\in\mathbb N} \subset [0,\frac{1}{2}]$ with $\lim_{k\rightarrow \infty}m_k=0$, the sequence of probability distributions $\{\rho^{m_k}\}_{k\in\mathbb N}$ of $\{\OX^{m_k}\}_{k\in\mathbb N}$  is tight.
\end{theorem}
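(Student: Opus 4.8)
The plan is to deduce tightness from the classical moment criterion for relative compactness in $\CR$ (see, e.g., \cite{billingsley2013convergence}): it suffices to check that the initial laws are tight — which is trivial here, since $\OX^m_0=\OX_0$ has, for \emph{every} $m$, the same law, namely the $x$-marginal of $f_0\in\mc P_4(\RR^{2d})$ — together with a modulus‑of‑continuity estimate of the form $\EE[|\OX^m_t-\OX^m_r|^4]\leq C\,|t-r|^2$ for all $0\leq r\leq t\leq T$, with a constant $C$ that does \emph{not} depend on $m\in(0,\tfrac12]$. Because such an estimate is uniform in $m$, it applies in particular along any sequence $m_k\to 0$, and the stated tightness of $\{\rho^{m_k}\}_k$ follows at once. (One could equivalently verify the Aldous conditions of Lemma \ref{lemAldous}; the analytic content is the same.)

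The first step is to upgrade Theorem \ref{thm-huang2021mean} to a \emph{uniform-in-$m$} fourth-moment bound. Starting from the representation \eqref{onlyX} of $\OX^m_t$, one applies Jensen's/Hölder's inequality to the drift double integral, the Burkholder--Davis--Gundy inequality to the stochastic double integral, Lemma \ref{lemXa} to bound $|X^\alpha(\rho^m_s)|$ by $(1+\EE[|\OX^m_s|^2])^{1/2}$, and closes with Grönwall's inequality, obtaining $\sup_{0<m\leq1/2}\sup_{t\in[0,T]}\EE[|\OX^m_t|^4+|X^\alpha(\rho^m_t)|^4]<\infty$; here one uses $\gamma=1-m\in[\tfrac12,1)$, so that the prefactors $m/\gamma,\lambda/\gamma,\sigma/\gamma$ appearing in \eqref{onlyX} are bounded uniformly in $m$. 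Writing $h^m(s):=X^\alpha(\rho^m_s)-\OX^m_s$, this yields $\sup_m\sup_{s\leq T}\EE[|h^m(s)|^4]<\infty$.

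The heart of the proof is the increment estimate, and the key point is that one cannot argue through $|\OX^m_t-\OX^m_r|\leq\int_r^t|\OV^m_\tau|\,d\tau$: the velocity has $\EE[|\OV^m_t|^2]$ of order $1/m$ (as the Maxwellian computation preceding \eqref{eq:macro} suggests), so this bound degenerates as $m\to0^+$. Instead one integrates the fast velocity out properly: by Fubini's theorem for the drift term and the stochastic Fubini theorem for the diffusion term, together with the identity $\int_s^t e^{-\frac{\gamma}{m}(\tau-s)}\,d\tau=\frac{m}{\gamma}\big(1-e^{-\frac{\gamma}{m}(t-s)}\big)$ — precisely the cancellation of the singular $1/m$ against the $O(m)$ mass of the damping kernel — the representation \eqref{onlyX} becomes
\[
\OX^m_t=\OX_0+\frac{m}{\gamma}\big(1-e^{-\frac{\gamma}{m}t}\big)\OV_0+\frac{\lambda}{\gamma}\int_0^t\big(1-e^{-\frac{\gamma}{m}(t-s)}\big)h^m(s)\,ds+\frac{\sigma}{\gamma}\int_0^t\big(1-e^{-\frac{\gamma}{m}(t-s)}\big)D(h^m(s))\,dB_s,
\]
in which every kernel is now bounded by $1$. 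For $r\leq t$ one then writes $\OX^m_t-\OX^m_r$ by splitting each term into a part over $[r,t]$ and a ``relaxation'' part over $[0,r]$ carrying the factor $e^{-\frac{\gamma}{m}(r-s)}\big(1-e^{-\frac{\gamma}{m}(t-r)}\big)$; estimating the $[r,t]$ parts by Hölder (drift) and Burkholder--Davis--Gundy (diffusion) via the uniform moment bound, and the relaxation parts likewise after pulling out $1-e^{-\frac{\gamma}{m}(t-r)}\leq\min\{1,\tfrac{\gamma}{m}(t-r)\}$ and using $\int_0^r e^{-\frac{\gamma}{m}(r-s)}\,ds\leq\tfrac{m}{\gamma}$, one arrives — after elementary manipulations of the type $\min\{(m/\gamma)^a,(\gamma/m)^a(t-r)^{2a}\}\leq(t-r)^a$ — at $\EE[|\OX^m_t-\OX^m_r|^4]\leq C|t-r|^2$ with $C$ independent of $m$.

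I expect the main obstacle to be exactly this singular-limit bookkeeping: the uniform moment estimate and the (stochastic) Fubini reorganization are what make the $1/m$ disappear, whereas everything else — the Hölder/BDG estimates and the verification of the moment criterion — is routine. With the uniform increment bound in hand, the moment criterion gives tightness of $\{\rho^{m_k}\}_k$ in $\mc P(\CR)$, and by Prohorov's theorem the relative compactness used in the sequel.
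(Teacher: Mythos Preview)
Your proposal is correct and follows essentially the same route as the paper: both arguments hinge on the stochastic Fubini reorganization of \eqref{onlyX} to obtain the representation with bounded kernels $1-e^{-\gamma(t-s)/m}$, then derive the uniform-in-$m$ fourth-moment bound via Lemma~\ref{lemXa} and Gr\"onwall, and finally control increments by the same splitting into a recent part on $[r,t]$ and a relaxation part on $[0,r]$ carrying the factor $e^{-\gamma(r-s)/m}(1-e^{-\gamma(t-r)/m})$. The only difference is cosmetic packaging of the tightness step --- the paper verifies the Aldous conditions of Lemma~\ref{lemAldous} (second-moment bounds at stopping times, obtaining $\EE[|\OX^m_{\beta+\delta}-\OX^m_\beta|^2]\leq C(\delta^{1/2}+\delta+\delta^2)$) rather than the Kolmogorov fourth-moment criterion you propose, exactly as you anticipated.
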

\begin{proof}
	By Lemma \ref{lemAldous}, it is sufficient to justify conditions $(Con 1)$ and $(Con 2)$ in Aldous tightness criteria .
	
	$\bullet$ \textit{Step 1: Checking $(Con 1)$. }  
	First, for $0< m\leq \frac{1}{2}$, recalling \eqref{onlyX}, we have by Fubini's theorem (see \cite[Theorem 4.33]{da2014stochastic} for the stochastic version)
	\begin{align}\label{Fubini}
	\OX_t^m&=\OX_0+\int_0^t  e^{-\frac{\gamma}{m}\tau}\OV_0d\tau
	+\frac{\lambda}{m}\int_0^t\int_0^\tau e^{-\frac{\gamma}{m}(\tau-s)}(X^{\alpha}(\rho_s^m)-\OX_s^m)dsd\tau\notag\\
	& \quad +\frac{\sigma}{m}\int_0^t\int_0^\tau e^{-\frac{\gamma}{m}(\tau-s)}D(X^{\alpha}(\rho_s^m)-\OX_s^m)dB_s d\tau \notag\\
	&=\OX_0+\int_0^t  e^{-\frac{\gamma}{m}\tau}\OV_0d\tau
	+\frac{\lambda}{m}\int_0^t\int_s^t e^{-\frac{\gamma}{m}(\tau-s)}d\tau (X^{\alpha}(\rho_s^m)-\OX_s^m)ds\notag\\
	& \quad +\frac{\sigma}{m}\int_0^t\int_s^t e^{-\frac{\gamma}{m}(\tau-s)}d\tau D(X^{\alpha}(\rho_s^m)-\OX_s^m)dB_s \notag\\
	&=\OX_0+\frac{m}{\gamma}(1-e^{-\frac{\gamma}{m}t})\OV_0
	+\frac{\lambda}{\gamma}\int_0^t(1-e^{-\frac{\gamma}{m}(t-s)}) (X^{\alpha}(\rho_s^m)-\OX_s^m)ds\notag\\[-.2cm] 
	\\[-.25cm]\nn
	& \quad +\frac{\sigma}{\gamma}\int_0^t(1-e^{-\frac{\gamma}{m}(t-s)}) D(X^{\alpha}(\rho_s^m)-\OX_s^m)dB_s\,.
	\end{align}
	Note here the assumption on $0< m\leq \frac{1}{2}$ ensures that $\gamma=1-m\in[\frac{1}{2},1)$, so $\frac{1}{\gamma}$ is well defined.
	It follows from H\"{o}lder's inequality that
		\begin{align}\label{Xes}
		|\OX_t^m|^4&\leq 64|\OX_0|^4+\frac{64m^4}{\gamma^4}|\OV_0|^4+\frac{64\lambda^4t^3}{\gamma^4}\int_0^t|X^{\alpha}(\rho_s^m)-\OX_s^m|^4ds \nn \\
		&\quad+\frac{64\sigma^4}{\gamma^4}\left|\int_0^{t}(1-e^{-\frac{\gamma}{m}(t-s)}) D(X^{\alpha}(\rho_s^m)-\OX_s^m)dB_s\right|^4,
		\end{align}
	where we have used the fact that for any sequence $\{a_i\}_{i=1}^n\geq 0$ and $p\geq 2$, there holds
	\begin{equation*}
	\left(\sum_{i=1}^{n}a_i\right)^p\leq n^{p-1}\sum_{i=1}^{n}a_i^p\,.
	\end{equation*}
	Using the moment inequality for stochastic integrals as in \cite[Theorem 7.1]{mao2007stochastic}  yields that
	\begin{align*}
	&\EE\left[\left|\int_0^{t}(1-e^{-\frac{\gamma}{m}(t-s)}) D(X^{\alpha}(\rho_s^m)-\OX_s^m)dB_s\right|^4\right]\nn\\
	&\quad \leq d^3\EE\left[\sum_{k=1}^{d}\left|\int_0^{t}(1-e^{-\frac{\gamma}{m}(t-s)}) (X^{\alpha}(\rho_s^m)-\OX_s^m)_kdB_s^ke_k\right|^4\right]\nn\\
	&\quad \leq 36d^3t\int_0^{t}\EE\left[  \sum_{k=1}^{d}|(X^{\alpha}(\rho_s^m)-\OX_s^m)_k|^4\right] ds\nn\\
	&\quad \leq 36d^3t\int_0^{t}\EE\left[  |X^{\alpha}(\rho_s^m)-\OX_s^m|^4\right] ds\,.
	\end{align*}
	Thus,
	\begin{align*}
	\EE[|\OX_t^m|^4]&\leq 64\EE[|\OX_0|^4]+\frac{64m^4}{\gamma^4}\EE[|\OV_0|^4]\\
	&\quad+\frac{64(\lambda^4t^3+36d^3t\sigma^4)}{\gamma^4}\int_0^t\EE[|X^{\alpha}(\rho_s^m)-\OX_s^m|^4]ds\,.
	\end{align*}
	Notice that
	\begin{equation}
	\label{Jensen}
	\begin{split}
	&\EE[|X^{\alpha}(\rho_t^m)-\OX_t^m|^4]\leq 8 |X^{\alpha}(\rho_t^m)|^4+8\EE[|\OX_t^m|^4]\\
	&\quad \leq 8(b_1+b_2\EE[|\OX_t^m|^2])^2+8\EE[|\OX_t^m|^4]\\
	&\quad \leq c_1+c_2\EE[|\OX_t^m|^4]
	\,,
	\end{split}
	\end{equation}
	where we have used Lemma \ref{lemXa}  in the second inequality, and $c_1,c_2$ depend only on $C_u$, $M$ and $C_l$. Thus we have
	\begin{align*}
	\EE[|\OX_t^m|^4]
	&\leq 64\EE[|\OX_0|^4]+\frac{64m^4}{\gamma^4}\EE[|\OV_0|^4]+c_3\nn\\
	&\quad+\frac{64c_2(\lambda^4t^3+36d^3t\sigma^4)}{\gamma^4}\int_0^t\EE[|\OX_s^m|^4]ds\,.
	\end{align*}
	Using Gronwall's inequality leads to
{\small 	\begin{equation}\label{esOX2}
\begin{split}
	\EE[|\OX_t^m|^4] \leq &\left(64\EE[|\OX_0|^4]+\frac{64m^4}{\gamma^4}\EE[|\OV_0|^4]+c_3\right)\cdot\\
	&\quad\cdot\exp\left(\frac{64c_2(\lambda^4T^3+36d^3T\sigma^4)}{\gamma^4}T\right),
	\end{split}
	\end{equation}}
for all $t\in[0,T]$.
	Recalling  $0\leq m\leq \frac{1}{2}$ and $\frac{1}{\gamma}=\frac{1}{1-m}\leq 2$, from estimate \eqref{esOX2} we obtain the boundedness:
	\begin{equation}\label{Sm}
	\EE[|\OX_t^m|^4] \leq C(\EE[|\OX_0|^4],\EE[|\OV_0|^4],M,C_u,C_l,\lambda,d,\sigma,T)\,.
	\end{equation}
	This yields that
	\begin{equation} \label{uniform-bd}
	\begin{split}
	&\sup_{m\in(0,1]} \sup_{t\in[0,T]}\EE[|\OX_t^m|^4] \\
	&\qquad \leq C(\EE[|\OX_0|^4],\EE[|\OV_0|^4],M,C_u,C_l,\lambda,\sigma,d,T)=:C_1
	\end{split}
	\end{equation}
	where the constant $C_1>0$ is independent of $m$. Therefore, for any $\varepsilon>0$, there exists a compact subset $K_\varepsilon:=\{x:~|x|^4\leq \frac{C_1}{\varepsilon}\}$ such that by Markov's inequality
	\begin{equation}
	\rho_t^m((K_\varepsilon)^c)=\PP(|X_t^m|^4> \frac{C_1}{\varepsilon})\leq \frac{\varepsilon\EE[|X_t^m|^4]}{C_1}\leq\varepsilon,\quad \forall ~0<m\leq 1\,.
	\end{equation}
	This means that for each $t\in[0,T]$, each countable subset of $\{\rho_t^m\}_{0<m\leq 1}$ is tight, which verifies  condition $(Con 1)$ in Lemma \ref{lemAldous}.
	\\ \\
	$\bullet$ \textit{Step 2: Checking $(Con 2)$. }   Let $\beta$ be a $\sigma(X^m_s;s\in[0,T])$-stopping time with discrete values such that $\beta+\delta_0\leq T$. Without any loss of generality, we may assume that the concerned countable subsequence $\{m_k\}_{k\in\mathbb N} \subset [0,1]$ satisfies $m_k\leq \frac{1}{2}$ for all $k\in\NN$; thus, we may just consider the case of $0<m\leq \frac{1}{2}$ which indicates $\frac{1}{2}\leq \gamma<1$. 
	Recall \eqref{onlyX} and compute
	\begin{equation*}
		\begin{split}
		\OX_{\beta+\delta}^m-\OX_{\beta}^m =&\int_\beta^{\beta+\delta}\OV_\tau d\tau = \int_\beta^{\beta+\delta}  e^{-\frac{\gamma}{m}\tau}\OV_0d\tau \\
		&+\frac{\lambda}{m}\int_\beta^{\beta+\delta}\int_0^\tau e^{-\frac{\gamma}{m}(\tau-s)}(X^{\alpha}(\rho_s^m)-\OX_s^m)dsd\tau
		\\
		&  +\frac{\sigma}{m}\int_\beta^{\beta+\delta}\int_0^\tau e^{-\frac{\gamma}{m}(\tau-s)}D(X^{\alpha}(\rho_s^m)-\OX_s^m)dB_s d\tau 		\end{split}
	\end{equation*}
	\begin{equation*}
		\begin{split}
		=&\int_\beta^{\beta+\delta}  e^{-\frac{\gamma}{m}\tau}\OV_0d\tau\\
		&+\frac{\lambda}{m}\int_0^\beta\int_\beta^{\beta+\delta} e^{-\frac{\gamma}{m}(\tau-s)}d\tau (X^{\alpha}(\rho_s^m)-\OX_s^m)ds\\
		&+\frac{\lambda}{m}\int_\beta^{\beta+\delta}\int_s^{\beta+\delta} e^{-\frac{\gamma}{m}(\tau-s)}d\tau (X^{\alpha}(\rho_s^m)-\OX_s^m)ds\\
		& +\frac{\sigma}{m}\int_0^\beta\int_\beta^{\beta+\delta} e^{-\frac{\gamma}{m}(\tau-s)}d\tau D(X^{\alpha}(\rho_s^m)-\OX_s^m)dB_s \\
		&+\frac{\sigma}{m}\int_\beta^{\beta+\delta}\int_s^{\beta+\delta} e^{-\frac{\gamma}{m}(\tau-s)}d\tau D(X^{\alpha}(\rho_s^m)-\OX_s^m)dB_s \,.
		\end{split}
	\end{equation*}
	Then it yields
	\begin{equation}\label{diff}
	\begin{split}
		& \OX_{\beta+\delta}^m-\OX_{\beta}^m	=\frac{m}{\gamma}(e^{-\frac{\gamma}{m}\beta}-e^{-\frac{\gamma}{m}(\beta+\delta)})\OV_0 \\
		&+\frac{\lambda}{\gamma}\int_0^\beta (e^{-\frac{\gamma}{m}(\beta-s)}-e^{-\frac{\gamma}{m}(\beta+\delta-s)}) (X^{\alpha}(\rho_s^m)-\OX_s^m)ds\\
		&+\frac{\lambda}{\gamma}\int_\beta^{\beta+\delta} (1-e^{-\frac{\gamma}{m}(\beta+\delta-s)}) (X^{\alpha}(\rho_s^m)-\OX_s^m)ds\\
		&+\frac{\sigma}{\gamma}\int_0^\beta (e^{-\frac{\gamma}{m}(\beta-s)}-e^{-\frac{\gamma}{m}(\beta+\delta-s)})D(X^{\alpha}(\rho_s^m)-\OX_s^m)dB_s\\
		&+\frac{\sigma}{\gamma}\int_\beta^{\beta+\delta} (1-e^{-\frac{\gamma}{m}(\beta+\delta-s)})D(X^{\alpha}(\rho_s^m)-\OX_s^m)dB_s\,.
		\end{split}
		\end{equation}	
	Note that there holds $|e^{-x}-e^{-y}|\leq |x-y|\wedge 1 $ for all $x,y\in[0,\infty)$. Basic computations further indicate that for each $q\geq 1$ and $\tau\in[0,T]$,
	\begin{align*}
	\int_0^{\tau} &
	\left| e^{-\frac{\gamma(\tau-s)}{m}} - e^{-\frac{\gamma(\tau+\delta-s)}{m}}   \right|^q \,ds
	\leq
	\int_0^{\tau}
	\left( e^{-\frac{\gamma(\tau-s)}{m}} - e^{-\frac{\gamma(\tau+\delta-s)}{m}}  \right) \,ds\nn\\
	&=
	\frac{m}{\gamma} \left( 1- e^{-\frac{\gamma \delta}{m}}  \right)
	-\frac{m}{\gamma} \left( e^{-\frac{\gamma \tau}{m}} -e^{-\frac{\gamma (\tau+\delta)}{m}} \right)
	\\
	& \leq  \frac{m}{\gamma} \cdot \frac{\gamma\delta}{m} 
	=\delta,
	\end{align*}
	and in particular, 
	$$
	\int_{\beta}^{\beta+\delta} \left(1-e^{-\frac{\gamma(\beta+\delta-s)}{m}}\right)^q ds \leq \int_{\beta}^{\beta+\delta} 1 \,ds =\delta.
	$$
	Then, it is obvious that
	{\small \begin{align*}
		\EE\left[\left|\frac{m}{\gamma}(e^{-\frac{\gamma}{m}\beta}-e^{-\frac{\gamma}{m}(\beta+\delta)})\OV_0\right|^2\right]
		\leq \frac{m^2}{\gamma^2}\cdot \frac{\gamma^2\delta^2}{m^2}
		\left(\EE[|\OV_0|^4]\right)^{\frac{1}{2}}
		\leq \delta^2\left(\EE[|\OV_0|^4]\right)^{\frac{1}{2}}.
		\end{align*}}
	Next, it follows that
	{\small \begin{align*}
		&\EE\left[\left| \int_0^\beta (e^{-\frac{\gamma}{m}(\beta-s)}-e^{-\frac{\gamma}{m}(\beta+\delta-s)}) (X^{\alpha}(\rho_s^m)-\OX_s^m)ds\right|^2\right]
		\notag\\
		&\quad \leq\   
		 \EE\left[\int_0^{\beta} |e^{-\frac{\gamma}{m}(\beta-s)}-e^{-\frac{\gamma}{m}(\beta+\delta-s)}|^2 ds\cdot \int_0^{\beta} |X^{\alpha}(\rho_s^m)-\OX_s^m|^2ds\right]\notag\\
		&\quad\leq \ 
		\delta \cdot T \sup_{s\in[0,T]}  \left(\EE\left[    |X^{\alpha}(\rho_s^m)-\OX_s^m|^4 \right] \right)^{1/2},
		\notag
		\end{align*}}
	and analogously,
	{\small \begin{align*}
		&\EE\left[\left| \int_\beta^{\beta+\delta} (1-e^{-\frac{\gamma}{m}(\beta+\delta-s)}) (X^{\alpha}(\rho_s^m)-\OX_s^m)ds \right|^2\right] \nn\\
		&\quad\leq \ 
		\EE\left[
		\int_{\beta}^{\beta+\delta} \left(1-e^{-\frac{\gamma(\beta+\delta-s)}{m}}\right)^2 ds
		\cdot \int_\beta^{\beta+\delta} |X^{\alpha}(\rho_s^m)-\OX_s^m|^2ds
		\right]
		\\
		&\quad\leq \    \delta\cdot
		\EE\left[\int_\beta^{\beta+\delta} |X^{\alpha}(\rho_s^m)-\OX_s^m|^2ds\right] \\
		 &\quad\leq \  \delta \cdot T \sup_{s\in[0,T]}  \left(\EE\left[    |X^{\alpha}(\rho_s^m)-\OX_s^m|^4 \right]\right)^{1/2}\,.
		\end{align*}}
	Further, applying It\^{o}'s isometry  gives
{\small 	\begin{align}
	&\EE\left[\left|\int_0^\beta (e^{-\frac{\gamma}{m}(\beta-s)}-e^{-\frac{\gamma}{m}(\beta+\delta-s)})D(X^{\alpha}(\rho_s^m)-\OX_s^m)d B_s\right|^2\right]\notag\\
	&\quad\leq \  d  \EE\left[\int_0^{\beta} |e^{-\frac{\gamma}{m}(\beta-s)}-e^{-\frac{\gamma}{m}(\beta+\delta-s)}|^2|X^{\alpha}(\rho_s^m)-\OX_s^m|^2ds\right]
	\notag\\
	&\quad\leq \  d\left(\EE\left[\int_0^{\beta} |e^{-\frac{\gamma}{m}(\beta-s)}-e^{-\frac{\gamma}{m}(\beta+\delta-s)}|^4 ds \right] \right)^{1/2}\cdot\nn\\
&\quad \ \cdot \left(\EE\left[\int_0^{\beta} |X^{\alpha}(\rho_s^m)-\OX_s^m|^4ds\right]\right)^{1/2}\notag\\
	&\quad\leq \  d  \delta^{1/2} \left(T \sup_{s\in[0,T]}  \EE\left[    |X^{\alpha}(\rho_s^m)-\OX_s^m|^4 \right]\right)^{1/2}, \nonumber
	\end{align}}
	and analogously,
	\begin{align}
	&\EE\left[\left|\int_\beta^{\beta+\delta} (1-e^{-\frac{\gamma}{m}(\beta+\delta-s)}) D(X^{\alpha}(\rho_s^m)-\OX_s^m)dB_s\right|^2\right]
	\notag\\
	&\quad\leq \  d  \delta^{1/2} \left(T \sup_{s\in[0,T]}  \EE\left[    |X^{\alpha}(\rho_s^m)-\OX_s^m|^4 \right]\right)^{1/2} . \nonumber
	\end{align}
	Therefore, summing up the above estimates and recalling  $0<m\leq m_0=\frac{1}{2}$, $\frac{1}{\gamma}\leq 2$, and the relations \eqref{Jensen} and \eqref{uniform-bd}, we arrive at
{\small 	\begin{align}
	&\EE[|\OX_{\beta+\delta}^m-\OX_{\beta}^m|^2] \nn\\
	&\quad\leq \ \frac{5}{\gamma^2} 
	\delta^2
	(\EE[|\OV_0|^4])^{\frac{1}{2}} 
	+\frac{10}{\gamma^2} 
	\left(\lambda^2  \delta T + \sigma^2d \left(\delta T\right)^{1/2} \right)
	\sup_{s\in[0,T]}  \left(\EE\left[    |X^{\alpha}(\rho_s^m)-\OX_s^m|^4 \right] \right)^{1/2}
	\notag\\
	&\quad\leq \ 
	C\left(\EE[|\OX_0|^4],\EE[|\OV_0|^4],M,C_u,C_l,\lambda,\sigma,d,T\right)
	\left(\delta^{\frac{1}{2}} + \delta+ \delta^2 \right). \nonumber
	\end{align}}
	Hence, for any $\varepsilon>0$, $\eta>0$, there exists some $\delta_0>0$ such that for all $0<m\leq \frac{1}{2}$ it holds that
	\begin{equation}
	\sup_{\delta\in[0,\delta_0]}\PP(|\OX_{\beta+\delta}^m-\OX_{\beta}^m|^2\geq \eta)\leq \sup_{\delta\in[0,\delta_0]}\frac{\EE[|\OX_{\beta+\delta}^m-\OX_{\beta}^m|^2]}{\eta}\leq \varepsilon\,.
	\end{equation}
	This justifies condition $Con 2$ in Lemma \ref{lemAldous}.  
\end{proof}
Next we shall identify the limit process, before which we recall a lemma on the stability estimate of the nonlinear term $X^\alpha(\rho)$.
\begin{lemma}{\cite[Lemma 3.2]{carrillo2018analytical}}\label{lemsta} 
	Assume that $\rho,\hat \rho\in\mc{P}_4(\RR^{d})$. Then the following stability estimate holds
	\begin{equation}\label{lemstaeq}
	|X^\alpha(\rho)-X^\alpha(\widehat \rho)|\leq CW_2(\rho,\widehat \rho)\,,
	\end{equation}
	where $W_2$ is the $2$-Wasserstein distance, and $C$ depends only on $\alpha,L$,  $\int_{\RR^d}|x|^4\rho(dx)$, and $\int_{\RR^d}|x|^4\hat\rho(dx)$.
\end{lemma}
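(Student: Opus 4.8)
The plan is to treat this as a standard estimate for a ratio-type nonlinearity, reducing everything to a quantitative lower bound on the denominator together with Lipschitz-type control of the numerator and denominator integrands against $W_2$. Writing $a_\rho:=\int_{\RR^d}x\,\omega_\alpha^\TE(x)\,\rho(dx)$ and $b_\rho:=\int_{\RR^d}\omega_\alpha^\TE(x)\,\rho(dx)$, with $a_{\widehat\rho},b_{\widehat\rho}$ defined analogously, the starting point is the elementary identity
\begin{equation*}
X^\alpha(\rho)-X^\alpha(\widehat\rho)=\frac{a_\rho-a_{\widehat\rho}}{b_\rho}-\frac{a_{\widehat\rho}}{b_{\widehat\rho}}\cdot\frac{b_\rho-b_{\widehat\rho}}{b_\rho},
\end{equation*}
so it suffices to bound $|a_\rho-a_{\widehat\rho}|$ and $|b_\rho-b_{\widehat\rho}|$ by $CW_2(\rho,\widehat\rho)$, and to bound $1/b_\rho$, $1/b_{\widehat\rho}$ and $|a_{\widehat\rho}|$ by constants of the asserted type.

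For the denominator I would localize on a ball. By Assumption~\ref{asum}(2) one has $\TE(x)\le\underline{\TE}+C_u(1+R^2)$ on the ball $B_R(0)$, hence $\omega_\alpha^\TE(x)\ge e^{-\alpha(\underline{\TE}+C_u(1+R^2))}$ there, while Markov's inequality gives $\rho(B_R(0))\ge 1-R^{-2}\int_{\RR^d}|x|^2\rho(dx)\ge 1-R^{-2}\bigl(\int_{\RR^d}|x|^4\rho(dx)\bigr)^{1/2}$. Choosing $R$ so that the right-hand side is at least $\frac12$ then yields $b_\rho\ge\frac12 e^{-\alpha(\underline{\TE}+C_u(1+R^2))}>0$ with $R$ depending only on $\int_{\RR^d}|x|^4\rho(dx)$, and similarly for $b_{\widehat\rho}$. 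Since $\TE\ge\underline{\TE}$ gives $0\le\omega_\alpha^\TE\le e^{-\alpha\underline{\TE}}$, one also gets $|a_{\widehat\rho}|\le e^{-\alpha\underline{\TE}}\int_{\RR^d}|x|\,\widehat\rho(dx)\le e^{-\alpha\underline{\TE}}\bigl(\int_{\RR^d}|x|^2\widehat\rho(dx)\bigr)^{1/2}$, which disposes of the ``constant'' factors in the identity above.

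For the two difference estimates I would fix an optimal coupling $\pi\in\Pi(\rho,\widehat\rho)$ with $\int|x-y|^2\,\pi(dx,dy)=W_2^2(\rho,\widehat\rho)$. From $0\le\omega_\alpha^\TE\le e^{-\alpha\underline{\TE}}$ and Assumption~\ref{asum}(1), $|\omega_\alpha^\TE(x)-\omega_\alpha^\TE(y)|\le\alpha e^{-\alpha\underline{\TE}}|\TE(x)-\TE(y)|\le\alpha Le^{-\alpha\underline{\TE}}(|x|+|y|)|x-y|$, so Cauchy--Schwarz in $\pi$ yields $|b_\rho-b_{\widehat\rho}|\le\alpha Le^{-\alpha\underline{\TE}}\bigl(\int(|x|+|y|)^2\,\pi\bigr)^{1/2}W_2(\rho,\widehat\rho)$, with $\int(|x|+|y|)^2\,\pi\le 2\int|x|^2\rho+2\int|y|^2\widehat\rho$ controlled by the moments of $\rho,\widehat\rho$. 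For the numerator I would split $x\,\omega_\alpha^\TE(x)-y\,\omega_\alpha^\TE(y)=(x-y)\omega_\alpha^\TE(x)+y\bigl(\omega_\alpha^\TE(x)-\omega_\alpha^\TE(y)\bigr)$: the first piece contributes $e^{-\alpha\underline{\TE}}W_2(\rho,\widehat\rho)$, and the second, after Cauchy--Schwarz, a factor involving $\int|y|^2(|x|+|y|)^2\,\pi$, which is bounded through $\int|x|^4\rho$ and $\int|y|^4\widehat\rho$; this is precisely where the hypothesis $\rho,\widehat\rho\in\mc{P}_4(\RR^d)$ enters. Substituting these bounds into the displayed identity gives \eqref{lemstaeq} with $C$ of the stated form.

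The step I expect to be the main obstacle is the uniform lower bound on the denominator $b_\rho$: since $\rho,\widehat\rho$ range over all of $\mc{P}_4(\RR^d)$, a naive estimate degenerates, and the remedy is the ball-localization above with the radius $R$ calibrated to the fourth moment via the quadratic upper bound of Assumption~\ref{asum}(2). Everything else consists of routine applications of Cauchy--Schwarz once the optimal coupling has been fixed.
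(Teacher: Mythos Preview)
Your argument is correct and complete: the quotient decomposition, the ball-localization lower bound on $b_\rho$ via Assumption~\ref{asum}(2) and Markov's inequality, and the coupling estimates for $a_\rho-a_{\widehat\rho}$ and $b_\rho-b_{\widehat\rho}$ using Assumption~\ref{asum}(1) together with Cauchy--Schwarz are exactly the standard route. The paper does not give its own proof of this lemma but simply quotes \cite[Lemma~3.2]{carrillo2018analytical}; your write-up reproduces essentially the argument found there, so there is nothing to compare beyond noting that your constant also picks up dependence on $\underline{\TE}$ and $C_u$ through the denominator bound, which is implicit in the standing Assumption~\ref{asum} even though the statement lists only $\alpha$, $L$, and the fourth moments.
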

Finally let us prove Theorem \ref{thmzero-limit}:
\begin{proof}{\textbf{(Theorem \ref{thmzero-limit})}}
	By Theorem \ref{thmtight1},  each subsequence $\{\OX^{m_k}\}_{k\in\mathbb N}$ with $m_0\leq 1/2$ and $m_k$ converging to $0$ as $k\rightarrow \infty$  admits a subsequence (denoted w.l.o.g. by itself) that converges weakly.
	By Skorokhod's lemma (see \cite[Theorem 6.7 on page
	70]{billingsley2013convergence}) and the existence and uniqueness of strong solution to SDE \eqref{MVeq},   we may find a common probability space $(\Omega,\mc{F},\PP)$ on which the joint processes $\{(\OX^{m_k},B)\}_{k\in\mathbb N}$ converge to some process $(\widehat X,B)$ as random variables valued in $\mc{C}([0,T];\RR^{2d})$ almost surely. Here $B$ is an identical $d$-dimensional Wiener process on $(\Omega,\mc{F},\PP)$. In particular, we have 
	\begin{align}
	\mathbb P\left( \lim_{k\rightarrow \infty} \sup\limits_{t\in[0,T]}|\OX^{m_k}_t-\widehat X_t| =0 \right)=1\,. \label{converge-as}
	\end{align}
	We shall verify that the limit $\widehat X$ is indeed the unique solution $\OX$ to SDE \eqref{MVCBO1}.
		
	Recalling the existence and uniqueness of the strong solution $\OX^{m_k}$ to SDE \eqref{Fubini} in Theorem \ref{thm-huang2021mean},  we have
{\small 	\begin{align}
\label{eq-X-mk}
	\OX_t^{m_k}&=\OX_0+\frac{m_k}{\gamma}(1-e^{-\frac{\gamma}{{m_k}}t})\OV_0
	+\frac{\lambda}{\gamma}\int_0^t(1-e^{-\frac{\gamma}{{m_k}}(t-s)}) (X^{\alpha}(\rho_s^{m_k})-\OX_s^{m_k})ds\\
	& \quad +\frac{\sigma}{\gamma}\int_0^t(1-e^{-\frac{\gamma}{{m_k}}(t-s)}) D(X^{\alpha}(\rho_s^{m_k})-\OX_s^{m_k})dB_s\notag\,. 
	\end{align}}
	By the estimates in \eqref{uniform-bd} and Fatou's lemma 
	there exists a constant $C_2 $ being independent of $m_k$ such that
	\begin{align}\label{estimate-L4-bd}  
	\sup_{k\in\mathbb N} & \sup_{t\in[0,T]}\EE \left[|\OX_t^{m_k}|^4\right]  +
	\sup_{t\in[0,T]} \EE\left[\left|\widehat X_t\right|^4 \right]\nn\\[-.25cm]
	\\[-.25cm]
	 & \leq  C_2:=C(\EE[|\OX_0|^4],\EE[|\OV_0|^4],C_{\alpha,\TE},\lambda,\sigma,d,T)<\infty.\nn
	\end{align}
	As a straightforward consequence of the above boundedness, it holds that
	\begin{align}
	\sup_{k\in \NN, t\in[0,T]} \PP(|\OX_t^{m_k} -\widehat X_t| > A)
	\leq \frac{2^4C_2}{A^4},\quad \forall\, A>0.
	\end{align}
	Thus, the dominated convergence theorem gives that for each $A>0$,
	\begin{align}
	&\lim_{k\rightarrow \infty}\EE\left[\int_0^T |\OX_t^{m_k}-\widehat X_t|^2\,dt \right] 
	\nonumber\\
	&\quad\leq
	\limsup_{k\rightarrow \infty}\bigg(\EE\left[\int_0^T |\OX_t^{m_k}-\widehat X_t|^2 \wedge A^2\,dt \right]\nn\\
	&\qquad+ \EE \left[\int_0^T |\OX_t^{m_k}-\widehat X_t|^2 1_{\{|\OX_t^{m_k}-\widehat X_t|>A\}}\,dt \right]  \bigg)
	\nonumber \\
	&\quad\leq
	\limsup_{k\rightarrow \infty}\EE\left[\int_0^T |\OX_t^{m_k}-\widehat X_t|^2 \wedge A^2\,dt \right]\nn\\
	&\qquad+T\cdot \sup_{k\in\mathbb N} \sup_{t\in[0,T]} \left(\EE \left[|\OX_t^{m_k}-\widehat X_t|^4\right] \right)^{1/2}   \left|  \PP(|\OX_t^{m_k} -\widehat X_t| > A) \right|^{1/2}
	\nonumber
	\\
	&\quad\leq
	\limsup_{k\rightarrow \infty}\EE\left[\int_0^T |\OX_t^{m_k}-\widehat X_t|^2 \wedge A^2\,dt \right]
	+  \frac{2^4 \, C_2 T}{A^2}
	\nonumber
	\\
	&\quad= \frac{2^4 \, C_2 T}{A^2}, \nonumber
	\end{align}
	which by the arbitrariness of $A>0$ indicates that 
	\begin{equation} \label{limit-zero}
	\lim_{k\rightarrow \infty}\EE\left[\int_0^T |\OX_t^{m_k}-\widehat X_t|^2\,dt \right] 
	=0.
	\end{equation}
	Letting $\rho(t,dx)$  be the probability distribution of $\widehat X_t$ for $t\in[0,T]$, Lemma \ref{lemXa} gives
	$$
	|{X}^{\alpha}(\rho_t)|\leq (b_1+b_2\EE[|\widehat X_t|^2])^{\frac{1}{2}} \leq (b_1+b_2C_2^{\frac{1}{2}})^\frac{1}{2}=:C_3,
	$$
	and thus
	\begin{equation}\label{Xalbound}
	\sup_{k\in\mathbb N} \sup_{t\in[0,T]}|{X}^{\alpha}(\rho_t^{m_k})|\leq  C_3, \quad \text{ and }\quad
	\sup_{t\in[0,T]} |{X}^{\alpha}(\rho_t)|\leq  C_3\,.
	\end{equation}	
	Then we compare the SDEs \eqref{MVCBO1} and \eqref{eq-X-mk} term by term. By Lemma \ref{lemsta}, we have
	\begin{equation*}
	|X^{\alpha}(\rho_t^{m_k})-X^\alpha( \rho_t)|^2\leq C W_2^2(\rho_t^{m_k}, \rho_t)\leq C \EE[|\OX_t^{m_k}-\widehat X_t|^2],
	\end{equation*}
	and thus by using the fact that  $\gamma=1-m_k$, one has
{\small  \begin{align}
	&\EE \bigg[ \bigg|
	\frac{\lambda}{\gamma}
	\int_0^t(1-e^{-\frac{\gamma}{{m_k}}(t-s)}) (X^{\alpha}(\rho^{m_k}_s)-\OX_s^{m_k})ds- \lambda \int_0^t  (X^{\alpha}(\rho_s)-\widehat X_s)ds \bigg|^2\bigg]\nn\\
	 &\quad \leq  2 \EE \bigg[  \bigg|\frac{\lambda}{1-m_k}\int_0^t(1-e^{-\frac{1-m_k}{{m_k}}(t-s)}) \cdot (X^{\alpha}(\rho^{m_k}_s)- X^{\alpha}(\rho_s) + \widehat X_s-\OX_s^{m_k})ds \bigg|^2 \bigg]
	\nonumber \\
	&\qquad +2 \EE \left[  \left|{\lambda} 
	\int_0^t \left(\frac{1-e^{-\frac{1-m_k}{{m_k}}(t-s)}}{1-m_k} -1\right) (X^{\alpha}(\rho_s)- \widehat X_s)ds \right|^2  
	\right]
	\nonumber \\
	&\quad \leq 
	C  \EE \left[ \int_0^t  \left|   \widehat X_s-\OX_s^{m_k} \right|^2 ds \right] +C{\lambda^2}  \int_0^t \left|\frac{1-e^{-\frac{1-m_k}{{m_k}}(t-s)}}{1-m_k} -1\right|^2ds
	\nn\\
	\ &\qquad\cdot \EE \left[  \int_0^T \left|X^{\alpha}(\rho_s)- \widehat X_s\right|^2  ds 
	\right] 
	\nonumber \\
	&\quad \leq 
	C \EE \left[ \int_0^t  \left|   \widehat X_s-\OX_s^{m_k} \right|^2 ds \right]
	+ C 
	\int_0^t \left|\frac{1-e^{-\frac{1-m_k}{{m_k}}(t-s)}-(1-m_k)}{1-m_k} \right|^2ds  
	\nonumber \\
	&\quad \leq 
	C \EE \left[ \int_0^t  \left|   \widehat X_s-\OX_s^{m_k} \right|^2 ds \right]
	+ C 
	\int_0^t \left( \left|{m_k}\right|^2  +  e^{-\frac{2(1-m_k)}{{m_k}}(t-s)} \right) ds 
	\nonumber \\
	&\quad \leq 
	C \EE \left[ \int_0^t  \left|   \widehat X_s-\OX_s^{m_k} \right|^2 ds \right]
	+ C 
	\left( t \left|{m_k}\right|^2  + \frac{m_k}{2(1-m_k)}   \right)  , \label{est-drift}
	\end{align}}
	where the constants $C$s are independent of $k$.
	For the stochastic integrals, it holds analogously that
{\small 	\begin{align}
	&\EE \ \bigg[ \bigg|
	\frac{\sigma}{\gamma}
	\int_0^t(1-e^{-\frac{\gamma}{{m_k}}(t-s)}) D(X^{\alpha}(\rho^{m_k}_s)-\OX_s^{m_k})dB_s \nn\\
	&\qquad- \sigma \int_0^t  D(X^{\alpha}(\rho_s)-\widehat X_s)dB_s \bigg|^2\bigg]\nn\\
	&\quad \leq{d\sigma^2} \sum_{n=1}^d
	\EE \bigg[ \bigg|
	\frac{1}{\gamma}
	\int_0^t(1-e^{-\frac{\gamma}{{m_k}}(t-s)}) (X^{\alpha}(\rho^{m_k}_s)-\OX_s^{m_k})_ndB_s^ne_n\nn\\
	&\qquad- \int_0^t  (X^{\alpha}(\rho_s)-\widehat X_s)_ndB_s^ne_n \bigg|^2\bigg]
	\nonumber \\
	&\quad =d{\sigma^2} \sum_{n=1}^d\EE \bigg[\int_0^t \bigg| \frac{1-e^{-\frac{\gamma}{{m_k}}(t-s)}} {\gamma} (X^{\alpha}(\rho^{m_k}_s)-\OX_s^{m_k})_n \nn\\
	&\qquad - (X^{\alpha}(\rho_s)-\widehat X_s)_n\bigg|^2 ds \bigg]\,.
	\end{align}}
	Thus we have
{\small 		\begin{align}
	&\EE \bigg[ \bigg|
	\frac{\sigma}{\gamma}
	\int_0^t(1-e^{-\frac{\gamma}{{m_k}}(t-s)}) D(X^{\alpha}(\rho^{m_k}_s)-\OX_s^{m_k})dB_s\nn\\
	& \quad\quad- \sigma \int_0^t  D(X^{\alpha}(\rho_s)-\widehat X_s)dB_s \bigg|^2\bigg]
	\nonumber \\
	&\quad \leq 
	{2d\sigma^2} \sum_{n=1}^d
	\EE \bigg[ 
	\int_0^t \bigg| \frac{1-e^{-\frac{\gamma}{{m_k}}(t-s)}} {\gamma}   \cdot \nn\\
	&\qquad \left( (X^{\alpha}(\rho^{m_k}_s)-\OX_s^{m_k})_n -     (X^{\alpha}(\rho_s)-\widehat X_s)_n\right) \bigg|^2\,ds\bigg]
	\nonumber \\
	&\qquad+  {2d\sigma^2}  \sum_{n=1}^d \EE \left[ 
	\int_0^t \left|
	\left(
	\frac{1-e^{-\frac{\gamma}{{m_k}}(t-s)}} {\gamma} -1
	\right)
	(X^{\alpha}(\rho_s)-\widehat X_s)_n\right|^2 ds \right]
	\nonumber \\
	&\quad \leq 
	C
	\EE \left[ 
	\int_0^t \left|     \OX_s^{m_k}     -\widehat X_s  \right|^2\,ds\right]\nonumber \\
	&\qquad +  {2d\sigma^2} \sup_{s\in [0,t]}  \EE \left[  \left|(X^{\alpha}(\rho_s)-\widehat X_s)\right|^2 \right] \cdot 
	\int_0^t \left|
	\left(
	\frac{1-e^{-\frac{\gamma}{{m_k}}(t-s)}} {\gamma} -1
	\right)
	\right|^2 ds  
	\notag\\
	 &\quad \leq 
	C \EE \left[ \int_0^t  \left|   \widehat X_s-\OX_s^{m_k} \right|^2 ds \right]
	+ C  
	\left( t \left|{m_k}\right|^2  + \frac{m_k}{2(1-m_k)}   \right) .
	\label{ineq-converge}
	\end{align}}
	In addition, it is obvious that
	\begin{align}
	\left| \frac{m_k}{\gamma}(1-e^{-\frac{\gamma}{m_k}t})\OV_0\right|
	\leq  C m_k \left| \OV_0\right|. \label{est-V0}
	\end{align}	
	Combining the estimates \eqref{est-drift}-\eqref{est-V0}, letting $k$ tend to infinity on both sides of \eqref{eq-X-mk} and recalling $\frac{1}{2}\geq m_k \rightarrow 0^+$ and the relation \eqref{limit-zero}, we have
	\begin{align*}
	\widehat X_t=\OX_0
	+\lambda\int_0^t(X^{\alpha}(\rho_s)-\widehat X_s)ds +\sigma\int_0^t D(X^{\alpha}(\rho_s)-\widehat X_s)dB_s.
	\end{align*}
	Therefore, the limit $\widehat X$ turns out to be a solution to SDE \eqref{MVCBO1}.
	Meanwhile, in view of the continuity of $X^{\alpha}(\rho)$ in Lemma \ref{lemsta}, we can easily show that \eqref{MVCBO1}  admits a unique (strong) solution as in Theorem \ref{thm-huang2021mean}  by using Leray-Schauder fixed point theorem as in \cite[Theorem 3.1]{carrillo2018analytical}. Thus, we must have $\widehat X = \OX$ that is the unique strong solution to SDE \eqref{MVCBO1} with $\sup_{t\in[0,T]} \EE \left[ |\OX_t|^4\right]\leq C_2$. Further, due to the arbitrariness of the subsequence $\{\OX^{m_k}\}_{k\in\mathbb N}$, we conclude that as $m\rightarrow 0^+$, the sequence of  stochastic processes $\{\OX^m\}_{0< m\leq \frac{1}{2}}$  converge weakly to  the unique solution $\overline X$ to SDE \eqref{MVCBO1}.  
		
	Finally, to measure the distance between $\OX^m$ and the limit $\widehat X=\OX$, we may have similar calculations to \eqref{est-drift}-\eqref{est-V0}, subtract both sides of SDEs \eqref{MVCBO1} from those of \eqref{eq-X-mk}, and arrive at
	\begin{align}
	\EE[|\OX^{m}_t-\OX_t|^2]\leq C\int_0^t\EE[|\OX^{m}_s-\OX_s|^2]ds+ 
	C\, m,\quad t\in[0,T]. \nonumber
	\end{align}
	By Gronwall's inequality it implies that
	\begin{equation}\label{convergence-l2}
	\sup_{t\in[0,T]}\EE[|\OX^{m}_t-\OX_t|^2]\leq Cm ,
	\end{equation}
	where $C$ depends only on $\EE[|\OX_0|^4],\EE[|\OV_0|^4],C_u,M,C_l,\lambda,\sigma,d$, and $T$.   This completes the proof. 
\end{proof}

\subsection{The general case with memory}
Next, we consider the same small inertia scaling in the general case with dependence from the local best. Again, we first write down the 
nonlinear McKean-Vlasov process corresponding to  the SD-PSO system \eqref{eq:psocir}, which is of the form

\begin{subequations}
	\begin{eqnarray}
	d\OX_t^m &=& \OV_t^mdt, \label{mXeq}\\
	d \OY_{t}^{m} &=& \nu\left(\OX_{t}^{m}-\OY_{t}^{m}\right) S^{\beta}\left(\OX_{t}^{m}, \OY_{t}^{m}\right) d t, \label{mYeq}\\
	d\OV_{t}^{m} &=& -\frac{\gamma}{m} \OV_{t}^{m} d t+\frac{\lambda_{1}}{m}\left(\OY_{t}^{m}-\OX_{t}^{m}\right) d t \nn \\[-.2cm]
	\nonumber
	\\
	\label{mVeq}
	&& +\frac{\lambda_{2}}{m}\left({Y}^{\alpha}(\overline \rho_t^m)-\OX_{t}^{m}\right) d t +\frac{\sigma_{1}}{m} D\left(\OY_{t}^{m}-\OX_{t}^{m}\right) d B_{t}^{1} \\[-.2cm] 
	\nn\\
	&& +\frac{\sigma_{2} }{m}D\left({Y}^{\alpha}(\overline \rho_t^m)-\OX_{t}^{m}\right) d B_{t}^{2}\nn\,,
	\end{eqnarray}
\end{subequations}
where $B^1$ and $B^2$ are two mutually independent d-dimensional Wiener processes, and similarly to the previous section, we introduce the following regularization of the global best
position
\begin{align*}\label{global_best_regular}
{Y}^{\alpha}(\overline \rho^m_t)=\frac{\int_{\mathbb{R}^{d}} y \omega_{\alpha}(y) {\overline \rho}^m(t,dy) }{\int_{\mathbb{R}^{d}} \omega_{\alpha}(y) {\bar \rho}^m(t,dy) }, 
 \qquad {\overline \rho}^m(t,y)=\iint_{\mathbb{R}^{d} \times \mathbb{R}^{d}} f^m(t,dx, y,d v)\,.
\end{align*}
As $m\to 0^+$ we formally get from \eqref{mVeq}
\[
\begin{split}
\OV_{t}^{0} dt &=\lambda_1\left(\OY_t^0-\OX^0_t\right)dt +\lambda_2\left({Y}^{\alpha}(\overline \rho_t^0)-\OX^0_t\right)dt\\
&\quad +\sigma_1 D(\OY_t^0-\OX^0_t)dB^{1}_t+\sigma_2 D({Y}^{\alpha}(\overline \rho_t^0)-\OX^0_t)dB^{2}_t,
\end{split}
\] 
which inserted into \eqref{mXeq} and omitting the superscripts corresponds to a novel \emph{CBO system with local best}
\coloredeq{eq:cbozero}{
\begin{split}
d \OX_t&=
\lambda_1 (\OY_t-\OX_t)dt +\lambda_2 (Y^{\alpha}(\bar\rho_t)-\OX_t)dt \\
&\quad+\sigma_1D(\OY_t-\OX_t)dB_t^1+\sigma_2 D(Y^{\alpha}(\bar\rho_t)-\OX_t)dB_t^2,\\
d\OY_{t}&=\nu\left(\OX_{t}-\OY_{t}\right) S^{\beta}\left(\OX_{t}, \OY_{t}\right) dt\,.
\end{split}
}
In contrast with the model recently introduced in \cite{TW} the above first order CBO method avoids backward time integration through the use of an additional differential equation. We refer to \cite{grassi2021consensus} for further details on the above CBO system.
\subsubsection{Formal derivation in the mean-field case}
Concerning the corresponding MF-PSO limit characterized by \eqref{PDEii} for $m\to 0^+$ we can essentially perform analogous computations as in the previous section (see \cite{Grassi2021PSO}). Similarly by considering the local Maxwellian with unitary mass and zero momentum
\[
\begin{split}
{\mathcal M}_m(x,y,v,t)&= \prod_{j=1}^d M_{m}(x_j,y_j,v_j,t), \\
M_{m}(x_j,y_j,v_j,t) &= \frac{m^{1/2}}{\pi^{1/2} |\Sigma(x_j,y_j,t)|} 
\exp\left\{-\frac{m v_j^2}{\Sigma(x_j,y_j,t)^2}\right\},
\end{split}
\]
where
\[
\Sigma(x_j,y_j,t)^2 = {\sigma_2^2}(x_j - Y^\alpha_j(\bar\rho))^2+{\sigma_1^2}(x_j - y_j)^2,
\]
we can assume for $m \ll 1$
\begin{equation}
f(x,y,v,t)=\rho(x,y,t){\mathcal M}_\e(x,y,v,t).
\label{eq:maxw2}
\end{equation}
After integration of the MF-PSO equation \eqref{PDEii} with respect to $v$, we get the second order \emph{macroscopic PSO system with local best}
\coloredeq{eq:macro2}{
\begin{split}
\frac{\partial \rho}{\partial t} + \nabla_x \cdot (\rho u) + \nabla_y \cdot \left(\nu(x-y)S^\beta(x,y)\rho\right)&=0\\
\frac{\partial (\rho u)_j}{\partial t} + \frac{\sigma^2}{2 m}\frac{\partial}{\partial x_j} \left(\rho(x,t) \Sigma(x_j,y_j,t)^2\right)&= \\
-\frac{\gamma}{m} (\rho u)_j + \frac1{m} (\lambda_1 (y_j-x_j)&+\lambda_2 (Y_j^\alpha(\bar\rho) - x_j) )\rho.
\end{split}}
Formally, as $m\to 0^+$, the above system reduces to a novel \emph{mean-field CBO system with local best}
\coloredeq{eq:CBOlb}{
\begin{split}
\frac{\partial \rho}{\partial t} &+ \nabla_x \cdot \left(\lambda_1 (y-x)+\lambda_2 (Y^\alpha(\bar\rho) - x) \right)\rho\\
& + \nabla_y \cdot \left(\nu(x-y)S^\beta(x,y)\rho\right)\\
 &=\frac{1}{2}\sum_{j=1}^d \frac{\partial^2}{\partial x^2_j} \left(\rho(x,t)\left( \sigma_1^2 (x_j-y_j)^2+\sigma_2^2 (x_j-Y_j^\alpha(\bar\rho))^2\right)\right).
\end{split}}

\subsubsection{Rigorous derivation}
Since the proof of the zero-inertia limit for the PSO dynamics with memory effects follows similar arguments as developed in section \ref{seczero} and  no essential innovation is needed to be explained, we only recall the main results here.

Let us solve \eqref{mVeq} to obtain
\begin{align}\label{OXm}
\OX_t^m&=\OX_0+\frac{m}{\gamma}(1-e^{-\frac{\gamma}{m}t})\OV_0
+\frac{\lambda_1}{\gamma}\int_0^t(1-e^{-\frac{\gamma}{m}(t-s)}) (\OY_s^m-\OX_s^m)ds \nn\\ &\quad +\frac{\sigma_1}{\gamma}\int_0^t(1-e^{-\frac{\gamma}{m}(t-s)}) D(\OY_s^m-\OX_s^m)dB_s^1\notag\\
&\quad +\frac{\lambda_2}{\gamma}\int_0^t(1-e^{-\frac{\gamma}{m}(t-s)}) (Y^{\alpha}(\bar\rho_s^m)-\OX_s^m)ds\\ &\quad+\frac{\sigma_2}{\gamma}\int_0^t(1-e^{-\frac{\gamma}{m}(t-s)}) D(Y^{\alpha}(\bar\rho_s^m)-\OX_s^m)dB_s^2\nn
\end{align}
and
\begin{align}\label{OYm}
\OY_{t}^{m}=\OY_0+\nu\int_0^t\left(\OX_{s}^{m}-\OY_{s}^{m}\right) S^{\beta}\left(\OX_{s}^{m}, \OY_{s}^{m}\right) d s\,.
\end{align}
Similar to Theorem \ref{thmtight1} one can prove the following result of tightness.
\begin{theorem}\label{thmtight2}
	Let Assumption \ref{asum} hold and $(\OX_t^m,\OY_t^m,\OV_t^m)_{t\in[0,T]}$ satisfy the system \eqref{mXeq}--\eqref{mVeq}.  For each countable subsequence $\{m_k\}_{k\in\mathbb N} \subset [0,\frac{1}{2}]$ with $\lim_{k\rightarrow \infty}m_k=0$, the sequence of probability distributions   $\{\rho^{m_k}\}_{k\in\mathbb N}$ of $\{(\OX^{m_k},\OY^{m_k})\}_{k\in\mathbb N}$  is tight.
\end{theorem}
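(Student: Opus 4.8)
The plan is to repeat the proof of Theorem~\ref{thmtight1} almost verbatim, now working with the joint continuous process $(\OX^{m_k},\OY^{m_k})$ regarded as a $\mc{C}([0,T];\RR^{2d})$-valued random variable; by Prohorov's theorem it suffices to verify conditions $(Con 1)$ and $(Con 2)$ of the Aldous criterion (Lemma~\ref{lemAldous}) for this pair. The one genuinely new feature is the memory variable $Y$, whose dynamics \eqref{OYm} carries no noise and whose coefficient $S^\beta$ is bounded by $2$ and Lipschitz; this makes $\OY^m$ cheap to control once $\OX^m$ is controlled, but it forces the moment estimates of the two variables to be closed simultaneously.

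First I would establish the uniform bound $\sup_{0<m\le 1/2}\sup_{t\in[0,T]}\EE[|\OX_t^m|^4+|\OY_t^m|^4]\le C_1$. Starting from the Fubini-reduced representation \eqref{OXm}, H\"older's inequality and the moment inequality for stochastic integrals (exactly as in \eqref{Xes}) bound $\EE[|\OX_t^m|^4]$ by a constant plus $C\int_0^t\EE[|Y^\alpha(\bar\rho_s^m)-\OX_s^m|^4+|\OY_s^m-\OX_s^m|^4]\,ds$, where the prefactors only involve powers of $1/\gamma$ with $\gamma=1-m\in[1/2,1)$, hence are bounded. Lemma~\ref{lemXa} controls $|Y^\alpha(\bar\rho_s^m)|^2$ by $b_1+b_2\EE[|\OY_s^m|^2]$, so the right-hand side is dominated by $C+C\int_0^t(\EE[|\OX_s^m|^4]+\EE[|\OY_s^m|^4])\,ds$. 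For the memory variable, \eqref{OYm} with $|S^\beta|\le 2$ gives $|\OY_t^m|^4\le C|\OY_0|^4+C\int_0^t(\EE[|\OX_s^m|^4]+\EE[|\OY_s^m|^4])\,ds$. Adding these and applying Gronwall's inequality to $\EE[|\OX_t^m|^4]+\EE[|\OY_t^m|^4]$ yields $C_1$, independent of $m$.

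Condition $(Con 1)$ is then immediate from Markov's inequality on the compact sets $\{(x,y):|x|^4+|y|^4\le C_1/\varepsilon\}$. For $(Con 2)$, let $\beta$ be a discrete-valued $\sigma((\OX_s^m,\OY_s^m);s\le T)$-stopping time with $\beta+\delta_0\le T$. The increment $\OX_{\beta+\delta}^m-\OX_\beta^m$ is decomposed into the $\OV_0$ term and the drift and stochastic-integral terms split over $[0,\beta]$ and $[\beta,\beta+\delta]$, exactly as in the derivation of \eqref{diff}; using the elementary inequalities $\int_0^\tau|e^{-\frac{\gamma(\tau-s)}{m}}-e^{-\frac{\gamma(\tau+\delta-s)}{m}}|^q\,ds\le\delta$ and $\int_\beta^{\beta+\delta}(1-e^{-\frac{\gamma(\beta+\delta-s)}{m}})^q\,ds\le\delta$ together with $C_1$ gives $\EE[|\OX_{\beta+\delta}^m-\OX_\beta^m|^2]\le C(\delta^{1/2}+\delta+\delta^2)$ uniformly in $m\le 1/2$, the local-best terms with $\lambda_1,\sigma_1$ being handled identically to the $\lambda_2,\sigma_2$ terms. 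For $Y$, \eqref{OYm} and Cauchy--Schwarz give directly $\EE[|\OY_{\beta+\delta}^m-\OY_\beta^m|^2]\le 4\nu^2\delta\int_\beta^{\beta+\delta}\EE[|\OX_s^m-\OY_s^m|^2]\,ds\le C\delta^2$. Summing and applying Markov's inequality once more verifies $(Con 2)$, and Lemma~\ref{lemAldous} concludes tightness of $\{\rho^{m_k}\}_{k\in\NN}$.

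The only step needing genuine attention --- and the reason the result is not wholly automatic --- is the coupled fourth-moment estimate: since $Y^\alpha(\bar\rho^m)$ is bounded only in terms of the second moment of the $Y$-marginal and $Y$ is itself driven by $X$, one cannot close the bound on $\EE[|\OX_t^m|^4]$ in isolation and must run Gronwall on the sum $\EE[|\OX_t^m|^4]+\EE[|\OY_t^m|^4]$. Everything else is a line-by-line transcription of the memoryless argument in Section~\ref{seczero}, so no further detail is required.
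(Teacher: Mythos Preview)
Your proposal is correct and is exactly the approach the paper intends: the paper does not spell out a proof of Theorem~\ref{thmtight2} but simply states that it is proved ``similar to Theorem~\ref{thmtight1}'', and your write-up carries this out in the natural way, correctly identifying that the only new ingredient is the coupled Gronwall estimate for $\EE[|\OX_t^m|^4]+\EE[|\OY_t^m|^4]$ needed because $Y^\alpha(\bar\rho^m)$ is controlled via Lemma~\ref{lemXa} in terms of the $Y$-marginal. One small imprecision: in your $(Con 2)$ estimate for the $Y$-increment, the expression $\int_\beta^{\beta+\delta}\EE[|\OX_s^m-\OY_s^m|^2]\,ds$ is not well-defined since $\beta$ is random, and the correct bound $\EE\bigl[\int_\beta^{\beta+\delta}|\OX_s^m-\OY_s^m|^2\,ds\bigr]\le\int_0^T\EE[|\OX_s^m-\OY_s^m|^2]\,ds\le C$ yields $\EE[|\OY_{\beta+\delta}^m-\OY_\beta^m|^2]\le C\delta$ rather than $C\delta^2$; this is harmless for verifying $(Con 2)$.
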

Then following the lines of the proof in Theorem \ref{thmzero-limit}, one can obtain
\begin{theo-frmd}
	Let Assumption \ref{asum} hold and $(\OX_t^m,\OY_t^m)_{t\in[0,T]}$ satisfy the system \eqref{OXm}--\eqref{OYm}. Then as $m\rightarrow 0^+$, the sequence of  stochastic processes $\{(\OX^m,\OY^m)\}_{0< m\leq \frac{1}{2}}$  converge weakly to $(\OX,\OY)$  which is the unique solution to the following coupled SDE:
	\begin{align*}
	\OX_t&=\OX_0
	+\lambda_1\int_0^t (\OY_s-\OX_s)ds +\sigma_1\int_0^tD(\OY_s-\OX_s)dB_s^1\nn\\
	&\quad+\lambda_2\int_0^t (Y^{\alpha}(\bar\rho_s)-\OX_s)ds +\sigma_2\int_0^t D(Y^{\alpha}(\bar\rho_s)-\OX_s)dB_s^2,
	\\
	\OY_{t}&=\OY_0+\nu\int_0^t\left(\OX_{s}-\OY_{s}\right) S^{\beta}\left(\OX_{s}, \OY_{s}\right) d s\,.
	\end{align*}
	Moreover it holds that
	\begin{equation}\label{est-convegence-m-xy}
	\sup_{t\in[0,T]}\EE\left[\big|\OX^m_t-\OX_t\big|^2
	+\big|\OY^m_t-\OY_t\big|^2
	\right]\leq C \, m\,,
	\end{equation}
	where the constant $C$ depends only on $\EE[|\OX_0|^4]$, $\EE[|\OY_0|^4]$, $\EE[|\OV_0|^4]$, $\lambda_1$, $\sigma_2$, $\lambda_2$, $\sigma_2$, $d,\beta,T,C_u,M,C_l$, and $\nu$.
\end{theo-frmd}

\section{Convergence to the global minimum}
In this section we present some results on the global convergence of the PSO model \eqref{PSO} without memory effects. 
The extension to the case with memory effects is not strightforward and is actually under study. Here we will follow the presentation in \cite{HJK}, we refer to \cite{carrillo2018analytical,carrillo2019consensus,benfenati2021binary,fhps20-2,fornasier2021anisotropic} for similar results for CBO and related models. A different approach to the global convergence of CBO has been presented recently in \cite{fornasier2021consensusbased}.

Let $(\OX_t,\OV_t)_{t\geq 0}$ be the solution to the nonlinear SDE \eqref{MVeq} (dropping the superscript $m$), and
consider the quantity \[\CH(t):=(\frac{\gamma}{2m})^2|\OX_t-\EE[\OX_t]|^2+|\OV_t|^2+\frac{\gamma}{2m}(\OX_t-\EE[\OX_t])\cdot \OV_t,\] then it holds that
\begin{equation}
\begin{split}
\label{ineqH}
\CH(t)&\geq \frac{1}{2}(\frac{\gamma}{2m})^2|\OX_t-\EE[\OX_t]|^2+\frac{1}{2}|\OV_t|^2\\
\CH(t) &\leq \frac{3}{2}(\frac{\gamma}{2m})^2|\OX_t-\EE[\OX_t]|^2+\frac{3}{2}|\OV_t|^2\\
&\leq \frac{3}{2}((\frac{\gamma}{2m})^2+1)(|\OX_t-\EE[\OX_t]|^2+|\OV_t|^2)\,.
\end{split}
\end{equation}
The goal is then to obtain the decay property of $\CH(t)$.

In the following we shall use the notation
\begin{equation}
\delta\OX_t:=\OX_t-\EE[\OX_t]\,, 
\end{equation}
then $\EE[|\delta\OX_t|^2]$ is the variance of $X_t$. Now we can derive an evolution inequality of the quantity $\EE[\CH(t)]$.
\begin{theorem} Under the Assumption \ref{asum},	let $(\OX_t,\OV_t)_{t\geq 0}$ be the solution to the nonlinear SDE \eqref{MVeq}. Then  $\EE[\CH(t)]$ satisfies
	\begin{align}
	\frac{d}{dt}\EE[\CH(t)]\leq  &-\frac{\gamma}{m}\EE[|\OV_t|^2]  \nn \\
	 &-\left(\frac{\lambda\gamma}{2m^2}-(\frac{2\lambda^2}{\gamma m}+\frac{\sigma^2}{m^2})\frac{2e^{-\alpha\underline \TE}}{\EE[e^{-\alpha\TE(\OX_t)}]}\right)\EE[|\delta\OX_t|^2]\,.
	\end{align}
\end{theorem}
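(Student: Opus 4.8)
The plan is to apply It\^o's formula to $\CH(t)$ term by term, take expectations, and exploit the deliberate choice of the cross-term coefficient together with a weighted Cauchy--Schwarz bound on the nonlocal drift. Throughout, write $a:=\frac{\gamma}{2m}$, so that $\CH(t)=a^2|\delta\OX_t|^2+|\OV_t|^2+a\,\delta\OX_t\cdot\OV_t$, and note that the fourth-moment bounds supplied by Theorem \ref{thm-huang2021mean} justify all the differentiations and the vanishing of the stochastic integrals in expectation. Since $\EE[\OX_t]$ is deterministic with $\frac{d}{dt}\EE[\OX_t]=\EE[\OV_t]$, the process $\delta\OX_t$ has finite variation and $d(\delta\OX_t)=(\OV_t-\EE[\OV_t])\,dt$; hence $\frac{d}{dt}\EE[a^2|\delta\OX_t|^2]=2a^2\EE[\delta\OX_t\cdot\OV_t]$ (the term $\EE[\delta\OX_t]\cdot\EE[\OV_t]$ vanishes). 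Applying It\^o to $|\OV_t|^2$, with quadratic variation $d\langle\OV\rangle_t=\frac{\sigma^2}{m^2}|X^{\alpha}(\rho_t)-\OX_t|^2\,dt$ coming from the diagonal multiplicative noise, gives $\frac{d}{dt}\EE[|\OV_t|^2]=-\frac{2\gamma}{m}\EE[|\OV_t|^2]+\frac{2\lambda}{m}\EE[\OV_t\cdot(X^{\alpha}(\rho_t)-\OX_t)]+\frac{\sigma^2}{m^2}\EE[|X^{\alpha}(\rho_t)-\OX_t|^2]$. For the mixed term there is no It\^o correction (one factor has finite variation), and one obtains $\frac{d}{dt}\EE[a\,\delta\OX_t\cdot\OV_t]=a\EE[|\OV_t|^2]-a|\EE[\OV_t]|^2-\frac{a\gamma}{m}\EE[\delta\OX_t\cdot\OV_t]+\frac{a\lambda}{m}\EE[\delta\OX_t\cdot(X^{\alpha}(\rho_t)-\OX_t)]$.

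The decisive point is that with $a=\frac{\gamma}{2m}$ one has $\frac{a\gamma}{m}=2a^2$, so the two contributions proportional to $\EE[\delta\OX_t\cdot\OV_t]$ cancel exactly --- this is the reason the mixed term appears in $\CH$ in the first place. Using that $X^{\alpha}(\rho_t)$ is deterministic and $\EE[\delta\OX_t]=0$, I would then simplify $\EE[\delta\OX_t\cdot(X^{\alpha}(\rho_t)-\OX_t)]=-\EE[|\delta\OX_t|^2]$ and $\EE[|X^{\alpha}(\rho_t)-\OX_t|^2]=|X^{\alpha}(\rho_t)-\EE[\OX_t]|^2+\EE[|\delta\OX_t|^2]$, and discard the harmless nonpositive term $-\frac{\gamma}{2m}|\EE[\OV_t]|^2$. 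The remaining mixed term $\frac{2\lambda}{m}\EE[\OV_t\cdot(X^{\alpha}(\rho_t)-\OX_t)]$ is split by Young's inequality with the weight tuned to the velocity dissipation, $\frac{2\lambda}{m}\EE[\OV_t\cdot(X^{\alpha}(\rho_t)-\OX_t)]\leq\frac{\gamma}{2m}\EE[|\OV_t|^2]+\frac{2\lambda^2}{\gamma m}\EE[|X^{\alpha}(\rho_t)-\OX_t|^2]$. Collecting the velocity terms gives $-\frac{2\gamma}{m}+\frac{\gamma}{2m}+\frac{\gamma}{2m}=-\frac{\gamma}{m}$, matching the claimed coefficient, while $\frac{a\lambda}{m}\EE[\delta\OX_t\cdot(X^{\alpha}(\rho_t)-\OX_t)]=-\frac{\lambda\gamma}{2m^2}\EE[|\delta\OX_t|^2]$ supplies the good-sign part of the bracket; at this stage one is left with $\frac{d}{dt}\EE[\CH(t)]\leq-\frac{\gamma}{m}\EE[|\OV_t|^2]+\big(\frac{2\lambda^2}{\gamma m}+\frac{\sigma^2}{m^2}\big)\EE[|X^{\alpha}(\rho_t)-\OX_t|^2]-\frac{\lambda\gamma}{2m^2}\EE[|\delta\OX_t|^2]$.

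The last ingredient is to control $\EE[|X^{\alpha}(\rho_t)-\OX_t|^2]$, which still carries the bias $|X^{\alpha}(\rho_t)-\EE[\OX_t]|^2$, purely by the variance $\EE[|\delta\OX_t|^2]$. Writing $\omega_\alpha^\TE(x)=e^{-\alpha\TE(x)}$ and using $X^{\alpha}(\rho_t)=\EE[\OX_t\,\omega_\alpha^\TE(\OX_t)]/\EE[\omega_\alpha^\TE(\OX_t)]$, one has the exact identity $X^{\alpha}(\rho_t)-\EE[\OX_t]=\EE[\delta\OX_t\,\omega_\alpha^\TE(\OX_t)]/\EE[\omega_\alpha^\TE(\OX_t)]$. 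Bounding the numerator by the weighted Cauchy--Schwarz inequality $\EE[|\delta\OX_t|\,\omega_\alpha^\TE(\OX_t)]\leq\big(\EE[|\delta\OX_t|^2\omega_\alpha^\TE(\OX_t)]\big)^{1/2}\big(\EE[\omega_\alpha^\TE(\OX_t)]\big)^{1/2}$ and then using $\omega_\alpha^\TE\leq e^{-\alpha\underline{\TE}}$, one power of $\big(\EE[\omega_\alpha^\TE(\OX_t)]\big)^{1/2}$ cancels and one gets $|X^{\alpha}(\rho_t)-\EE[\OX_t]|^2\leq\frac{e^{-\alpha\underline{\TE}}}{\EE[e^{-\alpha\TE(\OX_t)}]}\EE[|\delta\OX_t|^2]$. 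Since $\EE[e^{-\alpha\TE(\OX_t)}]\leq e^{-\alpha\underline{\TE}}$, this multiplier is $\geq1$, hence $\EE[|X^{\alpha}(\rho_t)-\OX_t|^2]\leq\frac{2e^{-\alpha\underline{\TE}}}{\EE[e^{-\alpha\TE(\OX_t)}]}\EE[|\delta\OX_t|^2]$; inserting this into the inequality of the previous paragraph yields the statement.

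The step I expect to be the crux is this final estimate on the nonlocal term: one must arrange Cauchy--Schwarz with the Gibbs weight $\omega_\alpha^\TE$ so that exactly one power of $\EE[\omega_\alpha^\TE(\OX_t)]$ survives in the denominator and the constant $\frac{2e^{-\alpha\underline{\TE}}}{\EE[e^{-\alpha\TE(\OX_t)}]}$ emerges; everything else is a routine It\^o computation combined with the algebraic cancellation engineered into $\CH$ through the choice $a=\frac{\gamma}{2m}$. This is the same mechanism as in the consensus-based optimization analysis of \cite{carrillo2018analytical,fornasier2021anisotropic}.
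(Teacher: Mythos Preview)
Your argument is correct and essentially the same as the paper's: you compute the three pieces of $\CH$ via It\^o, exploit the cancellation $\frac{a\gamma}{m}=2a^2$ built into the choice $a=\frac{\gamma}{2m}$, apply Young's inequality with the weight $\varepsilon=\frac{2\lambda}{\gamma}$ (the paper leaves $\varepsilon$ general and specializes at the end, but it is the same choice), and close with the variance bound on $\EE[|X^{\alpha}(\rho_t)-\OX_t|^2]$.

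The only point where you differ from the paper is the derivation of the bound $\EE[|X^{\alpha}(\rho_t)-\OX_t|^2]\leq \frac{2e^{-\alpha\underline{\TE}}}{\EE[e^{-\alpha\TE(\OX_t)}]}\EE[|\delta\OX_t|^2]$. You use the bias--variance decomposition, control the bias $|X^{\alpha}(\rho_t)-\EE[\OX_t]|^2$ by weighted Cauchy--Schwarz with the Gibbs weight, and then absorb the remaining variance piece using that the multiplier is $\geq 1$. The paper instead applies Jensen's inequality directly: since $X^{\alpha}(\rho_t)$ is the mean of $y$ under the probability $\omega_\alpha^{\TE}(y)\rho_t(dy)/\int\omega_\alpha^{\TE}\,d\rho_t$, convexity of $|\cdot|^2$ gives $|x-X^{\alpha}(\rho_t)|^2\leq \int |x-y|^2 \omega_\alpha^{\TE}(y)\rho_t(dy)/\int\omega_\alpha^{\TE}\,d\rho_t$, and then one bounds $\omega_\alpha^{\TE}\leq e^{-\alpha\underline{\TE}}$ and uses $\iint|x-y|^2\rho_t(dx)\rho_t(dy)=2\EE[|\delta\OX_t|^2]$. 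Both routes yield the identical constant, so this is a cosmetic difference only.
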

\begin{proof}
	First the integration by parts formula gives
	\begin{equation}\label{term1}
	\frac{d}{dt} \EE[|\delta\OX_t|^2]=2\EE[\delta\OX_t\cdot \OV_t]\,,
	\end{equation}
	where we have used the fact that $\EE[\delta\OX_t\cdot \EE[V_t]]=0$.
	Applying It\^{o}-Doeblin formula and taking zero-value of the stochastic integrals, we have for any $\varepsilon>0$,
	\begin{align}\label{term2}
	\frac{d}{dt} \EE[|\OV_t|^2] =& -2\frac{\gamma}{m}\EE[|\OV_t|^2]+2\frac{\lambda}{m}\EE[\OV_t\cdot(X^\alpha(\rho_t)-\OX_t)]\nn\\
	&+\frac{\sigma^2}{m^2}\EE[|X^\alpha(\rho_t)-\OX_t|^2] \nn \\
	\leq&-(\frac{2\gamma}{m}-\frac{\lambda}{\varepsilon m})\EE[|\OV_t|^2]+(\frac{\varepsilon\lambda}{ m}+\frac{\sigma^2}{m^2})\EE[|X^\alpha(\rho_t)-\OX_t|^2]
	\,.
	\end{align}
	Further by It\^{o}-Doeblin formula, it holds that
	\begin{align}\label{ito}
	&\frac{d}{dt}\EE[\delta\OX_t\cdot \OV_t]\nn\\
	&\qquad =\EE[|\OV_t|^2]-(\EE[\OV_t])^2-\frac{\gamma}{m}\EE[\delta\OX_t\cdot\OV_t]+\frac{\lambda}{m}\EE[\delta\OX_t\cdot(X^\alpha(\rho_t)-\OX_t)]\nn\\
	&\qquad \leq \EE[|\OV_t|^2] -\frac{\gamma}{2m}\frac{d}{dt} \EE[|\delta\OX_t|^2]-\frac{\lambda}{m}\EE[|\delta\OX_t|^2]+\frac{\lambda}{m}\EE[\delta\OX_t\cdot(X^\alpha(\rho_t)-\EE[\OX_t])]
	\nn\\
	&\qquad = \EE[|\OV_t|^2]-\frac{\gamma}{2m}\frac{d}{dt} \EE[|\delta\OX_t|^2]-\frac{\lambda}{m}\EE[|\delta\OX_t|^2]\,.
	\end{align}
	where we have used \eqref{term1} and the fact that $\EE[\delta\OX_t\cdot(X^\alpha(\rho_t)-\EE[\OX_t])]=0$. Thus, we have
	\begin{align}\label{term3}
	(\frac{\gamma}{2m})^2\frac{d}{dt} \EE[|\delta\OX_t|^2]&+\frac{\gamma}{2m}\frac{d}{dt}\EE[\delta\OX_t\cdot \OV_t] \nn \\
	& \leq \frac{\gamma}{2m}\EE[|\OV_t|^2]-\frac{\lambda\gamma}{2m^2}\EE[|\delta\OX_t|^2]\,.
	\end{align}
	Collecting estimates \eqref{term2} and \eqref{term3}  yields that
	\begin{align}
	\frac{d}{dt}\EE[\CH(t)]&\leq -\left(\frac{2\gamma}{m}-\frac{\lambda}{\varepsilon m}-\frac{\gamma}{2m}\right)\EE[|\OV_t|^2]-\frac{\lambda\gamma}{2m^2}\EE[|\delta\OX_t|^2] \nn\\
	&\quad+(\frac{\varepsilon\lambda}{ m}+\frac{\sigma^2}{m^2})\EE[|X^\alpha(\rho_t)-\OX_t|^2] 
	\,.
	\end{align}
	To estimate the term $\EE[|\OX_t-X^\alpha(\rho_t)|^2]$, we apply Jensen's inequality to obtain 
	\begin{align}\label{fact}
	\EE[|\OX_t-X^\alpha(\rho_t)|^2]&\leq \frac{\iint |x-y|^2\omega_\alpha^\TE(y) \rho_t(dy)\rho_t(dx)}{\int \omega_\alpha^\TE (y)\rho_t(dy) } \nn \\ &\leq 
	2e^{-\alpha\underline \TE}\frac{\EE[|\delta\OX_t|^2]}{\EE[e^{-\alpha\TE(\OX_t)}]}\,.
	\end{align}
	Hence, by choosing $\varepsilon=\frac{2\lambda}{\gamma}$  we obtain
	\begin{align}\label{eqdH}
	\frac{d}{dt}\EE[\CH(t)]\leq&  -\frac{\gamma}{m}\EE[|\OV_t|^2] \nn \\ &-\left(\frac{\lambda\gamma}{2m^2}-(\frac{2\lambda^2}{\gamma m}+\frac{\sigma^2}{m^2})\frac{2e^{-\alpha\underline \TE}}{\EE[e^{-\alpha\TE(\OX_t)}]}\right)\EE[|\delta\OX_t|^2],
	\end{align}
	which completes the proof.
\end{proof}
Next we study the evolution of the quantity $\EE[e^{-\alpha\TE(\OX_t)}]$, and we need an additional assumption on the cost function $\TE$ that
\begin{center}
	$\textbf{\textit{A1}}$:  $\TE\in C^2(\R^d)$ with $\|\nabla^2\TE\|_\infty\leq c_\TE$ for some constant $c_\TE>0$.
\end{center}
\begin{lemma}\label{lemW}
	Under the Assumption \ref{asum} and $\textbf{\textit{A1}}$,	let $(\OX_t,\OV_t)_{t\geq 0}$ be the solution to the nonlinear SDE \eqref{MVeq}. Then it holds that
\begin{align}
	\frac{d^2}{dt^2}(\EE[e^{-\alpha\TE(\OX_t)}])^2 \geq &-\frac{\gamma}{m}\frac{d}{dt}(\EE[e^{-\alpha\TE(\OX_t)}])^2 \nn \\
	 &-4(\alpha +\frac{\alpha\lambda}{m} 2(\frac{2m}{\gamma})^2)c_\TE e^{-2\alpha\underline \TE}\EE[\CH(t)]\,.
	\end{align}
\end{lemma}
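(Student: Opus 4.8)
The idea is to differentiate $E(t):=\EE[e^{-\alpha\TE(\OX_t)}]$ twice, exploiting that $d\OX_t=\OV_t\,dt$ carries no martingale part (so $\OX_t$ contributes no It\^o correction), and then to absorb the two genuinely ``bad'' terms that survive into $\EE[\CH(t)]$ by means of the Hessian bound $\textbf{\textit{A1}}$, the variance estimate \eqref{fact} and the comparison inequalities \eqref{ineqH}. Throughout I write $\mu_t:=\EE[\OX_t]$.

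\textbf{Step 1 (the two derivatives).} By It\^o--Doeblin, $d\,e^{-\alpha\TE(\OX_t)}=-\alpha e^{-\alpha\TE(\OX_t)}\nabla\TE(\OX_t)\cdot\OV_t\,dt$, so with $\Phi_t:=\EE[e^{-\alpha\TE(\OX_t)}\nabla\TE(\OX_t)\cdot\OV_t]$ one has $\frac{d}{dt}E(t)=-\alpha\Phi_t$ and hence $\frac{d^2}{dt^2}E(t)^2=2\alpha^2\Phi_t^2-2\alpha E(t)\frac{d}{dt}\Phi_t$. Applying It\^o--Doeblin to $F(x,v):=e^{-\alpha\TE(x)}\nabla\TE(x)\cdot v$ along \eqref{MVeq} — note that $F$ is linear in $v$ so $\nabla_v^2F\equiv0$, that $\nabla_vF=e^{-\alpha\TE(x)}\nabla\TE(x)$, and that $\nabla_xF\cdot v=e^{-\alpha\TE(x)}\big(-\alpha(\nabla\TE(x)\cdot v)^2+v^\top\nabla^2\TE(x)v\big)$ — and taking expectations (the stochastic integrals are true martingales, since $x\mapsto e^{-\alpha\TE(x)}\nabla\TE(x)$ is bounded under Assumption~\ref{asum} and $\textbf{\textit{A1}}$, together with the moment bounds of Theorem~\ref{thm-huang2021mean}), one obtains
\begin{align*}
\frac{d}{dt}\Phi_t=&-\alpha\EE[e^{-\alpha\TE(\OX_t)}(\nabla\TE(\OX_t)\cdot\OV_t)^2]+\EE[e^{-\alpha\TE(\OX_t)}\OV_t^\top\nabla^2\TE(\OX_t)\OV_t]\\
&-\frac{\gamma}{m}\Phi_t+\frac{\lambda}{m}\EE[e^{-\alpha\TE(\OX_t)}\nabla\TE(\OX_t)\cdot(X^\alpha(\rho_t)-\OX_t)].
\end{align*}
Substituting into the formula for $\frac{d^2}{dt^2}E(t)^2$ and using $\frac{2\alpha\gamma}{m}E(t)\Phi_t=-\frac{\gamma}{m}\frac{d}{dt}E(t)^2$, after dropping the two nonnegative contributions $2\alpha^2\Phi_t^2$ and $2\alpha^2E(t)\EE[e^{-\alpha\TE(\OX_t)}(\nabla\TE(\OX_t)\cdot\OV_t)^2]$ I am left with
\begin{align*}
\frac{d^2}{dt^2}E(t)^2+\frac{\gamma}{m}\frac{d}{dt}E(t)^2\ \ge\ &-2\alpha E(t)\,\EE[e^{-\alpha\TE(\OX_t)}\OV_t^\top\nabla^2\TE(\OX_t)\OV_t]\\
&-\frac{2\alpha\lambda}{m}E(t)\,\EE[e^{-\alpha\TE(\OX_t)}\nabla\TE(\OX_t)\cdot(X^\alpha(\rho_t)-\OX_t)].
\end{align*}

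\textbf{Step 2 (controlling the two terms).} For the Hessian term, $\|\nabla^2\TE\|_\infty\le c_\TE$, $e^{-\alpha\TE(\OX_t)}\le e^{-\alpha\underline\TE}$, $E(t)\le e^{-\alpha\underline\TE}$ and the first line of \eqref{ineqH} give the bound $2\alpha c_\TE e^{-2\alpha\underline\TE}\EE[|\OV_t|^2]\le 4\alpha c_\TE e^{-2\alpha\underline\TE}\EE[\CH(t)]$ on its modulus. For the interaction term — the crucial point — I split $\nabla\TE(\OX_t)=\nabla\TE(\mu_t)+\big(\nabla\TE(\OX_t)-\nabla\TE(\mu_t)\big)$. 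The deterministic contribution $\nabla\TE(\mu_t)$ drops out, because by the definition \eqref{14} of $X^\alpha(\rho_t)$ one has $\EE[e^{-\alpha\TE(\OX_t)}(X^\alpha(\rho_t)-\OX_t)]=X^\alpha(\rho_t)E(t)-\EE[\OX_te^{-\alpha\TE(\OX_t)}]=0$; for the remainder one uses $|\nabla\TE(\OX_t)-\nabla\TE(\mu_t)|\le c_\TE|\OX_t-\mu_t|$, Cauchy--Schwarz, and then \eqref{fact}, which contributes a factor $\big(2e^{-\alpha\underline\TE}\EE[|\OX_t-\mu_t|^2]/E(t)\big)^{1/2}$. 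Combining this with the $E(t)$ in front (so that only $\sqrt{E(t)}\le e^{-\alpha\underline\TE/2}$ survives) and with $\EE[|\OX_t-\mu_t|^2]\le 2\big(\frac{2m}{\gamma}\big)^2\EE[\CH(t)]$ from \eqref{ineqH} bounds the interaction term by $\frac{4\sqrt2\,\alpha\lambda}{m}\big(\frac{2m}{\gamma}\big)^2c_\TE e^{-2\alpha\underline\TE}\EE[\CH(t)]\le\frac{8\alpha\lambda}{m}\big(\frac{2m}{\gamma}\big)^2c_\TE e^{-2\alpha\underline\TE}\EE[\CH(t)]$. Adding the two estimates and moving $\frac{\gamma}{m}\frac{d}{dt}E(t)^2$ to the right gives exactly the claimed inequality.

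\textbf{Expected main obstacle.} The delicate point is the interaction term in Step 2: without the exact cancellation coming from the definition of $X^\alpha(\rho_t)$ one cannot replace $\nabla\TE(\OX_t)$ — which grows only linearly — by its $c_\TE$-Lipschitz increment, and one must then track the powers of $E(t)=\EE[e^{-\alpha\TE(\OX_t)}]$ carefully so that the $\sqrt{E(t)}$ left over from the Cauchy--Schwarz step cancels the $1/E(t)$ produced by \eqref{fact}. Everything else is routine It\^o calculus together with the elementary comparisons in \eqref{ineqH}.
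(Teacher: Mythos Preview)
Your proof is correct and structurally identical to the paper's: differentiate $E(t)=\EE[e^{-\alpha\TE(\OX_t)}]$ twice via It\^o, discard the two nonnegative terms, and absorb the Hessian and interaction contributions into $\EE[\CH(t)]$ via \eqref{ineqH} and \eqref{fact}. The one genuine difference is in how you handle the interaction term. The paper centers at $X^\alpha(\rho_t)$ rather than at $\mu_t=\EE[\OX_t]$: since $\nabla\TE(X^\alpha(\rho_t))$ is deterministic, the same cancellation $\EE[e^{-\alpha\TE(\OX_t)}(X^\alpha(\rho_t)-\OX_t)]=0$ kills it, and the Lipschitz increment $\nabla\TE(\OX_t)-\nabla\TE(X^\alpha(\rho_t))$ pairs directly with $X^\alpha(\rho_t)-\OX_t$ to give $c_\TE\,\EE[|\OX_t-X^\alpha(\rho_t)|^2]$ without any Cauchy--Schwarz. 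This avoids your $\sqrt{E(t)}$ bookkeeping entirely and lands on the same constant with one fewer estimate. Your route via $\mu_t$ is perfectly valid and reaches the identical bound after the harmless $4\sqrt{2}\le 8$ at the end, but the paper's choice of center is the slicker one here.
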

\begin{proof}
	First, applying It\^{o}-Doeblin formula and taking zero-value of the stochastic integrals, we have
	\begin{align*}
	&\frac{d}{dt}\EE[e^{-\alpha\TE(\OX_t)}]=-\alpha \EE [e^{-\alpha\TE(\OX_t)}\nabla \TE(\OX_t)\cdot \OV_t ]\nn\\
	&\quad = -\alpha \EE [ \int_0^t d \la e^{-\alpha\TE(\OX_s)}\nabla \TE(\OX_s), \OV_s \ra ]+\alpha \EE [ e^{-\alpha\TE(\OX_0)}\la  \nabla \TE(\OX_0), \OV_0 \ra ] \\
	&\quad = \alpha \EE [e^{-\alpha\TE(\OX_0)}  \la \nabla \TE(\OX_0), \OV_0 \ra ]-\alpha \EE [ \int_0^t  \la  e^{-\alpha\TE(\OX_s)}\OV_s\nabla^2 \TE(\OX_s), \OV_s \ra  ds]\\
	&\qquad +\alpha^2 \EE [ \int_0^t e^{-\alpha\TE(\OX_s)} |\la \nabla \TE(\OX_s), \OV_s \ra|^2ds ]\\
	&\qquad-\alpha \EE [ \int_0^t e^{-\alpha\TE(\OX_s)} \la \nabla \TE(\OX_s), -\frac{\gamma}{m}\OV_s \ra ds]\\
	&\qquad-\alpha \EE [ \int_0^t e^{-\alpha\TE(\OX_s)} \la \nabla \TE(\OX_s), \frac{\lambda}{m}(X^\alpha(\rho_s)-\OX_s) \ra ds]\,.
	\end{align*}
	Further, differentiating both sides with respect to $t$ gives
	\begin{align}
	&\frac{d^2}{dt^2}\EE[e^{-\alpha\TE(\OX_t)}]=-\alpha \EE [ \la  e^{-\alpha\TE(\OX_t)}\OV_t\nabla^2 \TE(\OX_t), \OV_t \ra ]\nn\\
	&\qquad +\alpha^2 \EE [ e^{-\alpha\TE(\OX_t)} |\la \nabla \TE(\OX_t), \OV_t \ra|^2]\nn\\
	&\qquad -\alpha \EE [  e^{-\alpha\TE(\OX_t)} \la \nabla \TE(\OX_t), -\frac{\gamma}{m}\OV_t \ra ]\nn\\
	&\qquad-\alpha \EE [ e^{-\alpha\TE(\OX_t)} \la \nabla \TE(\OX_t), \frac{\lambda}{m}(X^\alpha(\rho_t)-\OX_t) \ra ] \nn\\
	&\quad \geq -\frac{\gamma}{m} \frac{d}{dt}\EE[e^{-\alpha\TE(\OX_t)}]-\alpha \EE [ \la  e^{-\alpha\TE(\OX_t)}\OV_t\nabla^2 \TE(\OX_t), \OV_t \ra ]\nn\\
	&\qquad -\alpha \EE [ e^{-\alpha\TE(\OX_t)} \la \nabla \TE(\OX_t), \frac{\lambda}{m}(X^\alpha(\rho_t)-\OX_t) \ra ] \nn\\
	&\quad =:-\frac{\gamma}{m} \frac{d}{dt}\EE[e^{-\alpha\TE(\OX_t)}]+I_1+I_2\,,
	\end{align}
	where one has used the fact that
	\begin{equation}
	-\frac{\gamma}{m} \frac{d}{dt}\EE[e^{-\alpha\TE(\OX_t)}]=\frac{\alpha\gamma}{m}\EE [  e^{-\alpha\TE(\OX_t)} \la \nabla \TE(\OX_t), \OV_t \ra ]\,.
	\end{equation}	
	According to assumption $\textbf{\textit{A1}}$, it is easy to see that
	\begin{align}\label{I13}
	I_1&\geq -\alpha \EE [  e^{-\alpha\TE(\OX_t)}\|\nabla^2 \TE(\OX_t)\|_\infty|\OV_t|^2 ]\geq -\alpha c_\TE e^{-\alpha \underline{\TE}} \EE [|\OV_t|^2] .
	\end{align}
	We further notice that 
	\begin{align}
	&\left|\EE [  e^{-\alpha\TE(\OX_t)}\la  \nabla \TE(\OX_t), (X^\alpha(\rho_t)-\OX_t) \ra ]\right|\nn\\
	&\quad = \left|\EE [e^{-\alpha\TE(\OX_t)}\la \nabla \TE(\OX_t)-\nabla \TE(X^\alpha(\rho_t)), (\OX_t-X^\alpha(\rho_t)) \ra ]\right|\nn\\
	&\quad \leq e^{-\alpha\underline \TE}c_\TE\EE[|\OX_t-X^\alpha(\rho_t)|^2]\,,
	\end{align}
	where we have used the fact that $\EE [e^{-\alpha\TE(\OX_t)} \la \nabla \TE(X^\alpha(\rho_t)), (\OX_t-X^\alpha(\rho_t)) \ra ]=0$. Furthermore since $\EE[|\OX_t-X^\alpha(\rho_t)|^2]\leq 2e^{-\alpha\underline \TE}\frac{\EE[|\delta\OX_t|^2]}{\EE[e^{-\alpha\TE(\OX_t)}]}$, one has
	\begin{align}\label{I4}
	I_2\geq &-\frac{\alpha\lambda}{m} e^{-
		\alpha\underline \TE}c_\TE\EE[|\OX_t-X^\alpha(\rho_t)|^2] \nn\\
		\geq& -\frac{\alpha\lambda}{m} 2e^{-2\alpha\underline \TE}c_\TE\frac{\EE[|\delta\OX_t|^2]}{\EE[e^{-\alpha\TE(\OX_t)}]}\,.
	\end{align}
	This combining with \eqref{I13} leads to
	\begin{align}
	\frac{d^2}{dt^2}\EE[e^{-\alpha\TE(\OX_t)}]&\geq -\frac{\gamma}{m} \frac{d}{dt}\EE[e^{-\alpha\TE(\OX_t)}] -\alpha c_\TE e^{-\alpha \underline{\TE}} \EE [|\OV_t|^2] \nn\\
	&\quad-\frac{\alpha\lambda}{m}c_\TE 2e^{-2\alpha\underline \TE}\frac{\EE[|\delta\OX_t|^2]}{\EE[e^{-\alpha\TE(\OX_t)}]}\,.
	\end{align}
	Using this, one can obtain
	\begin{align}
	&\frac{d}{dt}\left(\frac{1}{2}\frac{d}{dt}(\EE[e^{-\alpha\TE(\OX_t)}])^2\right)=\frac{d}{dt}\left(\EE[e^{-\alpha\TE(\OX_t)}]\frac{d}{dt}(\EE[e^{-\alpha\TE(\OX_t)}])\right)\nn\\
	&\quad =\left(\frac{d}{dt}(\EE[e^{-\alpha\TE(\OX_t)}])\right)^2+\EE[e^{-\alpha\TE(\OX_t)}]\frac{d^2}{dt^2}\EE[e^{-\alpha\TE(\OX_t)}]\nn\\
	&\quad \geq -\frac{\gamma}{2m}\frac{d}{dt}(\EE[e^{-\alpha\TE(\OX_t)}])^2-\alpha c_\TE e^{-2\alpha \underline{\TE}} \EE [|\OV_t|^2]\nn\\
	&\qquad -\frac{\alpha\lambda}{m} 2e^{-2\alpha\underline \TE}c_\TE\EE[|\delta\OX_t|^2] \nn\\
	&\quad \geq  -\frac{\gamma}{2m}\frac{d}{dt}(\EE[e^{-\alpha\TE(\OX_t)}])^2 \nn\\
	&\qquad -\left(2\alpha +2\frac{\alpha\lambda}{m} 2(\frac{2m}{\gamma})^2\right)c_\TE e^{-2\alpha\underline \TE}\EE[\CH(t)]
	\,,
	\end{align}
	where we have used \eqref{ineqH} in the last inequality. This completes the proof.
\end{proof}
Our main theorem on global convergence can be described in the following way:
\begin{theo-frmd}
	Under the Assumption \ref{asum} and $\textbf{\textit{A1}}$,	let $(\OX_t,\OV_t)_{t\geq 0}$ be the solution to the nonlinear SDE \eqref{MVeq}. Further we assume that the initial data $\OX_0$ and $\OV_0$ satisfy
	\begin{equation}\label{eq1}
	\mu:=\frac{\lambda\gamma}{2m^2}-(\frac{2\lambda^2}{\gamma m}+\frac{\sigma^2}{m^2})\frac{4e^{-\alpha\underline \TE}}{\EE[e^{-\alpha\TE(\OX_0)}]}>0,
	\end{equation}
	and
	{\small \begin{align}\label{asuE}
		&2\alpha\frac{\gamma}{m}\EE[e^{-\alpha\TE(\OX_0)}]\left(\EE[e^{-\alpha\TE(\OX_0)}\nabla \TE(\OX_0)\cdot \OV_0]\right)_+\nn\\
		&\quad+4(\alpha +\frac{\alpha\lambda}{m} 2(\frac{2m}{\gamma})^2)c_\TE e^{-2\alpha\underline \TE}\frac{\EE[\CH(0)]}{\chi(\frac{\gamma}{m}-\chi)}
		< \frac{3}{4}(\EE[e^{-\alpha\TE(\OX_0)}])^2\,,
		\end{align}}
	where we denote $x_+=\max\{x,0\}$, $\forall\, x\in\mathbb R$, and
	$$\chi=\frac{\min\{\mu,\frac{\gamma}{m}\}}{\frac{3}{2}((\frac{\gamma}{2m})^2+1)}.$$ 
	Then $\EE[|\OX_t-\EE[\OX_t]|^2]\to 0,\EE[|\OV_t|^2]\to 0$ exponentially fast  as $t\to \infty$, and there exists  some $\tilde x$ depending on $\alpha$ such that $\EE[\OX_t]\to \tilde x$ and $X^\alpha(\rho_t)\to \tilde x$   exponentially fast  as $t\to \infty$. Moreover it holds that
	{\small \begin{equation}
	\TE(\tilde x)-\underline{\TE}\leq \frac{1}{\alpha}\log(2)-\frac{1}{\alpha}\log(\EE[e^{-\alpha\TE(\OX_0)}])-\underline{\TE} \to 0\mbox{ as }\alpha \to \infty\,.
	\end{equation}}
\end{theo-frmd}
\begin{remark}
	If we additionally assume the inverse continuity of $\TE$ holds, namely for any $x \in \RR^d$ 
	there exists  a minimizer $x^*$ of $\TE$ (which may depend on $x$) such that  it holds 
	\begin{equation*}
	|x-x^\ast| \leq  C_0|\TE(x)-\underline \TE|^\ell\,,
	\end{equation*}
	where  $\ell, C_0$ are some positive constants, then one can conclude that $\tilde x\to x^*$ as $\alpha\to \infty$.
\end{remark}
\begin{proof}
	Define 
	\begin{equation}
	T:=\inf\left\{t\geq 0:~\EE[e^{-\alpha\TE(\OX_t)}]<\frac{1}{2}\EE[e^{-\alpha\TE(\OX_0)}]\right\} \mbox{ with }\inf \emptyset=\infty\,.
	\end{equation}
	Obviously, $T>0$. Assume that $T<\infty$, then for $t\in[0,T]$, one can deduce that
	\begin{align}
	&\frac{\lambda\gamma}{2m^2}-(\frac{2\lambda^2}{\gamma m}+\frac{\sigma^2}{m^2})\frac{2e^{-\alpha\underline \TE}}{\EE[e^{-\alpha\TE(\OX_t)}]}\nn\\
	\geq \ &\frac{\lambda\gamma}{2m^2}-(\frac{2\lambda^2}{\gamma m}+\frac{\sigma^2}{m^2})\frac{4e^{-\alpha\underline \TE}}{\EE[e^{-\alpha\TE(\OX_0)}]}=\mu>0\,.
	\end{align}
	Consequently by \eqref{eqdH} we have
	\begin{align}
	\frac{d}{dt}\EE[\CH(t)]&\leq  -\frac{\gamma}{m}\EE[|\OV_t|^2]
	-\mu \EE[|\delta\OX_t|^2] \nn\\ &\leq -\min\{\mu,\frac{\gamma}{m}\}(\EE[|\delta\OX_t|^2]+\EE[|\OV_t|^2])\nn\\
	&\leq -\frac{\min\{\mu,\frac{\gamma}{m}\}}{\frac{3}{2}((\frac{\gamma}{2m})^2+1)}\EE[\CH(t)]
	\,,
	\end{align}
	where we have used the estimate \eqref{ineqH}. This implies that
	\begin{equation}
	\EE[\CH(t)]\leq \EE[\CH(0)]\exp\left(-\frac{\min\{\mu,\frac{\gamma}{m}\}}{\frac{3}{2}((\frac{\gamma}{2m})^2+1)}t\right)=\EE[\CH(0)]\exp(-\chi t)\,.
	\end{equation}
	One further notice that
	\begin{equation*}
	\chi\leq \frac{\frac{\gamma}{m}}{\frac{3}{2}((\frac{\gamma}{2m})^2+1)}<\frac{\gamma}{m}\,.
	\end{equation*}
	Set $\mc{Y}(t):=(\EE[e^{-\alpha\TE(\OX_t)}])^2$. Then we have
	\begin{equation}
	\mc{Y}'(0)= -2\alpha\EE[e^{-\alpha\TE(\OX_0)}]\EE[e^{-\alpha\TE(\OX_0)}\nabla \TE(\OX_0)\cdot \OV_0].
	\end{equation}
	By Gronwall's inequality, it follows from Lemma \ref{lemW} that
	\begin{align*}
	&\frac{d}{dt}\mc{Y}(t) \geq \mc{Y}'(0)\exp(-\frac{\gamma}{m}t)\nn\\
	&\qquad -4\left(\alpha +\frac{\alpha\lambda}{m} 2(\frac{2m}{\gamma})^2\right)c_\TE e^{-2\alpha\underline \TE}\exp(-\frac{\gamma}{m}t)\int_0^t\exp(\frac{\gamma}{m}s)\EE[\CH(s)]ds \nn\\
	&\quad \geq \mc{Y}'(0)\exp(-\frac{\gamma}{m}t) \nn\\
	&\qquad -4\left(\alpha +\frac{\alpha\lambda}{m} 2(\frac{2m}{\gamma})^2\right)c_\TE e^{-2\alpha\underline \TE}\EE[\CH(0)]\exp(-\frac{\gamma}{m}t)\int_0^t\exp((\frac{\gamma}{m}-\chi)s)ds
	\nn\\ 
	&\quad \geq \mc{Y}'(0)\exp(-\frac{\gamma}{m}t) \nn\\
	&\qquad -4\left(\alpha +\frac{\alpha\lambda}{m} 2(\frac{2m}{\gamma})^2\right)c_\TE e^{-2\alpha\underline \TE}\frac{\EE[\CH(0)]}{\frac{\gamma}{m}-\chi} \exp(-\chi t)\,,
	\end{align*}
	which implies that
	\begin{align*}
	\mc{Y}(t)\geq & \ \mc{Y}(0)- \frac{m}{\gamma} (-\mc{Y}'(0))_+ \nn\\ &-4\left(\alpha +\frac{\alpha\lambda}{m} 2(\frac{2m}{\gamma})^2\right)c_\TE e^{-2\alpha\underline \TE}\frac{\EE[\CH(0)]}{\chi(\frac{\gamma}{m}-\chi)}\,.
	\end{align*}
	By assumption \eqref{asuE}, this means that
	\begin{align*}
	&(\EE[e^{-\alpha\TE(\OX_t)}])^2\nn\\
	&\quad \geq  (\EE[e^{-\alpha\TE(\OX_0)}])^2-2\alpha\frac{\gamma}{m}\EE[e^{-\alpha\TE(\OX_0)}]\left(\EE[e^{-\alpha\TE(\OX_0)}\nabla \TE(\OX_0)\cdot \OV_0]\right)_+\nn\\
	&\qquad -4\left(\alpha +\frac{\alpha\lambda}{m} 2(\frac{2m}{\gamma})^2\right)c_\TE e^{-2\alpha\underline \TE}\frac{\EE[\CH(0)]}{\chi(\frac{\gamma}{m}-\chi)}\nn\\
	&\quad \geq  \frac{1}{4} (\EE[e^{-\alpha\TE(\OX_0)}])^2\,.
	\end{align*}
	This means that there exists $\delta>0$ such that $\EE[e^{-\alpha\TE(\OX_t)}]\geq \frac{1}{2}\EE[e^{-\alpha\TE(\OX_0)}]$ in $[T,T+\delta)$ as well. This then contradicts with the definition of $T$. Hence $T=\infty$. Consequently it holds that
	\begin{equation}\label{decay}
	\EE[\CH(t)]\leq \EE[\CH(0)]\exp(-\chi t) \mbox{ and }\EE[e^{-\alpha\TE(\OX_t)}]\geq\frac{1}{2}\EE[e^{-\alpha\TE(\OX_0)}],
	\end{equation}
	for all $t\geq 0$. Recalling the fact \eqref{fact} this infers that
	\begin{align}\label{esXa}
	&\EE[|\OX_t-X^\alpha(\rho_t)|^2]\leq 2e^{-\alpha\underline \TE}\frac{\EE[|\delta\OX_t|^2]}{\EE[e^{-\alpha\TE(\OX_t)}]}\nn\\
	&\qquad \leq 4e^{-\alpha\underline \TE}(\frac{2m}{\gamma})^2\frac{2\EE[\CH(0)]}{\EE[e^{-\alpha\TE(\OX_0)}]}\exp(-\chi t).
	\end{align}
	Additionally, one has
	\begin{align}\label{esE}
	&\EE[|\OX_t-\EE[\OX_t]|^2]\leq  2(\frac{2m}{\gamma})^2\EE[\CH(t)] \nn\\
	&\quad \leq  C\exp(-\chi t)\EE[|\OV_t|^2] \nn\\
	&\quad \leq  2\EE[\CH(t)] \leq  C\exp(-\chi t)\,.
	\end{align}
	Moreover we have
	\begin{equation}
	|\frac{d}{dt}\EE[\OX_t]|\leq \EE[|\OV_t|]\leq C\exp(-\frac{1}{2}\chi t)\to 0\mbox{ as }t\to\infty\,.
	\end{equation}
	This means that $\EE[\OX_t]\to \tilde x$ for some $\tilde x$ depending on $\alpha$, then it  follows from \eqref{esE} that $\OX_t\to \tilde x$ in mean square. Thus we have
	$X^\alpha(\rho_t)\to \tilde x$ according to \eqref{esXa}. Furthermore, by \eqref{decay} one as $\frac{1}{2}\EE[e^{-\alpha\TE(\OX_0)}]\leq \EE[e^{-\alpha\TE(\OX_t)}]\to e^{-\alpha\TE(\tilde x)}$. Therefore we conclude that
	\begin{equation}
	\TE(\tilde x)\leq \frac{1}{\alpha}\log(2)-\frac{1}{\alpha}\log\left(\EE[e^{-\alpha\TE(\OX_0)}]\right).
	\end{equation}
	By the Laplace principle \eqref{Laplace}, one has
	\begin{equation}
	0\leq \TE(\tilde x)-\underline{\TE}\leq  \frac{1}{\alpha}\log(2)-\frac{1}{\alpha}\log(\EE[e^{-\alpha\TE(\OX_0)}])-\underline{\TE} \to 0\mbox{ as }\alpha \to \infty\,.
	\end{equation}
	This completes the proof.
\end{proof}

\section{Numerical examples}
In this section, we illustrate through various numerical examples the previous theoretical analysis, i.e., the mean-field limit and the small inertial limit, and analyze the performance of SD-PSO-based methods against various prototype global optimization functions. We refer to \cite{carrillo2019consensus,fhps20-2,chen2020consensusbased,benfenati2021binary} for applications of CBO and related methods to high dimensional problems in machine learning.

\paragraph{The SD-PSO algorithm.}
First we introduce the time discrete versions of the SD-PSO systems \cite{Platen}.
The particle system \eqref{eq:psoc} is solved by the \emph{discrete PSO method without local best}
\coloredeq{eq:psoiDiscr}{
\begin{split}
X^{n+1}_i &= X^{n}_i + \Delta t \ V^{n+1}_i,\\
\iw V^{n+1}_i &= \iw V^{n}_i - \gamma \Delta t \ V_i^{n+1} + \lambda \Delta t  \left(\Q_\alpha^n-X^n_i\right)\\
&\quad  + \sigma \sqrt{\Delta t} \ D(\Q_\alpha^n-X^n_i) \ \theta^n_{i},
\end{split}}
where $\theta_{i} \sim \mathcal{N}(0,1)$ and the last equation can be rewritten as
\coloredeq{eq:veld}{
\begin{split}
V^{n+1}_i &= \left( \frac{m}{m + \gamma \ \Delta t}\right) V^{n}_i + \frac{\lambda \ \Delta t }{m + \gamma \ \Delta t}\left(\Q_{\alpha}^n-X^n_i\right)\\
&\quad + \frac{\sigma \ \sqrt{\Delta t} }{m + \gamma \ \Delta t} D(\Q_{\alpha}^n-X^n_i) \ \theta^n_{i}.
\end{split}}
In the general case, the SD-PSO system \eqref{eq:psocir} is solved by the \emph{discrete PSO method with local best}
\coloredeq{eq:psoDiscr}{
\begin{split}
X^{n+1}_i &= X^{n}_i + \Delta t \ V^{n+1}_i ,\\
V^{n+1}_i &= \left( \frac{m}{m + \gamma \ \Delta t}\right) V^{n}_i + \frac{\lambda_1 \ \Delta t }{m + \gamma \ \Delta t}\left(\P^n_i-X^n_i\right)\\
&+ \frac{\lambda_2 \ \Delta t }{m + \gamma \ \Delta t}\left(\G_\alpha^n-X^n_i\right)
+ \frac{\sigma_1 \ \sqrt{\Delta t} }{m + \gamma \ \Delta t} D(\P^n_i-X^n_i) \ \theta^n_{1,i}\\
& + \frac{\sigma_2 \ \sqrt{\Delta t} }{m + \gamma \ \Delta t} D(\G_\alpha^n-X^n_i) \ \theta^n_{2,i},\\
\P^{n+1}_i &= \P^{n}_i + \nu \ \Delta t \left(X^{n+1}_i-\P^{n}_i\right)S^\beta(X^{n+1}_i,\P^{n}_i), 
\end{split}}
where $\theta_{1,i}$, $\theta_{2,i} \sim \mathcal{N}(0,1)$. 
\begin{remark} Note that, the numerical scheme \eqref{eq:psoDiscr} using uniform noise becomes equivalent to the PSO algorithm \eqref{eq:psoi} under assumptions \eqref{eq:param} for $ \Delta t = 1 $, $\nu=0.5$, and taking the limit $\alpha$, $\beta \rightarrow \infty$ so that $\P_i^n$, $\G_{\alpha}^n$ match the local and global best definitions in \eqref{eq:psoEvolution}. In addition, in the limit $m\to 0^+$ scheme \eqref{eq:psoDiscr} is consistent with the zero-inertia limit \eqref{eq:cbozero} and reduces to the \emph{discrete CBO method with local best}
\coloredeq{eq:cboDiscr}{
\begin{split}
X^{n+1}_i &= X^{n}_i + {\lambda_1 \ \Delta t }\left(\P^n_i-X^n_i\right)
+ {\lambda_2 \ \Delta t }\left(\G_\alpha^n-X^n_i\right)\\
&+ {\sigma_1 \ \sqrt{\Delta t} } D(\P^n_i-X^n_i) \ \theta^n_{1,i}
 + {\sigma_2 \ \sqrt{\Delta t} } D(\G_\alpha^n-X^n_i) \ \theta^n_{2,i},\\
 \P^{n+1}_i &= \P^{n}_i + \nu \ \Delta t \left(X^{n+1}_i-\P^{n}_i\right)S^\beta(X^{n+1}_i,\P^{n}_i).
\end{split}}
\end{remark}

\subsection{Validation of the mean field limit}
In the following we validate numerically the mean field limit by considering as prototype functions for global optimization the Ackley function and the Rastrigin function in one dimension. The functions have multiple local minima that can easily trap the particle dynamics (see Figure \ref{Fig1}). We refer to \cite{Grassi2021PSO} for additional examples.

\begin{figure}[tb!]
\begin{minipage}{\linewidth}
\centering
\subcaptionbox{Ackley}{\includegraphics[scale=0.4]{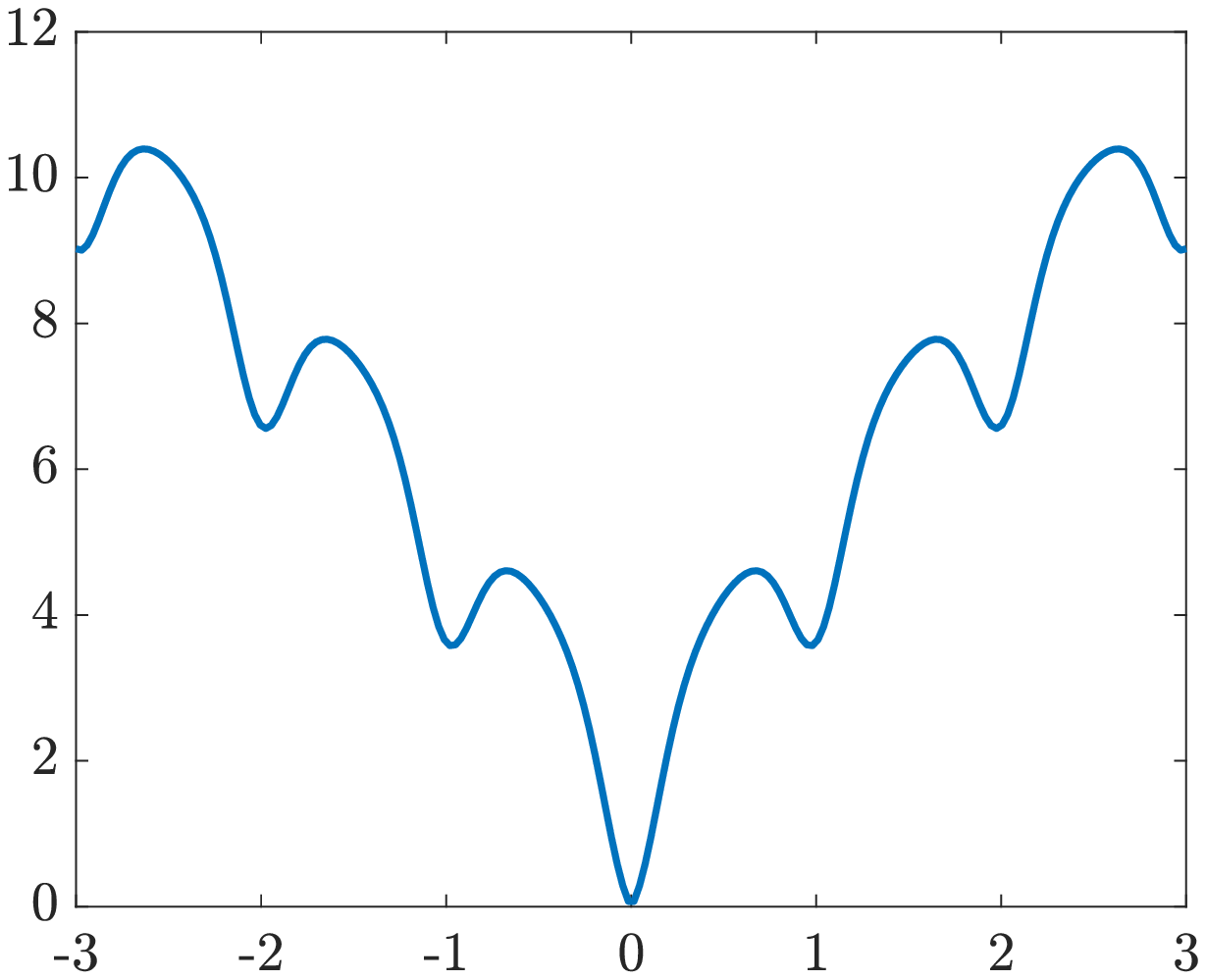}}\hskip -.5cm
\subcaptionbox{Rastrigin}{\includegraphics[scale= 0.4]{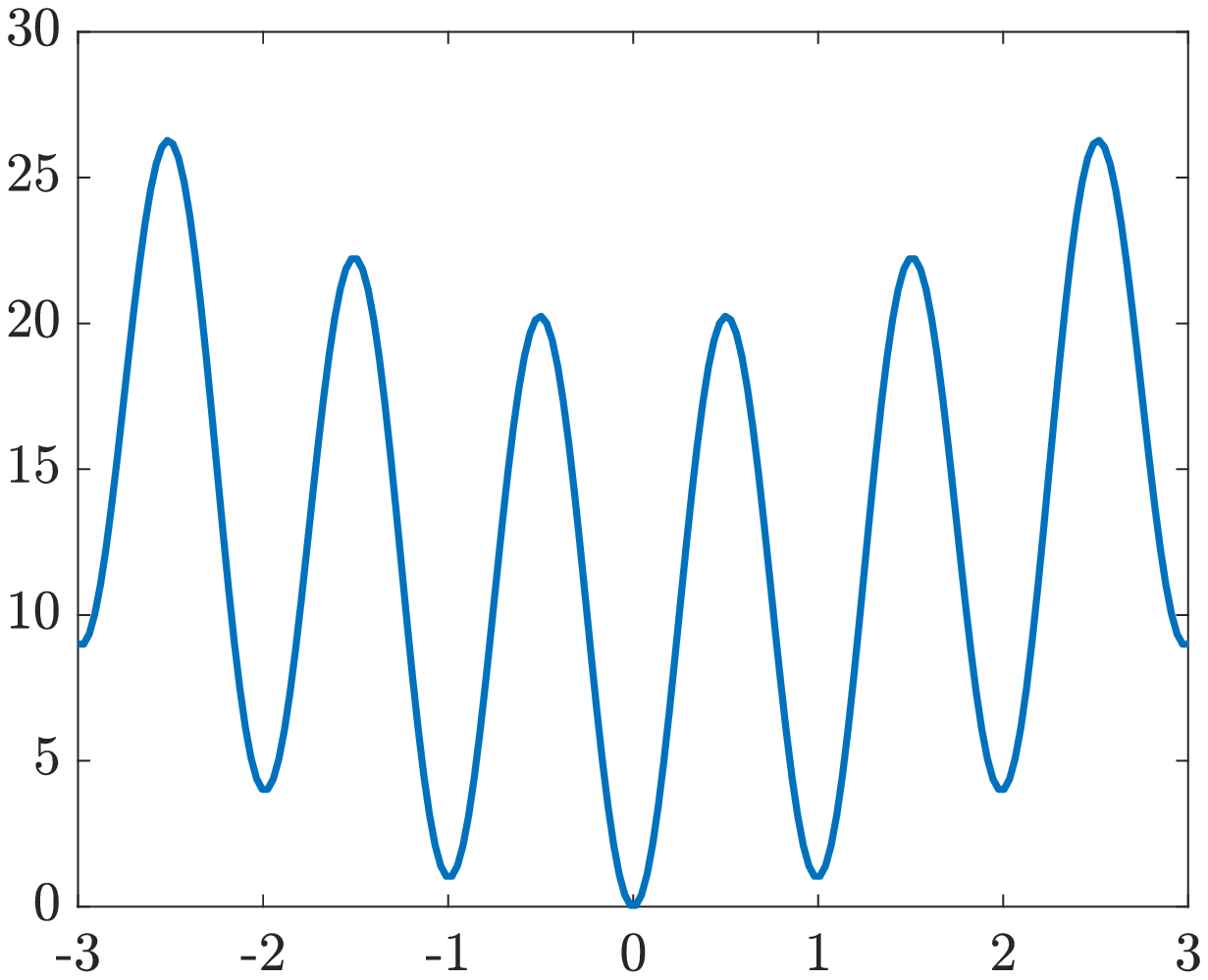}} 
\caption{One-dimensional Ackley and Rastrigin functions in the interval $\left[ -3, 3 \right]$ with global minimum in the origin.}
\label{Fig1}
\end{minipage}
\end{figure}

\paragraph{The MF-PSO solver.}
The corresponding MF-PSO equation without local best \eqref{PDEi} has been discretized using a dimensional splitting where the transport part is solved through a backward semi-Lagrangian method and the remaining Fokker-Planck term is discretized using an implicit central scheme. The MF-PSO equation with memory \eqref{PDEii} is solved by a further dimensional splitting where the additional memory term is discretized using a Lax-Wendroff method. Zero boundary conditions have been implemented outside the computational domain. We refer \cite{Grassi2021PSO,DP} for further details and additional discretizations of Vlasov-Fokker-Planck systems.


In the sequel we used $N=5 \times 10^5$ particles, a mesh size for the mean field solver of $90 \times 120$ points for $(x,v)\in [-3,3]\times [-4,4]$, and whenever present, the mesh and domain size in $y$ have been taken identical to those in $x$. To represent the particle solution, we used the probability density estimate based on a normal kernel reconstruction evaluated at equally-spaced points. In all simulations, the initial distribution is assumed to be uniform and the minimum is assumed in $x=0$.

\begin{figure}[tb]
\begin{minipage}{\linewidth}
\centering
\subcaptionbox{SD-PSO, $t = 0.5$}{\includegraphics[scale=0.25]{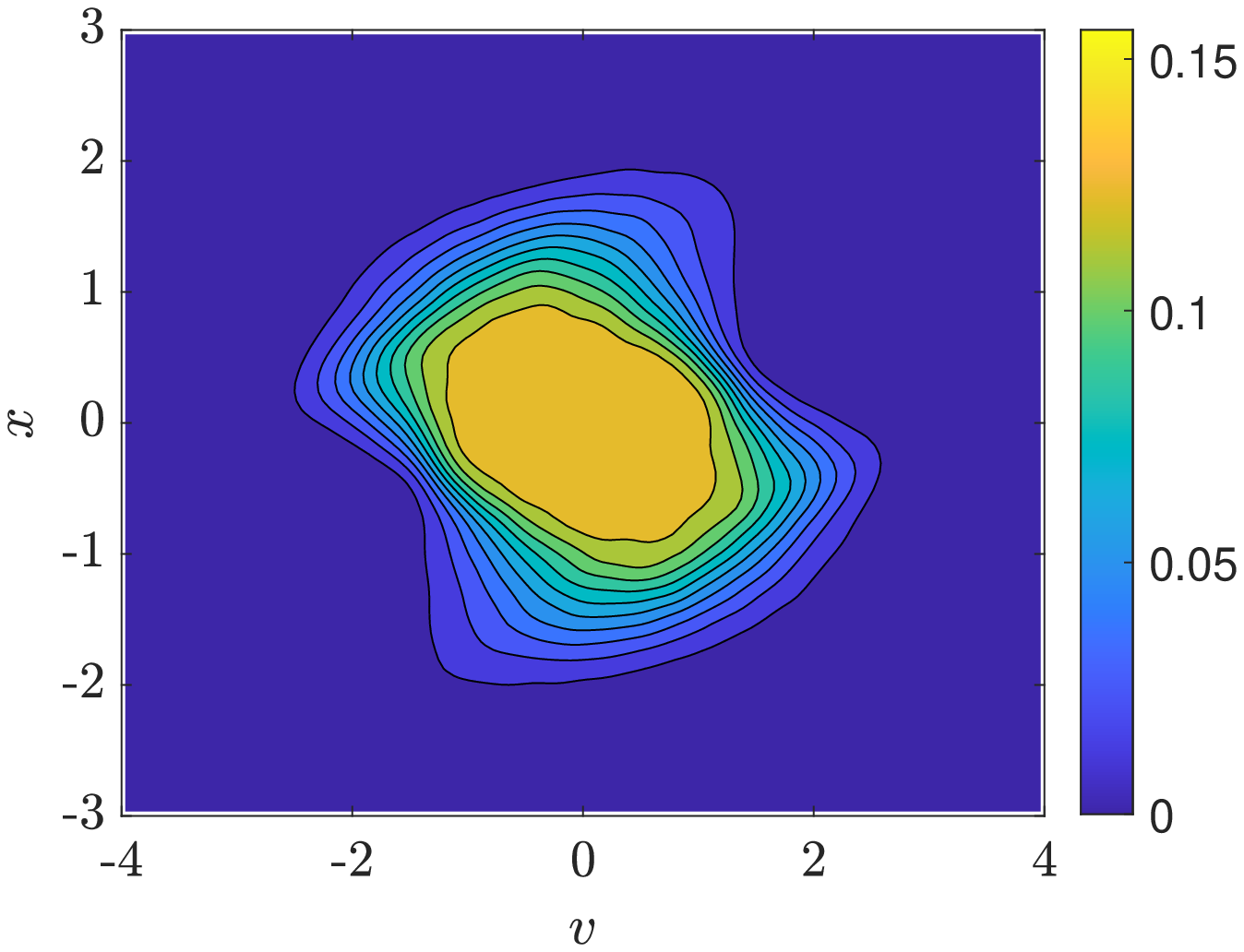}} 
\subcaptionbox{SD-PSO, $t = 1$}{\includegraphics[scale=0.25]{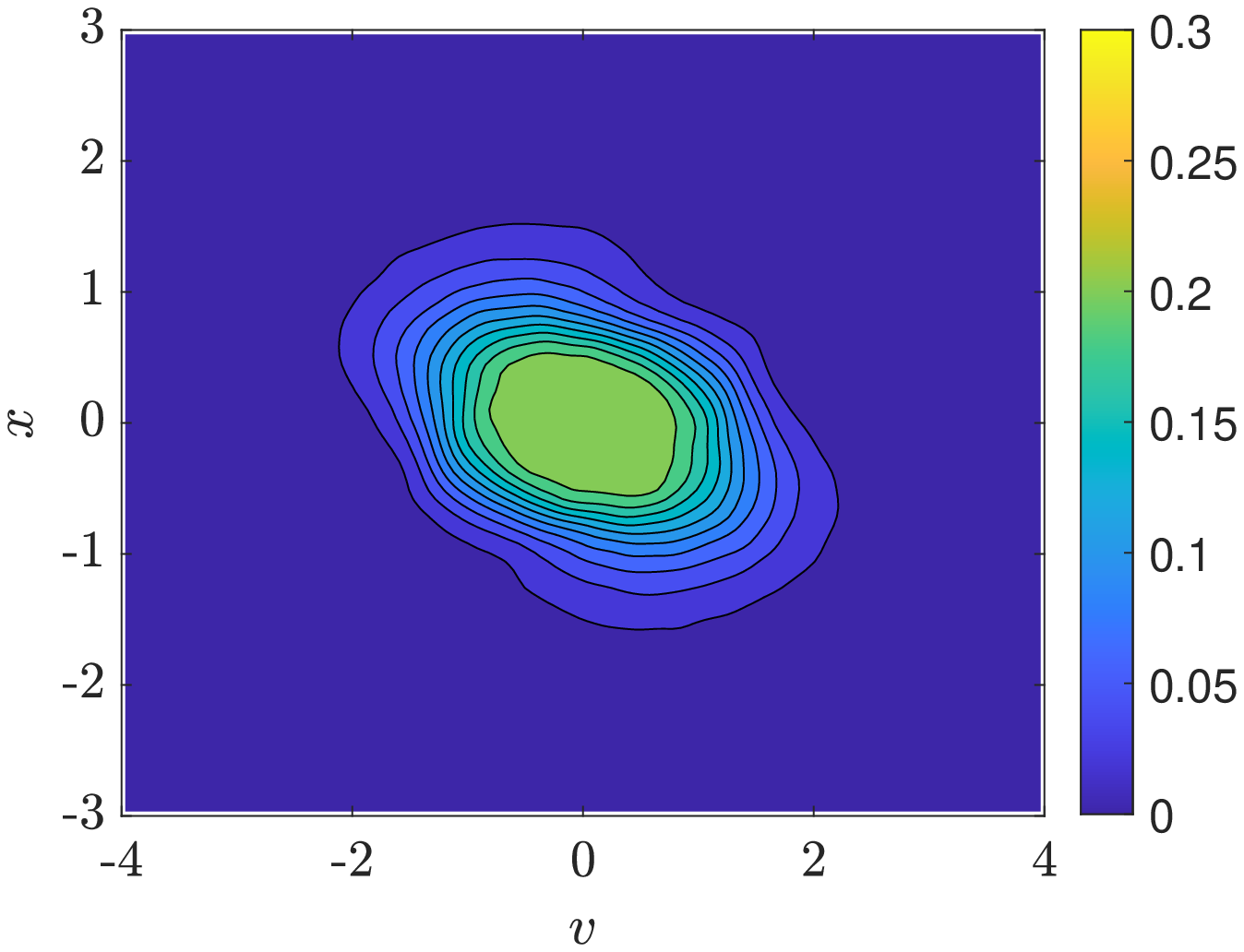}} 
\subcaptionbox{SD-PSO, $t = 3$}{\includegraphics[scale=0.25]{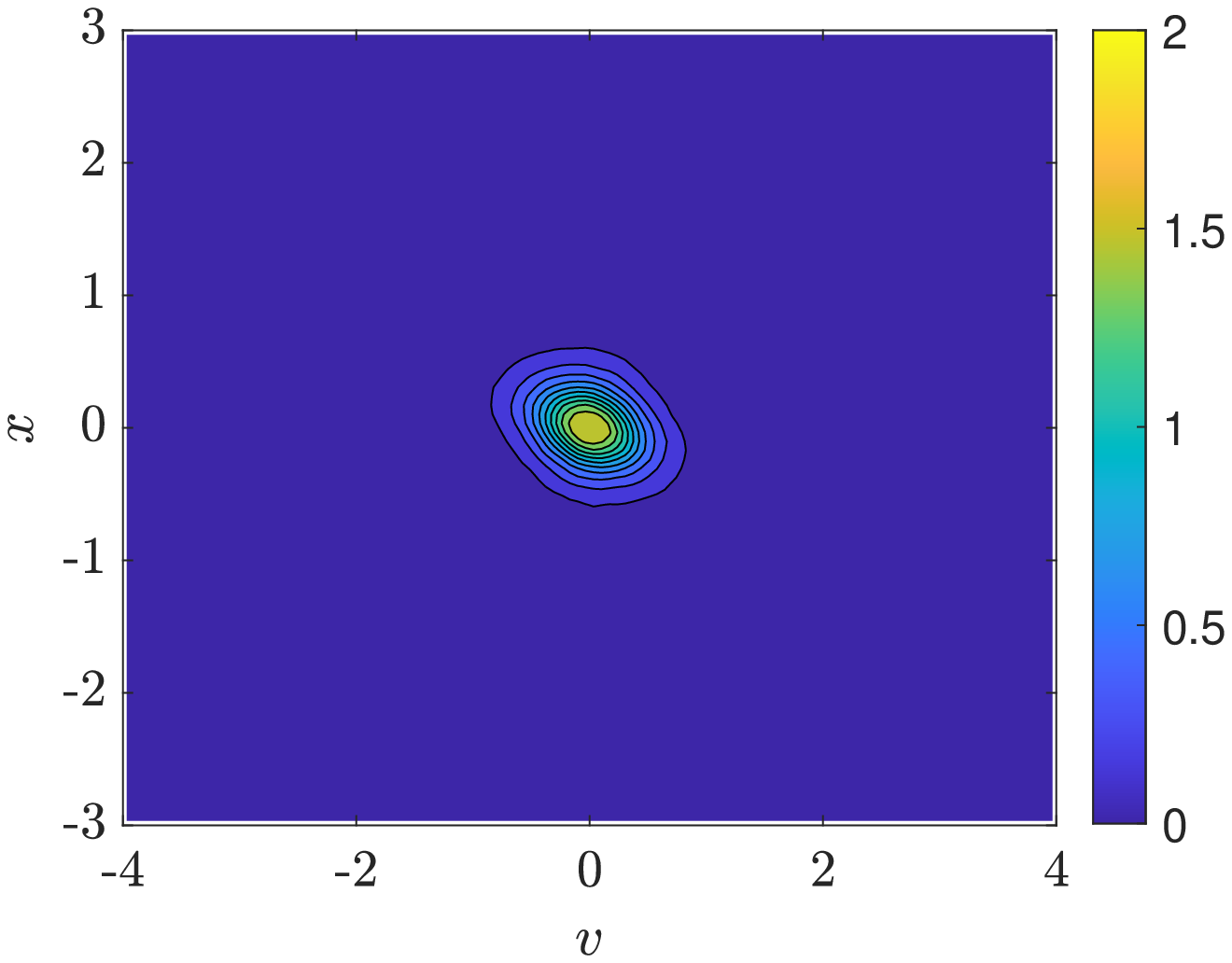}}\\
\subcaptionbox{MF-PSO, $t = 0.5$}{\includegraphics[scale=0.25]{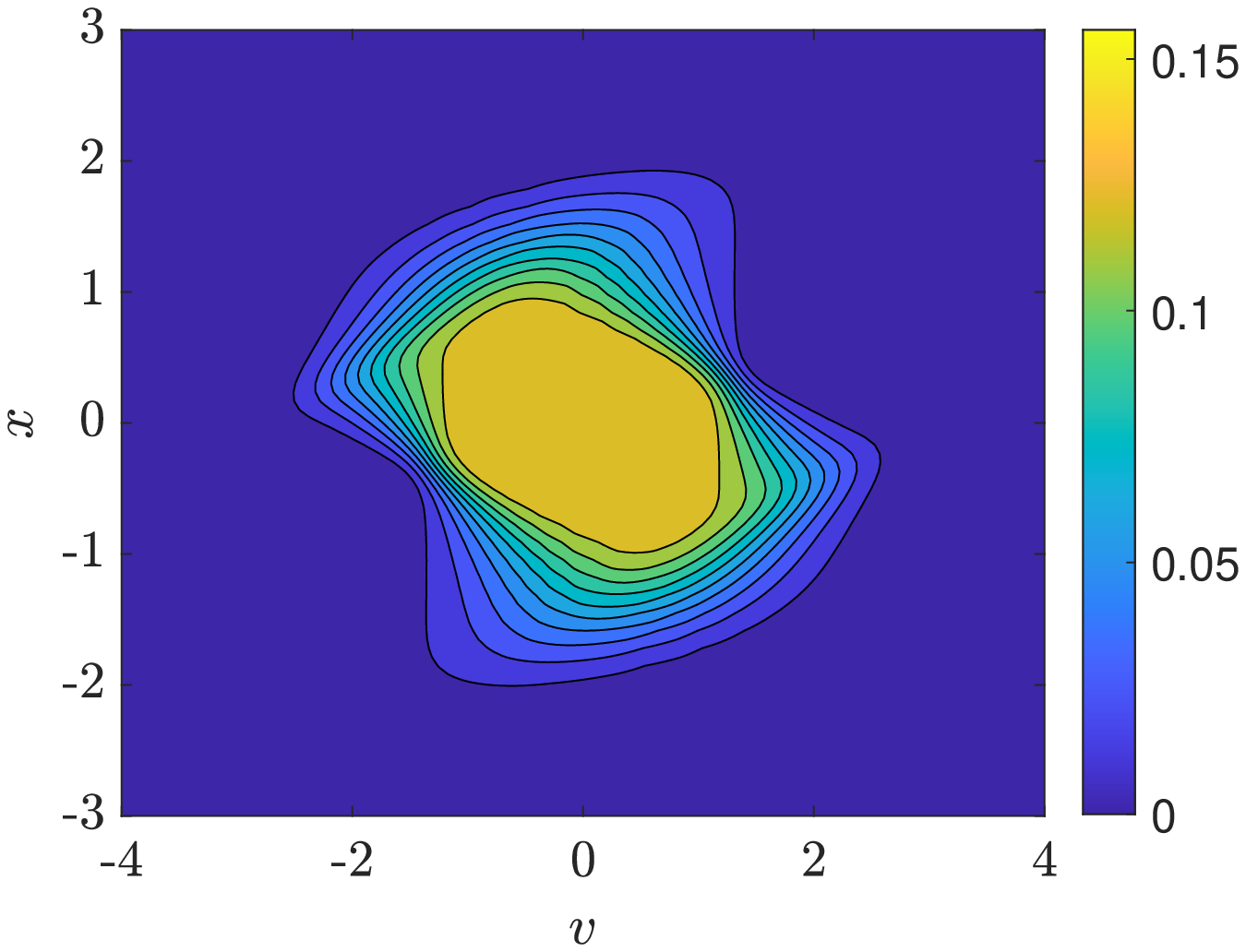}}  
\subcaptionbox{MF-PSO, $t = 1$}{\includegraphics[scale=0.25]{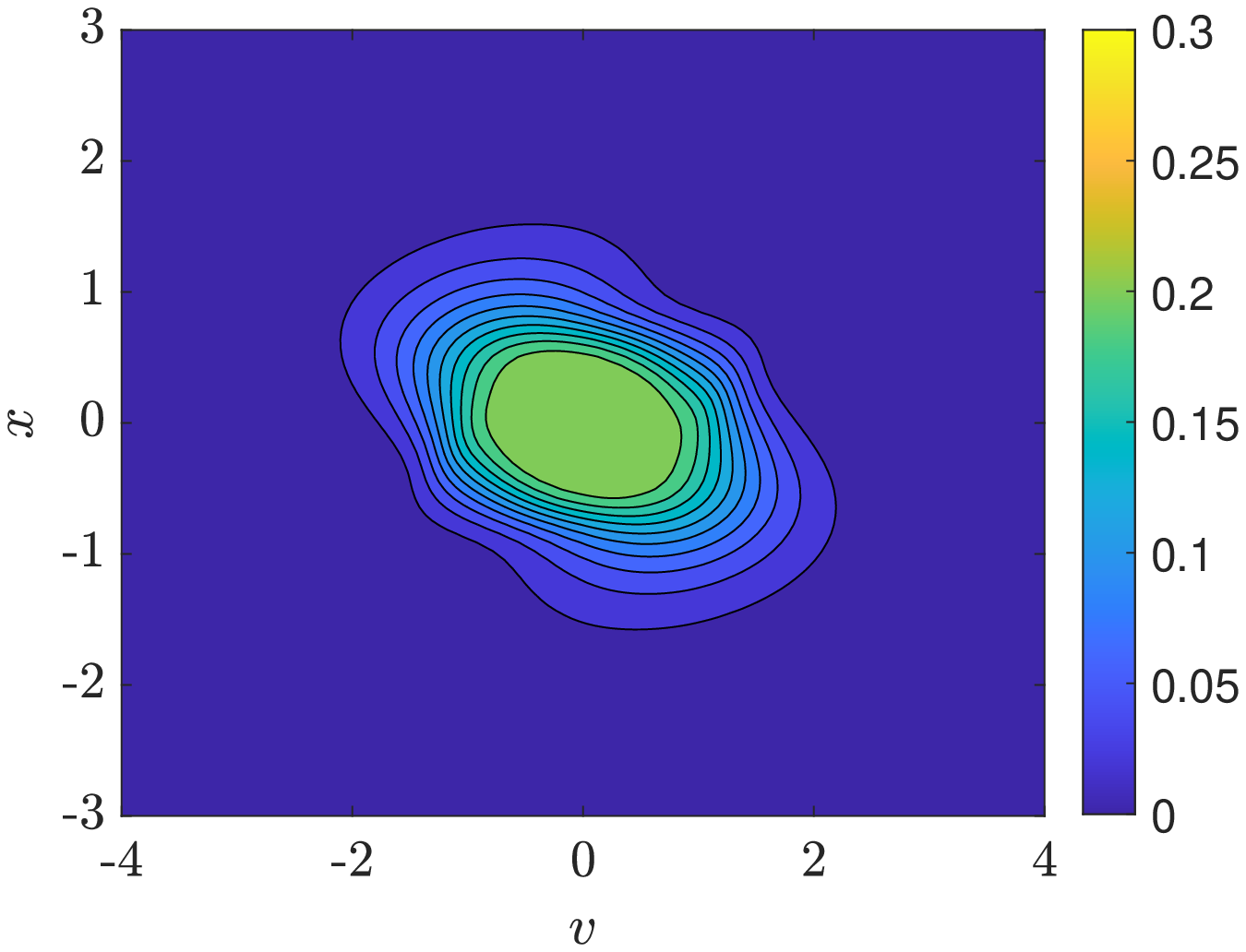}} 
\subcaptionbox{MF-PSO, $t = 3$}{\includegraphics[scale=0.25]{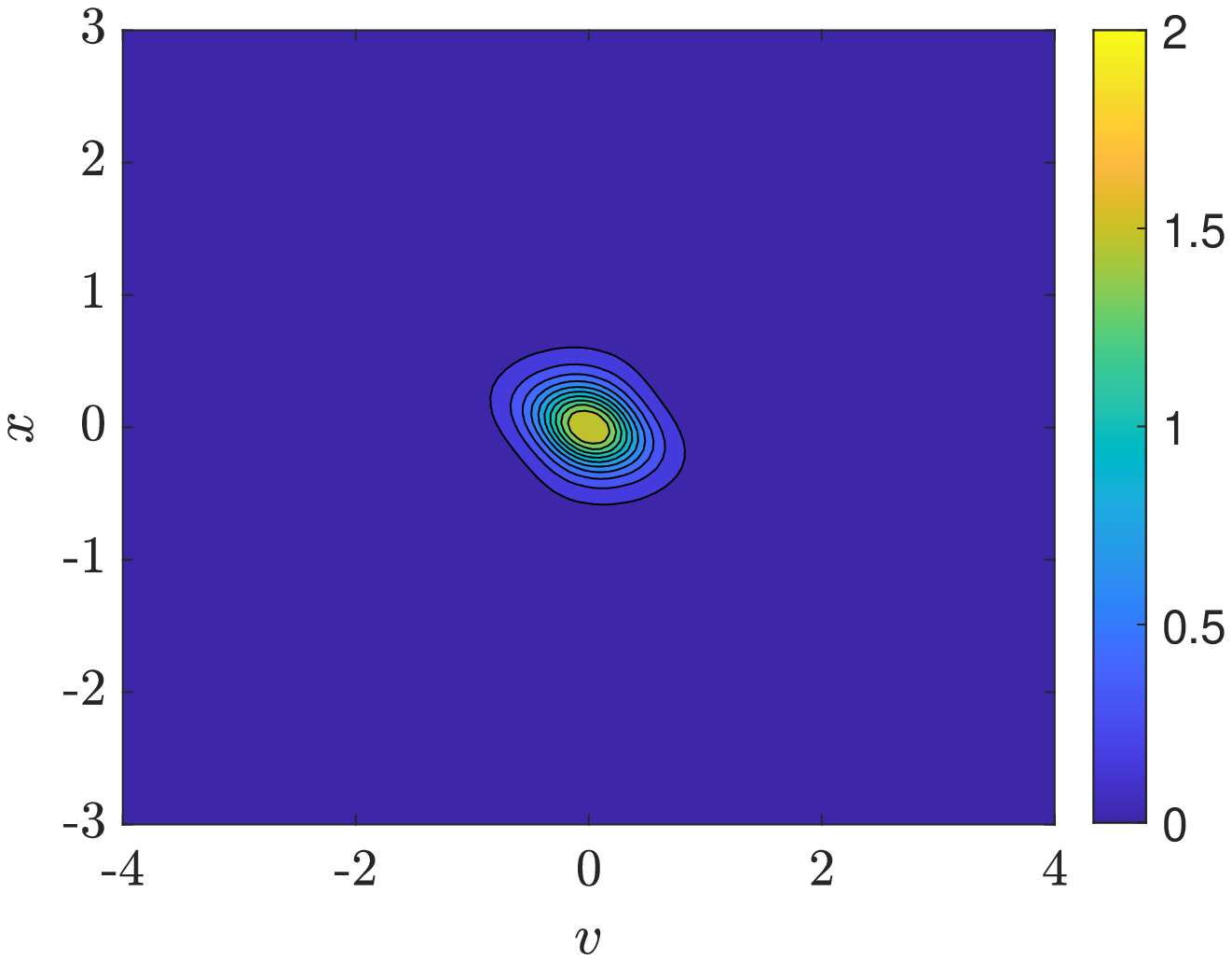}}  \\
\subcaptionbox{$\rho(x,t)$, $t = 0.5$}{\includegraphics[scale=0.25]{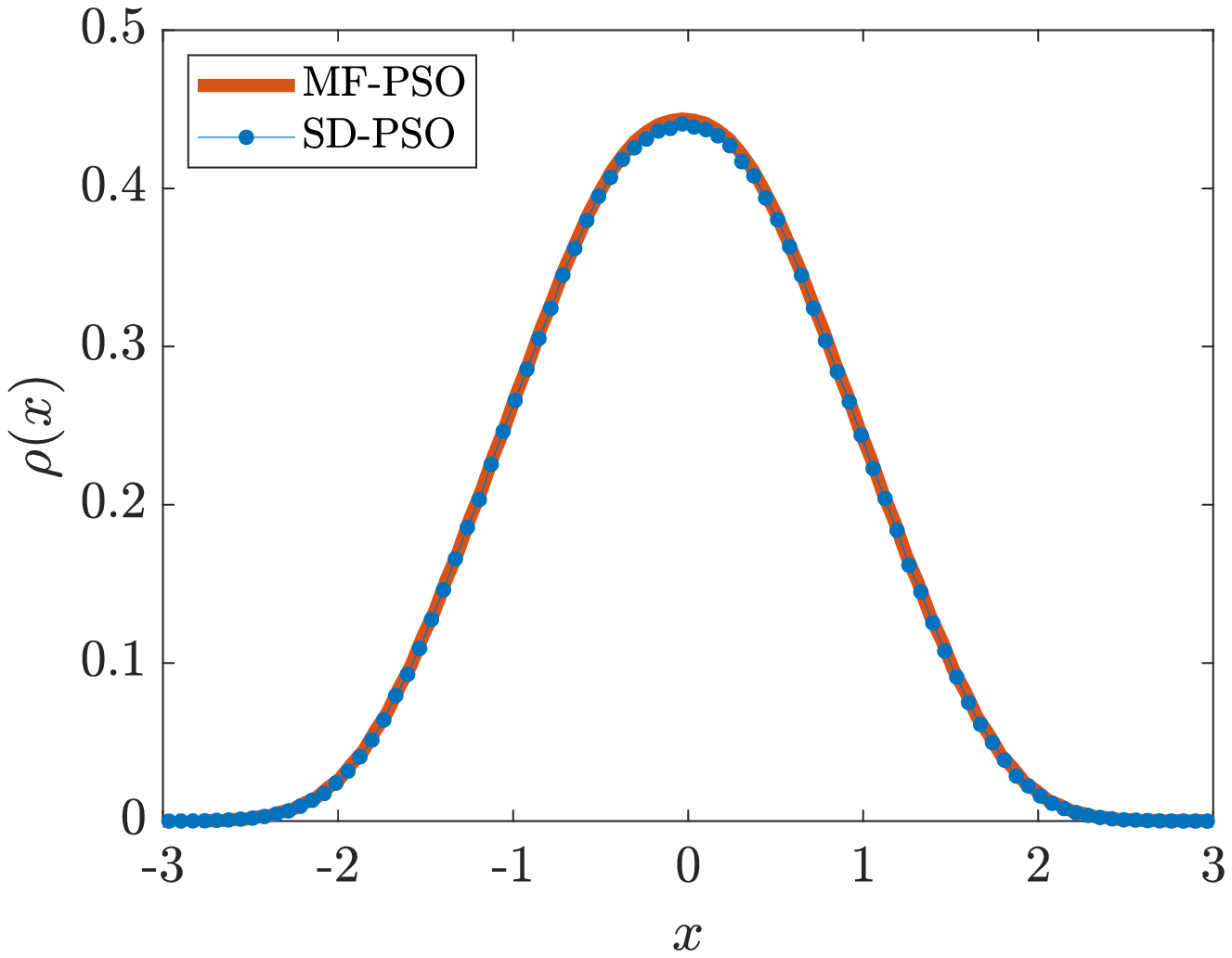}}
\subcaptionbox{$\rho(x,t)$, $t = 1$}{\includegraphics[scale=0.25]{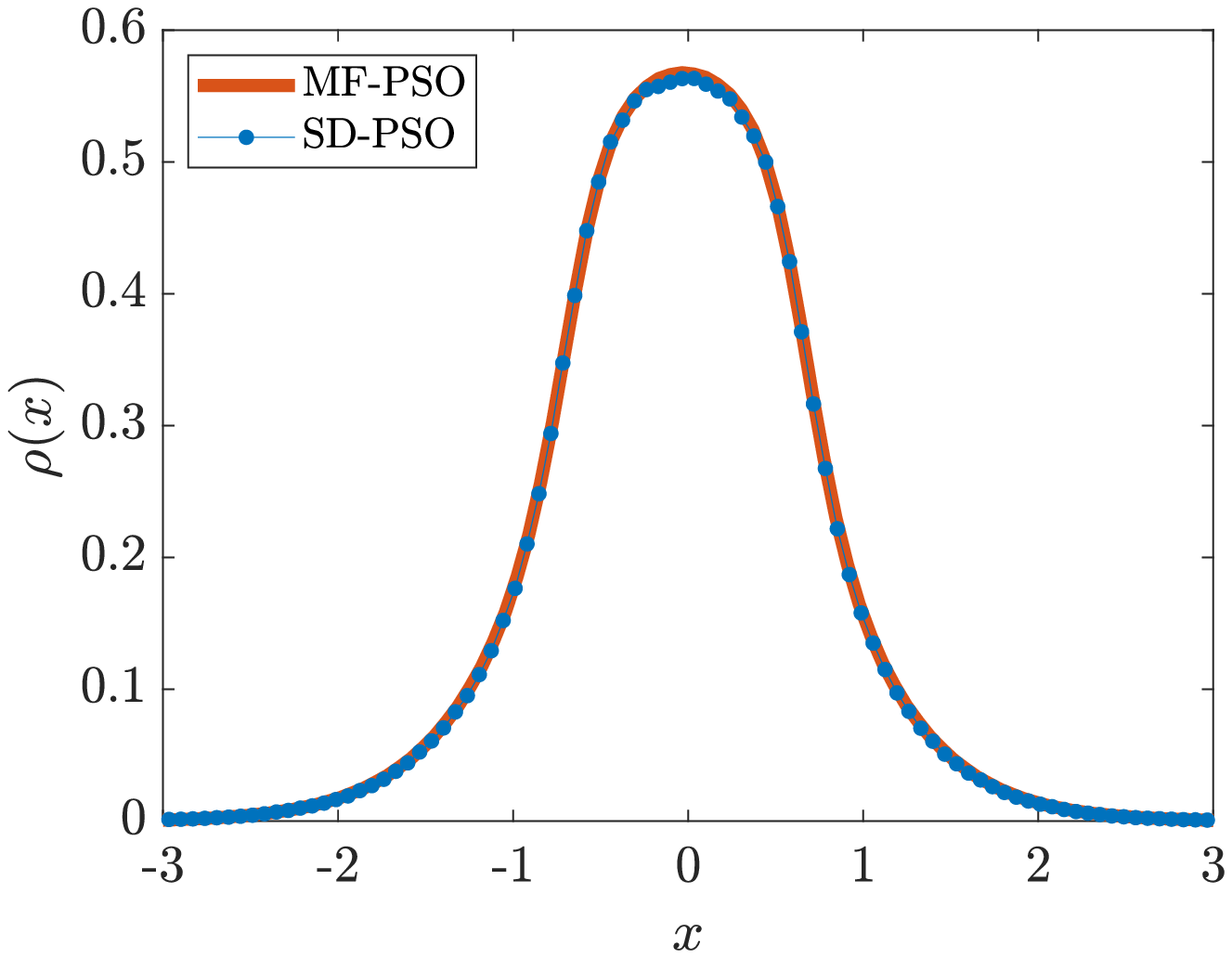}} 
\subcaptionbox{$\rho(x,t)$, $t = 3$}{\includegraphics[scale=0.25]{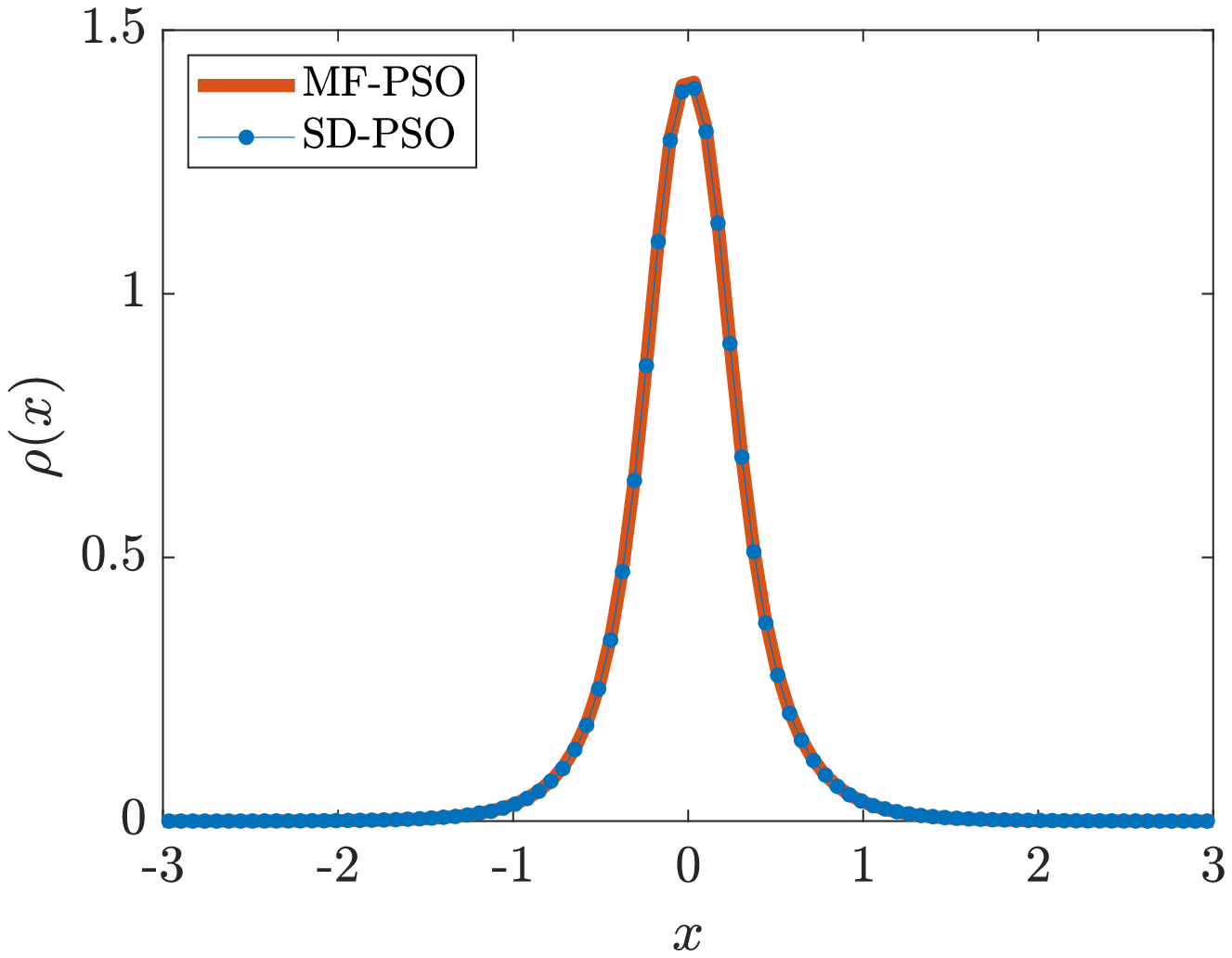}}\\
\caption{Mean field validation (no memory). Optimization of the Ackley function. First row: solution of the SD-PSO system \eqref{PSO} using $N=5 \times 10^5$ particles. Second row: solution of the MF-PSO limit \eqref{PDEi}. Third row: marginal densities.} 
\label{Fig2}
\end{minipage}
\end{figure}
\subsubsection{Absence of memory effects}
We consider the optimization process of the Ackley function. Here we report the results obtained with
\begin{equation}
\gamma = 0.5,\quad  \lambda = 1, \quad \sigma= {1}/{\sqrt{3}},\quad \alpha = 30.
\label{eq:paramt} 
\end{equation}
The values of $\lambda$ and $\sigma$ correspond to the standard PSO choice $c_k = 2 $ in \eqref{eq:param}. In Figure \ref{Fig2} we report the contour plots of the evolution, at times $ t = 0.5 $, $ t = 1 $ and $ t = 3 $, of the particle distribution computed through \eqref{eq:psoiDiscr} and by the direct discretization of the mean-field equation \eqref{PDEi} together with the evolution in time of the marginal density $\rho(x,t) = \int_{\RR^d} f(x,v,t)\,dv$.  

\subsubsection{Only local best dynamics}
In the second test case we introduce the dependence from the memory variable and compare the solutions of the discretized stochastic particle model \eqref{eq:psoDiscr} with the solver of the mean field limit \eqref{PDEii} in the case of the Rastrigin function. We assume $\lambda_2=0$ and $\sigma_2 = 0$, i.e. only the local best is present. The same parameters \eqref{eq:paramt} have been used together with $\beta = 30$ and $\nu = 0.5$ for the local best.
\begin{figure}[tb]
\begin{minipage}{\linewidth}
\centering
\subcaptionbox{SD-PSO, $t = 0.5$}{\includegraphics[scale=0.25]{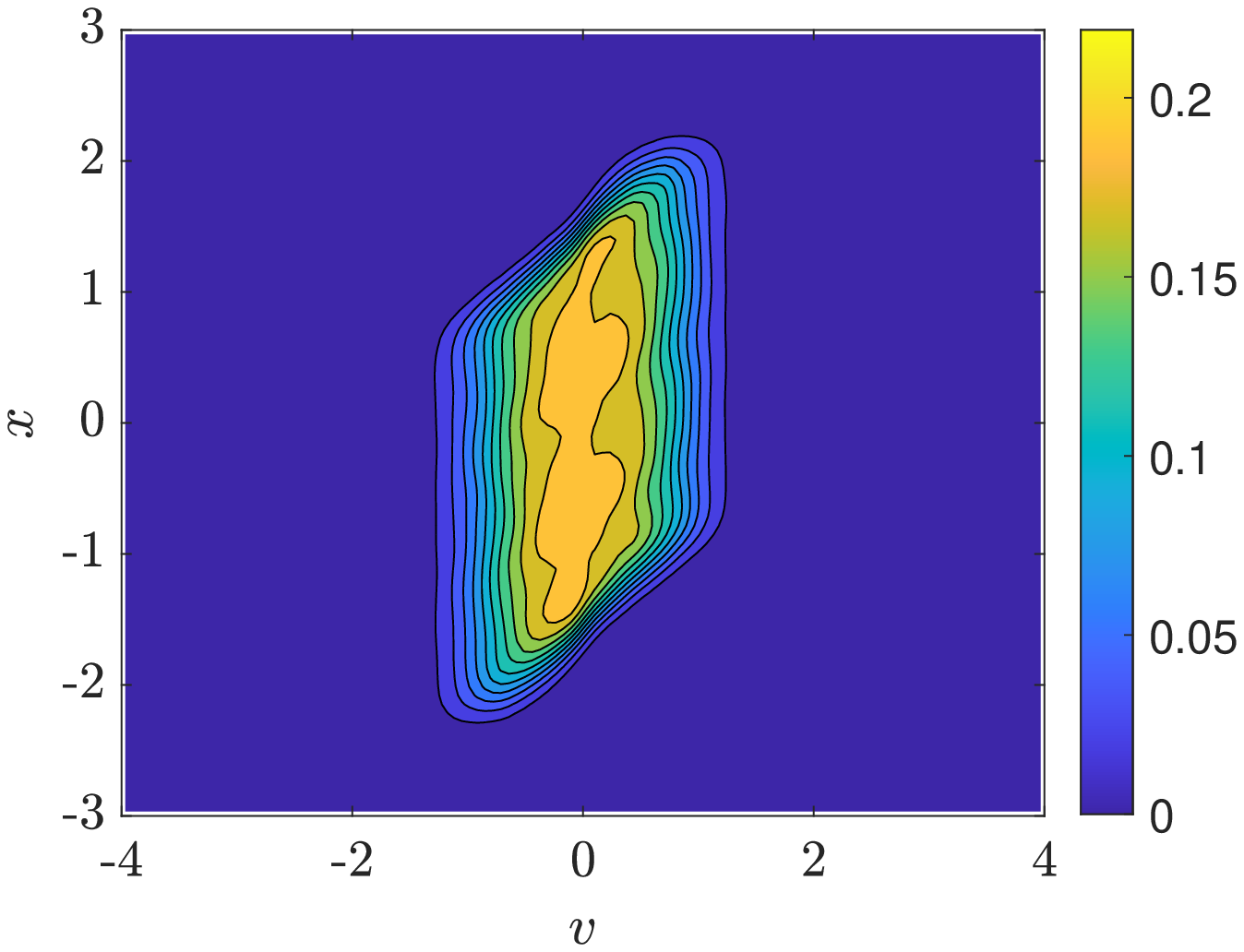}} 
\subcaptionbox{SD-PSO, $t = 3$}{\includegraphics[scale=0.25]{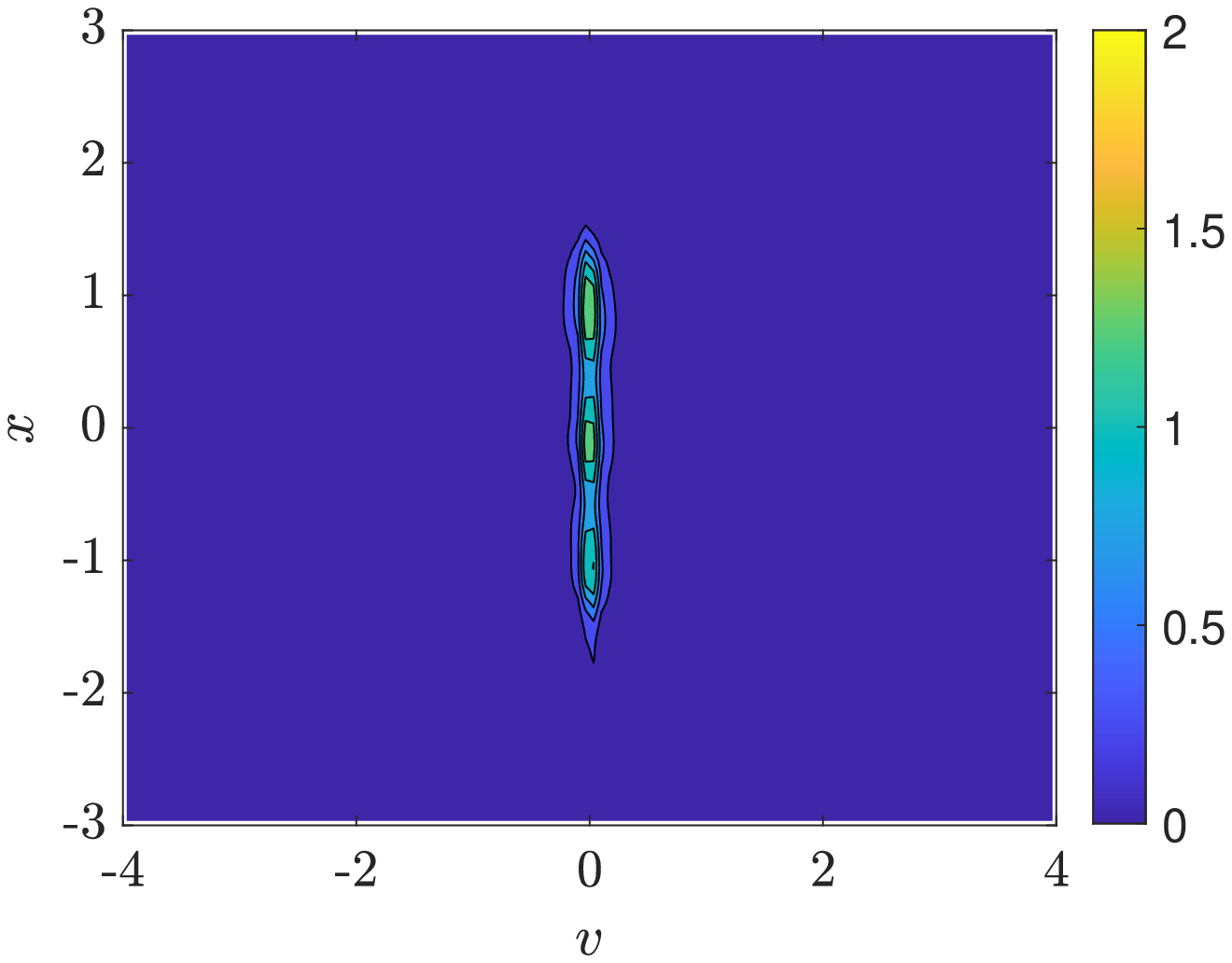}} 
\subcaptionbox{SD-PSO, $t = 6$}{\includegraphics[scale=0.25]{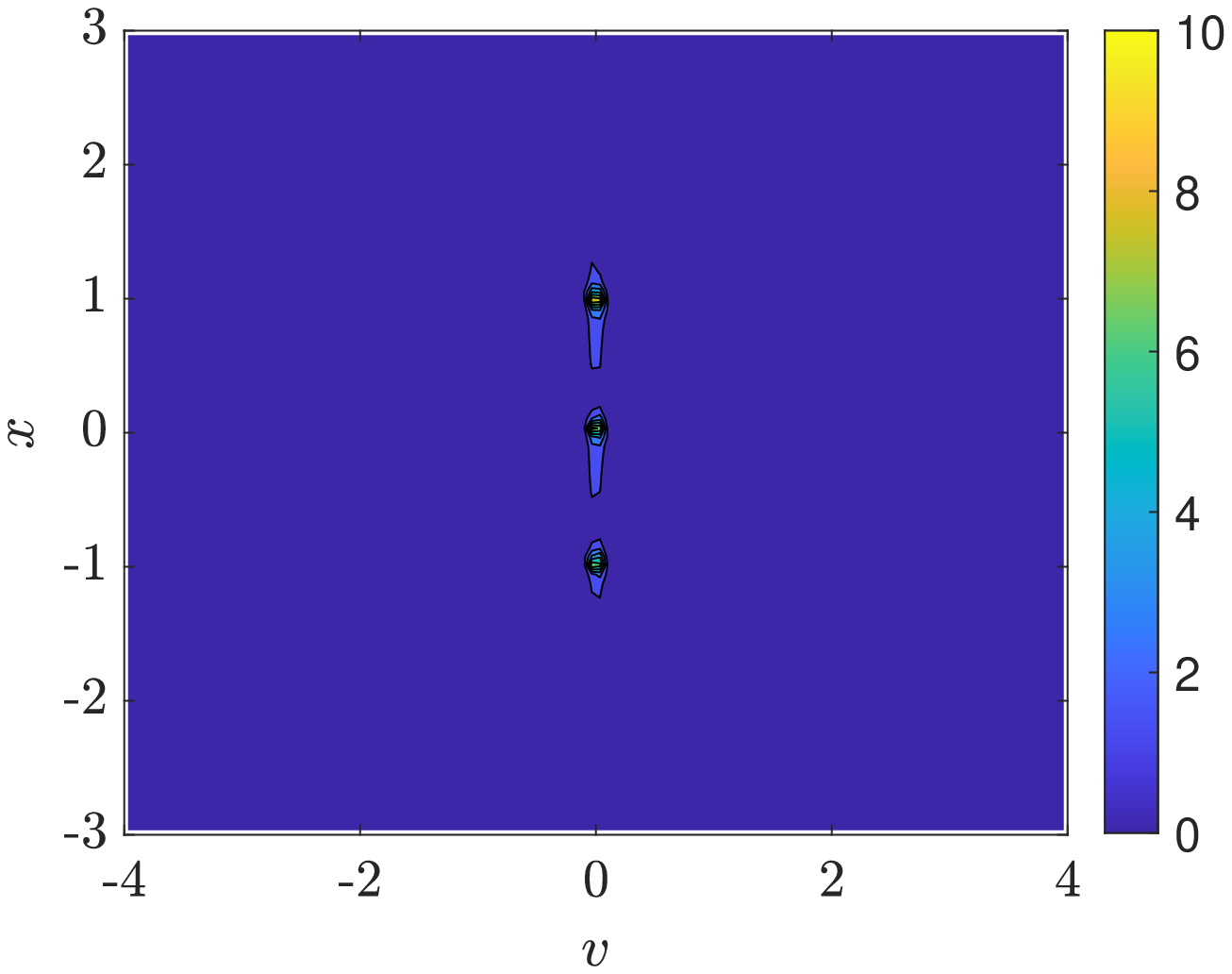}}\\
\subcaptionbox{MF-PSO, $t = 0.5$}{\includegraphics[scale=0.25]{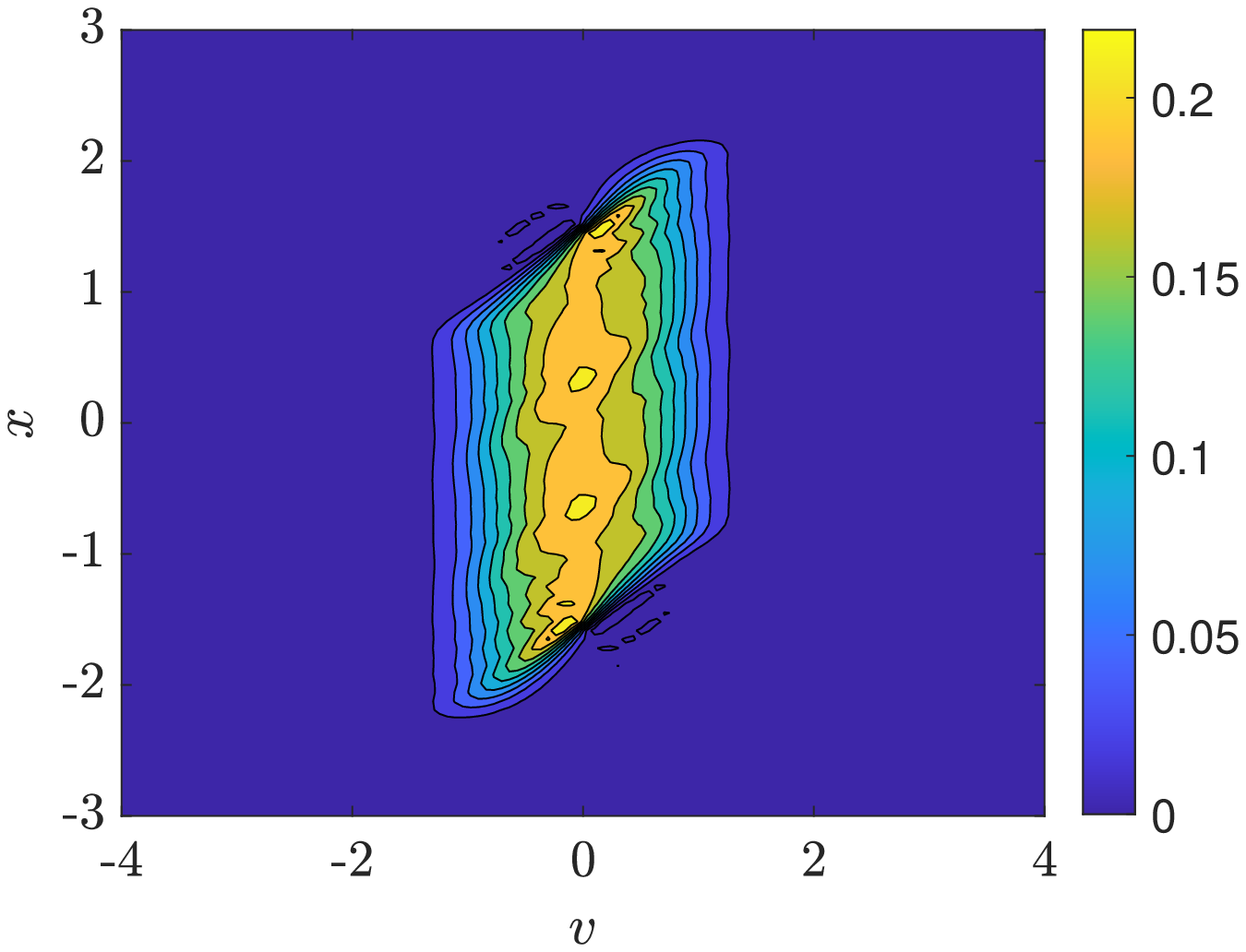}}  
\subcaptionbox{MF-PSO, $t = 3$}{\includegraphics[scale=0.25]{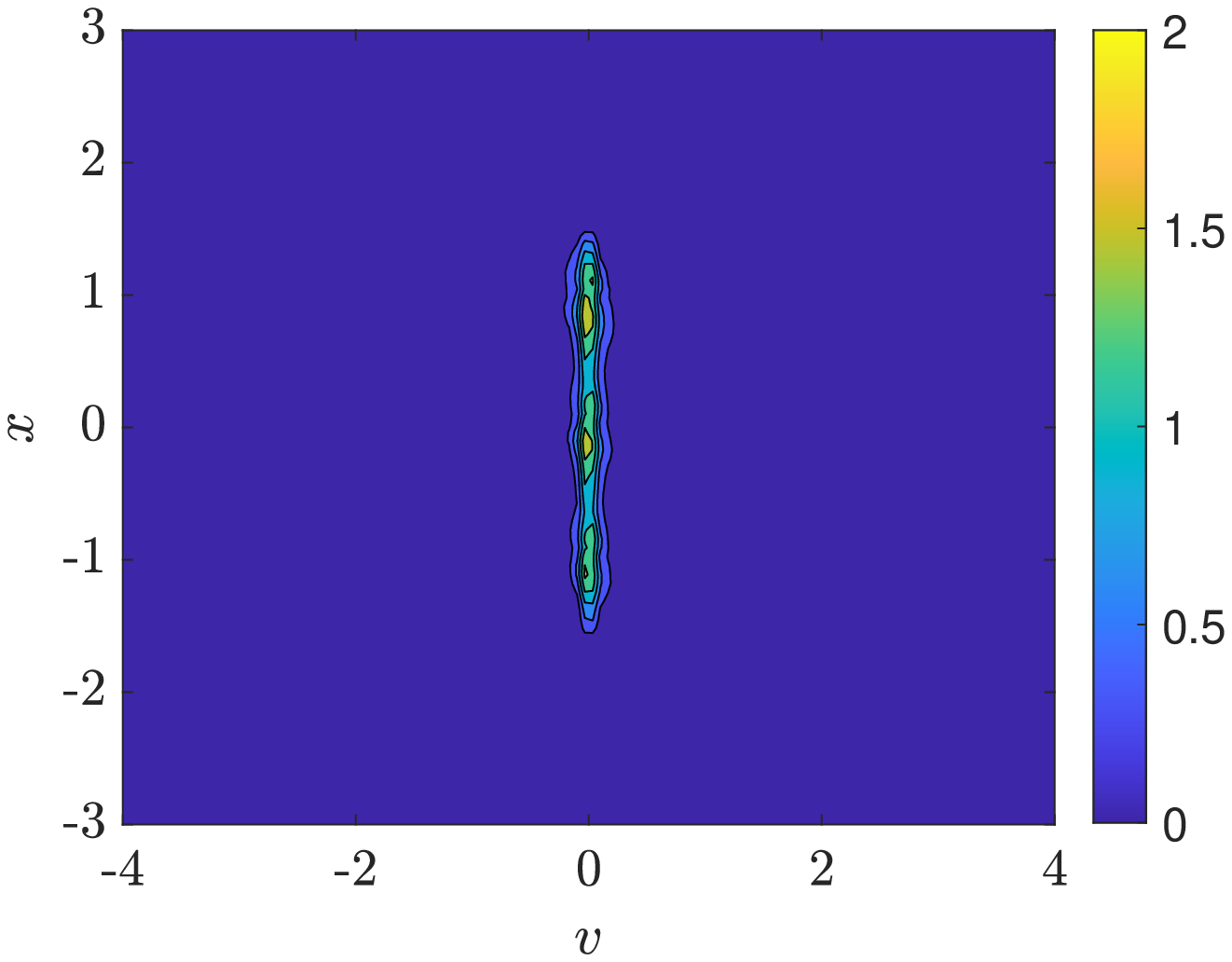}} 
\subcaptionbox{MF-PSO, $t = 6$}{\includegraphics[scale=0.25]{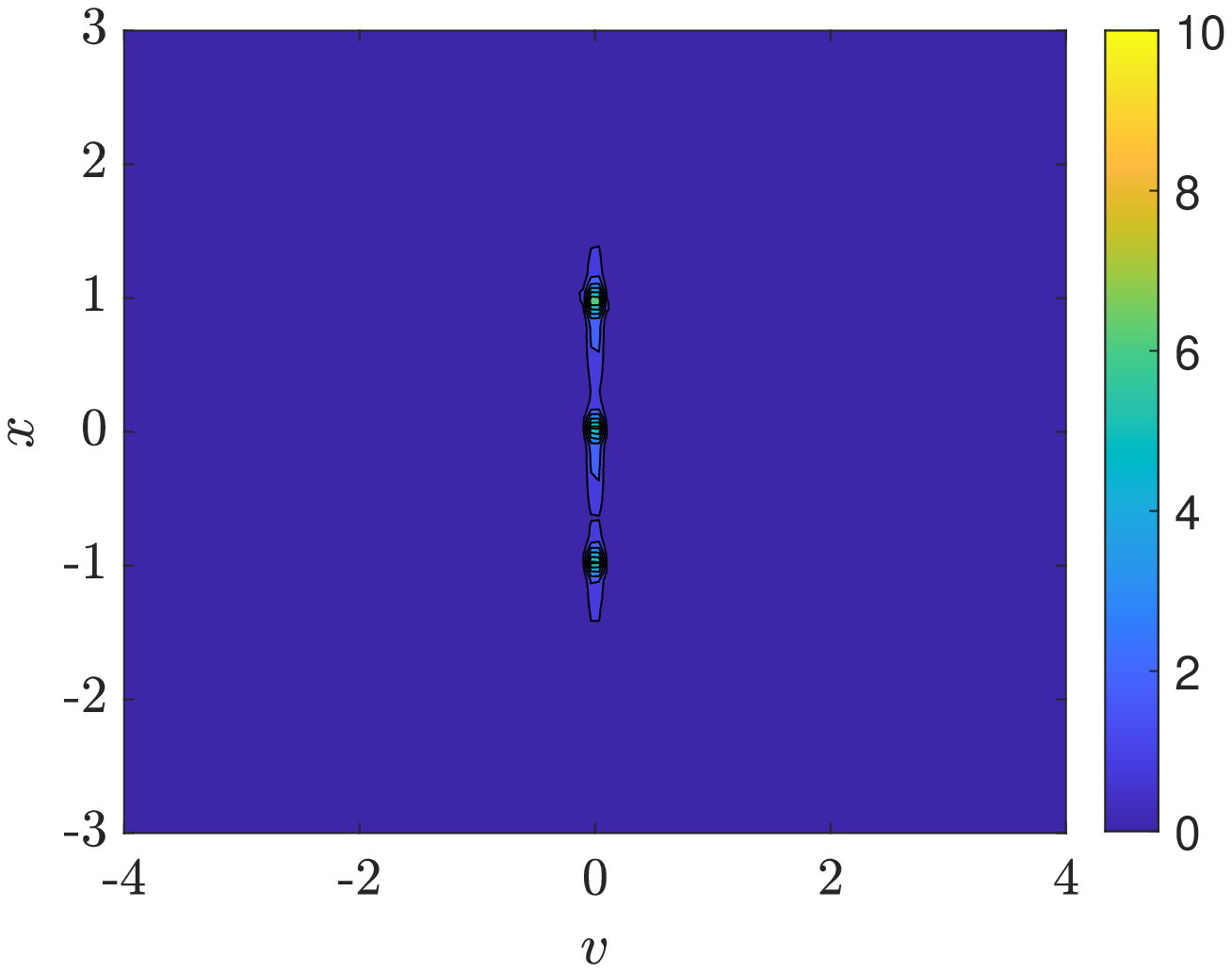}}  \\
\subcaptionbox{$\rho(x,t)$, $t = 0.5$}{\includegraphics[scale=0.25]{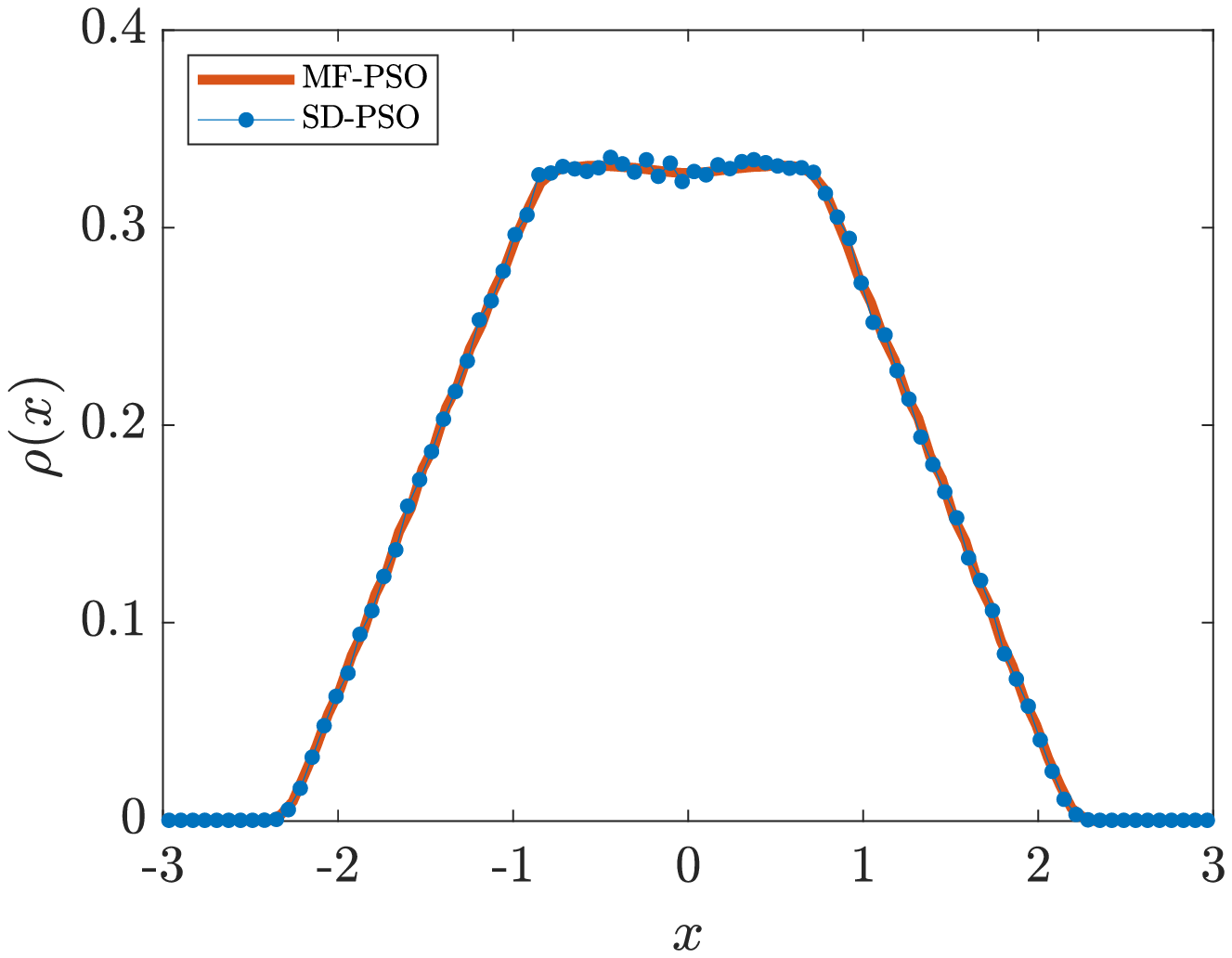}}\ 
\subcaptionbox{$\rho(x,t)$, $t = 3$}{\includegraphics[scale= 0.25]{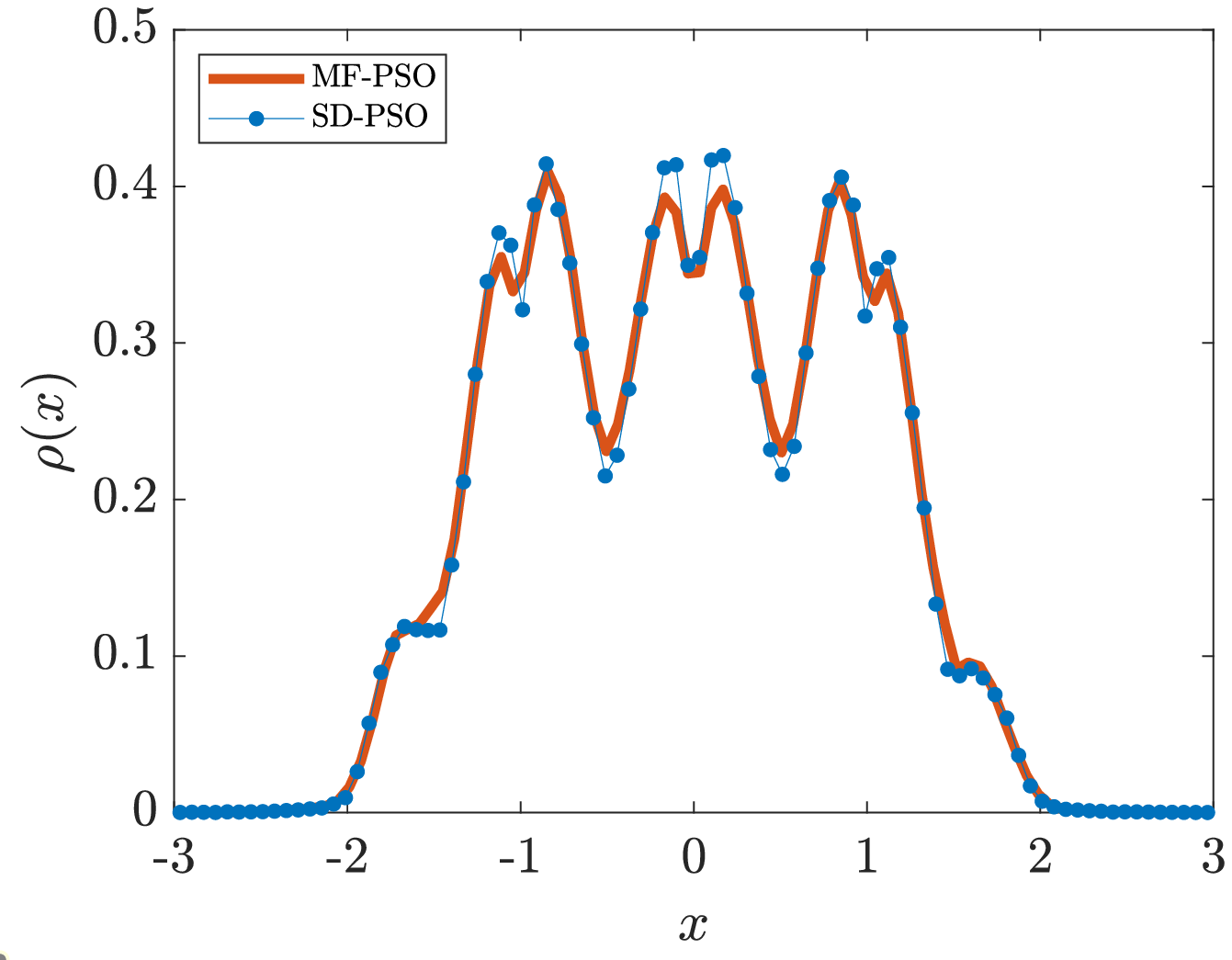}} \ 
\subcaptionbox{$\rho(x,t)$, $t = 6$}{\includegraphics[scale=0.25]{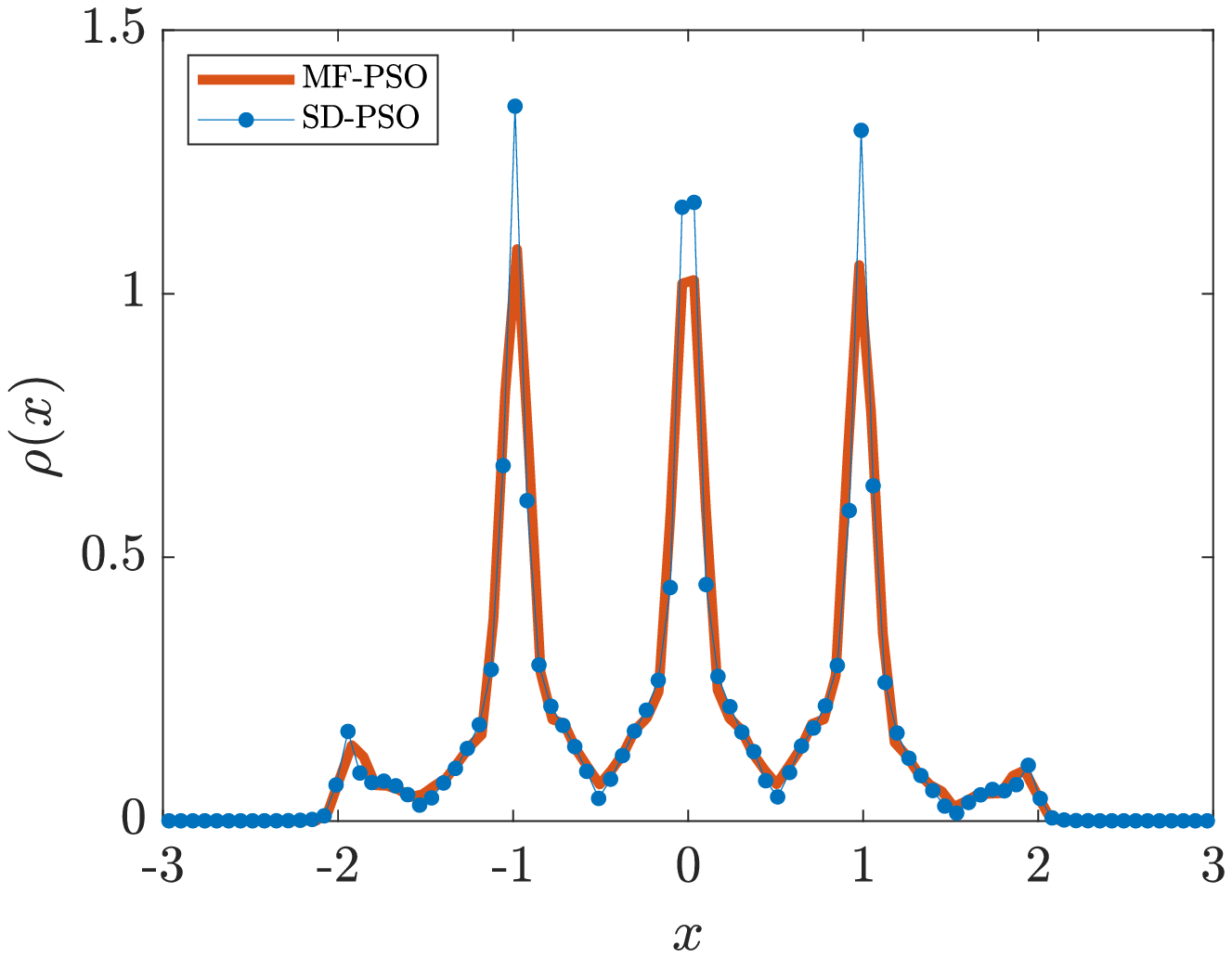}} \\
\caption{Mean field validation (local best only). Optimization of the Rastrigin function with minimum in $x=0$. First row: solution of the SD-PSO system \eqref{eq:psocir}. Second row: solution of the MF-PSO limit \eqref{PDEii}. Third row: marginal densities.} 
\label{Fig4}
\end{minipage} \\
\end{figure}
\noindent In Figure \ref{Fig4} we report the contour plot of the particle and mean-field solutions for the Rastrigin function, where now the final simulation time is $t=6$. The corresponding marginal densities are also reported.
Also in this second case, one can appreciate the good agreement between the particle and mean-field solutions.
We can note that in the presence of local best only, the particles tend to return to their local best position creating a "memory effect" that leads them to concentrate not only in the global minimum but also in the local minima. For large times we obtain a sequence of particle peaks with zero speed exactly in the positions of the local minima. Thus the dynamic allows us to identify each type of minimum present in the functions. 

\begin{figure}[htb]
\begin{minipage}{\linewidth}
\centering
\subcaptionbox{$\rho(x,t)$, $t = 0.5$}{\includegraphics[scale=0.25]{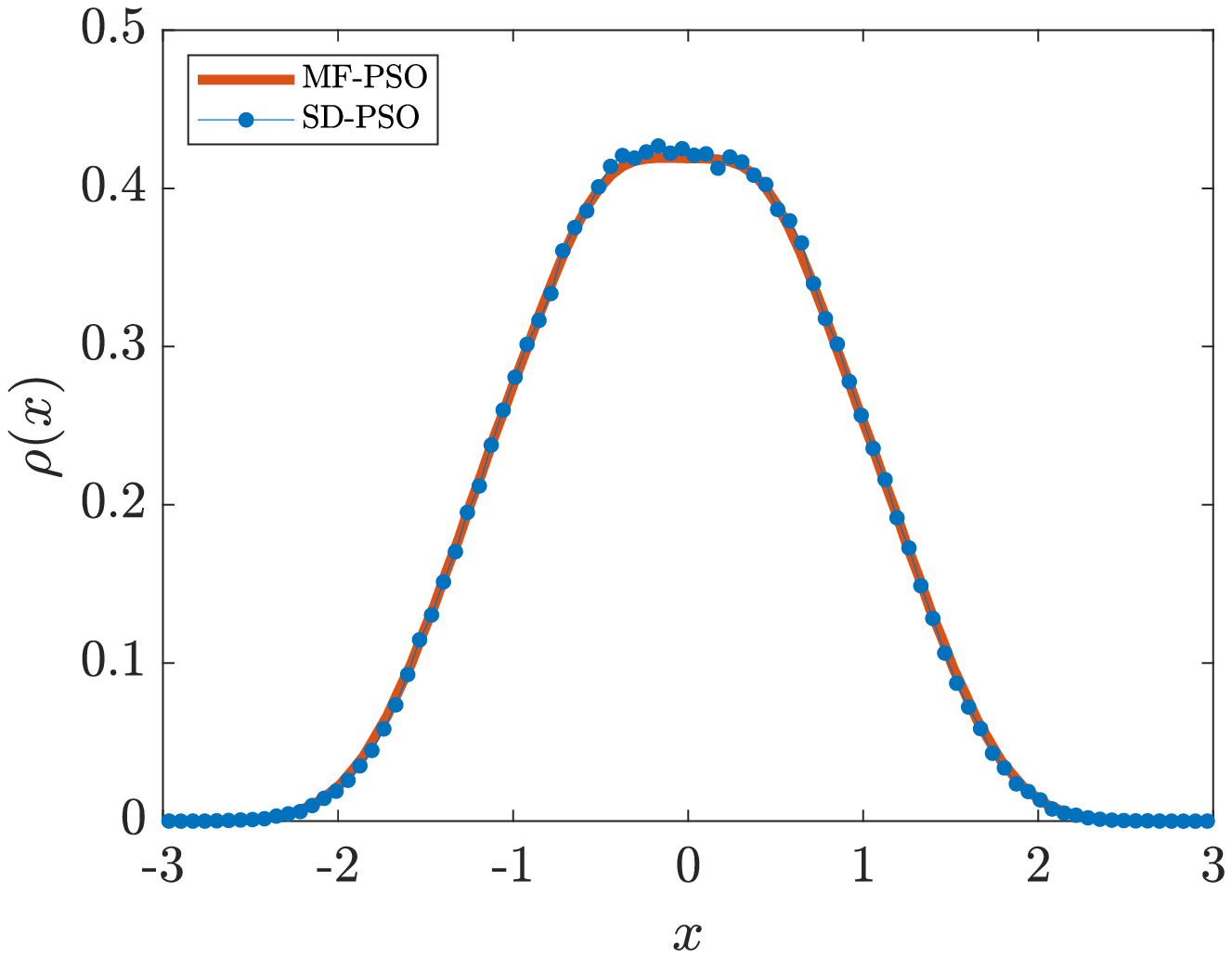}}\
\subcaptionbox{$\rho(x,t)$, $t = 1$}{\includegraphics[scale= 0.25]{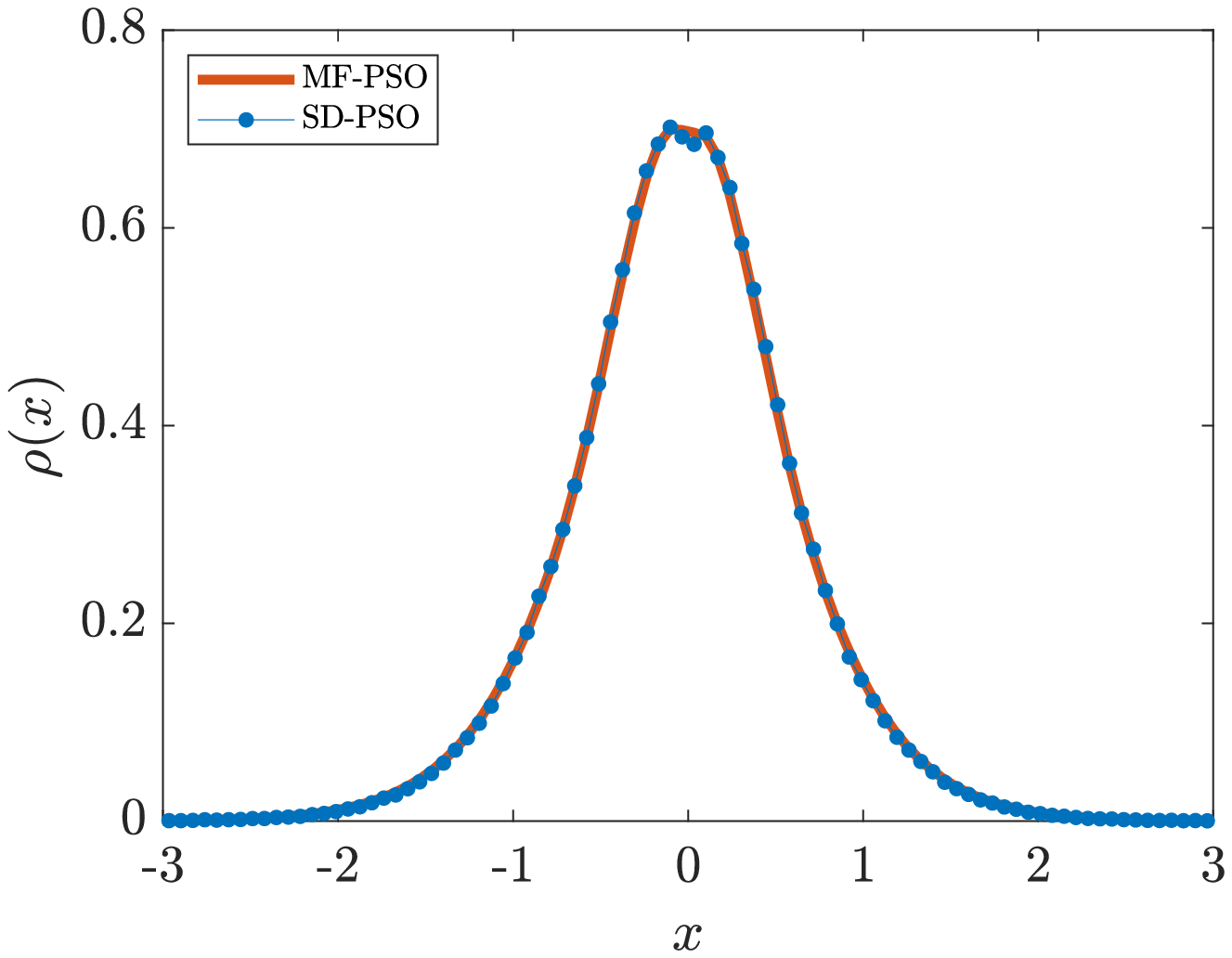}} \
\subcaptionbox{$\rho(x,t)$, $t = 3$}{\includegraphics[scale=0.25]{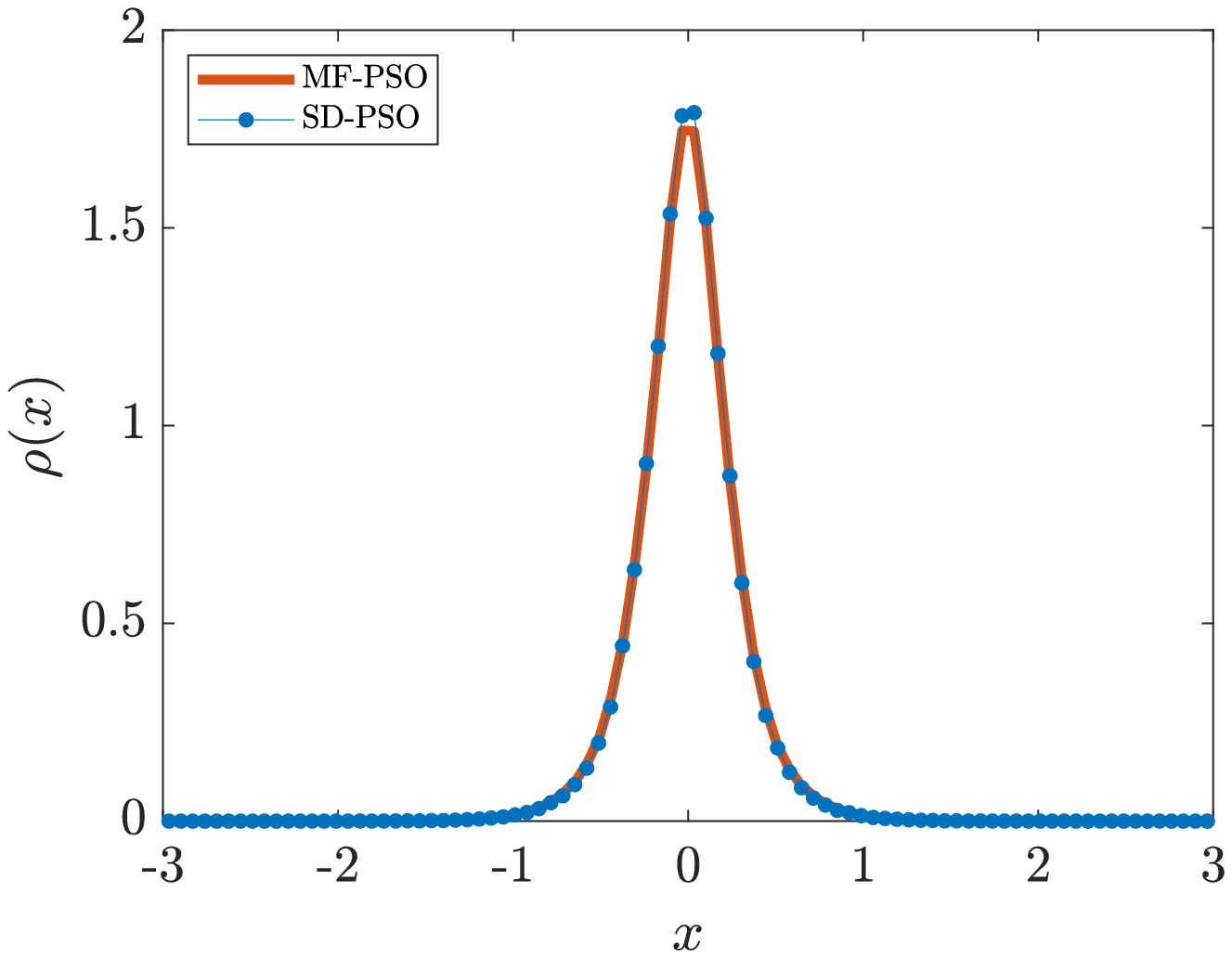}} \\
\subcaptionbox{$\rho(x,t)$, $t = 0.5$}{\includegraphics[scale=0.25]{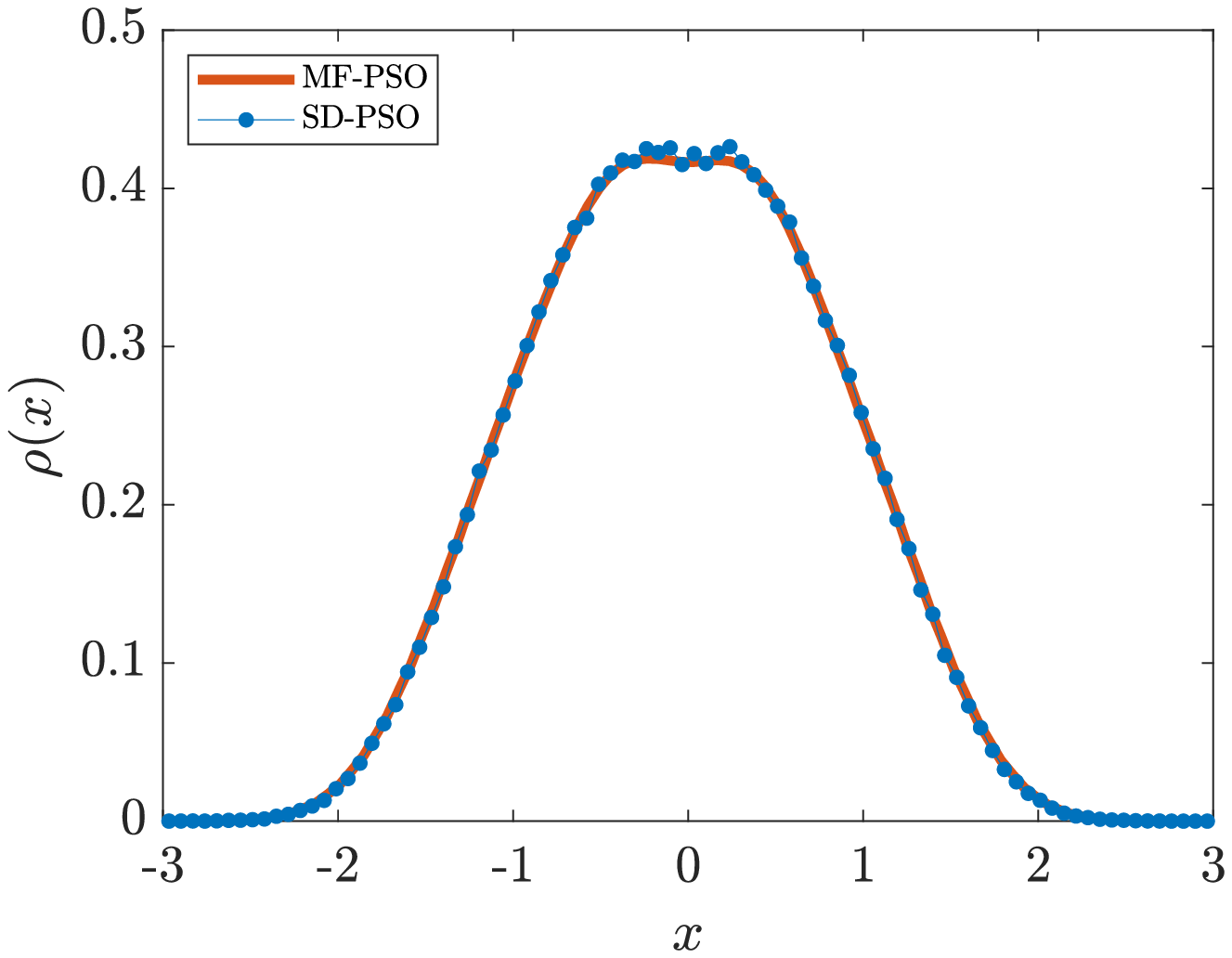}}\
\subcaptionbox{$\rho(x,t)$, $t = 1$}{\includegraphics[scale=0.25]{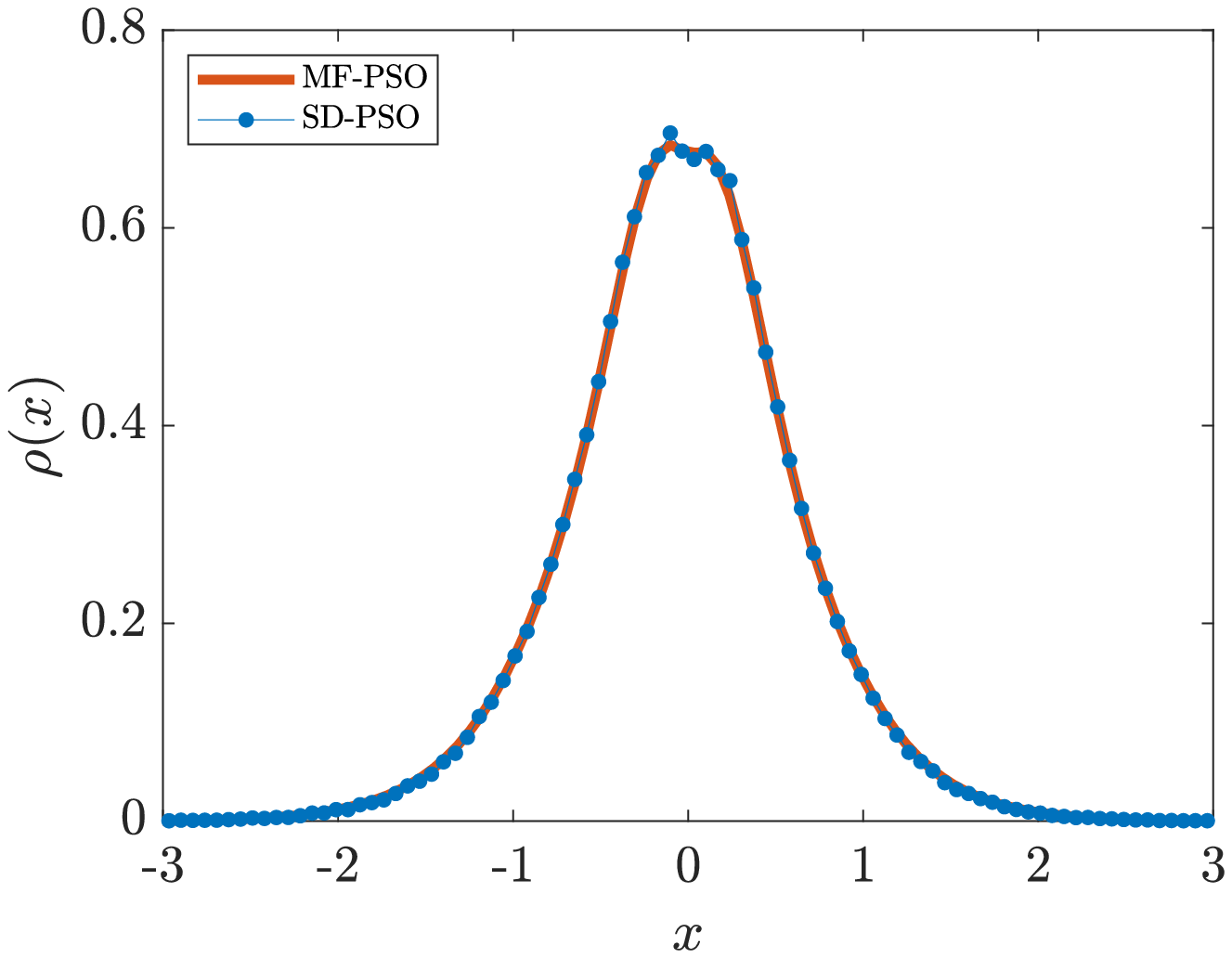}} \
\subcaptionbox{$\rho(x,t)$, $t = 3$}{\includegraphics[scale=0.25]{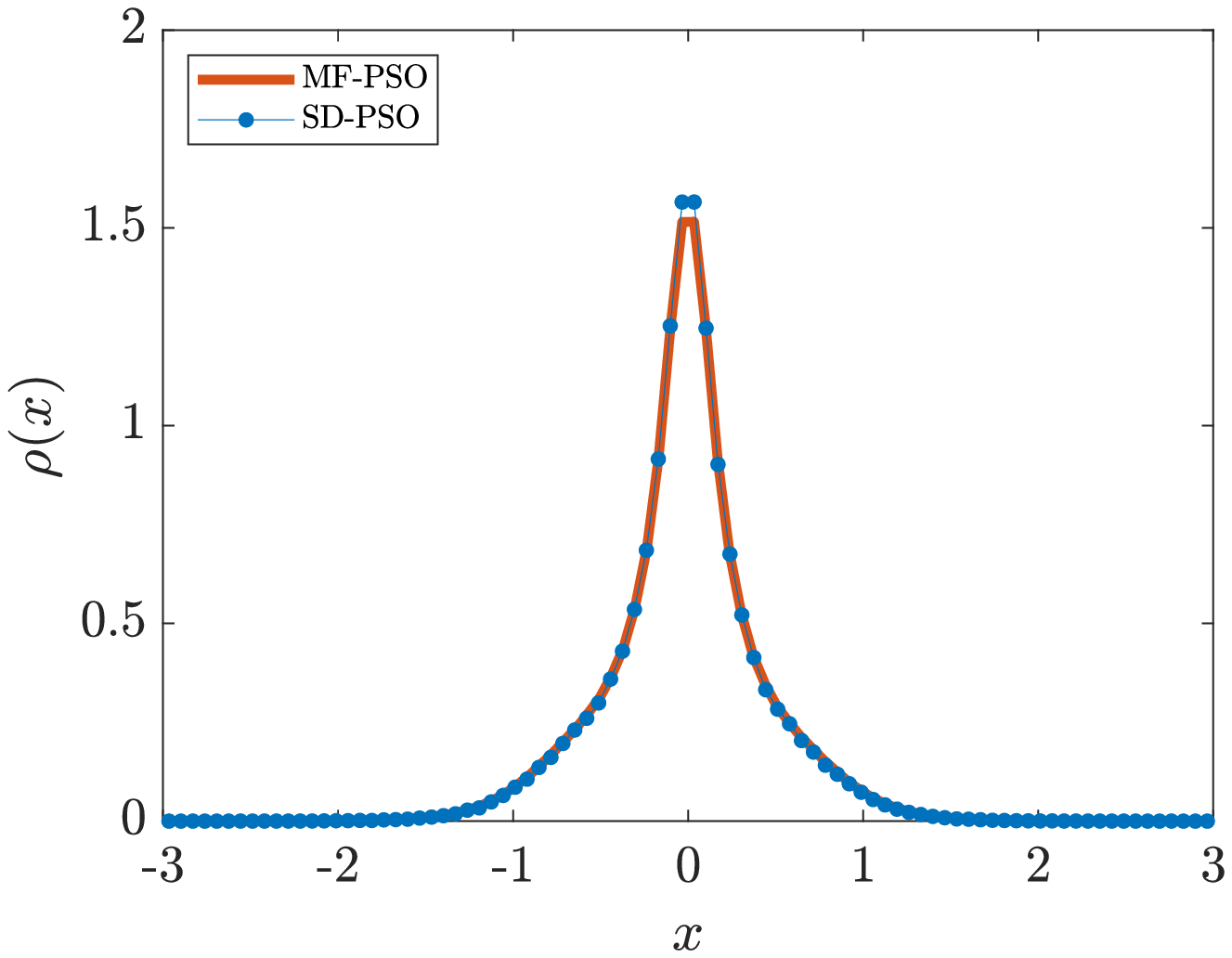}}
\caption{Mean field validation (general case). Evolution of the density $\rho(x,t)$ of the SD-PSO system \eqref{eq:psocir} and the MF-PSO limit \eqref{PDEii} for two different one-dimensional function with minimum in $x=0$. First row: optimization on the Ackley function. Second row: optimization oh the Rastrigin function.}
\label{Fig6}
\end{minipage}
\end{figure}
\subsubsection{The general case}
In the final test case, we keep the previous scenario, adding the contribution of the global best with the same weight as the local best. Therefore, we take $\lambda_1=\lambda_2=1$, $\sigma_1=\sigma_2={1}/{\sqrt{3}}$ and the same parameters \eqref{eq:paramt} in our numerical experiments. The solutions have been obtained by solving the discretized stochastic particle system \eqref{eq:psoDiscr} and the deterministic solver of the mean field equation \eqref{PDEii}. In Figure \ref{Fig6} we report the associated marginal density plots.
One can observe that the local minima effect disappears and the systems converge consistently towards the global minimum. Note that, by comparing the results for the Ackley function in Figure \ref{Fig6} and those in the last row of Figure \ref{Fig2} obtained by solving the same problem in absence of memory terms, at the same time instants, a faster convergence towards the global minimum is observed.

\subsection{Numerical small inertia limit}
From the analysis in Section 4, the classical CBO model \eqref{eq:CBOp} is produced as a hydrodynamic approximation of the mean-field PSO system \eqref{PDEi} in the limit of small inertia.  Therefore, we compare the particle solution to a discretization of the mean-field limit CBO system \eqref{eq:CBOp}, starting from the discretization of the stochastic particle model without memory effect \eqref{eq:psoiDiscr} and decreasing the inertial weight $m \to 0$ ($\gamma \to 1$).

\begin{figure}[htb]
\begin{minipage}{\linewidth}
\centering
\subcaptionbox{$\rho(x,t)$, $t = 0.2$}{\includegraphics[scale=0.25]{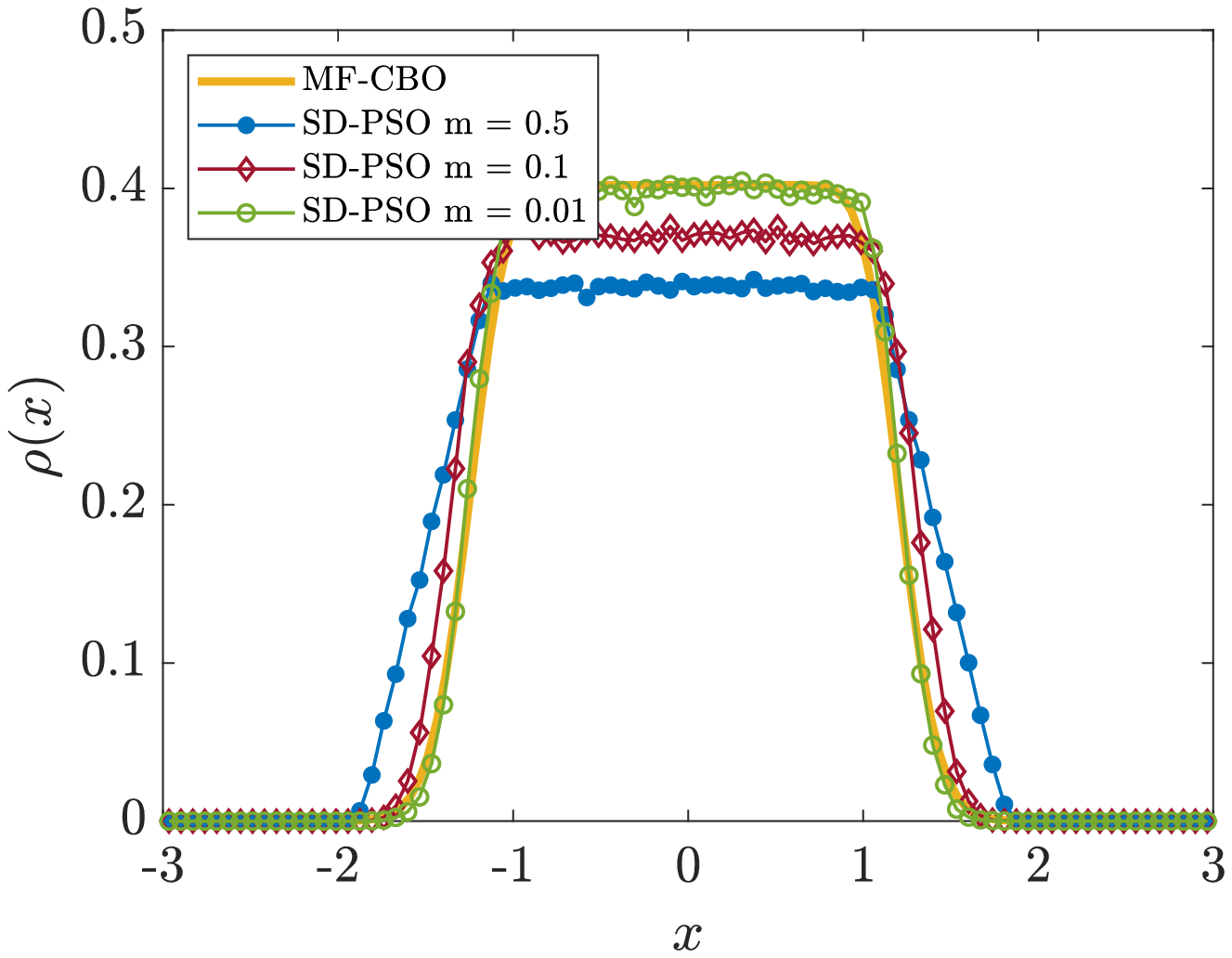}}\
\subcaptionbox{$\rho(x,t)$, $t = 1$}{\includegraphics[scale=0.25]{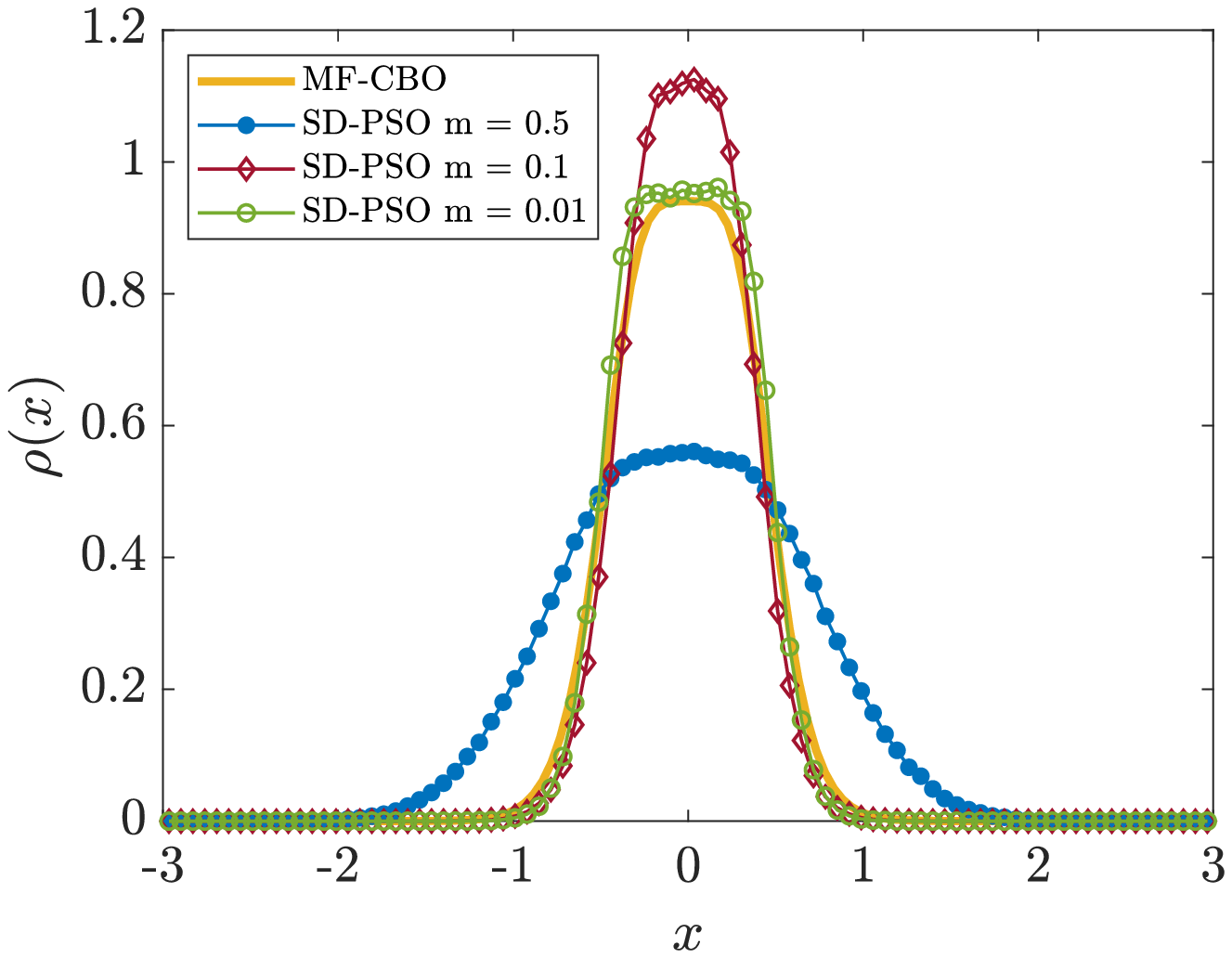}} \
\subcaptionbox{$\rho(x,t)$, $t = 2$}{\includegraphics[scale=0.25]{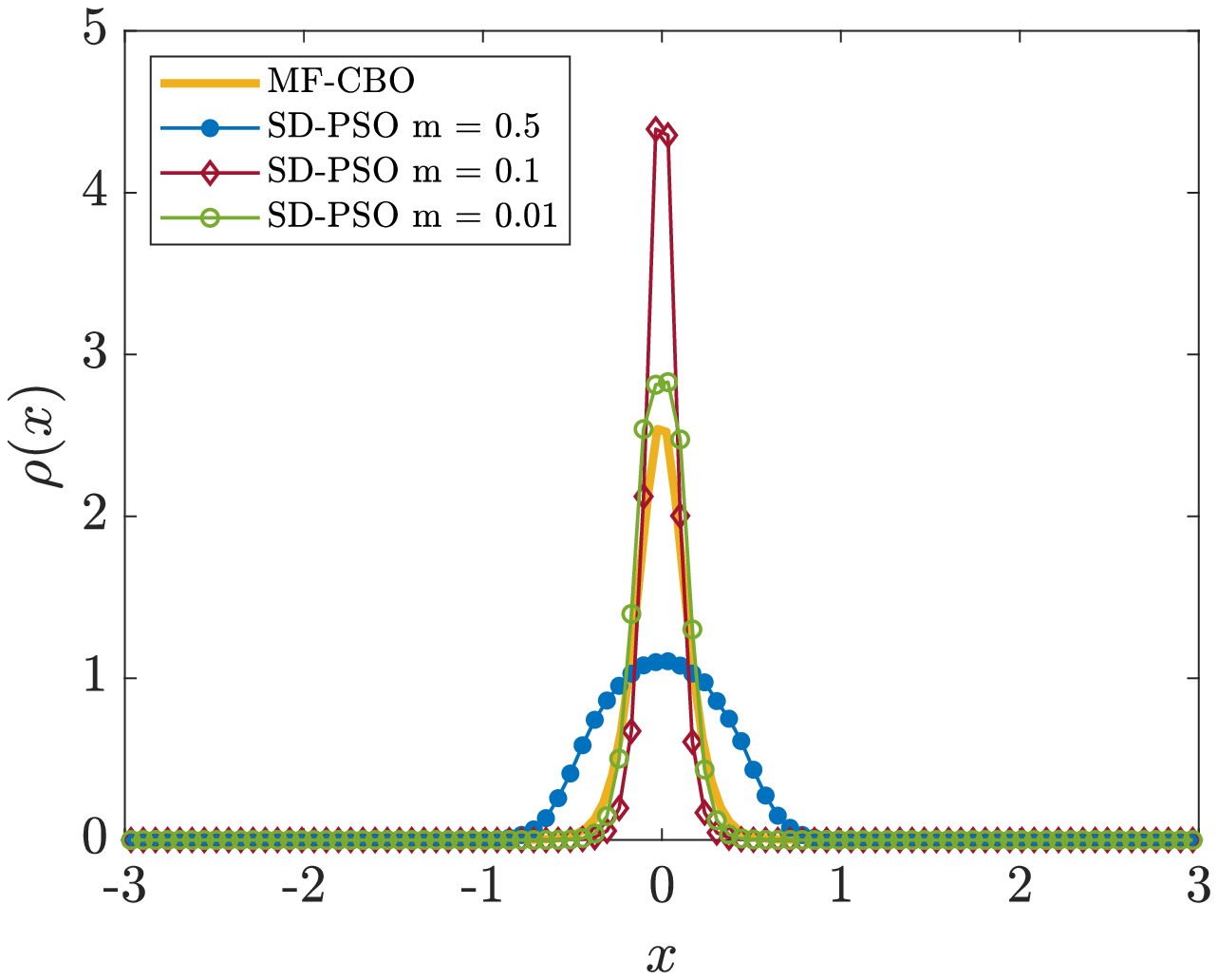}} \\
\subcaptionbox{$\rho(x,t)$, $t = 0.2$}{\includegraphics[scale=0.25]{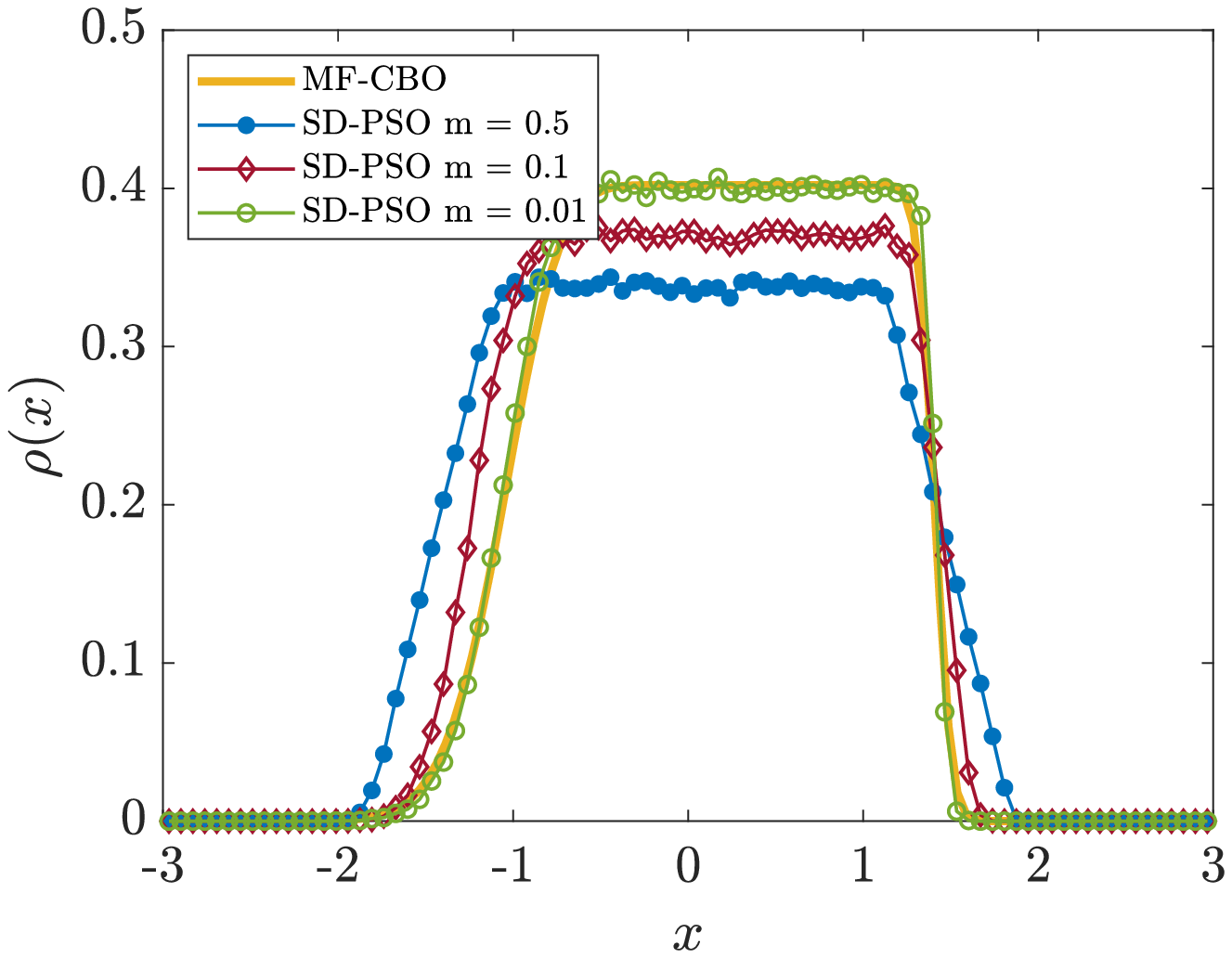}}\
\subcaptionbox{$\rho(x,t)$, $t = 1$}{\includegraphics[scale=0.25]{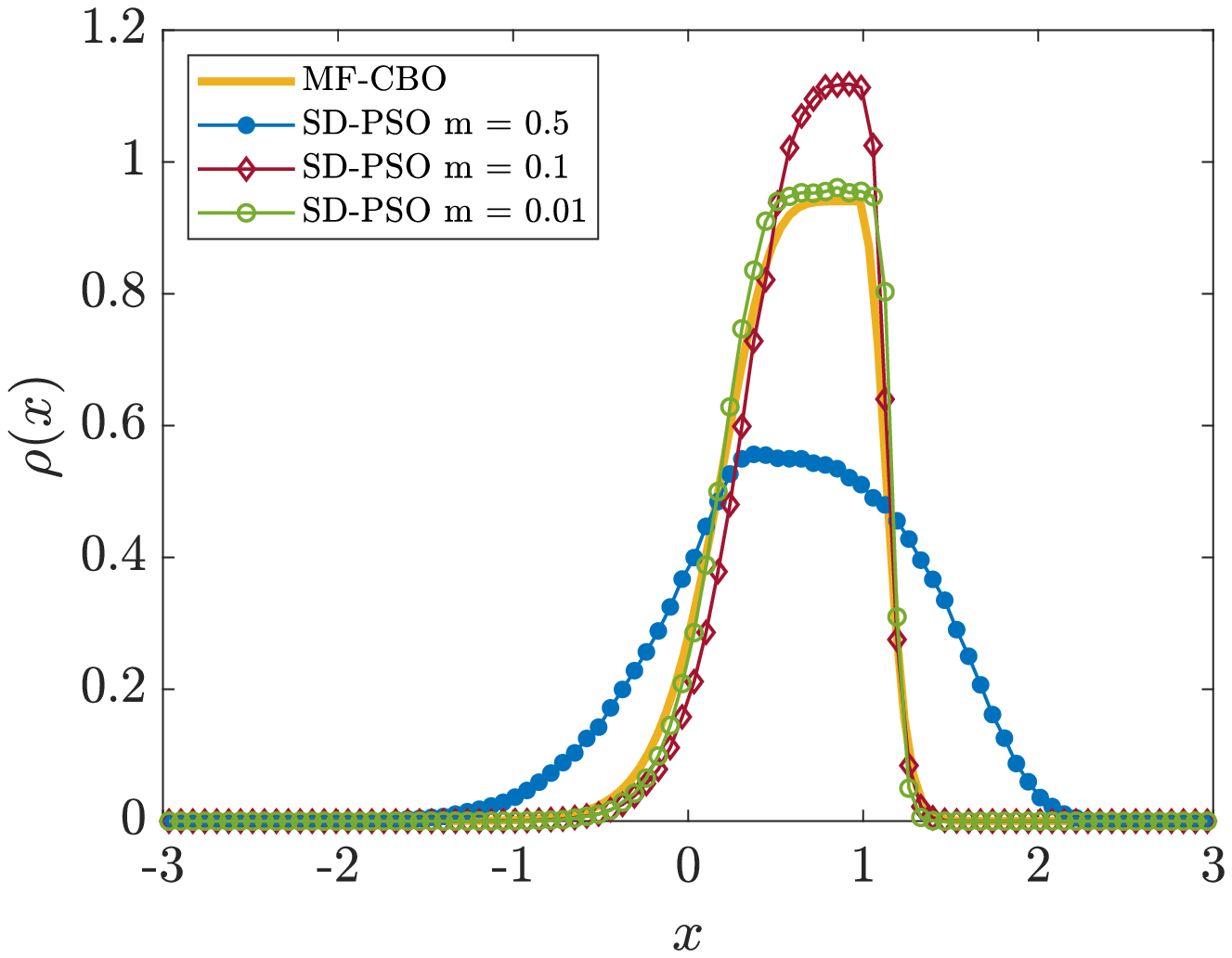}} \
\subcaptionbox{$\rho(x,t)$, $t = 2$}{\includegraphics[scale=0.25]{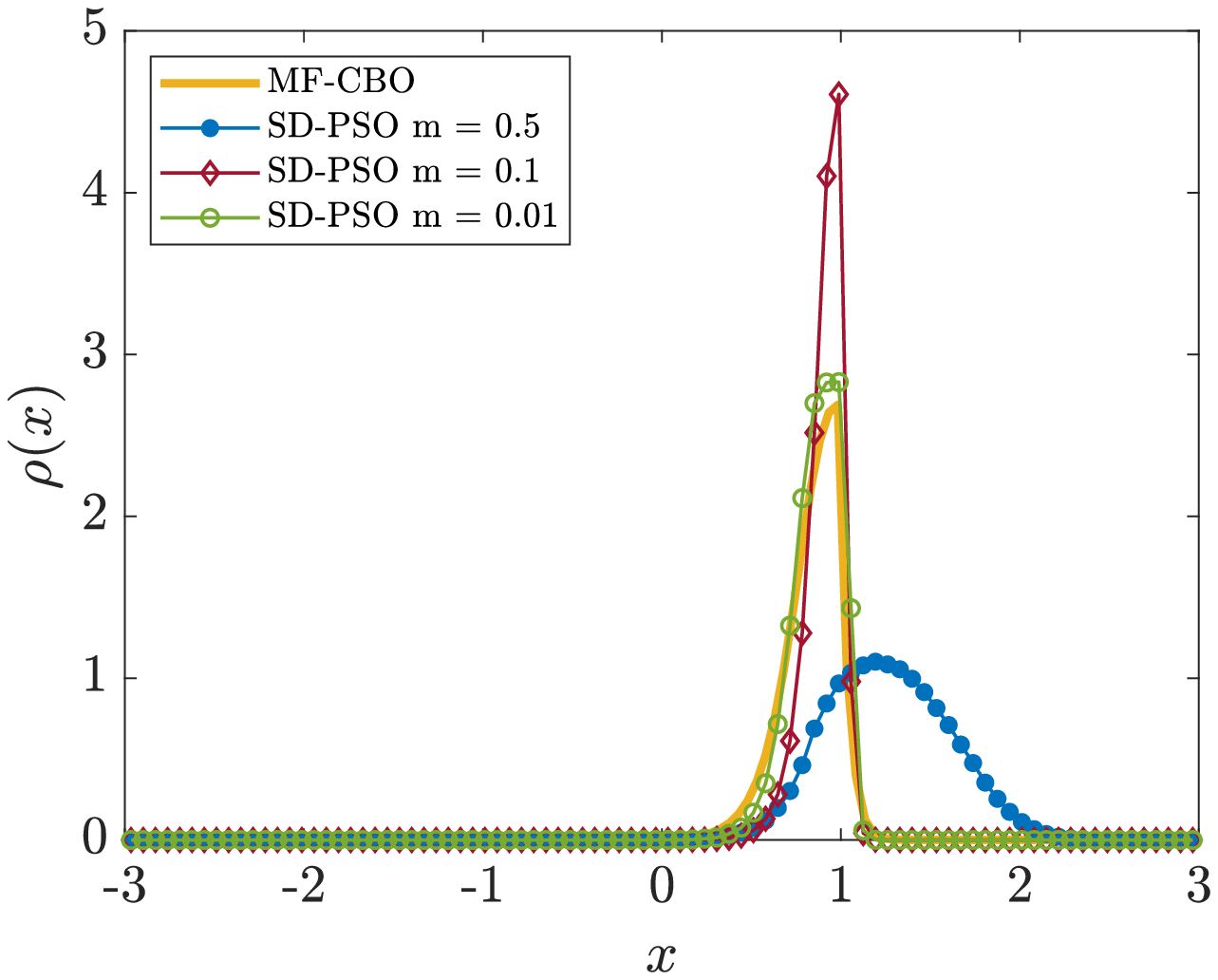}}
\caption{Low inertia limit. Evolution of the density $\rho(x,t)$ of the SD-PSO discretization \eqref{eq:psoiDiscr}, for decreasing inertial weight $m = 0.5, 0.1, 0.01$, and the mean-field CBO model \eqref{eq:CBOp} for the Ackley function with a uniform initial data. First row: minimum in $x = 0$. Second row: minimum in $x = 1$.}
\label{Fig7}
\end{minipage}
\end{figure}
In Figure \ref{Fig7} we report the plots of the density that describes the solution of the mean-field CBO model and the stochastic PSO model for different inertial weights ($m = 0.5$, $m = 0.1$ and $m = 0.01$). We considered the minimization problem for the Ackley function with minimum in $x = 0$ and in $x = 1$ with $N=5 \times 10^{5}$ particles for the SD-PSO discretization and a grid of $120$ points in space for the mean field CBO solver. It is clear that in the case of $ m = 0.5 $ the two densities at the final time $ t = 2 $ are considerably different and a slower convergence is observed in the SD-PSO system, for $ m = 0.1 $ the agreement is higher and the particle solution seems to converge faster to the minimum, finally in the case $ m = 0.01 $ both densities simultaneously grow towards a Dirac delta centered in the minimum. For smaller values of $m$ the two solutions becomes indistinguishable and we omitted the results.

\subsection{Performance on high-dimensional test cases}
In this section we report the results of several experiments concerning the behavior of the stochastic PSO models, discretized using \eqref{eq:psoiDiscr} in absence of memory or \eqref{eq:psoDiscr} in the general case, in high dimension ($d=20$) for various prototype test functions (see Table \ref{Tabfun}). 
Defining the success rate is critical as it completely alters the performance of the algorithm.  In particular, depending on the shape of the objective function, the distance between the estimated minimum and the real minimum can be used as an indicator as in \cite{pinnau2017consensus,carrillo2018analytical,carrillo2019consensus,fhps20-1,fhps20-2}. For some functions, however, this choice may be a poor indicator of the algorithm's performance, since the corresponding value function may be far from its minimum optimal value. In the first round of test cases, since we are limited to the Ackley and Rastrigin functions, for comparison purposes we rely on the choice reported below. Later, when we test the performance of the algorithm for a broader spectrum of test functions, we will generalize the definition of success rate by including the value of the function \cite{TW}.  

Thus we define:
\begin{itemize}
\item  the \textit{success rate}, computed averaging over $n_r$ runs and using as convergence criterion 
\[
\begin{split}
\Vert\Q_\alpha^{n_*}-x^{\ast}\Vert_{\infty}< \delta_{err},\qquad {\rm or}\qquad \Vert\bar\P_\alpha^{n_*}-x^{\ast}\Vert_{\infty}< \delta_{err} 
  \end{split}
  \label{ConvCrit}
\]
where $x^{\ast}$ is the minimum and $n_*$ the final time.
 \item the \textit{error}, evaluated as expected value in the $L_2$ norm over the successful runs
\[
\begin{split}
\mathbb{E}(\Vert \Q_\alpha^{n_*}-x^{\ast} \Vert_2 ),\qquad {\rm or}\qquad \mathbb{E}(\Vert \bar\P_\alpha^{n_*}-x^{\ast} \Vert_2 );
  \end{split}
\] 
\item the \textit{number of iterations}, where we stop the iteration if
\[
\Vert \Q^{n}_{\alpha}-\Q^{n-1}_{\alpha}\Vert < \delta_{stall},\qquad {\rm or}\qquad \Vert \bar\P^{n}_{\alpha}-\Q^{n-1}_{\alpha}\Vert < \delta_{stall}
\]
for $n_{stall}$ consecutive iterations or a maximum $n_{max}$ iterations has been reached. 
\end{itemize}

In the sequel, we consider $n_r=500$, $\delta_{err}=0.25$, $\delta_{stall}=10^{-4}$ and $n_{max}=10^4$. 
We remark that, increasing the problem dimension, a larger value of $ \alpha \gg 1 $ provides better performance \cite{pinnau2017consensus,fhps20-2}. On the other hand, a large value of $ \alpha $ may generate numerical instabilities given by the definition of the regularized global best. To avoid this, we used the algorithm presented in \cite{fhps20-2} which allow the use of arbitrary large values of $\alpha$.

In the following test cases, we address the role of the various parameters, of the presence of memory and of the local best when solving high dimensional global optimization problems. We refer also to \cite{Totzeck2018ANC} for additional comparisons. In our experiments, the PSO constraints \eqref{eq:param} have shown strong limitations in terms of success rates and have not been considered. We refer to \cite{Grassi2021PSO} for further details and comparisons.


\begin{table}[tb]
\begin{center}
\resizebox{4.5in}{1in}{
\begin{tabular}{l||l|l|ccc|l|ccc}
\multicolumn{3}{l}{\hspace{-5pt}\textbf{Rastrigin}} &  \multicolumn{4}{c}{\hspace{-45pt}Case without memory} &\multicolumn{3}{c}{Case with memory} \\
\hline
\hline
\hspace{5pt}$m$& & \hspace{5pt}$\sigma$ & $N=50$ & $N=100$ & $N=200$ & \hspace{5pt}$\sigma_2$ & $N=50$ & $N=100$ & $N=200$\\                  
\hline
{\rm $0.00$}& Rate& {\rm $9.0$}& 100.0\% & 100.0\% & 100.0\% &{\rm $11.0$}& 100.0\% & 100.0\% & 100.0\%\\
& Error &&   1.19e-04 & 1.11e-04 & 9.68e-05& & 6.83e-04 & 4.70e-04 & 4.69e-04\\
& $n_{iter}$& &10000.0 & 10000.0 & 9912.4 && 10000.0 & 9878.2 & 3290.2\\
\hline
{\rm $0.01$}& Rate &{\rm $7.0$} &  100.0\% & 100.0\% & 100.0\% &{\rm \ $9.0$}& 100.0\% & 100.0\% & 100.0\%\\
& Error &&  9.74e-05 & 2.01e-05 & 1.62e-05& & 8.60e-04 & 8.56e-04 & 8.81e-04\\
& $n_{iter}$& &10000.0 & 6899.2 & 2060.1 &&9939.5 & 7012.2 & 5422.1\\
\hline
{\rm $0.05$}& Rate &{\rm $3.5$}& 37.0\% & 74.0\% & 94.0\% &{\rm \ $4.5$}& 100.0\% & 100.0\% & 100.0\%\\
& Error &&4.27e-04 & 1.26e-04 & 1.14e-04& &  1.15e-03 & 6.67e-04 & 6.54e-04\\
& $n_{iter}$& &8233.2 & 7814.0 & 7326.6 &&9978.0 & 7657.6 & 5639.7\\
\hline
{\rm $0.10$}& Rate &{\rm $ 2.0$}& 1.0\% & 5.5\% & 29.5\% &{\rm \ $ 3.0$}& 80.8\% & 96.8\% & 100.0\%\\
& Error & & 2.00e-04 & 1.28e-04 & 1.11e-04& & 2.94e-03 & 8.96e-04 & 8.24e-04\\
& $n_{iter}$& &6155.4 & 6221.9 & 6214.3 &&9661.5 & 8676.5 & 7331.8\\
\hline
\hline
\end{tabular}}
\end{center}
\caption{SD-PSO with and without memory for $\lambda_1=\sigma_1=0$, $\lambda_2=1$, $\Delta t = 0.01$, $\nu=50$, $\beta = 3 \times 10^3$ and $\alpha = 5 \times 10^4$.}
\label{tab:1R}
\end{table}

\subsubsection{Effect of the inertial parameter $m$ }
First we test the algorithm performance for the Ackley and the Rastrigin functions in $[-3, 3]^d$, $d=20$.  In the left column of Table \ref{tab:1R} and Table \ref{tab:1A} we report the results obtained without memory effects \eqref{eq:psoiDiscr} and in the right column the results with memory effects \eqref{eq:psoDiscr}. Since, typically, optimizing the Rastrigin function is far more difficult than the Ackley function, we explore the space of parameters searching for optimal values of $\sigma$ and $\Delta t$ for the Rastrigin function,  then we used the same values for the Ackley function. This optimization was done empirically through several simulations with simple variations of a given step size for the parameters.

\begin{table}[htb]
\begin{center}
\resizebox{4.5in}{1in}{
\begin{tabular}{l||l|l|ccc|l|ccc}
\multicolumn{3}{l}{\hspace{-5pt}\textbf{Ackley}} &  \multicolumn{4}{c}{\hspace{-45pt}Case without memory} &\multicolumn{3}{c}{Case with memory} \\
\hline
\hline
\hspace{5pt}$m$& & \hspace{5pt}$\sigma$ & $N=50$ & $N=100$ & $N=200$ & \hspace{5pt}$\sigma_2$ & $N=50$ & $N=100$ & $N=200$\\                  
\hline
{\rm $0.00$}& Rate& {\rm $9.0$}& 100.0\% & 100.0\% & 100.0\% &{\rm $11.0$}& 100.0\% & 100.0\% & 100.0\%\\
& Error &&  8.46e-05 & 4.20e-05 & 1.27e-05& & 1.02e-04 & 7.66e-05 & 5.44e-05\\
& $n_{iter}$& &1364.9 & 1032.4 & 869.2 &&  2457.0 & 1778.0 & 1513.1\\
\hline
{\rm $0.01$}& Rate &{\rm $7.0$} &  100.0\% & 100.0\% & 100.0\% &{\rm \ $9.0$}& 100.0\% & 100.0\% & 100.0\%\\
& Error && 9.49e-05 & 5.89e-05 & 2.81e-05& & 2.34e-03 & 1.91e-04 & 1.61e-04\\
& $n_{iter}$& &2192.9 & 1886.7 & 1723.6 && 6430.4 & 5447.8 & 4598.3\\
\hline
{\rm $0.05$}& Rate &{\rm $3.5$}&100.0\% & 100.0\% & 100.0\% &{\rm \ $4.5$}& 100.0\% & 100.0\% & 100.0\%\\
& Error &&2.27e-04 & 1.48e-04 & 1.03e-04& &  2.41e-04 & 1.84e-04 & 1.48e-04\\
& $n_{iter}$&&5367.3 & 4459.4 & 3928.4 &&7186.1 & 5996.0 & 5074.6\\
\hline
{\rm $0.10$}& Rate &{\rm $ 2.0$}& 99.5\% & 100.0\% & 100.0\% &{\rm \ $ 3.0$}& 100.0\% & 100.0\% & 100.0\%\\
& Error & & 8.31e-04 & 2.76e-04 & 1.91e-04& & 3.90e-03 & 2.64e-03 & 2.06e-03\\
& $n_{iter}$& &5480.8 & 4514.1 & 3909.4&&8590.6 & 7326.4 & 6350.2\\
\hline
\hline
\end{tabular}}
\end{center}
\caption{SD-PSO with and without memory for $\lambda_1=\sigma_1=0$, $\lambda = \lambda_2=1$, $\Delta t = 0.01$, $\nu=50$, $\beta = 3 \times 10^3$ and $\alpha = 5 \times 10^4$.}
\label{tab:1A}
\end{table}

The results are given for different numbers of particles $N$. We consider $\alpha = 5 \times 10^4$, whereas
the memory parameters $\beta$ and $\nu$ were chosen respectively $\beta=3\times 10^3$ and $\nu=50$.
Note that, even if we rely only on the global best since we fix $\lambda_1=\sigma_1=0$, due to the regularization of the memory process the two approaches, with and without memory, differs and a higher noise in required in presence of memory. Low inertia values yields better performances overall, however, it should be noticed that for the Rastrigin function the best results in term of convergence speed are obtained with a small but non zero inertia value of $m=0.01$.  


\subsubsection{Effect of the local best dynamics}
Subsequently, we have introduced the local best dynamics in the same optimization process. To reduce the number of free parameters we assume $\lambda_1 = \xi \cdot \lambda_2$, $\sigma = \xi \cdot \sigma_2$
with $\xi \in [0,1]$ so that the local best is always weighted less than the global best. In this test we keep the inertial value $m = 0$ and $\lambda_1=1$, so that we are solving the generalized stochastic differential CBO model with memory using algorithm \eqref{eq:cboDiscr}. For each value of $\xi$ reported, we have computed an optimal $\sigma_2$ achieving the maximum rate of success. We chose $\beta = 3\times 10^3$, $\Delta t=0.01$, $\nu=50$  and  $\alpha = 5 \times 10^4$ as in the previous case. 

In Tables \ref{tab:3R} and \ref{tab:3A} we report the behavior of the particle optimizer on the Ackley and Rastrigin functions for different positions of the minimum $x^{\ast} = 0$, $x^{\ast} = 1$ and $x^{\ast} = 2$. Since for large values of $\xi$ we must decrease $\sigma_2$ to achieve maximum convergence rate we observe that the total number of iterations may decrease and that a speed-up is obtained thanks to the local best.

\begin{table}[htb]
\begin{center}
\resizebox{4.5in}{0.9in}{
{\renewcommand{\arraystretch}{1}
\begin{tabular}{l|lccc|ccc}
{\vspace{-2pt}\textbf{Rastrigin}} &  \multicolumn{4}{c}{Case $\xi = 0$, $\sigma_2 = 11.0$} &\multicolumn{3}{c}{Case $\xi = 0.25$, $\sigma_2 = 8.5$} \\
\hline
\hline
& & $N=50$ & $N=100$ & $N=200$ & $N=50$ & $N=100$ & $N=200$\\  
\hline
{\rm $x^{\ast} = 0$} & Rate  &  100.0\% & 100.0\% & 100.0\% & 100.0\% & 100.0\% & 100.0\%\\
 & Error & 7.04e-04 & 4.58e-04 & 3.29e-04&9.28e-04 & 6.11e-04 & 4.31e-04\\
& $n_{iter}$ & 10000.0 & 9963.9 & 4635.1&9978.0 & 8311.5 & 5754.1\\
\hline
{\rm $x^{\ast} = 1$}& Rate  &  98.8\% & 100.0\% & 100.0\% & 99.2\% & 100.0\% & 100.0\%\\
 & Error & 7.08e-04 & 4.60e-04 & 3.27e-04&9.31e-04 & 6.74e-04 & 4.59e-04\\
& $n_{iter}$ &10000.0 & 10000.0 & 4670.0 & 9987.0 & 9746.7 & 7460.1\\
\hline
{\rm $x^{\ast} = 2$}& Rate  &96.0\% & 99.1\% & 100.0\% & 93.5\% & 100.0\% & 100.0\%\\
 & Error &  6.91e-04 & 4.52e-04 & 3.28e-04&8.78e-04 & 6.74e-04 & 5.66e-04\\
& $n_{iter}$ &10000.0 & 10000.0 & 5035.5 & 9980.3 & 9854.1 & 8971.9\\
\hline  
\hline       
\end{tabular}}}
\end{center}
\caption{SD-PSO with memory ($m=0$) for $\lambda_1 = \xi \cdot \lambda_2$, $\sigma = \xi \cdot \sigma_2$, $\lambda_1$, $\lambda_2=1$, $\Delta t = 0.01$, $\nu=50$, $\beta = 3 \times 10^3$, $\alpha = 5 \times 10^4$.}
\label{tab:3R}
\end{table}

\begin{table}[htb]
\begin{center}
\resizebox{4.5in}{0.9in}{
{\renewcommand{\arraystretch}{1}
\begin{tabular}{l|lccc|ccc}
{\textbf{Ackley}} &  \multicolumn{4}{c}{Case $\xi = 0$, $\sigma_2 = 11.0$} &\multicolumn{3}{c}{Case $\xi = 0.25$, $\sigma_2 = 8.5$} \\
\hline
\hline
& & $N=50$ & $N=100$ & $N=200$ & $N=50$ & $N=100$ & $N=200$\\  
\hline
{\rm $x^{\ast} = 0$ \ } & Rate  &  100.0\% & 100.0\% & 100.0\% & 100.0\% & 100.0\% & 100.0\%\\
 & Error & 7.36e-05 & 5.13e-05 & 3.26e-05&2.54e-05 & 1.13e-05 & 1.07e-05\\
& $n_{iter}$ &2778.6 & 2030.0 & 1623.0 & 1942.9 & 1663.8 & 1442.5\\
\hline
{\rm $x^{\ast} = 1$ \ }& Rate  &  100.0\% & 100.0\% & 100.0\% & 100.0\% & 100.0\% & 100.0\%\\
 & Error & 7.31e-05 & 5.14e-05 & 3.26e-05&2.58e-05 & 1.12e-05 & 1.02e-05\\
& $n_{iter}$ & 5298.5 & 3640.6 & 2575.9 &  2465.3 & 1948.5 & 1632.5\\
\hline
{\rm $x^{\ast} = 2$ \ }& Rate  & 100.0\% & 100.0\% & 100.0\% &  100.0\% & 100.0\% & 100.0\%\\
 & Error & 7.30e-05 & 5.07e-05 & 3.22e-05&2.64e-05 & 1.09e-05 & 1.01e-05\\
& $n_{iter}$ &7819.8 & 5771.3 & 4235.9 &3126.8 & 2286.0 & 1803.8\\
\hline  
\hline       
\end{tabular}}}
\end{center}
\caption{SD-PSO with memory ($m=0$) for $\lambda_1 = \xi \cdot \lambda_2$, $\sigma = \xi \cdot \sigma_2$, $\lambda_2=1$, $\Delta t = 0.01$, $\nu=50$, $\beta = 3 \times 10^3$, $\alpha = 5 \times 10^4$.}
\label{tab:3A}
\end{table}

\begin{table}[htb]
{\large
\begin{center}\resizebox{4.5in}{2.2in}{
  \centering
  \begin{tabular}{  c | c | c | c | c | c  }
    \textbf{Name} & \textbf{Function $\TE(x)$}& \textbf{Range} & \textbf{$x^\ast$} & \textbf{$\TE(x^\ast)$} & \textbf{Sketch in 2D} \\
    \hline \hline
    Ackley & $-20\ \mbox{exp}\left( -0.2\sqrt{\frac{1}{d}\sum_{i=1}^{d}{(x_i)^2}}\right)-\mbox{exp}\left(\frac{1}{d}\sum_{i=1}^d{ \cos \left( 2\pi (x_i)\right) }\right) +20 +e$  & $[-32,32]^d$ & $ (0,\dots,0)$ & 0 &
     \begin{minipage}{.25\textwidth}  \centering \vspace{5pt}
      \includegraphics[scale=0.20]{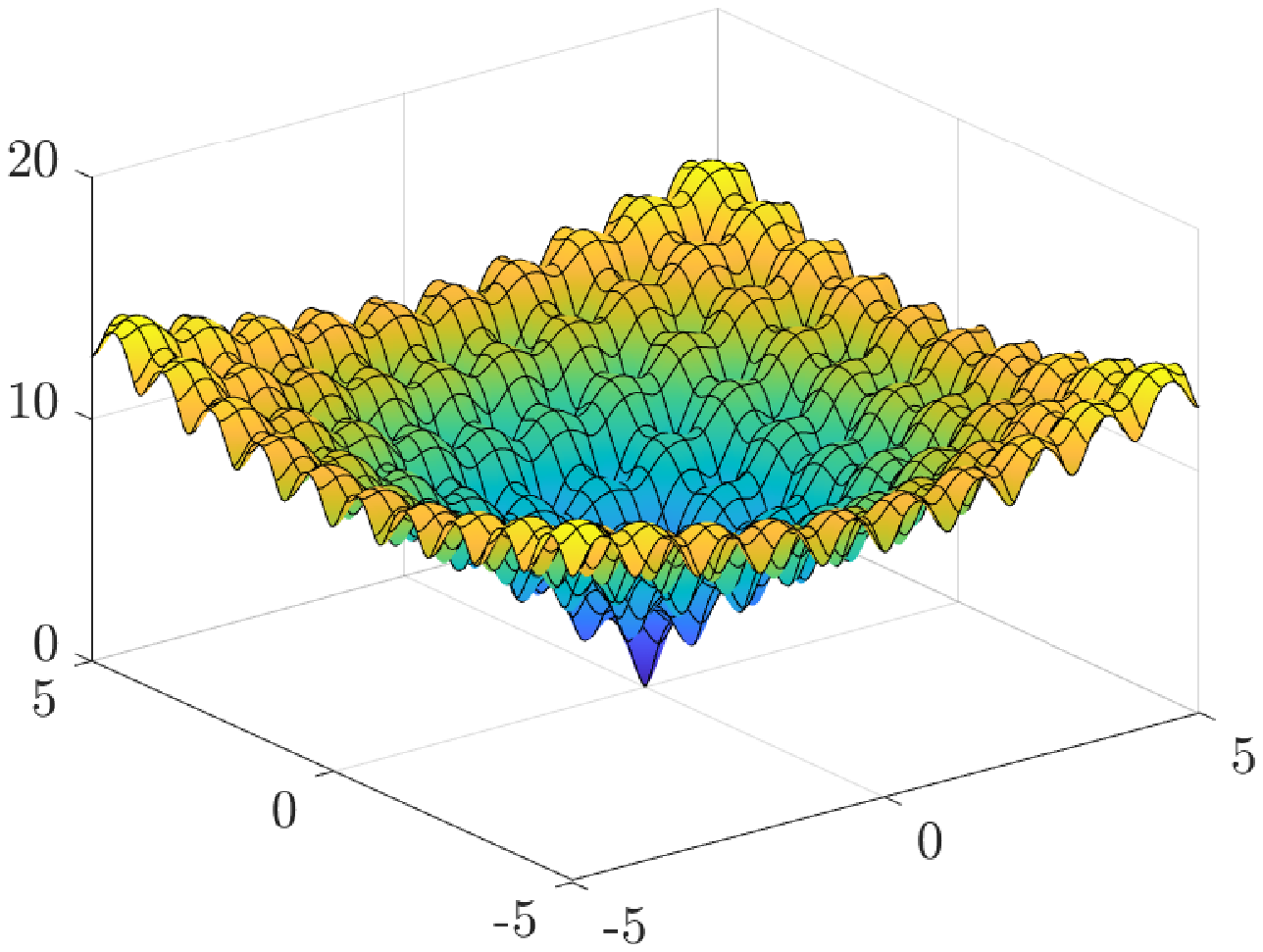} 
    \end{minipage}
    \\ \hline
     Griewank & $1+\sum_{i=1}^{d} \frac{(x_i)^2}{4000}-\prod_{i=1}^{d} \mbox{cos}\left(\frac{x_i}{i}\right)$   & $[-600,600]^d$ & $ (0,\dots,0)$ & 0 &
     \begin{minipage}{.25\textwidth}  \centering \vspace{5pt}
      \includegraphics[scale=0.20]{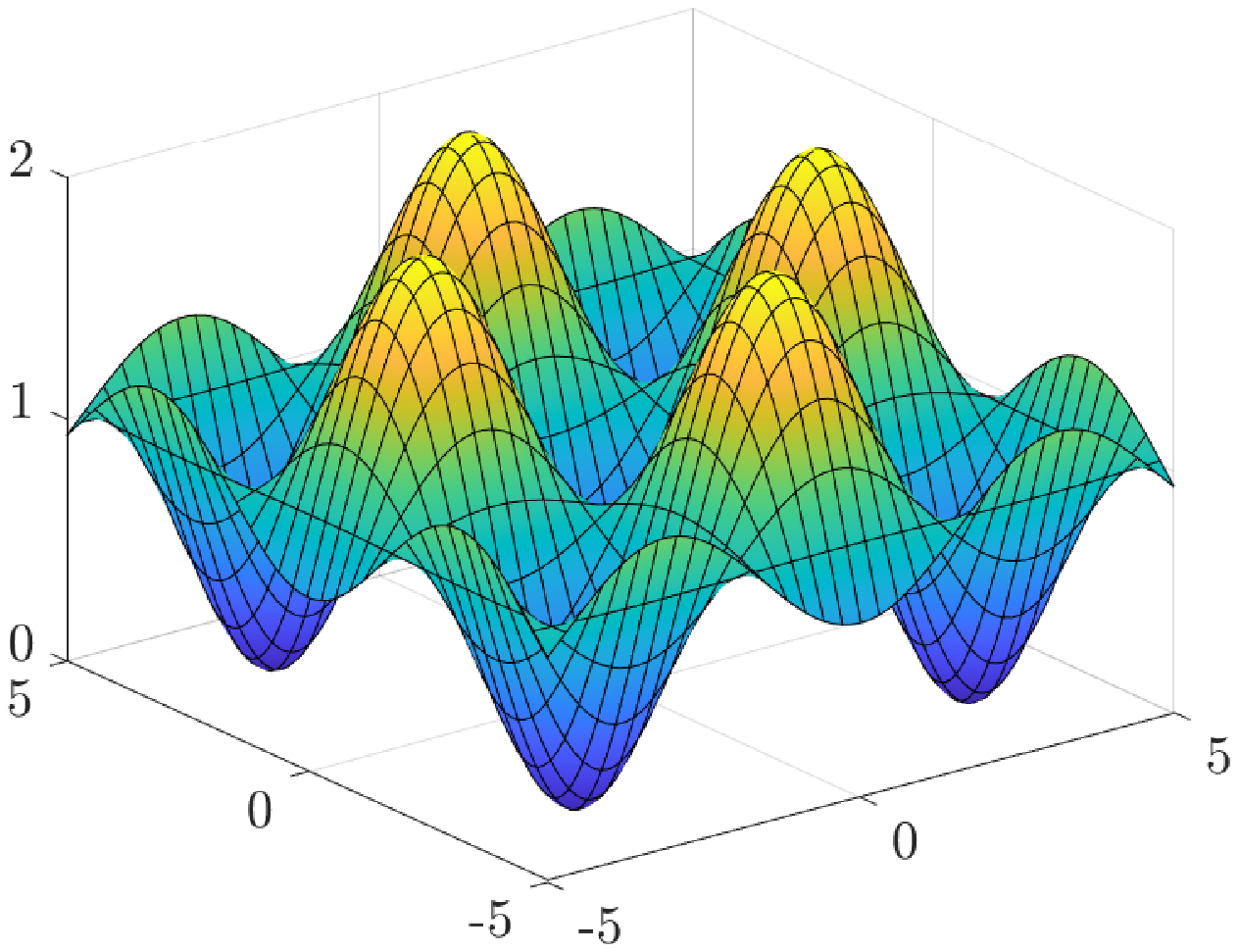} 
    \end{minipage}
    \\ \hline
        Rastrigin & $10d + \sum_{i=1}^d \left[(x_i)^2 - 10 \cos\left(2\pi (x_i) \right)\right]$  & $[-5.12,5.12]^d$ & $ (0,\dots,0)$ & 0 &   \begin{minipage}{.25\textwidth}  \centering \vspace{5pt}
      \includegraphics[scale=0.20]{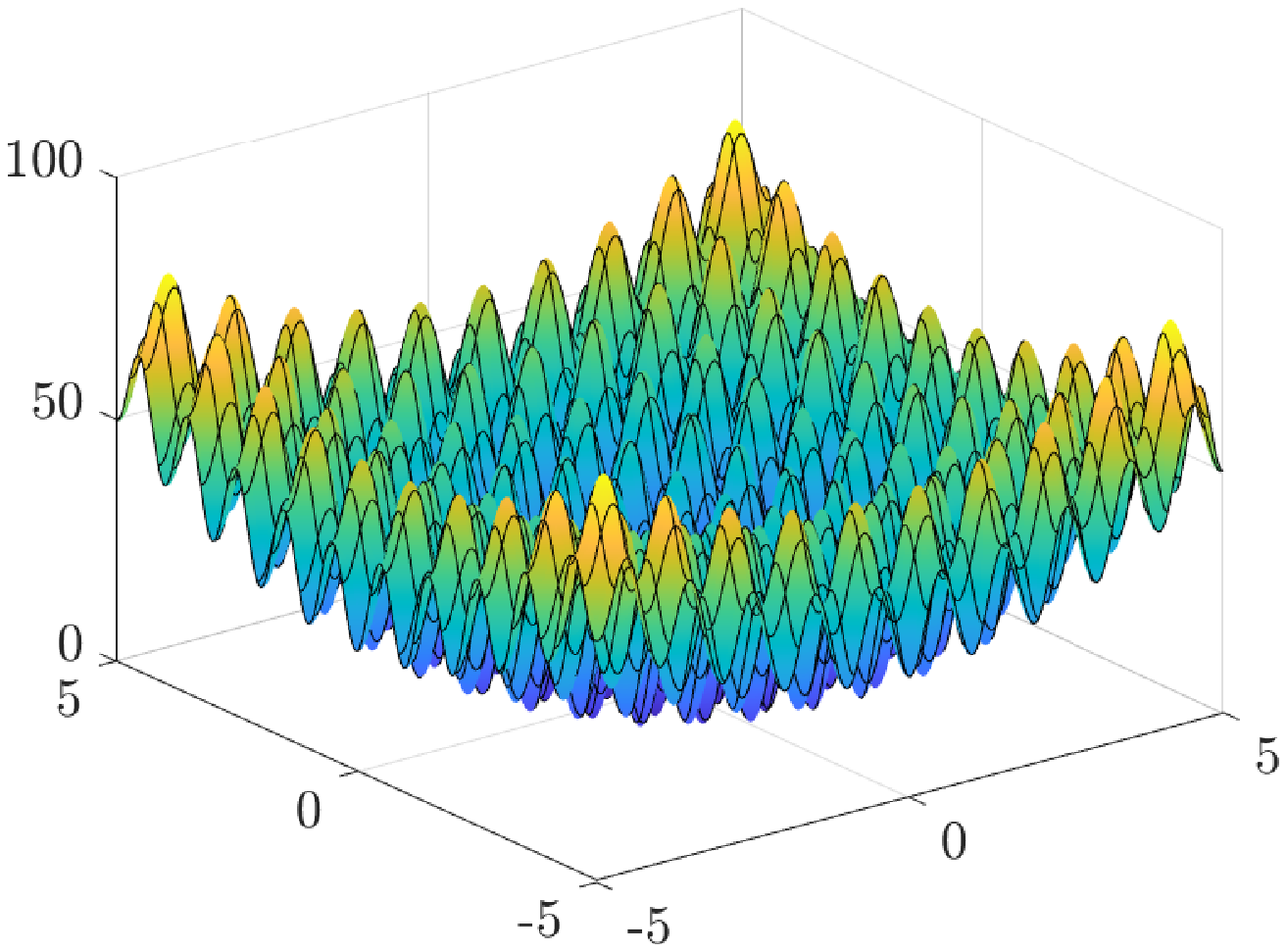} 
    \end{minipage}
    \\ \hline
       Rosenbrock & $1-\cos{\left( 2\pi \sqrt{\sum_{i=1}^{d}{(x_i)^2}}\right)} + 0.1 \sqrt{\sum_{i=1}^{d}{(x_i)^2}} $  & $[-5,10]^d$ & $ (1,\dots,1)$ & 0 &  \begin{minipage}{.25\textwidth}  \centering \vspace{5pt}
      \includegraphics[scale=0.20]{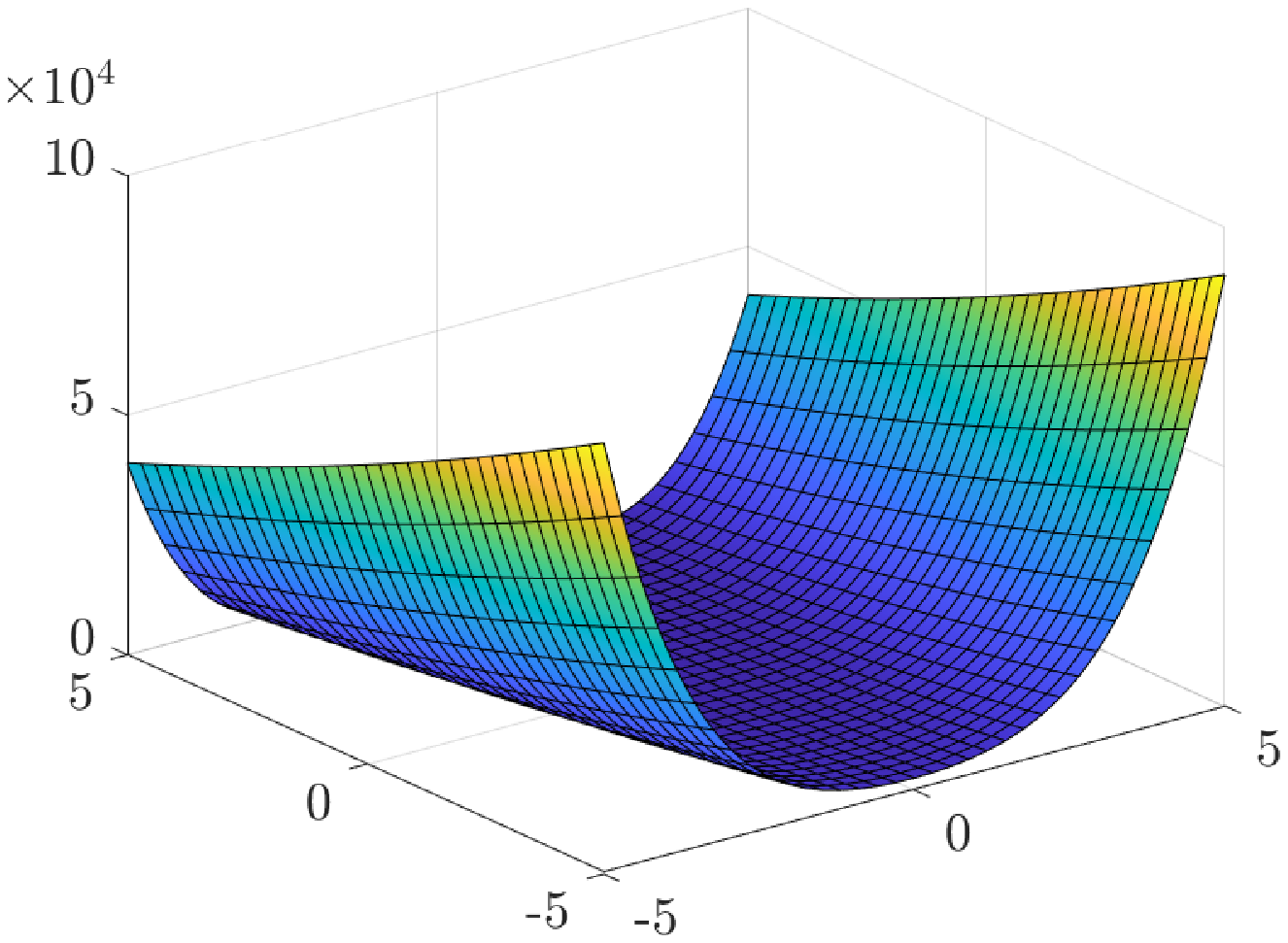} 
    \end{minipage}
    \\ \hline
       Salomon & $1-\cos{\left( 2\pi \sqrt{\sum_{i=1}^{d}{(x_i)^2}}\right)} + 0.1 \sqrt{\sum_{i=1}^{d}{(x_i)^2}}  $  & $[-100,100]^d$ & $ (0,\dots,0)$ & 0 &  \begin{minipage}{.25\textwidth}  \centering \vspace{5pt}
      \includegraphics[scale=0.20]{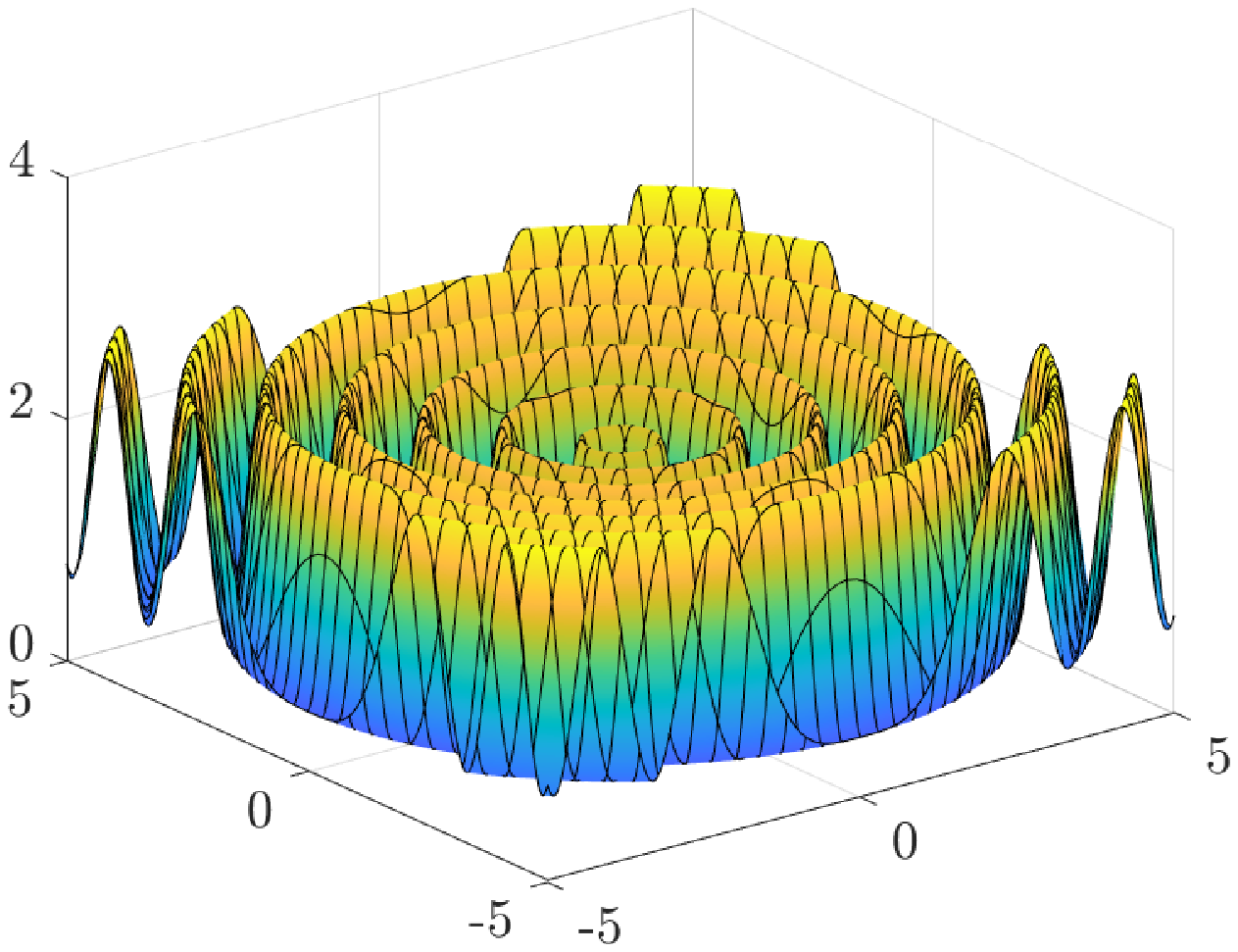} 
    \end{minipage}
    \\ \hline
     Schwefel 2.20 & $\sum_{i = 1}^d \vert x_i \vert  $  & $[-100,100]^d$ & $ (0,\dots,0)$ & 0 &  \begin{minipage}{.25\textwidth}  \centering \vspace{5pt}
      \includegraphics[scale=0.20]{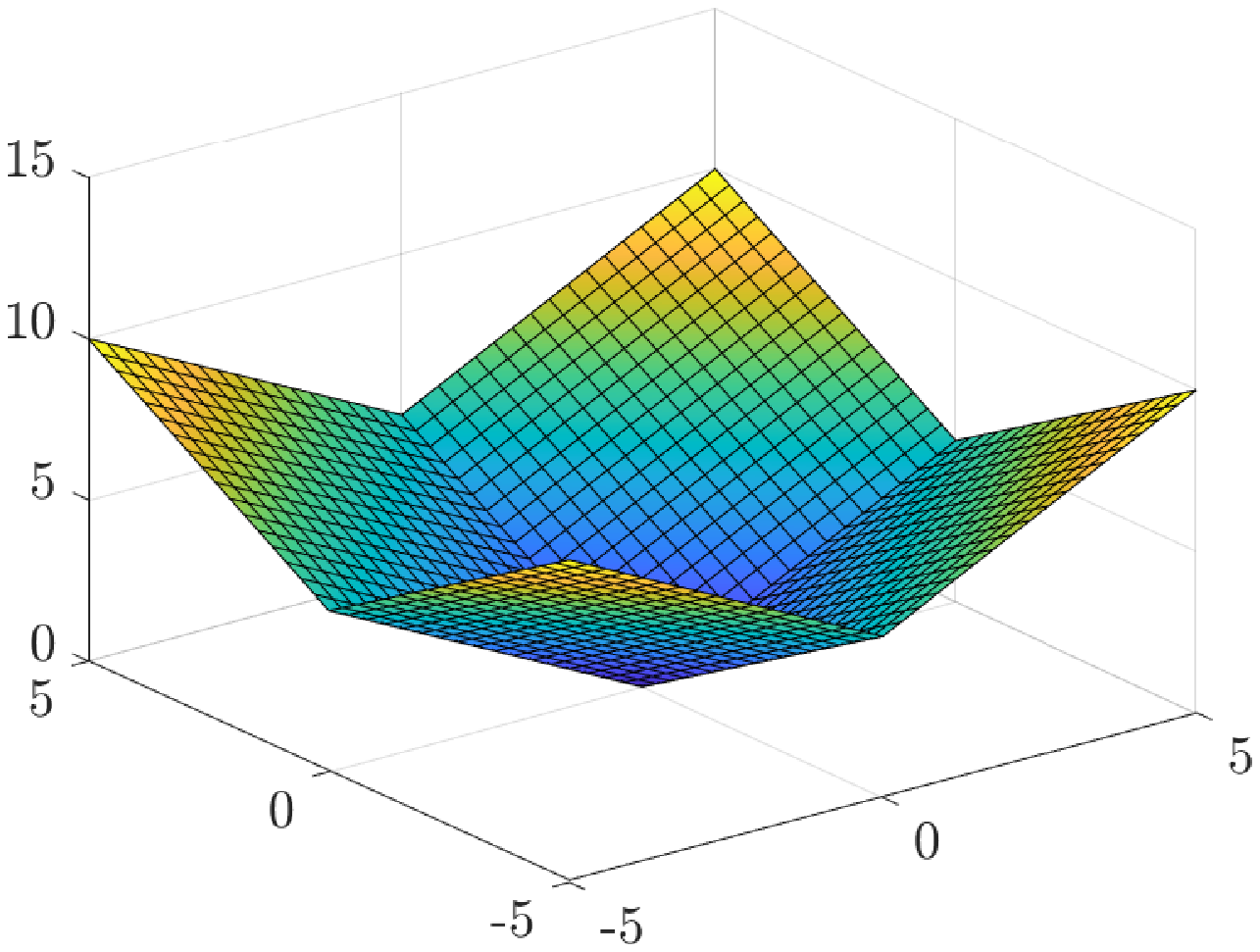} 
    \end{minipage}
    \\ \hline
     XSY random & $\sum_{i=1}^{d} \eta_i \vert x_i \vert^{i}, \qquad \eta_i \sim \mathcal{U}(0,1)$  & $[-5,5]^d$ & $ (0,\dots,0)$ & 0 &  \begin{minipage}{.25\textwidth}  \centering \vspace{5pt}
      \includegraphics[scale=0.20]{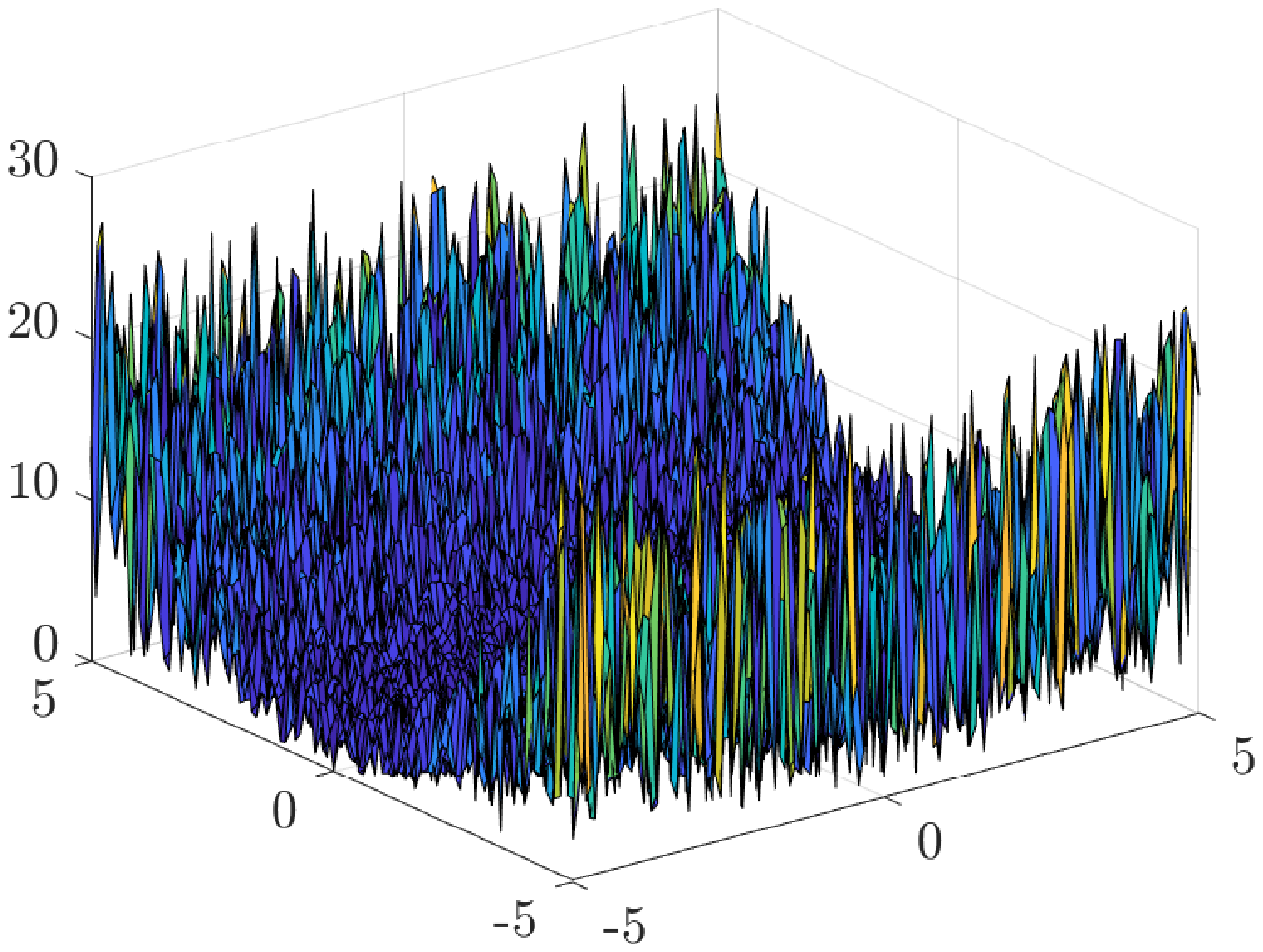}
    \end{minipage}
    \\ \hline
     XSY 4 & $\left( \sum_{i=1}^{d} \sin^2(x_i) - e^{ \ -\sum_{i=1}^{d} (x_i)^2} \right) \, e^{ \ -\sum_{i=1}^{d} \sin^2{\sqrt{\vert x_i \vert}}}   $  & $[-10,10]^d$ & $ (0,\dots,0)$ & $-1$ &  \begin{minipage}{.25\textwidth} \centering \vspace{5pt}
      \includegraphics[scale=0.20]{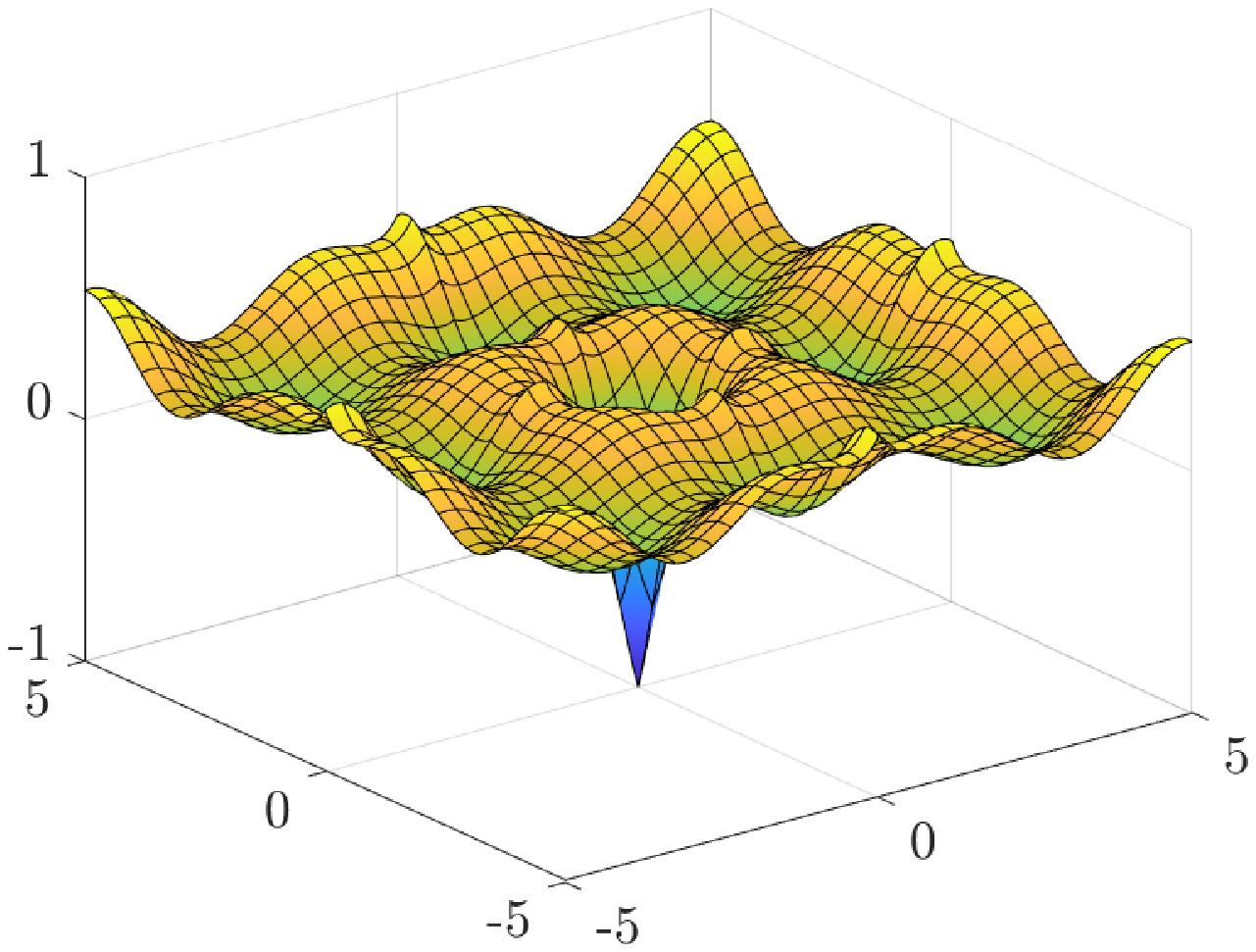} 
    \end{minipage}
    \\ \hline
    \hline
  \end{tabular}}
 \end{center}
 \caption{Prototype test functions for global optimization.}
 \label{Tabfun}
 }
\end{table}

\subsubsection{Comparison on prototype functions}

In the last test case we analyze the performance of the methods by solving simultaneously a set of different optimization functions considered in their standard search domains \cite{JYZ} (see Table \ref{Tabfun}). Here, instead of trying to find an optimal set of parameters for each function we use the same parameters for all functions. Furthermore, in order to identify a comparable set of optimization parameters for the different functions, we found it particularly effective to rescale all functions from their classical domain to the same reference domain. In our experiment we generalized the notion of success criteria by introducing the following definitions
\begin{itemize}
\item  the \textit{success rate}, computed averaging over $n_r$ runs and using as convergence criterion 
\[
\begin{split}
\Vert\bar\P_\alpha^{n_*}-x^{\ast}\Vert_{\infty}< \delta_{err}\quad {\rm or} \quad  |\TE(\bar\P_\alpha^{n_*})-\TE(x^{\ast})|< \delta_{fun}
  \end{split}
  \label{ConvCrit2}
\]
where $x^{\ast}$ is the minimum and $n_*$ the final time.
\item  The \textit{average function value} $\mathcal{F}_{avg}$, computed averaging the function value $\TE(\bar\P_\alpha^{n_*})$ over $n_r$ runs.
\end{itemize}
\begin{figure}[tb]
\begin{minipage}{\linewidth}
\centering
\subcaptionbox{$\Vert\bar\P_\alpha^{n_*}-x^{\ast}\Vert_{\infty}$ with $\xi = 0$}{\includegraphics[scale=0.38]{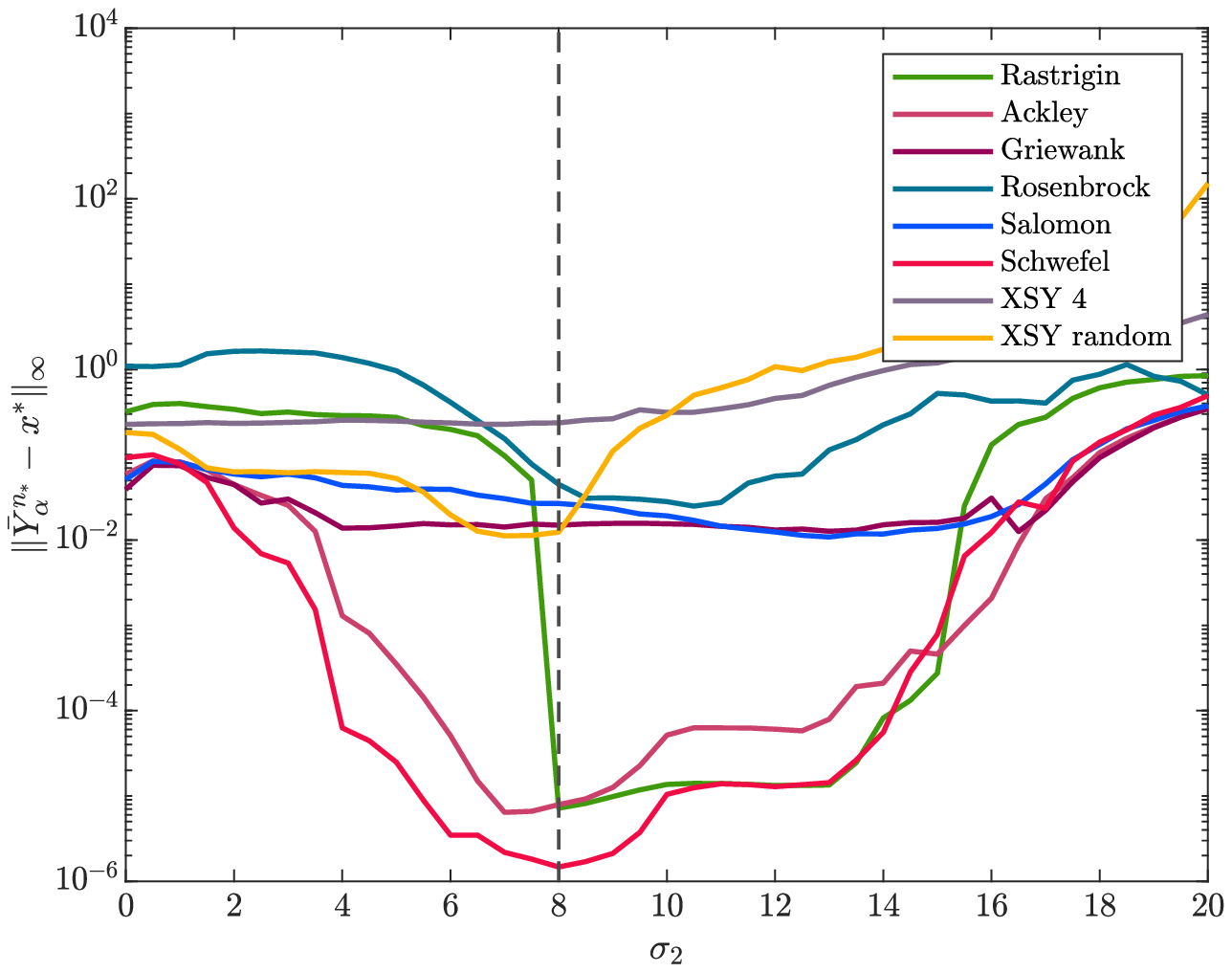}}
\subcaptionbox{$\Vert\bar\P_\alpha^{n_*}-x^{\ast}\Vert_{\infty}$  with $\xi = 0.25$}{\includegraphics[scale= 0.38]{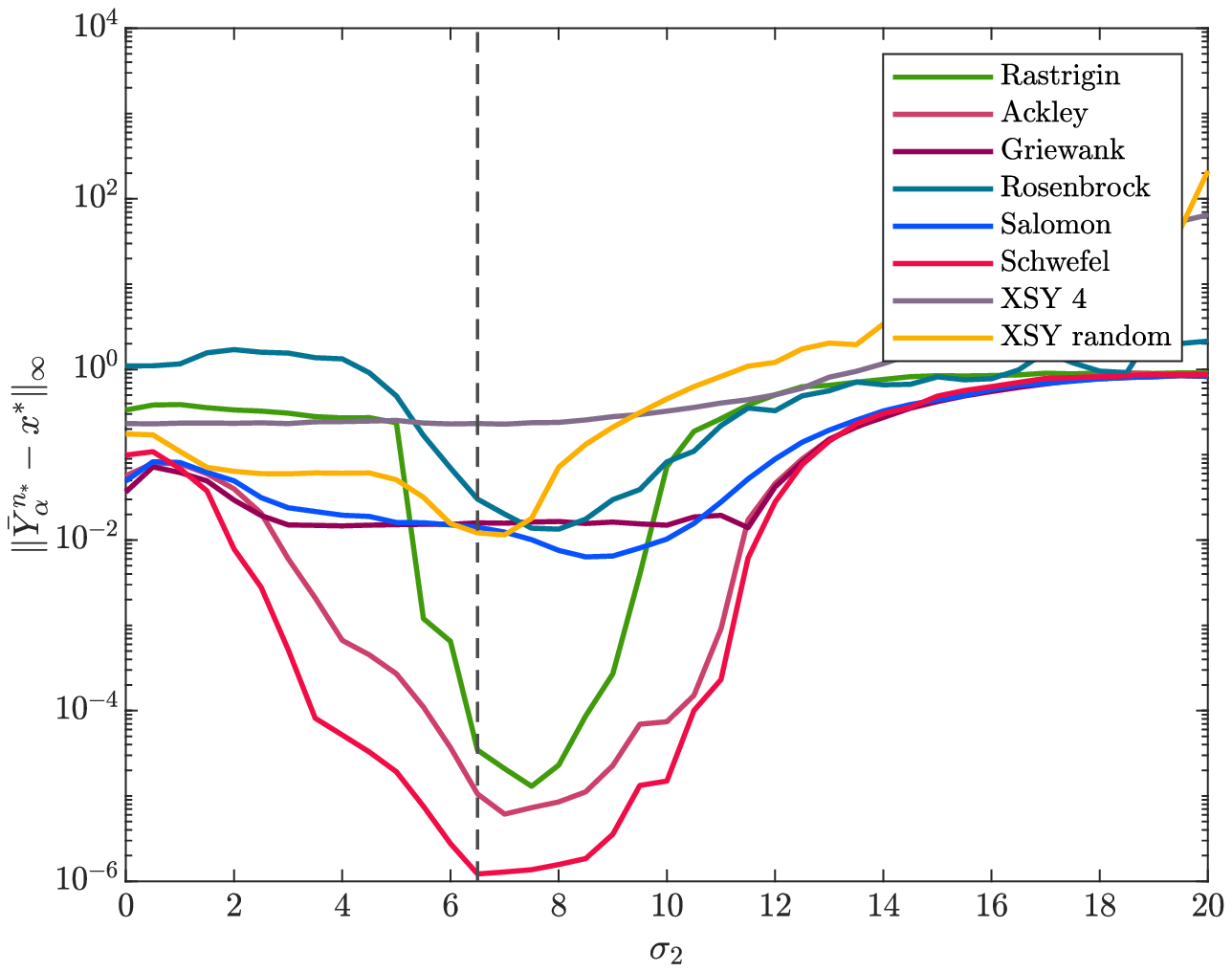}} \\
\subcaptionbox{$\mathcal{F}_{avg}$ with $\xi = 0$}{\includegraphics[scale=0.38]{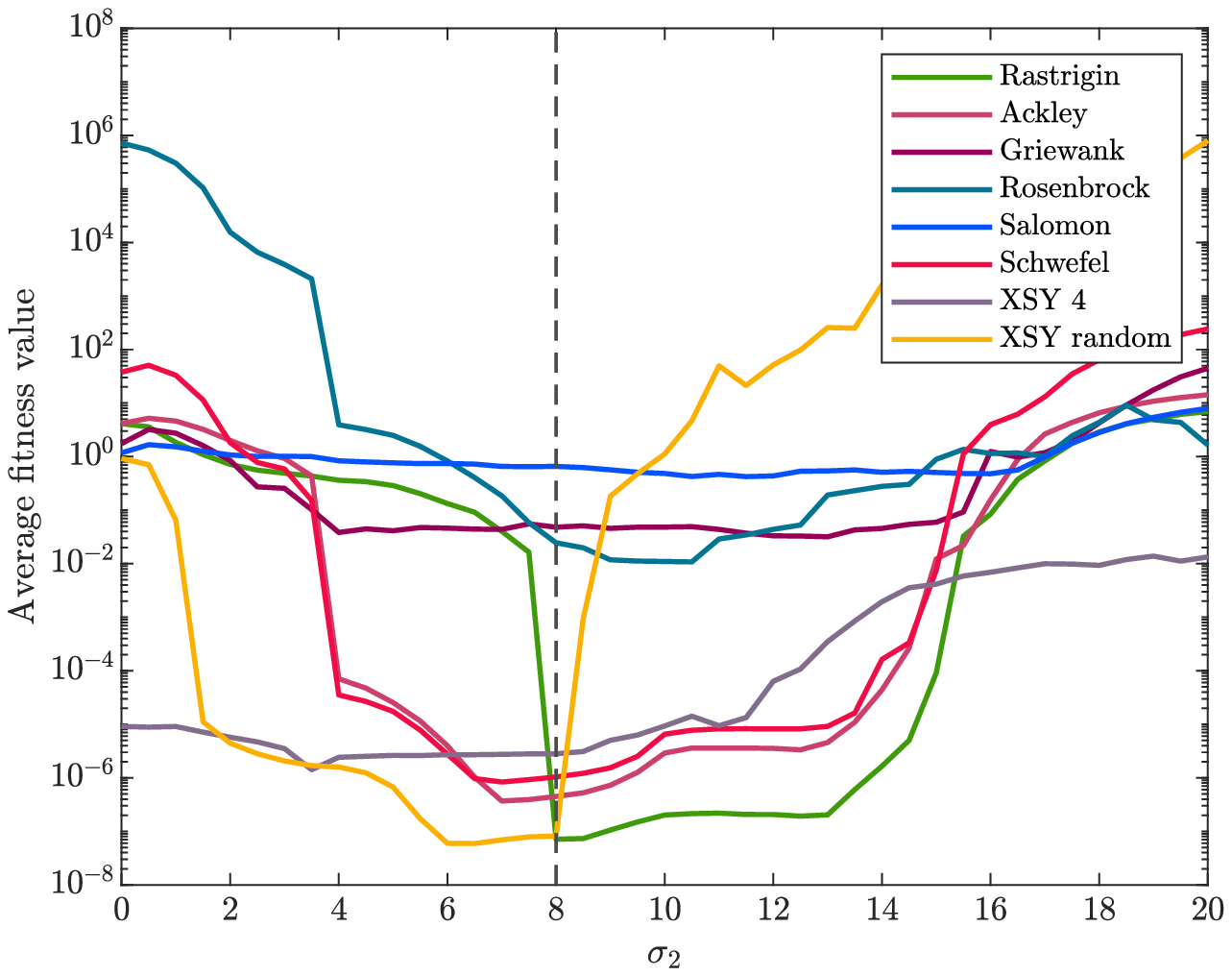}}
\subcaptionbox{$\mathcal{F}_{avg}$ with $\xi = 0.25$}{\includegraphics[scale= 0.38]{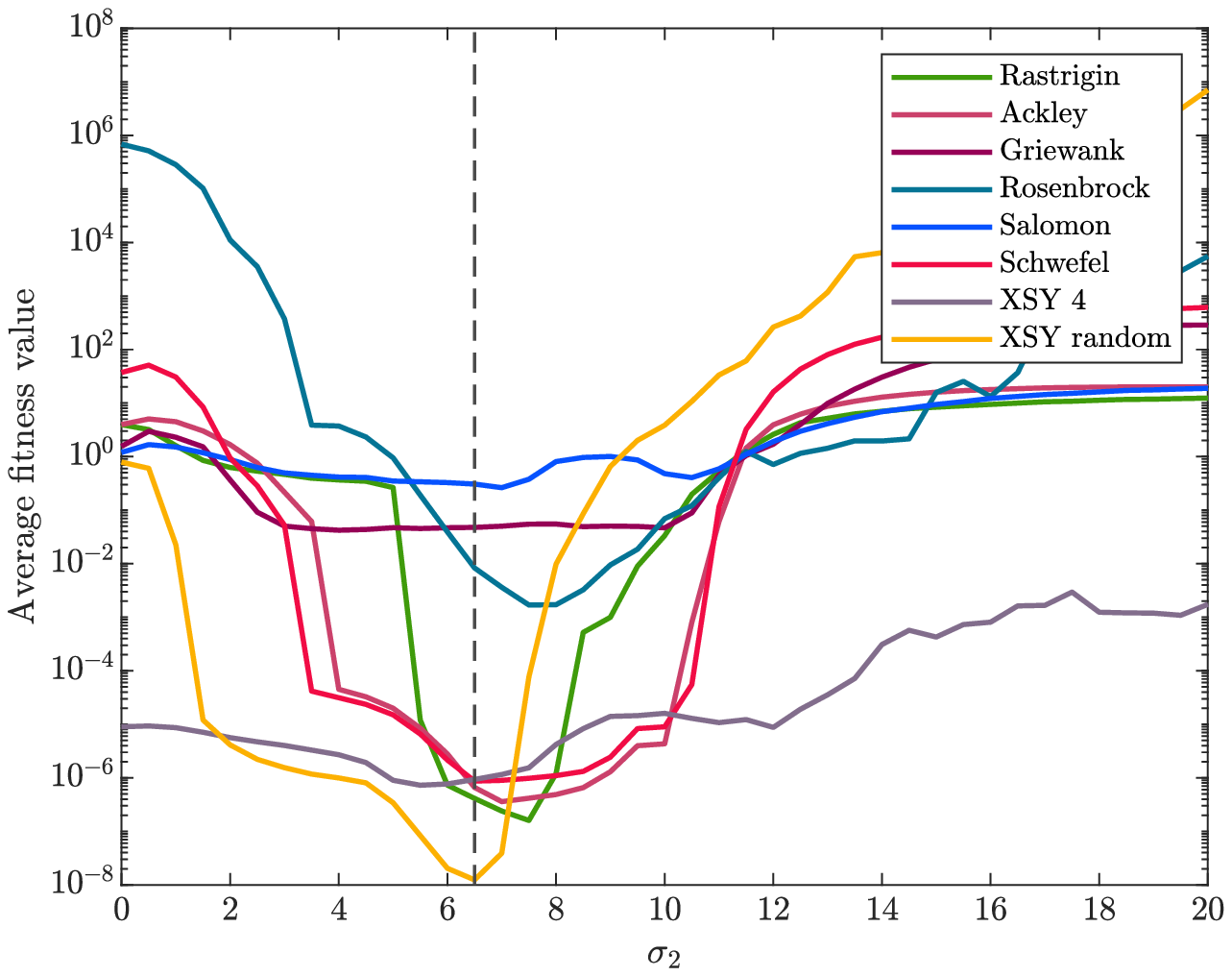}} \\
\caption{SD-PSO with memory ($m=0$). Behavior of the average error (top) and fitness value (bottom) for different values of $\sigma_2$. Here $\sigma_1 = \xi \cdot \sigma_2$, $\lambda_1 = \xi \cdot \lambda_2$, $\lambda_2=1$, $\Delta t = 0.01$, $\nu=50$, $\beta = 3 \times 10^3$ and $\alpha = 5 \times 10^4$. The dashed vertical lines are the estimated optimal values.}
\label{Fig8}
\end{minipage}
\end{figure}
In our simulations, we set $[-1,1]^d$ as the reference domain and translate the functions so that all have a minimum value of $\TE(x^*)=0$. 
We selected $\delta_{err}=0.1$, $\delta_{fun}=0.01$, $n_r=500$ and $n_{max}=10^4$.
We let most parameters fixed as in previous test case, namely $\alpha = 5 \times 10^4$, $\beta = 3 \times 10^3$, $\nu=50$. Additionally we keep $m=0$, $\Delta t = 0.01$, and for a given value of $\xi=0$ (absence of local best) and $\xi=0.25$ (local best weighted $1/4$ of global best) estimate the value for $\sigma_2$ in order to maximize the average convergence rate among all functions. This has been done with simple variations of step $0.5$ for $\sigma_2$ in the simulations, according to results in Figure \ref{Fig8} where we considered the behavior of the average error and fitness value for different values of $\sigma_2$ calculated over $n_r$ runs.


\begin{table}[tb]
\begin{center}\resizebox{4.5in}{2.3in}{
{\renewcommand{\arraystretch}{1.2}
\begin{tabular}{l|lccc|ccc}
 & \multicolumn{4}{c}{Case $\xi = 0$, $\sigma_2 = 8.0$} &\multicolumn{3}{c}{ \hspace{5pt} Case $\xi = 0.25$, $\sigma_2 = 6.5$ \hspace{5pt} } \\
\hline
\hline
& & $N=50$ & $N=100$ & $N=200$ & $N=50$ & $N=100$ & $N=200$ \\  
\hline
{\rm \textbf{Ackley}}& Rate  &100.0\% & 100.0\% & 100.0\%  &100.0\% & 100.0\% & 100.0\% \\
& Error &  9.44e-05 & 3.57e-05 & 1.48e-05 & 9.25e-06 & 4.40e-06 & 2.02e-06\\
& $\mathcal{F}_{avg}$ &2.61e-05 & 1.04e-05& 8.49e-06 &2.65e-05 & 1.26e-05 & 5.78e-06\\
& $n_{iter}$ &1012.5 & 847.9 & 736.2 &1033.4 & 874.3 & 764.0\\
\hline
{\rm \textbf{Griewank}}& Rate  &100.0\% & 100.0\% & 100.0\% &100.0\% & 100.0\% & 100.0\% \\
 & Error & 2.28e-02 & 2.24e-02 & 2.19e-02 &  2.27e-02 & 2.16e-02 & 2.24e-02\\
& $\mathcal{F}_{avg}$ &5.57e-02 & 5.21e-02 & 4.26e-02 &5.25e-02 & 4.93e-02 & 2.28e-02 \\
& $n_{iter}$ &1010.8 & 861.6 & 761.7 &1006.3 & 734.7 & 626.6 \\
\hline
{\rm \textbf{Rastrigin}}& Rate &34.0\% & 70.7\% & 95.0\% &9.0\% & 26.4\% & 42.0\% \\
& Error &1.78e-05 & 1.89e-05 & 2.05e-05 &  3.01e-05 & 3.12e-05 & 3.03e-05\\
& $\mathcal{F}_{avg}$ &9.32e-08 & 9.68e-08 & 9.95e-08 &2.41e-07 & 2.58e-07 & 2.44e-07 \\
& $n_{iter}$ &1308.5 & 1122.9 & 970.5& 1631.0 & 1483.0 & 1334.8 \\
\hline
{\rm \textbf{Rosenbrock}}& Rate  &49.3\% & 84.7\% & 100.0\% &87.3\% & 100.0\% & 100.0\% \\
& Error &  2.60e-02 & 3.44e-02 & 1.08e-02 &  4.87e-02 & 3.32e-02 & 6.92e-03\\
& $\mathcal{F}_{avg}$ &8.58e-02 & 1.25e-02 & 9.30e-03 &2.12e-02 & 8.01e-03 & 3.23e-04 \\
& $n_{iter}$ &8009.3 & 8392.8 & 7358.0 &9669.8 & 9553.8 & 7925.7\\
\hline
{\rm \textbf{Schwefel 2.20}}& Rate  &100.0\% & 100.0\% & 100.0\% &100.0\% & 100.0\% & 100.0\% \\
& Error &  2.11e-05 & 1.73e-06 & 7.32e-07 &  3.65e-06 & 1.63e-06 & 1.09e-06\\
& $\mathcal{F}_{avg}$ &2.93e-03 & 4.99e-04 & 2.18e-04 &5.14e-05 & 2.46e-05 & 8.01e-06 \\
& $n_{iter}$ &865.9 & 749.8 & 668.3 &863.2 & 747.0 & 665.8 \\
\hline
{\rm \textbf{Salomon}}& Rate  & 84.7\% & 98.7\% & 100.0\% &100.0\% & 100.0\% & 100.0\% \\
& Error & 8.94e-02 & 6.45e-02 & 4.99e-02 & 3.72e-02 & 3.21e-02 & 2.75e-02\\
& $\mathcal{F}_{avg}$ &8.96e-01 & 6.66e-01 & 5.24e-01&3.83e-01 & 3.21e-01 & 2.75e-01 \\
& $n_{iter}$ &1749.3 & 1657.9 & 1631.9 &2193.7 & 1749.7 & 1138.2\\
\hline
{\rm \textbf{XSY random \ }}& Rate  &90.0\% & 99.3\% & 100.0\% &100.0\% & 100.0\% & 100.0\% \\
& Error &  4.11e-02 & 2.26e-02 & 1.14e-02 &   2.45e-02 & 1.67e-02 & 1.66e-02\\
& $\mathcal{F}_{avg}$ &5.64e-07 & 9.60e-08 & 6.06e-08 &9.75e-09 & 7.26e-09 & 4.56e-09 \\
& $n_{iter}$ &10000.0 & 10000.0 & 10000.0 &10000.0 & 10000.0 & 10000.0\\
\hline
{\rm \textbf{XSY 4 \ }}& Rate  &100.0\% & 100.0\% & 100.0\% & 100.0\% & 100.0\% & 100.0\% \\
& Error &  1.09e+00 & 9.85e-01 & 9.70e-01 & 8.56e-01 & 8.19e-01 & 7.97e-01\\
& $\mathcal{F}_{avg}$ & 2.88e-05 & 2.57e-05 & 7.44e-05 & 1.69e-07 & 1.42e-07 & 1.41e-07 \\
& $n_{iter}$ &9682.5 & 9018.1 & 8861.6 &10000.0 & 10000.0 & 10000.0\\

\hline  
\hline    
\end{tabular}}}
\end{center}
\caption{SD-PSO with memory ($m=0$) for $\lambda_1 = \xi \cdot \lambda_2$, $\sigma_1 = \xi \cdot \sigma_2$, $\lambda_2=1$, $\Delta t = 0.01$, $\nu=50$, $\beta = 3 \times 10^3$, $\alpha = 5 \times 10^4$.}
\label{tab:TestFunctions}
\end{table}

The results in Table \ref{tab:TestFunctions} confirm the potential of the method in identifying correctly the global minima for different heterogeneous test functions. Overall, with the exception of the Rastrigin function for which  the local best produces a reduction in the convergence rate using this set of parameters, the importance of the local best is evident. In particular, the presence of the local best yields a reduction in the number of iterations for the Griewank, the Rosenbrock and the Salomon functions and an increase in the convergence rate for the XSY random and XSY4 functions. Except for the Griewank and Solomon functions, the computed value of the objective function is consistently close to zero and improves by increasing the number of particles. 
Finally, we emphasize that it was beyond the scope of this survey to discuss additional practical improvements to the algorithms that can be adopted to improve the success rate and the efficiency, like the use of random batch methods \cite{AlPa,carrillo2019consensus,JLJ}, particle reduction techniques \cite{fhps20-2,fornasier2021anisotropic} and parameters adaptivity \cite{poli2007particle,wang2017particle}. 
We refer to \cite{grassi2021consensus} for further details on these implementation aspects.

\section{Concluding remarks and research directions}
PSO methods represent a particularly prominent category within global optimization methods that do not make use of the gradient of the objective function. The popularity of these methods is related to the versatility and robustness of the algorithms, the good scalability that allows dealing with high-dimensional problems, and the ability to identify the global minimum effectively even in the case of non-convex and possibly non-smooth functions. Despite this, a complete mathematical theory related to the derivation of such methods and their global convergence properties is still lacking.

In this work, relying on some recent results \cite{Grassi2021PSO,huang2021mean1,cipriani2021zero,HJK,grassi2021consensus}, we have made an important step towards the construction of a general mathematical theory for the rigorous analysis and the understanding of PSO methods. 
The starting point of our analysis is a generalization of PSO methods in the context of second-order stochastic differential equations. In addition to the continuous formulation of PSO algorithms this novel class of methods generalizes the particle optimization process by making the alignment and exploration coefficients, based on the corresponding drift and diffusion dynamics, independent.

In the mean-field limit, using a regularized version of these SD-PSO systems, we obtained a Vlasov-Fokker-Planck type equation describing the MF-PSO dynamics. In addition, we rigorously studied the behavior of the system for small values of the inertia parameter showing how in such a limit the MF-PSO dynamics converges to a generalization of CBO models containing the local best. The latter result allowed us to clarify the relationships between these two classes of meta-heuristic optimization methods. A convergence result to the global minimum for a wide class of objective function is then proved in the case where the dynamic does not take into account memory effects. 
A complete gallery of numerical examples illustrate on the one hand the theoretical results obtained and on the other hand how the new class of SD-PSO methods potentially presents several advantages over traditional PSO in terms of convergence speed and solution stability. 

These results open important perspectives in the area of mathematical understanding of particle swarming optimization methods and in the construction of new algorithms. Among the many research directions some, not exhaustive, are summarized below.
\begin{enumerate}
\item[-] The majority of PSO applications are limited to single objective and unconstrained optimization problems. Therefore, the development of methods capable to deal with multi-objective and constrained optimization problems is a challenging and interesting area of research.
\smallskip
\item[-] Most of the convergence results for mean-field PSO and CBO models refer to the global best only. Generalization of these results to include the effect of the local best and its role should be studied. Convergence rate estimates of practical interest are still limited and further analysis is necessary.
\smallskip
\item[-] Similarly to classical PSO algorithms, the computational parameters are usually determined according to specific problems and require considerable application experience and numerous experimental tests. The identification of optimal parameters and the implementation of adaptive techniques for their determination is thus fundamental for many applications.
\end{enumerate}

 \section*{Acknowledgments}
This work has been written within the activities of GNCS group of INdAM (National Institute of High Mathematics). The support of MIUR-PRIN Project 2017, No. 2017KKJP4X "Innovative numerical methods for evolutionary partial differential equations and applications" and of the ESF PhD grant "Mathematical and statistical methods for machine learning in biomedical and socio-sanitary applications" is acknowledged. H. H. is partially supported by the Pacific Institute for the Mathematical Sciences (PIMS) postdoc fellowship. J. Q. is partially supported  by the National Science and Engineering Research Council of Canada (NSERC) and by the start-up funds from the University of Calgary.

\bibliographystyle{ims-rv-van} 
\bibliography{biblio} 

\end{document}